\documentclass[10pt, reqno]{amsart}
\usepackage{amssymb}
\usepackage{eucal}
\usepackage{amsmath}
\usepackage{amscd}
\usepackage[dvips]{color}
\usepackage{multicol}
\usepackage[all]{xy}           
\usepackage{graphicx}
\usepackage{color}
\usepackage{colordvi}
\usepackage{xspace}
\usepackage{makecell}
\usepackage{tikz}

\usepackage{ifpdf}
\ifpdf
 \usepackage[colorlinks,final,backref=page,hyperindex]{hyperref}
\else
 \usepackage[colorlinks,final,backref=page,hyperindex,hypertex]{hyperref}
\fi




\topmargin -.8cm \textheight 22.8cm \oddsidemargin 0cm \evensidemargin -0cm \textwidth 16.3cm


\begin{document}
\newcommand {\emptycomment}[1]{} 

\newcommand{\nc}{\newcommand}
\newcommand{\delete}[1]{}
\nc{\mfootnote}[1]{\footnote{#1}} 
\nc{\todo}[1]{\tred{To do:} #1}

\delete{
\nc{\mlabel}[1]{\label{#1}}  
\nc{\mcite}[1]{\cite{#1}}  
\nc{\mref}[1]{\ref{#1}}  
\nc{\meqref}[1]{\eqref{#1}} 
\nc{\mbibitem}[1]{\bibitem{#1}} 
}

\nc{\mlabel}[1]{\label{#1}  
{\hfill \hspace{1cm}{\bf{{\ }\hfill(#1)}}}}
\nc{\mcite}[1]{\cite{#1}{{\bf{{\ }(#1)}}}}  
\nc{\mref}[1]{\ref{#1}{{\bf{{\ }(#1)}}}}  
\nc{\meqref}[1]{\eqref{#1}{{\bf{{\ }(#1)}}}} 
\nc{\mbibitem}[1]{\bibitem[\bf #1]{#1}} 

\newtheorem{thm}{Theorem}[section]
\newtheorem{lem}[thm]{Lemma}
\newtheorem{cor}[thm]{Corollary}
\newtheorem{pro}[thm]{Proposition}
\newtheorem{conj}[thm]{Conjecture}
\theoremstyle{definition}
\newtheorem{defi}[thm]{Definition}
\newtheorem{ex}[thm]{Example}
\newtheorem{rmk}[thm]{Remark}
\newtheorem{pdef}[thm]{Proposition-Definition}
\newtheorem{condition}[thm]{Condition}

\renewcommand{\labelenumi}{{\rm(\alph{enumi})}}
\renewcommand{\theenumi}{\alph{enumi}}

\nc{\tred}[1]{\textcolor{red}{#1}}
\nc{\tblue}[1]{\textcolor{blue}{#1}}
\nc{\tgreen}[1]{\textcolor{green}{#1}}
\nc{\tpurple}[1]{\textcolor{purple}{#1}}
\nc{\btred}[1]{\textcolor{red}{\bf #1}}
\nc{\btblue}[1]{\textcolor{blue}{\bf #1}}
\nc{\btgreen}[1]{\textcolor{green}{\bf #1}}
\nc{\btpurple}[1]{\textcolor{purple}{\bf #1}}

\nc{\rp}[1]{\textcolor{blue}{Ruipu:#1}}
\nc{\cm}[1]{\textcolor{red}{Chengming:#1}}
\nc{\li}[1]{\textcolor{blue}{#1}}
\nc{\lir}[1]{\textcolor{blue}{Li:#1}}


\nc{\twovec}[2]{\left(\begin{array}{c} #1 \\ #2\end{array} \right )}
\nc{\threevec}[3]{\left(\begin{array}{c} #1 \\ #2 \\ #3 \end{array}\right )}
\nc{\twomatrix}[4]{\left(\begin{array}{cc} #1 & #2\\ #3 & #4 \end{array} \right)}
\nc{\threematrix}[9]{{\left(\begin{matrix} #1 & #2 & #3\\ #4 & #5 & #6 \\ #7 & #8 & #9 \end{matrix} \right)}}
\nc{\twodet}[4]{\left|\begin{array}{cc} #1 & #2\\ #3 & #4 \end{array} \right|}

\nc{\rk}{\mathrm{r}}
\newcommand{\g}{\mathfrak g}
\newcommand{\h}{\mathfrak h}
\newcommand{\pf}{\noindent{$Proof$.}\ }
\newcommand{\frkg}{\mathfrak g}
\newcommand{\frkh}{\mathfrak h}
\newcommand{\Id}{\rm{Id}}
\newcommand{\gl}{\mathfrak {gl}}
\newcommand{\ad}{\mathrm{ad}}
\newcommand{\add}{\frka\frkd}
\newcommand{\frka}{\mathfrak a}
\newcommand{\frkb}{\mathfrak b}
\newcommand{\frkc}{\mathfrak c}
\newcommand{\frkd}{\mathfrak d}
\newcommand {\comment}[1]{{\marginpar{*}\scriptsize\textbf{Comments:} #1}}

\nc{\tforall}{\text{ for all }}

\nc{\svec}[2]{{\tiny\left(\begin{matrix}#1\\
#2\end{matrix}\right)\,}}  
\nc{\ssvec}[2]{{\tiny\left(\begin{matrix}#1\\
#2\end{matrix}\right)\,}} 

\nc{\typeI}{local cocycle $3$-Lie bialgebra\xspace}
\nc{\typeIs}{local cocycle $3$-Lie bialgebras\xspace}
\nc{\typeII}{double construction $3$-Lie bialgebra\xspace}
\nc{\typeIIs}{double construction $3$-Lie bialgebras\xspace}

\nc{\bia}{{$\mathcal{P}$-bimodule ${\bf k}$-algebra}\xspace}
\nc{\bias}{{$\mathcal{P}$-bimodule ${\bf k}$-algebras}\xspace}

\nc{\rmi}{{\mathrm{I}}}
\nc{\rmii}{{\mathrm{II}}}
\nc{\rmiii}{{\mathrm{III}}}
\nc{\pr}{{\mathrm{pr}}}
\newcommand{\huaA}{\mathcal{A}}

\nc{\mcdot}{{}}

\nc{\OT}{constant $\theta$-}
\nc{\T}{$\theta$-}
\nc{\IT}{inverse $\theta$-}


\nc{\asi}{ASI\xspace}
\nc{\dualp}{transposed Poisson\xspace}
\nc{\Dualp}{Transposed Poisson\xspace}
\nc{\dualpop}{{\bf TPois}\xspace}
\nc{\ldualp}{derivation-transposed Poisson\xspace}

\nc{\spdualp}{sp-dual Poisson \xspace} \nc{\aybe}{AYBE\xspace}

\nc{\admset}{\{\pm x\}\cup K^{\times} x^{-1}}

\nc{\dualrep}{gives a dual representation\xspace}
\nc{\admt}{admissible to\xspace}

\nc{\ciri}{\circ_{\rm I}}
\nc{\cirii}{\circ_{\rm II}}
\nc{\ciriii}{\circ_{\rm III}}

\nc{\opa}{\cdot_A}
\nc{\opb}{\cdot_B}

\nc{\post}{positive type\xspace}
\nc{\negt}{negative type\xspace}
\nc{\invt}{inverse type\xspace}

\nc{\pll}{\beta}
\nc{\plc}{\epsilon}

\nc{\ass}{{\mathit{Ass}}}
\nc{\comm}{{\mathit{Comm}}}
\nc{\dend}{{\mathit{Dend}}}
\nc{\zinb}{{\mathit{Zinb}}}
\nc{\tdend}{{\mathit{TDend}}}
\nc{\prelie}{{\mathit{preLie}}}
\nc{\postlie}{{\mathit{PostLie}}}
\nc{\quado}{{\mathit{Quad}}}
\nc{\octo}{{\mathit{Octo}}}
\nc{\ldend}{{\mathit{ldend}}}
\nc{\lquad}{{\mathit{LQuad}}}

 \nc{\adec}{\check{;}} \nc{\aop}{\alpha}
\nc{\dftimes}{\widetilde{\otimes}} \nc{\dfl}{\succ} \nc{\dfr}{\prec}
\nc{\dfc}{\circ} \nc{\dfb}{\bullet} \nc{\dft}{\star}
\nc{\dfcf}{{\mathbf k}} \nc{\apr}{\ast} \nc{\spr}{\cdot}
\nc{\twopr}{\circ} \nc{\tspr}{\star} \nc{\sempr}{\ast}
\nc{\disp}[1]{\displaystyle{#1}}
\nc{\bin}[2]{ (_{\stackrel{\scs{#1}}{\scs{#2}}})}  
\nc{\binc}[2]{ \left (\!\! \begin{array}{c} \scs{#1}\\
    \scs{#2} \end{array}\!\! \right )}  
\nc{\bincc}[2]{  \left ( {\scs{#1} \atop
    \vspace{-.5cm}\scs{#2}} \right )}  
\nc{\sarray}[2]{\begin{array}{c}#1 \vspace{.1cm}\\ \hline
    \vspace{-.35cm} \\ #2 \end{array}}
\nc{\bs}{\bar{S}} \nc{\dcup}{\stackrel{\bullet}{\cup}}
\nc{\dbigcup}{\stackrel{\bullet}{\bigcup}} \nc{\etree}{\big |}
\nc{\la}{\longrightarrow} \nc{\fe}{\'{e}} \nc{\rar}{\rightarrow}
\nc{\dar}{\downarrow} \nc{\dap}[1]{\downarrow
\rlap{$\scriptstyle{#1}$}} \nc{\uap}[1]{\uparrow
\rlap{$\scriptstyle{#1}$}} \nc{\defeq}{\stackrel{\rm def}{=}}
\nc{\dis}[1]{\displaystyle{#1}} \nc{\dotcup}{\,
\displaystyle{\bigcup^\bullet}\ } \nc{\sdotcup}{\tiny{
\displaystyle{\bigcup^\bullet}\ }} \nc{\hcm}{\ \hat{,}\ }
\nc{\hcirc}{\hat{\circ}} \nc{\hts}{\hat{\shpr}}
\nc{\lts}{\stackrel{\leftarrow}{\shpr}}
\nc{\rts}{\stackrel{\rightarrow}{\shpr}} \nc{\lleft}{[}
\nc{\lright}{]} \nc{\uni}[1]{\tilde{#1}} \nc{\wor}[1]{\check{#1}}
\nc{\free}[1]{\bar{#1}} \nc{\den}[1]{\check{#1}} \nc{\lrpa}{\wr}
\nc{\curlyl}{\left \{ \begin{array}{c} {} \\ {} \end{array}
    \right .  \!\!\!\!\!\!\!}
\nc{\curlyr}{ \!\!\!\!\!\!\!
    \left . \begin{array}{c} {} \\ {} \end{array}
    \right \} }
\nc{\leaf}{\ell}       
\nc{\longmid}{\left | \begin{array}{c} {} \\ {} \end{array}
    \right . \!\!\!\!\!\!\!}
\nc{\ot}{\otimes} \nc{\sot}{{\scriptstyle{\ot}}}
\nc{\otm}{\overline{\ot}}
\nc{\ora}[1]{\stackrel{#1}{\rar}}
\nc{\ola}[1]{\stackrel{#1}{\la}}
\nc{\pltree}{\calt^\pl}
\nc{\epltree}{\calt^{\pl,\NC}}
\nc{\rbpltree}{\calt^r}
\nc{\scs}[1]{\scriptstyle{#1}} \nc{\mrm}[1]{{\rm #1}}
\nc{\dirlim}{\displaystyle{\lim_{\longrightarrow}}\,}
\nc{\invlim}{\displaystyle{\lim_{\longleftarrow}}\,}
\nc{\mvp}{\vspace{0.5cm}} \nc{\svp}{\vspace{2cm}}
\nc{\vp}{\vspace{8cm}} \nc{\proofbegin}{\noindent{\bf Proof: }}
\nc{\proofend}{$\blacksquare$ \vspace{0.5cm}}
\nc{\freerbpl}{{F^{\mathrm RBPL}}}
\nc{\sha}{{\mbox{\cyr X}}}  
\nc{\ncsha}{{\mbox{\cyr X}^{\mathrm NC}}} \nc{\ncshao}{{\mbox{\cyr
X}^{\mathrm NC,\,0}}}
\nc{\shpr}{\diamond}    
\nc{\shprm}{\overline{\diamond}}    
\nc{\shpro}{\diamond^0}    
\nc{\shprr}{\diamond^r}     
\nc{\shpra}{\overline{\diamond}^r}
\nc{\shpru}{\check{\diamond}} \nc{\catpr}{\diamond_l}
\nc{\rcatpr}{\diamond_r} \nc{\lapr}{\diamond_a}
\nc{\sqcupm}{\ot}
\nc{\lepr}{\diamond_e} \nc{\vep}{\varepsilon} \nc{\labs}{\mid\!}
\nc{\rabs}{\!\mid} \nc{\hsha}{\widehat{\sha}}
\nc{\lsha}{\stackrel{\leftarrow}{\sha}}
\nc{\rsha}{\stackrel{\rightarrow}{\sha}} \nc{\lc}{\lfloor}
\nc{\rc}{\rfloor}
\nc{\tpr}{\sqcup}
\nc{\nctpr}{\vee}
\nc{\plpr}{\star}
\nc{\rbplpr}{\bar{\plpr}}
\nc{\sqmon}[1]{\langle #1\rangle}
\nc{\forest}{\calf}
\nc{\altx}{\Lambda_X} \nc{\vecT}{\vec{T}} \nc{\onetree}{\bullet}
\nc{\Ao}{\check{A}}
\nc{\seta}{\underline{\Ao}}
\nc{\deltaa}{\overline{\delta}}
\nc{\trho}{\tilde{\rho}}

\nc{\rpr}{\circ}
\nc{\dpr}{{\tiny\diamond}}
\nc{\rprpm}{{\rpr}}

\nc{\mmbox}[1]{\mbox{\ #1\ }} \nc{\ann}{\mrm{ann}}
\nc{\Aut}{\mrm{Aut}} \nc{\can}{\mrm{can}}
\nc{\twoalg}{{two-sided algebra}\xspace}
\nc{\colim}{\mrm{colim}}
\nc{\Cont}{\mrm{Cont}} \nc{\rchar}{\mrm{char}}
\nc{\cok}{\mrm{coker}} \nc{\dtf}{{R-{\rm tf}}} \nc{\dtor}{{R-{\rm
tor}}}
\renewcommand{\det}{\mrm{det}}
\nc{\depth}{{\mrm d}}
\nc{\Div}{{\mrm Div}} \nc{\End}{\mrm{End}} \nc{\Ext}{\mrm{Ext}}
\nc{\Fil}{\mrm{Fil}} \nc{\Frob}{\mrm{Frob}} \nc{\Gal}{\mrm{Gal}}
\nc{\GL}{\mrm{GL}} \nc{\Hom}{\mrm{Hom}} \nc{\hsr}{\mrm{H}}
\nc{\hpol}{\mrm{HP}} \nc{\id}{\mrm{id}} \nc{\im}{\mrm{im}}
\nc{\incl}{\mrm{incl}} \nc{\length}{\mrm{length}}
\nc{\LR}{\mrm{LR}} \nc{\mchar}{\rm char} \nc{\NC}{\mrm{NC}}
\nc{\mpart}{\mrm{part}} \nc{\pl}{\mrm{PL}}
\nc{\ql}{{\QQ_\ell}} \nc{\qp}{{\QQ_p}}
\nc{\rank}{\mrm{rank}} \nc{\rba}{\rm{RBA }} \nc{\rbas}{\rm{RBAs }}
\nc{\rbpl}{\mrm{RBPL}}
\nc{\rbw}{\rm{RBW }} \nc{\rbws}{\rm{RBWs }} \nc{\rcot}{\mrm{cot}}
\nc{\rest}{\rm{controlled}\xspace}
\nc{\rdef}{\mrm{def}} \nc{\rdiv}{{\rm div}} \nc{\rtf}{{\rm tf}}
\nc{\rtor}{{\rm tor}} \nc{\res}{\mrm{res}} \nc{\SL}{\mrm{SL}}
\nc{\Spec}{\mrm{Spec}} \nc{\tor}{\mrm{tor}} \nc{\Tr}{\mrm{Tr}}
\nc{\mtr}{\mrm{sk}}

\nc{\ab}{\mathbf{Ab}} \nc{\Alg}{\mathbf{Alg}}
\nc{\Algo}{\mathbf{Alg}^0} \nc{\Bax}{\mathbf{Bax}}
\nc{\Baxo}{\mathbf{Bax}^0} \nc{\RB}{\mathbf{RB}}
\nc{\RBo}{\mathbf{RB}^0} \nc{\BRB}{\mathbf{RB}}
\nc{\Dend}{\mathbf{DD}} \nc{\bfk}{{K}} \nc{\bfone}{{\bf 1}}
\nc{\base}[1]{{a_{#1}}} \nc{\detail}{\marginpar{\bf More detail}
    \noindent{\bf Need more detail!}
    \svp}
\nc{\Diff}{\mathbf{Diff}} \nc{\gap}{\marginpar{\bf
Incomplete}\noindent{\bf Incomplete!!}
    \svp}
\nc{\FMod}{\mathbf{FMod}} \nc{\mset}{\mathbf{MSet}}
\nc{\rb}{\mathrm{RB}} \nc{\Int}{\mathbf{Int}}
\nc{\Mon}{\mathbf{Mon}}
\nc{\remarks}{\noindent{\bf Remarks: }}
\nc{\OS}{\mathbf{OS}} 
\nc{\Rep}{\mathbf{Rep}}
\nc{\Rings}{\mathbf{Rings}} \nc{\Sets}{\mathbf{Sets}}
\nc{\DT}{\mathbf{DT}}

\nc{\BA}{{\mathbb A}} \nc{\CC}{{\mathbb C}} \nc{\DD}{{\mathbb D}}
\nc{\EE}{{\mathbb E}} \nc{\FF}{{\mathbb F}} \nc{\GG}{{\mathbb G}}
\nc{\HH}{{\mathbb H}} \nc{\LL}{{\mathbb L}} \nc{\NN}{{\mathbb N}}
\nc{\QQ}{{\mathbb Q}} \nc{\RR}{{\mathbb R}} \nc{\BS}{{\mathbb{S}}} \nc{\TT}{{\mathbb T}}
\nc{\VV}{{\mathbb V}} \nc{\ZZ}{{\mathbb Z}}


\nc{\calao}{{\mathcal A}} \nc{\cala}{{\mathcal A}}
\nc{\calc}{{\mathcal C}} \nc{\cald}{{\mathcal D}}
\nc{\cale}{{\mathcal E}} \nc{\calf}{{\mathcal F}}
\nc{\calfr}{{{\mathcal F}^{\,r}}} \nc{\calfo}{{\mathcal F}^0}
\nc{\calfro}{{\mathcal F}^{\,r,0}} \nc{\oF}{\overline{F}}
\nc{\calg}{{\mathcal G}} \nc{\calh}{{\mathcal H}}
\nc{\cali}{{\mathcal I}} \nc{\calj}{{\mathcal J}}
\nc{\call}{{\mathcal L}} \nc{\calm}{{\mathcal M}}
\nc{\caln}{{\mathcal N}} \nc{\calo}{{\mathcal O}}
\nc{\calp}{{\mathcal P}} \nc{\calq}{{\mathcal Q}} \nc{\calr}{{\mathcal R}}
\nc{\calt}{{\mathcal T}} \nc{\caltr}{{\mathcal T}^{\,r}}
\nc{\calu}{{\mathcal U}} \nc{\calv}{{\mathcal V}}
\nc{\calw}{{\mathcal W}} \nc{\calx}{{\mathcal X}}
\nc{\CA}{\mathcal{A}}

\nc{\fraka}{{\mathfrak a}} \nc{\frakB}{{\mathfrak B}}
\nc{\frakb}{{\mathfrak b}} \nc{\frakd}{{\mathfrak d}}
\nc{\oD}{\overline{D}}
\nc{\frakF}{{\mathfrak F}} \nc{\frakg}{{\mathfrak g}}
\nc{\frakm}{{\mathfrak m}} \nc{\frakM}{{\mathfrak M}}
\nc{\frakMo}{{\mathfrak M}^0} \nc{\frakp}{{\mathfrak p}}
\nc{\frakS}{{\mathfrak S}} \nc{\frakSo}{{\mathfrak S}^0}
\nc{\fraks}{{\mathfrak s}} \nc{\os}{\overline{\fraks}}
\nc{\frakT}{{\mathfrak T}}
\nc{\oT}{\overline{T}}
\nc{\frakX}{{\mathfrak X}} \nc{\frakXo}{{\mathfrak X}^0}
\nc{\frakx}{{\mathbf x}}
\nc{\frakTx}{\frakT}      
\nc{\frakTa}{\frakT^a}        
\nc{\frakTxo}{\frakTx^0}   
\nc{\caltao}{\calt^{a,0}}   
\nc{\ox}{\overline{\frakx}} \nc{\fraky}{{\mathfrak y}}
\nc{\frakz}{{\mathfrak z}} \nc{\oX}{\overline{X}}

\font\cyr=wncyr10

\nc{\al}{\alpha}
\nc{\lam}{\lambda}
\nc{\lr}{\longrightarrow}


\title[Dual pre-Poisson  algebras]{Properties, deformation quantizations and bialgebras for dual pre-Poisson algebras}

\author{Dilei Lu}
\address{College of Applied Science, Beijing Information Science and Technology University, Beijing 100192, China}
         \email{ludyray@bistu.edu.cn}

\date{\today}

\begin{abstract}
A dual pre-Poisson algebra is an algebraic structure that integrates a permutative algebra and a Leibniz algebra under certain compatibility conditions. As the Koszul dual notion of the pre-Poisson algebra, this structure serves as a natural generalization of the Poisson algebra. In this paper, we commence a study on dual pre-Poisson algebras from the algebraic point of view and establish a bialgebra theory for dual pre-Poisson algebras. We begin by investigating the fundamental properties of dual pre-Poisson algebras and provide several explicit constructions. In particular, we prove that the operad of dual pre-Poisson algebras is Koszul, and we compute its Hilbert-Poincaré series and codimension. Furthermore, we introduce the notion of diassociative formal deformations of permutative
algebras and show that dual pre-Poisson algebras are the corresponding semi-classical limits.  Moreover, we introduce dual pre-Poisson bialgebras, which are characterized both by Manin triples and by matched pairs of dual pre-Poisson algebras, thus developing a bialgebra theory for dual pre-Poisson algebras. Then our study leads to the permutative-Leibniz Yang-Baxter equation (PLYBE) that is composed of the permutative  and the classical Leibniz Yang-Baxter equation. We conclude by introducing $\mathcal{O}$-operators and pre-dual pre-Poisson algebras, which provide a systematic method for constructing symmetric solutions to the PLYBE and, consequently, dual pre-Poisson bialgebras.


\end{abstract}

\subjclass[2020]{
16S37,  	
16T10,  
17A30  	
17A32,  
17B38,  
17B63,  
18M70,  
53D55.  
}

\keywords{Leibniz algebra; permutative
algebra; Poisson algebra; dual pre-Poisson algebra; Koszul duality;  deformation quantization; bialgebras; Yang-Baxter equation; $\mathcal{O}$-operator}

\maketitle


\tableofcontents

\allowdisplaybreaks

\section{Introduction}

This paper studies dual pre-Poisson algebras, focusing on their properties, constructions, and deformation quantizations, and establishes a corresponding bialgebra theory. Futhermore, we   utilize analogs of the classical Yang-Baxter equation,  $\mathcal{O}$-operators  and pre-structures to give dual pre-Poisson bialgebras.

\subsection{Poisson algebras and pre-Poisson algebras} 
Poisson algebras, which combine the structures of a commutative associative algebra and a Lie algebra linked by the Leibniz rule, arose from the study of Hamiltonian mechanics \cite{AVI,DPAM,OA} and Poisson geometry \cite{LA1,VI,WA}. They play  important roles in many fields in mathematics and mathematical physics, such as  geometric quantization \cite{HJ,KM}, algebraic geometry \cite{GVD,PA}, quantum groups \cite{CV,DVG}, algebraic operads \cite{FB,GMRE,MR} and partial differential equations \cite{CM,SAVKDV}. Poisson algebras are also known as the classical limits of associative formal deformations of commutative associative algebras. Therefore, such associative formal deformations are regarded as  the deformation quantizations of the corresponding Poisson algebras \cite{MR}. In operad theory, the operad of Poisson algebras is quadratic binary,  Koszul, cyclic and Dong  \cite{GK,KPS,MM}. Moreover, it enjoys the remarkable property of being Koszul self-dual \cite{LB}. Building on this algebraic foundation, the bialgebra theory for Poisson algebras was established in   \cite{NB}. This study employs the notion of a pre-Poisson algebra, which was first proposed in \cite{MA}, and led to the construction of coboundary Poisson bialgebras \cite{LLB,LBS,NB}.

A pre-Poisson algebra is defined as a compatible combination of a pre-Lie algebra and a Zinbiel algebra, which induces its sub-adjacent Poisson algebra through the commutator of the pre-Lie algebra and the anti-commutator of the Zinbiel algebra \cite{MA}. It is also known that a Rota-Baxter operator on a Poisson algebra can produce a pre-Poisson algebra \cite{MA,BBG}. Moreover,  the pre-Poisson algebra is also a particular case of the Poisson dialgebra introduced by Loday in \cite{L4} and it is  the semi-classical limit of dendriform formal deformation of a zinbiel algebra \cite{MA}.  At the operadic level, it was shown that the operad of pre-Poisson algebras, as a binary quadratic operad, is isomorphic to the Manin black product of the operad of pre-Lie algebras and the operad of Poisson algebras \cite{BBG,UK}. Futhermore, the operad of pre-Poisson algebras is not only the disuccessor of the operad of Poisson algebras \cite{BBG}, but has also been proven to be Koszul by Dotsenko \cite{DT}, thus sharing fundamental  operadic properties with each other. Recent years have witnessed a surge of interest in pre-Poisson algebras.  The extending structures and bialgebra theory for pre-Poisson algebras were studied in \cite{ZLS}, while the notion of a phase space for a Poisson algebra and its relation to pre-Poisson algebras were introduced and discussed in \cite{WS}.

\subsection{Dual pre-Poisson algebras}
The notion of a dual pre-Poisson algebra was first given by Aguiar in \cite{MA}, which was introduced as the Koszul duality of the pre-Poisson algebra in the sense of algebraic operads \cite{UK}.  It contains a permutative algebra and a Leibniz algebra such that some compatibility conditions are satisfied. As  \cite{MA} indicate that there is a natural construction of dual pre-Poisson algebras  from  Poisson algebras equipped with  average operators. In \cite{JBGN}, it was proved that the operad of dual pre-Poisson algebras is the duplicator of the operad of Poisson algebras. Furthermore,   the operad of dual pre-Poisson algebras, being quadratic binary, is isomorphic to the  Manin white product of the operad of permutative algebras and the operad of Poisson algebras \cite{JBGN,UK}.    Some of the more recent progresses on dual pre-Poisson algebras can be found in \cite{GVPK,LSB,LMS,UK1,WS}.

In this paper, we investigate the properties of dual pre-Poisson algebras, proving that their operad is Koszul and computing the corresponding Hilbert-Poincaré series and codimension (Proposition \ref{Koszulfordpp}). Their importance is demonstrated in two key aspects. On one hand, dual pre-Poisson algebras exhibit close connections to other algebraic structures such as Poisson algebras and pre-Poisson algebras. Note that any Poisson algebra can be viewed as a special dual pre-Poisson algebra (Example \ref{dwdpp} (1)). Especially, Poisson algebras owe their importance crucially to their closure under the tensor product, overcoming the lack of such a property for Lie algebras. The dual pre-Poisson algebra also has this property (Proposition \ref{cfortdppa}). Then we generalize the typical construction of a Poisson algebra  from a commutative associative algebra  equipped with a pair of commuting derivations  to the context of dual pre-Poisson algebras, that is, there is a   construction of a dual pre-Poisson algebra by a pair of commuting derivation  on a permutative algebra (Proposition \ref{sdztodpp}). Moreover, dual pre-Poisson algebras can be obtained either from the tensor product of permutative and Poisson algebras (Proposition \ref{ifdualpp}), or from representations of Poisson algebras (Proposition \ref{dppfromHRofP}).
Specifically, we show that dual pre-Poisson algebras are  the  semi-classical limits of diassociative formal deformations of permutative algebras (Theorem \ref{dppofdeform}), generalizing the well-known fact that Poisson algebras are characterized as the  semi-classical limits of associative formal deformations of commutative associative algebras.

 It is   known that the operads of permutative algebras and pre-Lie algebras are  Koszul duals of each other \cite{CL}, as are the operads of Leibniz algebras and Zinbiel algebras \cite{L0}. Hence, we obtain the following diagram illustrating the relationships among Poisson algebras,  pre-Poisson algebras  and dual pre-Poisson algebras.
\begin{equation*}    
\xymatrix@C=1.5cm@R=0.8cm{ 
\text{Leibniz algebra}    \ar[r]^{\in\hspace{0.4cm}}&   \txt{dual pre-Poisson \\algebra}  &   \text{permutative algebra}\ar@{-->}[l]_{\tiny{\txt{deformation}}} \\    
\text{Lie algebra}  \ar[r]^-{\in} \ar@<.5ex>[u]^{\tiny{\txt{average \\ operator}}} \ar@<.2ex>[d]^{\tiny{\txt{Rota-Baxter \\ operator}}}  &   \text{Poisson algebra}   \ar@<.2ex>[d]^{\tiny{\txt{Rota-Baxter \\ operator}}}  \ar@<.5ex>[u]^-{\tiny{\txt{average \\ operator}}}     &   \ar[l]_{\tiny{\txt{deformation}\hspace{0.6cm}}} \txt{commutative associative \\algebra} \ar@<.5ex>[u]^{\tiny{\txt{average \\ operator}}} \ar@<.2ex>[d]^-{\tiny{\txt{Rota-Baxter \\ operator}}}     \\    
\text{pre-Lie algebra} \ar[r]^{\in\quad} \ar@<1ex>[u]^{\text{sub-adjacent}} &   \text{pre-Poisson algebra}   \ar@<1ex>[u]^{\text{sub-adjacent}} &  \ar[l]_-{\tiny{\txt{deformation}}} \text{Zinbiel  algebra.}\ar@<1ex>[u]^-{\text{sub-adjacent}}  }
\end{equation*}
From an algebraic perspective, the algebras in the first row can be obtained from those in the second row by applying an averaging operator, whereas the algebras in the third row are constructed from the second-row algebras by employing a Rota-Baxter operator \cite{MA,BBG,PBG,JBGN}. At the levels of  algebraic operads,  the Koszul duality  in the diagram is given by a central symmetry.   Furthermore, the operads of the algebras in the first and third rows are   the duplicator and the disuccessor, respectively, of the operads of those in the middle row \cite{BBG,JBGN}. More importantly,  the operads of  all the algebras in the diagram are Koszul \cite{BGL,Gik}.

On the other hand, dual pre-Poisson algebras serve as an effective framework for constructing compatible pre-Lie and Lie algebras, offering an alternative to the classical Poisson algebra setting. This utility is underpinned by a general fact that there exists a  Lie algebra structure on the tensor product of an algebra over a binary quadratic operad and an algebra over its Koszul dual \cite{Gik,L0}. Moreover, under the hypothesis specified in \cite[Proposition 13.9.1]{L0}, the tensor product construction of  the Lie algebra   is just the sub-adjacent Lie algebra  of  a pre-Lie  algebra. We construct two distinct pre-Lie algebra structures on the tensor product of a pre-Poisson algebra and a dual pre-Poisson algebra. The first arises from the Koszul duality between the operads of pre-Lie and permutative algebras, while the second originates from the Koszul duality between the operads of Leibniz and Zinbiel algebras. Furthermore, we prove that these two distinct pre-Lie algebra structures are compatible, meaning that any linear combination of their products again defines a pre-Lie algebra (Proposition \ref{ComFdualpp}). In particular, this yields a compatible Lie algebra. Such process can be understood as applying Koszul duality to a dual pre-Poisson algebra to produce a compatible Lie algebra.

\subsection{A bialgebra theory for dual pre-Poisson algebras}

A bialgebra is an algebraic structure equipped with a comultiplication, imposing compatibility conditions with the existing multiplication. These structures are pivotal in numerous fields, linking diverse concepts in both mathematics and physics. Due to the significance of bialgebra structures across mathematics and physics \cite{B1,BAX,CV,Dr}, developing a bialgebra theory for dual pre-Poisson algebras is a natural and motivated undertaking.  Note that except for the dual pre-Poisson algebra, all algebraic structures shown in the previous diagram have  established bialgebra theories, such as Leibniz algebras \cite{TS},  permutative algebras \cite{BYZ},  Lie algebras \cite{CV,Dr}, commutative associative algebras \cite{B2}, Poisson algebras \cite{NB}, pre-Lie algebras \cite{B1},   Zinbiel algebras \cite{WNB}, pre-Poisson algebras \cite{WS,ZLS}.

Motivated by the absence of a bialgebra theory for dual pre-Poisson algebras, this paper establishes such a theory independently and systematically.  Explicitly, we introduce the notion of a dual pre-Poisson bialgebra, which is defined as a structure that combines a permutative bialgebra and a Leibniz bialgebra in a compatible way. It is equivalent to a Manin triple associated to a nondegenerate skew-symmetric invariant bilinear form  and also corresponds to a certain matched pair of dual pre-Poisson algebras (Theorem \ref{sandengjia}). Moreover, the study of dual pre-Poisson bialgebras naturally leads to the permutative-Leibniz Yang-Baxter equation (PLYBE), an equation combining the permutative and the classical Leibniz Yang-Baxter equations \cite{BYZ,TS}. We show that a symmetric solution to the PLYBE yields such a bialgebra (Theorem \ref{tcdppab:1}). Furthermore, by introducing the notions of an $\mathcal{O}$-operator and a pre-dual pre-Poisson algebra, we provide explicit methods to construct these symmetric solutions and, consequently, dual pre-Poisson bialgebras (Corollary \ref{pretoBialgebra}). We summarize these results in the following diagram:

\begin{small}
    \begin{equation*}
\begin{gathered}
    \xymatrix@C=1.2cm@R=0.7cm{
  \txt{dual pre-Poisson \\algebras }\ar@{<-}[rr]^{\mathrm{Prop.\;}\ref{djdandm}\quad} \ar@<.2ex>[d]^-{\tiny{\txt{Rota-Baxter \\ operator}}}     &&\txt{matched pairs of \\dual pre-Poisson \\algebras } \ar@{<->}[dr]^-{\mathrm{Prop.\;}{\ref{mpdppba}}}\ar@{<->}[r]^-{\mathrm{Prop.\;}{\ref{mpandmtofdppa}}}&\txt{ Manin triples of \\dual pre-Poisson \\algebras }\ar@{<->}[d]^-{\mathrm{Thm.\;}{\ref{sandengjia}}}\\
		\txt{pre-dual\\ pre-Poisson \\algebras } \ar@{->}[r]^{\mathrm{Thm.\;}\ref{iogpdpp:0}\quad} \ar@<1ex>[u]^{\text{sub-adjacent}}  &
		\txt{$\mathcal{O}$-operators of \\dual pre-Poisson \\algebras } \ar@{->}[r]^{ \mathrm{Thm.\;}\ref{tcdppab:3}}   \ar@{->}[l]<0.7ex>^{\mathrm{Prop.\;}{\ref{OonVector}}\quad} &
		\txt{ symmetric \\ solutions of the \\ PLYBE}   \ar@{->}[l]<0.7ex>^{\;\mathrm{Prop.\;}{\ref{wssqof:0}}}   \ar@{->}[r]^{\mathrm{Thm.\;}{\ref{tcdppab:1}}\;\;}&\txt{dual pre-Poisson\\ bialgebras} 
 }
\end{gathered}
\end{equation*}
\end{small}

The paper is organized as follows. In Section \ref{Property},   we exhibit fundamental properties of dual pre-Poisson algebras and provide several explicit constructions. In particular, we consider the operadic properties of dual pre-Poisson algebras. Then we apply dual pre-Poisson algebras to construct compatible  pre-Lie algebras and compatible  Lie algebras via Koszul duality.   With introducing  the diassociative formal deformations of permutative algebras, we show that dual pre-Poisson algebras are the semi-classical limits of the diassociative formal deformations of permutative algebras. Finally, we introduce the representations of dual pre-Poisson algebras and their invariant bilinear forms. Subsequently, we define the notion of a quadratic dual pre-Poisson algebra that amalgamates a quadratic permutative algebra with a quadratic Leibniz algebra. In Section \ref{Bialgebra},  we introduce the notions of matched pairs, Manin triples of dual pre-Poisson algebras and dual pre-Poisson bialgebras, and establish their equivalence. Furthermore, we introduce the permutative-Leibniz Yang-Baxter equation (PLYBE) and show that its symmetric solutions give a   class of dual pre-Poisson bialgebras.  To construct such solutions, we finally introduce the concepts of $\mathcal{O}$-operators and pre-dual pre-Poisson algebras.

 Unless otherwise specified,  we work with finite-dimensional vector spaces and algebras over a field $\mathbb{F}$ of characteristic $0$ throughout this paper, although many results and definitions remain valid in the infinite-dimensional case.
\begin{enumerate}
\item For a vector space $V$,
let
$$\tau:V\otimes V\rightarrow V\otimes V,\quad u\otimes v\mapsto v\otimes u,\;\;\; \forall u,v\in V
$$
be the flip operator. 
\item  Let $A$ be a vector space with a bilinear
operation $\diamond$. Define linear maps $L_{\diamond},
R_{\diamond}:A\rightarrow {\rm End}(A)$ respectively by
\begin{eqnarray*}
L_{\diamond}(x)y:=x\diamond y,\;\; R_{\diamond}(x)y:= y \diamond x, \;\;\; \forall x, y\in A.
\end{eqnarray*}
 If there is a bilinear operation $\odot$ on the dual space $A^*$, we denote the linear maps $\mathcal{L}_{\odot},
\mathcal{R}_{\odot}:A^*\rightarrow {\rm End}(A^*)$ respectively by
 \begin{eqnarray*}
\mathcal{L}_{\odot}(a^*)b^*:= a^* \odot b^*,\;\; \mathcal{R}_{\odot}(a^*)b^*:= b^* \odot a^*, \;\;\;\forall a^*,b^*\in A^*.
\end{eqnarray*}

\item  Let  $A$  and  $V$  be vector spaces. For a linear map  $f: A \rightarrow {\rm End}(V)$, define a linear map  $f^{*}: A \rightarrow {\rm End}(V^*)$  by 
\begin{align}
\left\langle f^{*}(x) v^{*}, u\right\rangle=-\left\langle v^{*}, f(x) u\right\rangle, \quad \forall x \in A, u \in V, v^{*} \in V^{*}, \label{dualmap}
\end{align}
where $\left\langle \cdot,\cdot\right\rangle$ is the standard  pair between $V$ and $V^*$.  
\end{enumerate}

\section{Dual pre-Poisson algebras}\label{Property}

In this section, we begin by presenting both the fundamental and the operadic properties of dual pre-Poisson algebras. Then we provide several explicit constructions, which include  applying averaging operators to Poisson algebras, taking tensor products of permutative algebras with Poisson algebras, using representations of Poisson algebras, and employing pairs of commuting derivations on permutative algebras. Furthermore we utilize dual pre-Poisson algebras, through the framework of Koszul duality, to construct compatible Lie algebras. Moreover, we introduce the notion of diassociative  formal deformations of permutative algebras   and show that dual pre-Poisson algebras are the corresponding semi-classical limits.  Finally, by introducing the concept of an invariant bilinear form on a dual pre-Poisson algebra, we define a quadratic dual pre-Poisson algebra as a combination of a quadratic permutative algebra and a quadratic Leibniz algebra.

\subsection{Properties and constructions   of dual pre-Poisson algebras}  We  first  introduce the notion of dual pre-Poisson algebras and exhibit their properties in several aspects.

\begin{defi}\cite{MA}\label{def1}
A   \textbf{dual pre-Poisson algebra} is a triple $(A, \circ,[\cdot,\cdot])$ where $A$ is a vector space with two bilinear operations $\circ,[\cdot,\cdot]$ such that
\begin{enumerate}
\item $(A,\circ)$ is a {\bf permutative algebra}:
\begin{eqnarray}
x \circ (y \circ z) = (x \circ y) \circ z =  (y \circ x) \circ z , \quad \forall x,y,z \in A. \label{perm}
\end{eqnarray}
\item $(A,[\cdot,\cdot])$ is a {\bf Leibniz algebra}:
\begin{eqnarray}
[x , [y ,z]] = [[x ,y] ,z] + [y, [x,z]], \quad \forall x,y,z \in A. \label{Leibniz}
\end{eqnarray}
\item The following three compatibility conditions hold:
\begin{align}
[x, y \circ z] &=[x, y ]\circ z + y \circ[x,  z], \label{xdpp1} \\
[x \circ y,   z] &= x \circ [ y,   z] + y \circ [ x,   z], \label{xdpp2}\\
[x, y ]\circ z  &=  - [y, x]\circ z, \quad \forall x,y,z \in A. \label{xdpp3}
\end{align}
\end{enumerate}
Let $(A, \circ_A,[\cdot,\cdot]_A)$ and $(B, \circ_B,[\cdot,\cdot]_B)$ be dual pre-Poisson algebras. A linear map $f: A \to B$ is called a {\bf homomorphism of dual pre-Poisson   algebras} if  for all $x,y \in A$,
\begin{align}
f(x \circ_A y) = f(x) \circ_B f(y),\quad f([x,y]_A) = [f(x),f(y)]_B.
\end{align}
\end{defi}

\begin{pro}
Let $(A,\circ,[\cdot,\cdot])$  be a dual pre-Poisson algebra. Then the following equations hold
\begin{align*}
[x \circ y,   z] &= [y \circ x,   z],\\
[x, y \circ z] + x \circ [ y,   z]&=    [x \circ y,   z] + [x, y ]\circ z ,\\
 [x \circ y,   z]  &= [x, y \circ z] + [y, x\circ z], \quad \forall x,y,z \in A.
\end{align*}
\end{pro}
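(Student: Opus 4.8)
The plan is to derive all three identities directly from the three compatibility conditions \eqref{xdpp1}, \eqref{xdpp2}, and \eqref{xdpp3}; notably, neither the permutative axiom \eqref{perm} nor the Leibniz axiom \eqref{Leibniz} is needed. Each identity reduces to rearranging the same handful of terms, namely $x\circ[y,z]$, $y\circ[x,z]$, $[x,y]\circ z$, and $[y,x]\circ z$, so the entire argument is bookkeeping together with one cancellation.

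First I would prove the first identity. Applying \eqref{xdpp2} gives $[x\circ y,z]=x\circ[y,z]+y\circ[x,z]$, and applying \eqref{xdpp2} again with $x$ and $y$ interchanged gives $[y\circ x,z]=y\circ[x,z]+x\circ[y,z]$. The two right-hand sides are the same two summands in a different order, hence equal, yielding $[x\circ y,z]=[y\circ x,z]$. Next I would prove the second identity by expanding each side. For the left-hand side, \eqref{xdpp1} gives $[x,y\circ z]=[x,y]\circ z+y\circ[x,z]$, so the left-hand side equals $[x,y]\circ z+y\circ[x,z]+x\circ[y,z]$. For the right-hand side, \eqref{xdpp2} gives $[x\circ y,z]=x\circ[y,z]+y\circ[x,z]$, so the right-hand side equals $x\circ[y,z]+y\circ[x,z]+[x,y]\circ z$; the two expressions coincide.

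Finally, for the third identity I would expand the left-hand side by \eqref{xdpp2} as $x\circ[y,z]+y\circ[x,z]$. On the right-hand side I would apply \eqref{xdpp1} to each bracket, obtaining $[x,y\circ z]=[x,y]\circ z+y\circ[x,z]$ and $[y,x\circ z]=[y,x]\circ z+x\circ[y,z]$. Summing, the right-hand side becomes $\big([x,y]\circ z+[y,x]\circ z\big)+y\circ[x,z]+x\circ[y,z]$, and here the skew-symmetry condition \eqref{xdpp3} forces $[x,y]\circ z+[y,x]\circ z=0$. The surviving terms then match the expanded left-hand side. The only step beyond pure rearrangement is this cancellation via \eqref{xdpp3}, so I do not anticipate any genuine obstacle.
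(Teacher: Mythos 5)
Your proof is correct and matches the paper's intent: the paper simply states that the proposition ``directly follows from Definition~\ref{def1},'' and your computation is exactly the routine verification being alluded to, using \eqref{xdpp2} for the first identity, \eqref{xdpp1}--\eqref{xdpp2} for the second, and \eqref{xdpp1}--\eqref{xdpp2} together with the cancellation from \eqref{xdpp3} for the third. Your observation that neither \eqref{perm} nor \eqref{Leibniz} is needed is also accurate.
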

\begin{proof}
It directly follows  from the Definition~\ref{def1}.
\end{proof}

There are some examples for dual pre-Poisson algebras.

\begin{ex}\label{dwdpp}
(1) Recall that $(A, \bullet,\{\cdot,\cdot\})$  is called a   \textbf{Poisson algebra} where $(A,\bullet)$ is a commutative associative algebra  and $(A,\{\cdot,\cdot\})$ is a Lie algebra  satisfying  
\begin{align}
\{x, y \bullet z\} =&\{x, y \}\bullet z + y \bullet \{x,  z\},\quad \forall x,y \in A. \label{p1} 
\end{align}
Since commutative associative algebras are special permutative algebras and Lie algebras are Leibniz algebras satisfying anti-commutativity, thus all Poisson algebras are  dual pre-Poisson algebras.

(2) Let $A$ be a 2-dimensional vector space  with a basis $\{e_1,e_2\}$. Then there is a dual pre-Poisson algebra $(A,\circ,[\cdot,\cdot])$ whose the  nonzero products of $\circ,[\cdot,\cdot]$ are defined by
\begin{align}
   e_2 \circ e_2  = [e_2, e_2 ] = e_1. \label{dwdpp:1}
\end{align}

\end{ex}

{}

\begin{rmk}
Recall   that a \textbf{pre-Poisson algebra} is a triple $(A, \diamond,\odot )$ where $(A,\diamond)$ is a {\bf pre-Lie algebra}:
\begin{eqnarray}
x \diamond (y \diamond z) -(x \diamond y) \diamond z   =  y\diamond (x \diamond z) - (y \diamond x) \diamond z  , \quad \forall x,y,z \in A. \label{pre}
\end{eqnarray}
and $(A,\odot)$ is a {\bf Zinbiel algebra}:
\begin{eqnarray}
 x \odot (y \odot z) = (x \odot y + y \odot x) \odot z, \quad \forall x,y,z \in A. \label{Zinbiel}
\end{eqnarray}
such that  the compatibility conditions are satisfied: 
\begin{align}
(x \diamond y - y \diamond x) \odot z &=  x \diamond (y \odot z) -  y \odot (x \diamond z), \label{xpp1} \\
(x \odot y + y \odot x) \diamond z &=  x \odot  (y \diamond z) +  y \odot (x \diamond z), \quad \forall x,y,z \in A.\label{xpp2}
\end{align}
 The word “dual” in the name of dual pre-Poisson algebra means that it is the Koszul duality of the pre-Poisson algebra, which was introduced as the underlying algebraic structure of a Poisson algebra  equipped with a Rota-Baxter operator  \cite{MA}.  
\end{rmk}

It is well known that  Poisson algebras are closed under taking tensor products \cite{XX}. This property is also shared by dual pre-Poisson algebras. 
\begin{pro}\label{cfortdppa}
Let $(A, \circ_A,[\cdot,\cdot]_A)$  and $(B, \circ_B,[\cdot,\cdot]_B)$  be two dual pre-Poisson algebras. Define two bilinear operations $\circ_{A \otimes B}$ and  $[\cdot,\cdot]_{A \otimes B}$ on $A \otimes B$  by
\begin{align*}
 (x \otimes a) \circ_{A \otimes B} (y \otimes b) &=  x \circ_A y  \otimes  a \circ_B b,\\
    [ x \otimes a ,  y \otimes b]_{A \otimes B} &=  [x ,y]_A  \otimes  a \circ_B b +  x \circ_A y  \otimes [a , b]_B,\quad \forall x,y \in A, a,b \in B.
\end{align*}
Then  $(A \otimes B, \circ_{A \otimes B},[\cdot,\cdot]_{A \otimes B})$  is a dual pre-Poisson algebra.
\end{pro}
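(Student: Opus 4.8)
The plan is to verify directly the three defining axioms of Definition~\ref{def1} for the triple $(A \otimes B, \circ_{A \otimes B}, [\cdot,\cdot]_{A \otimes B})$. Since both operations are bilinear, it suffices to check each identity on decomposable tensors $X = x \otimes a$, $Y = y \otimes b$, $Z = z \otimes c$ and then extend by bilinearity. The permutative law \eqref{perm} for $\circ_{A \otimes B}$ is immediate: expanding gives $X \circ_{A \otimes B} (Y \circ_{A \otimes B} Z) = x \circ_A (y \circ_A z) \otimes a \circ_B (b \circ_B c)$ and $(X \circ_{A \otimes B} Y) \circ_{A \otimes B} Z = (x \circ_A y) \circ_A z \otimes (a \circ_B b) \circ_B c$, and these coincide---together with the variant obtained by swapping $x \leftrightarrow y$ and $a \leftrightarrow b$---simply by applying \eqref{perm} in $A$ and in $B$ slot by slot. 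This step is routine.

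Next I would treat the three compatibility conditions. For \eqref{xdpp1} and \eqref{xdpp2}, I would expand the $A \otimes B$-bracket as its defining sum of two tensors, then use \eqref{xdpp1} (resp. \eqref{xdpp2}) in each factor to split $[x, y \circ_A z]_A$ (resp. $[x \circ_A y, z]_A$) and the corresponding $B$-bracket; the resulting four terms match the two terms of the right-hand side once \eqref{perm} is used to rearrange the associated products in the opposite tensor slot. For \eqref{xdpp3}, the two terms of $[X, Y]\circ_{A\otimes B} Z$ are converted into those of $-[Y, X]\circ_{A \otimes B} Z$ by applying \eqref{xdpp3} in one factor together with \eqref{perm} in the other, and symmetrically; thus \eqref{xdpp3} for the tensor product genuinely requires \eqref{xdpp3} in both $A$ and $B$.

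The main obstacle is the Leibniz identity \eqref{Leibniz}, whose verification I would organize as follows. Writing out $[X, [Y, Z]]$, $[[X, Y], Z]$ and $[Y, [X, Z]]$ each produces four tensor terms. The two ``pure'' terms of $[X,[Y,Z]]$, namely $[x, [y,z]_A]_A \otimes a \circ_B (b \circ_B c)$ and $x \circ_A (y \circ_A z) \otimes [a, [b,c]_B]_B$, split---via \eqref{Leibniz} in $A$ and in $B$ respectively---into precisely the four pure terms on the right-hand side, each pairing being completed by \eqref{perm} in the opposite slot. It then remains to match the two ``mixed'' terms $x \circ_A [y,z]_A \otimes [a, b \circ_B c]_B$ and $[x, y \circ_A z]_A \otimes a \circ_B [b,c]_B$ against the four leftover mixed terms on the right. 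Expanding all of these through \eqref{xdpp1} and \eqref{xdpp2} in both factors yields a collection of terms in which two pairs cancel exactly by the antisymmetry relation \eqref{xdpp3} applied in $A$ and in $B$, after which the survivors coincide with the desired terms. This cancellation is the crux of the computation and is the precise place where condition \eqref{xdpp3} becomes indispensable; extending the verified identities bilinearly then completes the proof.
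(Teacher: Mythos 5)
Your proposal is correct and follows essentially the same route as the paper: a direct verification on decomposable tensors in which the pure terms of the Leibniz identity cancel via \eqref{Leibniz} in each factor combined with \eqref{perm} in the opposite slot, the mixed terms are expanded through \eqref{xdpp1}--\eqref{xdpp2}, and the two residual pairs are killed precisely by \eqref{xdpp3} applied in $A$ and in $B$. Your treatment of the compatibility conditions \eqref{xdpp1}--\eqref{xdpp3} for the tensor product likewise matches the paper's computation, including the observation that \eqref{xdpp3} for $A \otimes B$ requires \eqref{xdpp3} in both factors.
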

\begin{proof}
For brevity, the subscripts $A$ and $B$ in the operations $\circ$ and $[\cdot,\cdot]$ will suppressed, since their meaning should be clear from the context. 

It is obvious that $(A \otimes B, \circ_{A \otimes B})$ is a permutative algebra. Let $x,y,z \in A, a,b,c \in B$. Then we have
\begin{align*}
&[x \otimes a, [y \otimes b ,z \otimes c ]_{A \otimes B}]_{A \otimes B} -[[x \otimes a, y \otimes b],z \otimes c ]_{A \otimes B}]_{A \otimes B}- [y \otimes b, [x \otimes a ,z \otimes c ]_{A \otimes B}]_{A \otimes B}\\
&= x \circ [y,z] \otimes [a,b \circ c] +  [x ,y \circ  z] \otimes a \circ[b, c] - [x ,y] \circ  z  \otimes [a \circ b, c] -  [x \circ y,z] \otimes [a,b ]\circ c\\
&\quad - y \circ [x,z] \otimes [b,a \circ c] -  [y ,x \circ  z] \otimes b \circ[a, c] \\
&=x \circ [y,z] \otimes [a,b] \circ c  + x \circ [y,z] \otimes b \circ  [a,c] +   [x ,y] \circ  z  \otimes a \circ[b, c]  + y \circ [x ,z]    \otimes a \circ[b, c]\\
&\quad - [x ,y] \circ  z  \otimes a \circ [ b, c] - [x ,y] \circ  z  \otimes b \circ [ a, c] -    x \circ [ y,z] \otimes [a,b ]\circ c - y \circ [x,z] \otimes [a,b ]\circ c \\
&\quad  - y \circ [x,z] \otimes [b,a] \circ c - y \circ [x,z] \otimes b \circ  [a,c]  -  [y ,x ] \circ  z  \otimes b \circ[a, c] -  x \circ [y , z] \otimes b \circ[a, c] \\
&=0.
\end{align*}
Then $(A \otimes B,  [\cdot,\cdot]_{A \otimes B})$  is a Leibniz algebra. Furthermore, we have
\begin{align*}
&[x \!\otimes\! a, (y \!\otimes\! b) \circ_{A \otimes B} (z \!\otimes\! c)]_{A \otimes B} - [x \!\otimes\! a, y \!\otimes\! b ]_{A \otimes B} \circ_{A \otimes B} (z \!\otimes\! c) - (y \!\otimes\! b) \circ_{A \otimes B} [x \!\otimes\! a,  z\!\otimes\! c]_{A \otimes B}\\
&= [x  ,y \circ z] \!\otimes\! a \circ (b \circ c)   +  x  \circ (y \circ z) \!\otimes\! [a, b \circ c  ] - [x,y]\circ z \!\otimes\! (a \circ b) \circ c - (x  \circ y)\circ z \!\otimes\! [a , b] \circ c\\
&\quad - y\circ [x,z] \otimes b\circ (a \circ c) -y\circ(x\circ z) \otimes b\circ[a,c] \\
& = 0.\\
&[(x \!\otimes\! a) \circ_{A \otimes B} (y \!\otimes\! b) ,    z \!\otimes\! c ]_{A \otimes B} -  (x \!\otimes\! a)  \circ_{A \otimes B} [ y \!\otimes\! b,    z \!\otimes\! c ]_{A \otimes B} - (y \!\otimes\! b) \circ_{A \otimes B} [  x \!\otimes\! a  ,    z \!\otimes\! c ]_{A \otimes B}\\
&= [x  \circ y , z] \!\otimes\! (a \circ b) \circ c    +  (x  \circ  y) \circ z  \!\otimes\! [a\circ b , c  ]\! -\! x\circ [y,z] \!\otimes\! a\circ (b \circ c) \!-\!x\circ(y\circ z) \!\otimes\! a\circ[b,c] \! \\
&\quad  -\! y\circ [x,z] \!\otimes\! b\circ (a \circ c) \!-\!y\circ(x\circ z) \!\otimes\! b\circ[a,c] \\
&=0.\\
&[x  \otimes  a, y  \otimes  b ]_{A \otimes B}\circ_{A \otimes B} (z  \otimes  c) + [y  \otimes  b, x  \otimes  a]_{A \otimes B}\circ_{A \otimes B} (z  \otimes  c)\\
&= [x,y]\circ z \!\otimes\! (a \circ b) \circ c + (x  \circ y)\circ z \!\otimes\! [a , b] \circ c + [y,x]\circ z \!\otimes\! (b \circ a) \circ c + (y \circ x)\circ z \!\otimes\! [b, a] \circ c\\
&=0.
\end{align*}
Thus  $(A \otimes B, \circ_{A \otimes B},[\cdot,\cdot]_{A \otimes B})$  is a dual pre-Poisson algebra.
\end{proof}

\begin{rmk}
Note that same property holds for other Poisson-type  algebras such as Novikov-Poisson algebras and transposed Poisson algebras \cite{BBGW}.
\end{rmk}

In the following, we consider the properties of dual pre-Poisson algebras at the level of algebraic operads. Let $\mathcal{P}$ be a  binary quadratic (symmetric) operad and $\mathcal{P}^!$ be its Koszul dual operad (see \cite{BD1,Gik,LB} for more details on algebraic operads). For $n \in \mathbb{N}$, denote the vector
space consisting of elements of arity $n$ in the operad $\mathcal{P}$ by $\mathcal{P}(n)$ and  the {\bf codimension} of arity $n$ of  $\mathcal{P}$  by $\text{dim} \mathcal{P}(n) $. The   {\bf generating series} (or {\bf Hilbert-Poincaré series}) $f_{\mathcal{P}}(t)$  of the symmetric operad  $\mathcal{P}$  is given by
\begin{align*}
f_{\mathcal{P}}(t)=\sum_{n=1}^{\infty} \frac{\operatorname{dim} \mathcal{P}(n)}{n !} t^{n}.
\end{align*}

An operad is {\bf Koszul} if it is binary quadratic and further satisfies an homological criterion \cite{Gik,LB}. One motivation for studying the Koszulity of an operad $\mathcal{P}$ is that this property guarantees the existence of a small chain complex, which can be used to compute the (co)homology of $\mathcal{P}$-algebra \cite{BGL}. Denote the operads of  pre-Poisson algebras and dual pre-Poisson algebras by $\mathsf{prePois}$ and $\mathsf{DualprePois}$ respectively. Then we  have $\mathsf{prePois}^! = \mathsf{DualprePois}$.

\begin{pro}\label{Koszulfordpp}
The operad $\mathsf{DualprePois}$ of dual pre-Poisson algebras is Koszul. Moreover, the Hilbert-Poincaré series of $\mathsf{DualprePois}$ is given by
\begin{align*}
f_{\mathsf{DualprePois}}(t)= \frac{t}{(1-t)^2}
\end{align*}
and $\operatorname{dim}\mathsf{DualprePois}(n) = n \cdot n!$.
\end{pro}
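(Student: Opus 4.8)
The plan is to deduce both assertions from the Koszul-dual relationship $\mathsf{DualprePois} = \mathsf{prePois}^!$ recorded just before the statement, together with the fact that $\mathsf{prePois}$ is Koszul, proved by Dotsenko \cite{DT}.

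For the Koszulity claim I would invoke the standard fact that an operad is Koszul if and only if its Koszul dual is Koszul (see \cite{Gik,LB}). Since $\mathsf{prePois}$ is Koszul and $\mathsf{DualprePois} = \mathsf{prePois}^!$, this immediately yields that $\mathsf{DualprePois}$ is Koszul. Alternatively, one may use the identification $\mathsf{DualprePois}\cong \mathsf{Perm}\circ\mathsf{Pois}$ from \cite{JBGN} and the stability of Koszulity under the Manin white product of Koszul operads, but the dual route is the shortest.

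For the generating series I would use the Ginzburg--Kapranov functional equation, which is legitimate precisely because the operads involved are now known to be Koszul: for a Koszul operad $\mathcal{P}$ with Koszul dual $\mathcal{P}^!$ the Hilbert--Poincar\'e series (in the exponential-generating-function normalization fixed in the definition of $f_{\mathcal{P}}$) satisfy $f_{\mathcal{P}}(-f_{\mathcal{P}^!}(-t)) = t$. Taking $\mathcal{P} = \mathsf{prePois}$ and $\mathcal{P}^! = \mathsf{DualprePois}$ gives
\begin{align*}
f_{\mathsf{prePois}}\bigl(-f_{\mathsf{DualprePois}}(-t)\bigr) = t.
\end{align*}
The remaining input is the series of the pre-Poisson operad. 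One has $\dim\mathsf{prePois}(2)=4$ (the four binary words built from the pre-Lie and Zinbiel products) and, in general, $\dim\mathsf{prePois}(n) = n!\,C_n$ with $C_n=\tfrac{1}{n+1}\binom{2n}{n}$ the Catalan number, so that $f_{\mathsf{prePois}}(t) = \sum_{n\ge1}C_n t^n$. Writing $C(t)=1+f_{\mathsf{prePois}}(t)=\sum_{n\ge0}C_nt^n$ and using $C(t)=1+tC(t)^2$, one checks that the compositional inverse of $f_{\mathsf{prePois}}$ is $w\mapsto \tfrac{w}{(1+w)^2}$. Hence $-f_{\mathsf{DualprePois}}(-t)=\tfrac{t}{(1+t)^2}$, and substituting $t\mapsto -t$ gives
\begin{align*}
f_{\mathsf{DualprePois}}(t) = \frac{t}{(1-t)^2} = \sum_{n\ge1} n\,t^n,
\end{align*}
so that $\dim\mathsf{DualprePois}(n) = n\cdot n!$, as claimed.

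As a cross-check one can bypass the inversion entirely: by \cite{JBGN} the operad $\mathsf{DualprePois}$ is the duplicator of $\mathsf{Pois}$, and the duplicator construction multiplies the arity-$n$ dimension by $n$, i.e.\ $f_{\mathsf{Dup}(\mathcal{P})}(t)=t\,\tfrac{d}{dt}f_{\mathcal{P}}(t)$; with $f_{\mathsf{Pois}}(t)=\tfrac{t}{1-t}$ this reproduces $f_{\mathsf{DualprePois}}(t)=\tfrac{t}{(1-t)^2}$ and $\dim\mathsf{DualprePois}(n)=n\cdot n!$ at once. The main obstacle is securing the one external input of each route: for the primary route, the Hilbert series of $\mathsf{prePois}$; for the alternative, the justification of the duplicator dimension formula, namely that the Manin white product with $\mathsf{Perm}$ agrees arity-wise with the Hadamard product. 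Once that input is in hand the computation is routine, the only care required being the exponential-generating-function convention and the signs in the functional equation.
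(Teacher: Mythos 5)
Your proposal is correct and follows essentially the same route as the paper: Koszulity of $\mathsf{DualprePois}$ via Koszul duality from Dotsenko's Koszulity of $\mathsf{prePois}$, the Ginzburg--Kapranov functional equation combined with the dendriform Hilbert series $f_{\mathsf{prePois}}(t)=\sum_{n\ge 1}C_nt^n$ (your flagged ``external input'' is precisely what the paper cites as \cite[Theorem 4.6]{DT}), and even your duplicator cross-check reproduces the paper's remark invoking \cite[Proposition 4.1]{GVPK}. Your explicit inversion via $C(t)=1+tC(t)^2$ fills in the step the paper leaves as ``a direct calculation,'' and the two orderings of the functional equation you and the paper use are equivalent since they express compositional inversion.
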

\begin{proof} It was shown that the operad $\mathsf{prePois}$ of pre-Poisson algebras is Koszul \cite[Corollary 4.7]{DT}.  Thus by \cite[Proposition 4.1.4]{Gik}, as the Koszul dual operad of $\mathsf{prePois}$, the operad $\mathsf{DualprePois}$ is also Koszul. Let $\mathsf{Dend}$ and $\mathsf{Dias}$ be the (symmetric) operad of dendriform algebras and dialgebras  respectively (see \cite{B2,BGL,L4} for more details). Then by  \cite[Theorem 4.6]{DT}, the operads $\mathsf{prePois}$ and  $\mathsf{Dend}$ share the same Hilbert-Poincaré series, which is given by
\begin{align*}
f_{\mathsf{prePois}}(t) = f_{\mathsf{Dend}}(t) = \frac{1-2t-\sqrt{1-4t} }{2t}.
\end{align*}
 Since the operad $\mathsf{prePois}$ is Koszul, then by the Ginzburg-Kapranov criterion \cite[Proposition 4.1.4]{Gik}, the following functional equation holds
\begin{align} 
f_{\mathsf{DualprePois}}\left(-f_{\mathsf{prePois}}(-t)\right) =  t. \label{GKcri}
\end{align}
With a direct calculation, we have
\begin{align*}
f_{\mathsf{DualprePois}}(t)= \frac{t}{(1-t)^2}
\end{align*}
which coincides with the Hilbert-Poincaré series of the operad $\mathsf{Dias}$ \cite{BGL,L4}. Thus we conclude that
$$
\operatorname{dim}\mathsf{DualprePois}(n) = \operatorname{dim}\mathsf{Dias}(n)  = n \cdot n!.
$$
 This completes the proof.  \end{proof}
 
 \begin{rmk}In fact, the codimension of $\mathsf{DualprePois}$ can   be derived from its relation to the  operad $\mathsf{Pois}$   of Poisson algebras. It is known that the  operad $\mathsf{DualprePois}$ is the duplicator of the operad  $\mathsf{Pois}$ \cite{JBGN}. Applying  \cite[Proposition 4.1]{GVPK},  we have 
 $$
 \operatorname{dim}\mathsf{DualprePois}(n) = n\operatorname{dim}\mathsf{Pois}(n)  = n \cdot n!.
 $$
 \end{rmk}

Next we give several constructions of dual  pre-Poisson algebras. An important class of dual  pre-Poisson algebras come from average operators on Poisson algebras. 

\begin{defi}
Let $(A, \bullet,\{\cdot,\cdot\})$ be a Poisson algebra and $P:A \to A$  be a linear map. If $P$ satisfies the following equations
\begin{align}
P(x) \bullet P(y) = P(P(x) \bullet y ),\quad \{P(x),P(y)\}= P(\{P(x),y\}), \quad \forall x,y \in A,
\end{align} 
then  $P$ is called an {\bf average operator} on $(A, \bullet,\{\cdot,\cdot\})$.
\end{defi}

\begin{pro} \cite{MA} Let $(A, \bullet,\{\cdot,\cdot\})$ be a Poisson algebra and $P: A \to A$ be an average operator on $(A, \bullet,\{\cdot,\cdot\})$. Define new operations on $A$ by
\begin{align}
 x \circ y = P(x) \bullet y , \quad  [x,y]=  \{P(x),y\},\quad \forall x,y \in A.
\end{align} 
Then  $(A, \circ,[\cdot,\cdot])$ is a dual  pre-Poisson algebra.
\end{pro}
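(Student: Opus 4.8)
The plan is to verify directly that the two operations $\circ$ and $[\cdot,\cdot]$ satisfy all five defining conditions of Definition~\ref{def1}. The only tools needed are the Poisson axioms of $(A,\bullet,\{\cdot,\cdot\})$—associativity and commutativity of $\bullet$, the Jacobi identity and antisymmetry of $\{\cdot,\cdot\}$, and the derivation rule \eqref{p1}—together with the two defining identities of the average operator, namely $P(P(x)\bullet y)=P(x)\bullet P(y)$ and $P(\{P(x),y\})=\{P(x),P(y)\}$. These last two identities are the workhorses: each converts an expression carrying a single $P$ into one carrying two copies of $P$ (or vice versa), which is exactly what is needed to match the two sides of every axiom.

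For the permutative law \eqref{perm}, I would compute $(x\circ y)\circ z=P(P(x)\bullet y)\bullet z=(P(x)\bullet P(y))\bullet z$ using the average identity for $\bullet$, then invoke associativity to rewrite this as $P(x)\bullet(P(y)\bullet z)=x\circ(y\circ z)$; commutativity of $\bullet$ then yields the remaining symmetry $(x\circ y)\circ z=(y\circ x)\circ z$. For the Leibniz identity \eqref{Leibniz}, the average identity for the bracket turns $[[x,y],z]=\{P(\{P(x),y\}),z\}$ into $\{\{P(x),P(y)\},z\}$, after which the three terms of \eqref{Leibniz} become precisely the Jacobi identity of $\{\cdot,\cdot\}$ evaluated on $P(x),P(y),z$.

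The three compatibility conditions follow the same template. For \eqref{xdpp1}, expanding $[x,y\circ z]=\{P(x),P(y)\bullet z\}$ by the derivation rule \eqref{p1} splits it into $\{P(x),P(y)\}\bullet z+P(y)\bullet\{P(x),z\}$; the average identity for the bracket then identifies the first summand with $[x,y]\circ z$, while the second is literally $y\circ[x,z]$. For \eqref{xdpp2}, I would first use the average identity for $\bullet$ to write $[x\circ y,z]=\{P(x)\bullet P(y),z\}$, then apply the derivation rule in the second slot and finish with commutativity of $\bullet$ to match $x\circ[y,z]+y\circ[x,z]$. Finally, \eqref{xdpp3} reduces, after pushing $P$ outside via the average identity for the bracket, to the plain antisymmetry $\{P(x),P(y)\}=-\{P(y),P(x)\}$ of the Lie bracket.

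No step presents a genuine obstacle; the computation is routine once the two average-operator identities are recognized as the mechanism for balancing the number of occurrences of $P$ on the two sides of each equation. The only point requiring slight care is condition \eqref{xdpp2}, where one must first convert $P(x)\bullet y$ back into $P(x)\bullet P(y)$ inside the bracket and then use the \emph{second-slot} form of the derivation rule—a consequence of \eqref{p1} combined with antisymmetry of $\{\cdot,\cdot\}$ rather than \eqref{p1} applied directly.
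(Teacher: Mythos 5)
Your proof is correct: all five axioms are verified exactly as you describe, including the subtle point in \eqref{xdpp2} where $P(x\circ y)=P(x)\bullet P(y)$ (via the $\bullet$-average identity) and the second-slot derivation rule obtained from \eqref{p1} and antisymmetry are needed. The paper states this result with a citation to Aguiar and gives no proof, and your direct verification using the two average-operator identities to balance occurrences of $P$ is precisely the standard argument.
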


\begin{ex}\label{Ex:Pois}
Let  $(A, \bullet,\{\cdot,\cdot\})$ be the 3-dimensional Poisson algebra with a basis $\{e_1, e_2, e_3\}$ whose nonzero products are
\begin{align}
e_1 \bullet e_2 = e_3,\quad \{e_1, e_2 \} = 2e_3. \label{Ex:Pois:cf}
\end{align}
Let $P : A \to A$ be a linear map given by
\begin{align}
P(e_1) = e_1, \quad P(e_2) =  P(e_3) = 0.
\end{align}
Then  $P$ is an average operator on $(A, \bullet,\{\cdot,\cdot\})$. Moreover, there is a 3-dimensional dual  pre-Poisson algebra $(A, \circ,[\cdot,\cdot])$ whose nonzero products are explicitly given by
\begin{align}
 e_1 \circ e_2 = P(e_1) \bullet e_2 = e_3, \quad [e_1 ,e_2] = \{P(e_1)  ,e_2\} =  2e_3.
\end{align}
\end{ex}

Moreover, there is a dual pre-Poisson algebra  structure on the tensor product of
 a permutative algebra and a  Poisson algebra.

\begin{pro}\label{ifdualpp}
 Let $(A,\star)$ be a permutative algebra and $(B, \bullet,\{\cdot,\cdot\})$ be a Poisson algebra. Then $(A \otimes B , \circ_{A \otimes B},[\cdot,\cdot]_{A \otimes B})$ is a dual  pre-Poisson algebra where $\circ_{A \otimes B},[\cdot,\cdot]_{A \otimes B}$ are respectively  given by
 \begin{align}
 (x \otimes a) \circ_{A \otimes B} (y \otimes b) &=  x \star y  \otimes  a \bullet b,\\
    [ x \otimes a ,  y \otimes b]_{A \otimes B} &=   x \star y  \otimes  \{a ,b\},\quad \forall x,y \in A, a,b \in B.
\end{align} 
\end{pro}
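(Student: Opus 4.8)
The plan is to verify the three defining conditions of a dual pre-Poisson algebra (Definition~\ref{def1}) directly for the given operations, treating the permutative axiom, the Leibniz axiom, and the three compatibility conditions in turn. The key simplification I would exploit is that the bracket $[\cdot,\cdot]_{A\otimes B}$ here has only a single tensor term $x\star y\otimes\{a,b\}$, as opposed to the two-term bracket appearing in Proposition~\ref{cfortdppa}; this is because the permutative factor $(A,\star)$ carries no bracket of its own. Consequently every verification factors cleanly into a permutative-algebra identity in the $A$-slot and a Poisson-algebra identity in the $B$-slot, which keeps the computations much shorter than in the proof of Proposition~\ref{cfortdppa}.

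First I would check that $(A\otimes B,\circ_{A\otimes B})$ is permutative: since $\circ_{A\otimes B}$ is the tensor product of the permutative product $\star$ and the commutative associative product $\bullet$, and both $(A,\star)$ and $(B,\bullet)$ satisfy the relevant associativity/permutativity, the permutative axiom~\eqref{perm} follows immediately from the corresponding identities factor-by-factor. Next, for the Leibniz axiom~\eqref{Leibniz}, I would expand each of the three terms $[x\otimes a,[y\otimes b,z\otimes c]]$, $[[x\otimes a,y\otimes b],z\otimes c]$, and $[y\otimes b,[x\otimes a,z\otimes c]]$. Each yields an expression of the form (triple $\star$-product in $A$) $\otimes$ (nested bracket in $B$); the $A$-slots all collapse to the common value $(x\star y)\star z$ by permutativity, so the Leibniz identity reduces exactly to the Leibniz (here Jacobi) identity for $\{\cdot,\cdot\}$ in the $B$-slot, which holds since $(B,\{\cdot,\cdot\})$ is a Lie algebra.

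For the three compatibility conditions, I would argue similarly. Condition~\eqref{xdpp1}, $[x\otimes a, (y\otimes b)\circ(z\otimes c)]=[x\otimes a,y\otimes b]\circ(z\otimes c)+(y\otimes b)\circ[x\otimes a,z\otimes c]$, expands so that the $B$-slots reproduce exactly the Poisson Leibniz rule~\eqref{p1} for $(B,\bullet,\{\cdot,\cdot\})$, while the $A$-slots match by permutativity; a parallel computation handles~\eqref{xdpp2}. For the skew-symmetry condition~\eqref{xdpp3}, $[x\otimes a,y\otimes b]\circ z = x\star y\otimes\{a,b\}\circ\cdots$ acquires its sign from the antisymmetry $\{a,b\}=-\{b,a\}$ of the Lie bracket on $B$, combined with the commutativity $a\bullet b=b\bullet a$ in the $\circ$-factor; since $x\star y$ and $y\star x$ both produce the same $\star$-product after composing with $z$ (again by permutativity), the two sides cancel.

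The main obstacle, if any, is purely bookkeeping rather than conceptual: one must keep careful track of which $A$-products and $B$-products appear in each term and confirm that the permutative axiom genuinely forces all the relevant $A$-slots in a given identity to coincide, so that the identity collapses to a pure $B$-slot statement. Once this factorization is established, each of the five verifications is immediate from the axioms of $(A,\star)$ and the Poisson axioms of $(B,\bullet,\{\cdot,\cdot\})$. I would therefore present the permutative and Leibniz checks explicitly and then note that the three compatibility conditions follow by the same factorization, reducing respectively to~\eqref{p1}, its mirror form, and the antisymmetry of the Lie bracket.
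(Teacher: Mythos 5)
Your proposal is correct and follows essentially the same route as the paper's proof, which likewise verifies Definition~\ref{def1} directly, using permutativity to identify all the $A$-slots (e.g.\ $x\star(y\star z)=(x\star y)\star z=y\star(x\star z)$) so that each identity collapses to a Poisson axiom in $B$: the Jacobi identity for the Leibniz check, the Leibniz rule \eqref{p1} and its mirror form for \eqref{xdpp1}--\eqref{xdpp2}, and antisymmetry of $\{\cdot,\cdot\}$ for \eqref{xdpp3}. One small correction: in \eqref{xdpp3} the commutativity of $\bullet$ is not actually needed — the sign comes solely from $\{a,b\}=-\{b,a\}$ together with $(x\star y)\star z=(y\star x)\star z$ — which is exactly how the paper argues.
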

\begin{proof}
It is known that $(A \otimes B , \circ_{A \otimes B})$ is a permutative algebra. For all $x,y \in A, a,b \in B$, we have
 \begin{align*}
    & [ x \otimes a ,  [ y \otimes b,z \otimes c]_{A \otimes B}]_{A \otimes B}  \\
    &=  [x \otimes a, (y \star z)  \otimes  \{b ,c\}]_{A \otimes B} =   x \star (y \star z)\otimes \{a,  \{b ,c\}\} \\
    &=     x \star (y \star z)\otimes \{\{a,  b\} ,c\} +   x \star (y \star z)\otimes \{b,  \{a ,c\}\} \\
        &=   (  x \star  y) \star z \otimes \{\{a,  b\} ,c\} +   y \star (x \star z)\otimes \{b,  \{a ,c\}\} \\
&=  [ [ x \otimes a ,  y \otimes b]_{A \otimes B},z \otimes c]_{A \otimes B}  +  [ y \otimes b ,  [ x \otimes a,z \otimes c]_{A \otimes B}]_{A \otimes B}.
\end{align*} 
Moreover, we have
 \begin{align*}
  &  [ x \otimes a ,  ( y \otimes b) \circ_{A\otimes B} (z \otimes c)]_{A \otimes B}    \\
  &= [ x \otimes a ,  (y  \star z) \otimes (b \bullet c)]_{A \otimes B} = x\star (y  \star z) \otimes \{a, b \bullet c\}\\
 &=    x\star (y  \star z) \otimes  \{a, b \}\bullet c +   x\star (y  \star z) \otimes  b \bullet \{a,  c\}\\
  &=    (x\star  y)  \star z  \otimes  \{a, b \}\bullet c +   y\star (x  \star z) \otimes  b \bullet \{a,  c\}\\ 
  &= [ x \otimes a ,    y \otimes b  ]_{A \otimes B} \circ_{A\otimes B} (z \otimes c) + (y \otimes b)  \circ_{A\otimes B}  [ x \otimes a ,  z \otimes c   ]_{A \otimes B},\\
    &   [ (x \otimes a)  \circ_{A\otimes B}  (y \otimes b),  z \otimes c ]_{A \otimes B}   \\
    &=    [ (x  \star y) \otimes (a \bullet b),  z \otimes c ]_{A \otimes B}   =     (x  \star y) \star z \otimes \{  a \bullet b , c \}  \\
   &   =   y  \star (x \star z ) \otimes  b  \bullet \{  a , c \} +   x  \star (y  \star z) \otimes a \bullet  \{ b , c \} \\
   & = (y \otimes b)  \circ_{A\otimes B}  [  x \otimes a,  z \otimes c ]_{A \otimes B}  +  (x \otimes a)  \circ_{A\otimes B} [  y \otimes b,  z \otimes c ]_{A \otimes B},  \\
    &  [ x \otimes a ,    y \otimes b  ]_{A \otimes B} \circ_{A\otimes B} (z \otimes c)   = (x  \star y) \star z  \otimes \{a ,b\} \bullet c    =  -(y  \star x) \star z  \otimes \{b ,a\} \bullet c   \\
       &   =- [ y \otimes b ,     x \otimes a ]_{A \otimes B} \circ_{A\otimes B} (z \otimes c). 
\end{align*} 
Thus $(A \otimes B , \circ_{A \otimes B},[\cdot,\cdot]_{A \otimes B})$ is a dual  pre-Poisson algebra.
\end{proof}

We give the following example to illustrate the above construction explicitly.

\begin{ex}\label{infdualppa}
Let $A=\mathbb{F}\left[t^{ \pm}\right] \partial_{1} \oplus \mathbb{F}\left[t^{ \pm}\right] \partial_{2}$, and define a  bilinear    operation   $\star: A \otimes A \rightarrow A $ by
$$
 t^{i} \partial_{m}  \star t^{j} \partial_{n} =  t^{i+j+\delta_{m,1}}\partial_{n},  \quad \forall i, j \in \mathbb{Z},\; m,n \in \{1,2\}.
$$
Then $ (A, \star)$  is a   perm algebra. Moreover, let  $B=\mathbb{F}\left[x^{ \pm}, y^{ \pm}\right]$ and define two bilinear  operations  $\bullet,\{\cdot,\cdot\}: B \otimes B \rightarrow B $ respectively by
$$
x^{a} y^{b} \bullet x^{c} y^{d}=x^{a+c} y^{b+d}, \quad \left\{x^{a} y^{b}, x^{c} y^{d}\right\}=(a d-b c) x^{a+c-1} y^{b+d-1}, \quad \forall a, b, c, d \in \mathbb{Z}.
$$
Then $ (B, \bullet,\{\cdot,\cdot\})$  is a   Poisson algebra. By Proposition \ref{ifdualpp}, there is an  infinite-dimensional dual pre-Poisson algebra $ (A \otimes B, \circ_{A \otimes B},[\cdot,\cdot]_{A \otimes B})$ with $\circ_{A \otimes B},[\cdot,\cdot]_{A \otimes B}$ respectively given by 
\begin{align*}
( t^{a_1} \partial_{m}   \otimes x^{b_1} y^{c_1}) \circ_{A \otimes B} (t^{a_2} \partial_{n}  \otimes x^{b_2} y^{c_2}) &=  t^{a_1+a_2+\delta_{m,1}}\partial_{n} \otimes x^{b_1+b_2} y^{c_1+c_2},\\
[t^{a_1} \partial_{m} \otimes x^{b_1} y^{c_1} , t^{a_2}\partial_{n}  \otimes x^{b_2} y^{c_2}]_{A \otimes B}  &=   (b_1c_2-b_2c_1)t^{a_1+a_2+\delta_{n,1}}\partial_{m}  \otimes x^{b_1+b_2-1} y^{c_1+c_2-1},
\end{align*}
where  $a_1, a_2, b_1, b_2,c_1,c_2  \in  \mathbb{Z},\; m,n \in \{1,2\}$.
\end{ex}

Next we give a construction of dual  pre-Poisson algebras  from representations of Poisson algebras, displaying a close relationship between dual  pre-Poisson algebras and Poisson algebras.

Let $(A, \bullet,\{\cdot,\cdot\})$ be a Poisson algebra,  $V$  be a  vector space and  $\mu, \rho: A \rightarrow \operatorname{End}(V)$  be two linear maps. Recall \cite{NB} that a {\bf representation} of  $(A, \bullet,\{\cdot,\cdot\})$ is a triple $(V; \mu, \rho)$ where $ \left(V; \mu\right)$  is a representation of commutative associative algebra $(A,\bullet)$ and $( V; \rho)$ is a representation of Lie algebra $(A, \{\cdot,\cdot\})$ satisfying the following  compatible conditions:
\begin{align}
\rho(x \bullet y) &= \mu(y) \rho(x)+\mu(x) \rho (y),\\
\mu(\{x, y\}) &=\rho(x)\mu(y)-\mu(y)\rho(x), \quad \forall x, y \in A.
\end{align}

\begin{pro}\label{dppfromHRofP}
 Let $(A, \bullet,\{\cdot,\cdot\})$ be a Poisson algebra and $(V; \mu, \rho)$ be a representation of $(A, \bullet,\{\cdot,\cdot\})$. Then there is a dual  pre-Poisson algebra structrue on the direct sum  $A \oplus V$  of vector spaces given by
 \begin{align}
(x+u) \circ_{A \oplus V}(y+v) &:=x \bullet  y+ \mu(x) v  ,\\
[ x+u , y+v ]_{A \oplus V} &:=\{x,y\}+\rho(x) v, \quad \forall x, y \in A, u, v \in V.
\end{align}
We denote the dual pre-Poisson algebra $(A \oplus V,\circ_{A \oplus V}, [\cdot , \cdot]_{A \oplus V})$ by $A \rightthreetimes_{\mu,\rho} V$.
\end{pro}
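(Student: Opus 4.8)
The plan is to verify directly that the triple $(A \oplus V, \circ_{A\oplus V}, [\cdot,\cdot]_{A\oplus V})$ satisfies the three clauses of Definition~\ref{def1}. This is a semidirect-product–type construction: the restriction of both operations to $A$ recovers the original Poisson algebra, the products of two elements of $V$ vanish, and the mixed products are governed by $\mu$ and $\rho$. Accordingly, I would write a generic element as $X=x+u$ with $x\in A$, $u\in V$, expand each required identity on $X=x+u$, $Y=y+v$, $Z=z+w$, and then split the result into its $A$-component and its $V$-component, handling the two separately. Permutativity of $\circ_{A\oplus V}$ is the first clause, the Leibniz identity \eqref{Leibniz} the second, and the compatibility conditions \eqref{xdpp1}, \eqref{xdpp2}, \eqref{xdpp3} the third.

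For the $A$-components, every identity reduces to a relation that already holds in the Poisson algebra $(A,\bullet,\{\cdot,\cdot\})$: associativity and commutativity of $\bullet$ give permutativity of $\circ_{A\oplus V}$ along $A$; the Jacobi identity and antisymmetry of $\{\cdot,\cdot\}$ give the Leibniz identity and \eqref{xdpp3} along $A$; and the Poisson Leibniz rule \eqref{p1}, together with antisymmetry, yields \eqref{xdpp1} and \eqref{xdpp2} along $A$. This part is automatic, since any Poisson algebra is a dual pre-Poisson algebra (Example~\ref{dwdpp}(1)) and the $A$-components are exactly the defining identities of that structure.

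For the $V$-components, each identity isolates precisely one defining relation of the representation $(V;\mu,\rho)$. Concretely, associativity (hence permutativity) of $\circ_{A\oplus V}$ along $V$ is the module identity $\mu(x)\mu(y)=\mu(x\bullet y)$; the Leibniz identity along $V$ is $\rho(\{x,y\})=\rho(x)\rho(y)-\rho(y)\rho(x)$, i.e. $(V;\rho)$ is a Lie module; identity \eqref{xdpp1} along $V$ is the compatibility condition $\mu(\{x,y\})=\rho(x)\mu(y)-\mu(y)\rho(x)$; identity \eqref{xdpp2} along $V$ is the compatibility condition $\rho(x\bullet y)=\mu(x)\rho(y)+\mu(y)\rho(x)$; and \eqref{xdpp3} along $V$ follows from $\mu(\{x,y\})=-\mu(\{y,x\})$, that is, from antisymmetry of the Lie bracket. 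Thus each $V$-component coincides with one of the axioms making $(V;\mu,\rho)$ a representation of $(A,\bullet,\{\cdot,\cdot\})$, and no further hypotheses are needed.

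There is no genuine obstacle here; the argument is a finite verification. The only place demanding care is the bookkeeping in \eqref{xdpp1} and \eqref{xdpp2}, where $\mu$ and $\rho$ interact and where one must track the order of the operators and their precise arguments so that the two mixed compatibility conditions of the representation are invoked with the correct signs. I would therefore carry out these two identities explicitly, componentwise, and leave the remaining verifications, which follow the same pattern, to the reader.
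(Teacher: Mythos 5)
Your proposal is correct and follows essentially the same route as the paper: a direct componentwise verification in which the $A$-components reduce to the Poisson axioms and the $V$-components of the permutativity, Leibniz, and compatibility identities \eqref{xdpp1}--\eqref{xdpp3} isolate exactly the representation axioms $\mu(x\bullet y)=\mu(x)\mu(y)$, $\rho(\{x,y\})=\rho(x)\rho(y)-\rho(y)\rho(x)$, $\mu(\{x,y\})=\rho(x)\mu(y)-\mu(y)\rho(x)$, $\rho(x\bullet y)=\mu(x)\rho(y)+\mu(y)\rho(x)$, and antisymmetry of $\{\cdot,\cdot\}$, as you state. The paper likewise treats the permutative and Leibniz clauses as routine and writes out the three compatibility conditions explicitly, so your identification of each identity with its corresponding axiom matches its proof.
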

\begin{proof}
It is straightforward to check that $(A \oplus V, \circ_{A \oplus V})$ is a permutative algebra and  $(A \oplus V, [\cdot,\cdot]_{A \oplus V})$ is a Leibniz algebra. Moreover, for all $x, y,z \in A, u, v,w \in V$, we have
\begin{align*}
&[x +u, (y+v) \circ_{A\oplus V} (z+w)]_{A\oplus V}\\
& =[x +u,  y \bullet z+ \mu(y)w)  ]_{A\oplus V} =\{x ,   y \bullet z\}+\rho(x) \mu(y)w  \\
&=\{x, y \}\bullet z + y \bullet \{x,  z\} +\mu(\{x, y\})w + \mu(y)\rho(x)w \\
&= [x +u,  y+v  ]_{A\oplus V}\circ_{A\oplus V} (z+w)  + (y+v) \circ_{A\oplus V} [x +u,  z+w]_{A\oplus V},  \\
&[(x +u) \circ_{A\oplus V} (y+v)  , z+w ]_{A\oplus V}\\
&= [ x \bullet y + \mu(x)v , z+w ]_{A\oplus V} = \{ x \bullet y,z\}+\rho(x \bullet y)  w  \\
&= x \bullet \{y,z\}+ y  \bullet \{x,z\}    + \mu(y) \rho(x)w +\mu(x) \rho (y)w \\
&=    (x+u) \circ_{A\oplus V} [y +v,  z+w]_{A\oplus V}  + (y+v) \circ_{A\oplus V} [x +u,  z+w]_{A\oplus V},  \\
&[x +u,  y+v  ]_{A\oplus V}\circ_{A\oplus V} (z+w)  \\
&= \{x, y \}\bullet z +\mu(\{x, y\})w = -\{y, x\}\bullet z -\mu(\{y, x\})w   \\
&=- [  y+v ,x +u ]_{A\oplus V}\circ_{A\oplus V} (z+w).
\end{align*}
Thus $(A \oplus V, \circ_{A \oplus V}, [\cdot,\cdot]_{A \oplus V})$ is a  dual  pre-Poisson algebra.
\end{proof}

\begin{ex}\label{bigdualpre}
(1) Let $(A, \bullet,\{\cdot,\cdot\})$ be a Poisson algebra,  Then  $(A;  L_{\bullet},\ad_{\{\cdot,\cdot\}})$ is a representation of  $(A, \bullet,\{\cdot,\cdot\})$, where $\ad_{\{\cdot,\cdot\}}: A \to \gl(A)$ is  the  adjoint representation  of Lie algebra $(A,  \{\cdot,\cdot\})$ and is given by $\ad_{\{\cdot,\cdot\}}(x)(y): = \{x,y\}$ for all $x,y \in A$. In particular, $(A^*; -L_{\bullet}^*,\ad_{\{\cdot,\cdot\}}^*)$ is also a representation of  $(A, \bullet,\{\cdot,\cdot\})$ \cite{NB}. Thus by Proposition \ref{dppfromHRofP}, there are two dual  pre-Poisson algebras $A \rightthreetimes_{L_{\bullet},\ad_{\{\cdot,\cdot\}}} A$ and $A \rightthreetimes_{-L_{\bullet}^*,\ad_{\{\cdot,\cdot\}}^*} A^*$.

(2) Continuing with Example \ref{Ex:Pois}, let $\{e_1^*, e_2^*, e_3^*\}$ be the dual basis of $\{e_1, e_2, e_3\}$. Then there is a 6-dimensional dual pre-Poisson algebra  $A \rightthreetimes_{-L_{\bullet}^*,\ad_{\{\cdot,\cdot\}}^*} A^*$ whose nonzero products are explicitly given by 
\begin{align}
e_1 \circ e_2 &= e_2 \circ e_1 = e_3,  &&   e_1 \circ e_3^* = e_2^*,  &&    e_2 \circ e_3^* = e_1^*,  \label{exdual1}\\
[e_1, e_2] &= -[e_2, e_1] = 2e_3,  &&   [e_1 , e_3^*] = -2e_2^*,  &&   [e_2 , e_3^*]= 2e_1^*,  \label{exdual2}
\end{align}

\end{ex}

Next we give a construction of dual pre-Poisson algebras by a pair of commuting derivations on permutative algebras in parallel to the classical construction of Poisson algebras by a pair of commuting  derivations on commutative  associative algebras.

Recall that a {\bf derivation} of a permutative algebra $(A,\circ)$ is defined to be a linear map $D:A \to A$ satisfying the Leibniz rule, i.e., $D(x \circ y) = D(x) \circ y + x \circ D(y)$ for all $x,y \in A$.  

\begin{pro}\label{sdztodpp}
Let $(A,\circ)$ be a permutative algebra and $D_1,D_2: A \to A$ be commuting derivations on $(A,\circ)$. Define new operation $[\cdot,\cdot]: A \otimes A \to A$ on $A$ by
\begin{align}
 [x,y] = D_1(x)\circ D_2(y) - D_2(x) \circ D_1(y) ,\quad \forall x,y \in A. \label{DDdualprep}
\end{align} 
Then $(A,  [\cdot,\cdot])$ is a  Leibniz algebra. Moreover, $(A, \circ,[\cdot,\cdot])$ is a dual  pre-Poisson algebra.
\end{pro}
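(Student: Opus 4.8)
The plan is to check, in order, the three clauses of Definition~\ref{def1}. The permutative axiom \eqref{perm} is a hypothesis, so the work consists of verifying the Leibniz identity \eqref{Leibniz} for the bracket \eqref{DDdualprep} and the three compatibility conditions \eqref{xdpp1}--\eqref{xdpp3}. Throughout I would use three elementary facts: associativity from \eqref{perm}, which lets me write every triple product unambiguously; the left-symmetry half of \eqref{perm}, namely $(u\circ v)\circ w=(v\circ u)\circ w$, which lets me transpose the first two factors of any triple product; and the Leibniz rule $D_i(x\circ y)=D_i(x)\circ y+x\circ D_i(y)$, together with $D_1D_2=D_2D_1$, to expand and normalize derivatives of products.

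I would dispose of the compatibility conditions first, since they involve only first derivatives. Condition \eqref{xdpp3} is immediate: expanding $[x,y]\circ z=(D_1(x)\circ D_2(y))\circ z-(D_2(x)\circ D_1(y))\circ z$ and transposing the first two factors in each term via left-symmetry produces exactly $-[y,x]\circ z$. For \eqref{xdpp1} and \eqref{xdpp2} I would expand the single derivative $D_i$ of the product $y\circ z$ (resp.\ $x\circ y$) by the Leibniz rule, distribute over $\circ$ using associativity, and then check that the four resulting triple products agree termwise with the expansion of $[x,y]\circ z+y\circ[x,z]$ (resp.\ $x\circ[y,z]+y\circ[x,z]$) after one use of left-symmetry to align the first-two factors. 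Notably, commutativity of $D_1,D_2$ is not needed for these three identities.

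The main obstacle is the Leibniz identity \eqref{Leibniz}. The plan is to expand $L:=[x,[y,z]]-[[x,y],z]-[y,[x,z]]$ in full. Applying $D_1$ or $D_2$ to an inner bracket and using the Leibniz rule, each of the three outer brackets unfolds into eight signed triple products whose factors are first- and second-order derivatives of $x,y,z$; here I would invoke $D_1D_2=D_2D_1$ to rewrite every mixed second derivative in the canonical form $D_1D_2$, so that $L$ becomes a sum of twenty-four triple products. The organizing observation is that in every one of these terms the distinguished rightmost factor is a derivative of $z$, so the terms fall into five groups according to whether this factor is $D_1^2(z)$, $D_2^2(z)$, $D_1D_2(z)$, $D_1(z)$, or $D_2(z)$. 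Within each group, once products are regarded as unordered in their first two factors (again by left-symmetry), the terms cancel in pairs, giving $L=0$. I expect the only genuine difficulty to be the bookkeeping of signs and of which factor is distinguished; the commutativity of the derivations is precisely what makes the cancellation exact, mirroring its role in the classical construction of a Poisson bracket from two commuting derivations on a commutative associative algebra.
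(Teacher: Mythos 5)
Your proposal is correct and takes essentially the same route as the paper: both verify the Leibniz identity and the compatibility conditions \eqref{xdpp1}--\eqref{xdpp3} by direct expansion using the derivation property, associativity, and the left-symmetry $(u\circ v)\circ w=(v\circ u)\circ w$, with $D_1D_2=D_2D_1$ used to identify the mixed second derivatives. Your grouping of the twenty-four terms by the rightmost derivative of $z$ (and your accurate observation that commutativity of $D_1,D_2$ is not needed for \eqref{xdpp1}--\eqref{xdpp3}) is simply a tidier bookkeeping of the same wholesale cancellation the paper writes out.
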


\begin{proof}
Let $x,y,z\in A$, we have
\begin{align*}
 &[x,[y,z]] -  [[x,  y],z] - [y,[x,z]]\\
 &=D_1(x)\circ \big(D_2D_1(y) \circ D_2(z) + D_1(y)\circ D_2D_2(z) - D_2D_2(y) \circ D_1(z) -D_2(y) \circ D_2D_1(z)\big)\\
 &\hspace{0.5cm}-D_2(x)\!\circ\! \big(D_1D_1(y) \circ D_2(z) + D_1(y)\circ D_1D_2(z) - D_1D_2(y) \circ D_1(z) -D_2(y)\!\circ\! D_1D_1(z)\big)\\
 &\hspace{0.5cm}-\big(D_1D_1(x) \circ D_2(y) + D_1(x) \circ D_1D_2(y) - D_1D_2(x) \circ D_1(y) -D_2(x) \circ D_1D_1(y)\big)D_2(z)\\
  &\hspace{0.5cm}+\big(D_2D_1(x) \circ D_2(y) + D_1(x)\circ D_2D_2(y) - D_2D_2(x) \circ D_1(y) -D_2(x) \circ D_2D_1(y)\big)D_1(z)\\
&\hspace{0.5cm}-D_1(y)\!\circ \!\big(D_2D_1(x) \circ D_2(z) + D_1(x) \!\circ\! D_2D_2(z) - D_2D_2(x) \circ D_1(z) -D_2(x) \circ D_2D_1(z)\big)\\
 &\hspace{0.5cm}+D_2(y) \!\circ \!\big(D_1D_1(x) \circ D_2(z) + D_1(x) \!\circ\! D_1D_2(z) - D_1D_2(x) \circ D_1(z) -D_2(x) \circ D_1D_1(z)\big)\\
 &=0.
\end{align*}
Thus $(A,  [\cdot,\cdot])$ is a  Leibniz algebra.  Moreover,
\begin{align*}
&[x, y \circ z] - [x, y ]\circ z - y \circ[x,  z]  \\
&=D_1(x)\circ (D_2(y) \circ z +   y \circ D_2(z) ) - D_2(x)\circ (D_1(y)\circ z +  y \circ D_1(z))\\
&\hspace{0.5cm}-(D_1(x)\circ D_2(y)) \circ z + (D_2(x) \circ D_1(y)) \circ z - y \!\circ\! (D_1(x) \!\circ\! D_2(z))  + y \!\circ\! (D_2(x) \circ D_1(z))\\
&=0.\\
&[x \circ y,   z] - x \circ [ y,   z] - y \circ [ x,   z]  \\
&=( D_1(x) \circ y) \circ D_2(z) +  ( x \circ D_1(y)) \circ D_2(z) - ( D_2(x) \circ y) \circ D_1(z) - ( x \circ D_2(y)) \circ D_1(z)\\
&\hspace{0.5cm}\!- \!x \!\circ\! (D_1(y)\! \circ\! D_2(z) )+   x \circ (D_2(y)  \circ D_1(z))  - y \circ (D_1(x) \circ D_2(z) )+   y \circ (D_2(x) \circ D_1(z))  \\
&=0.\\
&[x, y ]\circ z + [y, x]\circ z \\
&=(D_1(x)\circ D_2(y) - D_2(x) \circ D_1(y)+D_1(y)\circ D_2(x) - D_2(y) \circ D_1(x) ) \circ z  \\
&=0.
\end{align*}
Thus $(A, \circ,[\cdot,\cdot])$ is a dual  pre-Poisson algebra.
\end{proof}

{}

As an application of dual pre-Poisson algebras, we  construct  compatible Lie algebras, or more precisely, compatible pre-Lie algebras from dual  pre-Poisson algebras by Koszul duality.  

Recall \cite{IZGVVS1,IZGVVS2} that a {\bf compatible  Lie algebra} is a triple $(A,\{\cdot,\cdot\},\{\cdot,\cdot\}')$ where $(A,\{\cdot,\cdot\})$ and $(A,\{\cdot,\cdot\}')$ are  Lie algebras such that the following bilinear operation 
\begin{align*}
[x, y] = k_1\{x, y\} + k_2\{x, y\}',\quad \forall k_1, k_2 \in \mathbb{F}, x, y \in A, 
\end{align*}
also defines a Lie algebra structure on $A$.

{}

\begin{defi}\cite{WB}  A {\bf compatible  pre-Lie algebra} is a triple $(A,\diamond,\diamond')$ where $(A,\diamond)$ and $(A,\diamond')$ are  pre-Lie algebras such that the following bilinear operation 
$$
x \circ y=k_{1} x \diamond y+k_{2} x \diamond' y,  \quad \forall k_1, k_2 \in \mathbb{F}, x, y \in A, 
$$
also defines a pre-Lie algebra structure on  $A$.
\end{defi}

\begin{pro}\cite{WB}\label{Comprelie}
Let $(A, \diamond)$  and  $(A, \diamond')$ be two pre-Lie algebras.  Then  $( A,\diamond, \diamond')$  is a compatible pre-Lie algebra if and only if  the following condition holds
\begin{align*}
&(x \diamond y) \diamond' z-x \diamond'(y \diamond z)+(x \diamond' y) \diamond z-x \diamond(y \diamond' z)\\
& =(y \diamond x) \diamond' z-y \diamond'(x \diamond z)+(y \diamond' x) \diamond z-y \diamond(x \diamond' z),\quad \forall    x, y, z \in A.
\end{align*}
\end{pro}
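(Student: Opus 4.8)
The plan is to work with the associator form of the pre-Lie identity. For each of the products $\diamond$, $\diamond'$, $\circ$, let $\mathrm{as}$ with the corresponding subscript denote its associator, for instance $\mathrm{as}_{\diamond}(x,y,z) = (x\diamond y)\diamond z - x\diamond(y\diamond z)$; a product is pre-Lie exactly when its associator is symmetric in its first two arguments, that is $\mathrm{as}_{\diamond}(x,y,z) = \mathrm{as}_{\diamond}(y,x,z)$. By definition $(A,\diamond,\diamond')$ is a compatible pre-Lie algebra precisely when, for every $k_1,k_2\in\mathbb{F}$, the product $x\circ y = k_1\,x\diamond y + k_2\,x\diamond' y$ enjoys this symmetry, so the whole statement reduces to analyzing $\mathrm{as}_{\circ}(x,y,z)-\mathrm{as}_{\circ}(y,x,z)$.

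First I would expand $\mathrm{as}_{\circ}$ by bilinearity. Since $\circ$ is a $(k_1,k_2)$-linear combination in each slot, the products $(x\circ y)\circ z$ and $x\circ(y\circ z)$ organize into a quadratic expression in $(k_1,k_2)$:
\begin{align*}
\mathrm{as}_{\circ}(x,y,z) = k_1^2\,\mathrm{as}_{\diamond}(x,y,z) + k_2^2\,\mathrm{as}_{\diamond'}(x,y,z) + k_1k_2\,C(x,y,z),
\end{align*}
where the cross term is $C(x,y,z) = (x\diamond y)\diamond' z + (x\diamond' y)\diamond z - x\diamond'(y\diamond z) - x\diamond(y\diamond' z)$. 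This is a routine expansion.

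Next I would symmetrize in $x$ and $y$. Because $\diamond$ and $\diamond'$ are themselves pre-Lie, the $k_1^2$ and $k_2^2$ contributions to $\mathrm{as}_{\circ}(x,y,z)-\mathrm{as}_{\circ}(y,x,z)$ cancel, leaving
\begin{align*}
\mathrm{as}_{\circ}(x,y,z)-\mathrm{as}_{\circ}(y,x,z) = k_1k_2\,\big(C(x,y,z)-C(y,x,z)\big).
\end{align*}
Thus $\circ$ is pre-Lie for all $k_1,k_2$ iff $C(x,y,z)=C(y,x,z)$, and after reordering the four summands of $C$ this equality is exactly the displayed identity. For the forward implication, specializing to $k_1=k_2=1$ (legitimate since $\mathbb{F}$ has characteristic $0$) forces $C(x,y,z)-C(y,x,z)=0$; for the converse, once the identity holds the cross term vanishes for every $k_1,k_2$, so each $\circ$ is pre-Lie.

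I do not expect a genuine obstacle here: the argument is bilinear bookkeeping. The only point requiring care is the separation of the quadratic form in $(k_1,k_2)$ into its $k_1^2$, $k_2^2$, and $k_1k_2$ components, after which the characteristic-zero hypothesis lets the single specialization $k_1=k_2=1$ extract the cross-term identity.
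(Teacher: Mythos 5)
Your proof is correct, and it is exactly the standard argument for this fact: the paper itself states Proposition \ref{Comprelie} as a citation to \cite{WB} without reproducing a proof, and the quoted source proceeds by the same bilinear expansion, in which the $k_1^2$ and $k_2^2$ parts of $\mathrm{as}_{\circ}(x,y,z)-\mathrm{as}_{\circ}(y,x,z)$ vanish because $\diamond$ and $\diamond'$ are pre-Lie, so compatibility is equivalent to the symmetry in $(x,y)$ of the cross term $C$, which is precisely the displayed identity. One minor remark: the characteristic-zero hypothesis is not actually needed, since the single specialization $k_1=k_2=1$ makes the coefficient of the cross term equal to $1$ over any field.
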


\begin{pro}\cite{WB}\label{compreyoLie}
Let  $( A,\diamond, \diamond')$  be a compatible pre-Lie algebra. Then $(A,\{\cdot,\cdot\},\{\cdot,\cdot\}')$ is a compatible Lie algebra with $\{\cdot,\cdot\},\{\cdot,\cdot\}'$ respectively  given by
$$
\{x,y\}=x \diamond y - y \diamond x, \quad \{x,y\}'=x \diamond' y-y \diamond' x, \quad \forall x,y \in A . 
$$
We  denoted this compatible Lie algebra by  $\left(\mathfrak{g}(A),\{\cdot,\cdot\},\{\cdot,\cdot\}'\right)$, or simply by $\mathfrak{g}(A)$.
\end{pro}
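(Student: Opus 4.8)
The plan is to reduce everything to the single well-known fact that the commutator of a pre-Lie algebra is a Lie bracket (the sub-adjacent Lie algebra), combined with the elementary observation that forming a commutator is a linear operation.

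First I would record the standard construction: for any pre-Lie algebra $(A,\diamond)$, the bracket $\{x,y\}=x\diamond y-y\diamond x$ satisfies the Jacobi identity, so $(A,\{\cdot,\cdot\})$ is a Lie algebra. Applying this separately to $(A,\diamond)$ and $(A,\diamond')$ shows that $\{\cdot,\cdot\}$ and $\{\cdot,\cdot\}'$ are each Lie brackets, which establishes the two individual components of the purported compatible Lie algebra.

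The crucial step is to handle an arbitrary linear combination. I would fix $k_1,k_2\in\mathbb{F}$ and set $x\circ y=k_1\,x\diamond y+k_2\,x\diamond'y$. By the very definition of a compatible pre-Lie algebra, $(A,\circ)$ is again a pre-Lie algebra, so its commutator is automatically a Lie bracket; but by bilinearity this commutator is
$$
x\circ y-y\circ x=k_1(x\diamond y-y\diamond x)+k_2(x\diamond'y-y\diamond'x)=k_1\{x,y\}+k_2\{x,y\}'.
$$
Hence $k_1\{\cdot,\cdot\}+k_2\{\cdot,\cdot\}'$ is a Lie bracket for every choice of $k_1,k_2$, which is exactly the defining condition for $(A,\{\cdot,\cdot\},\{\cdot,\cdot\}')$ to be a compatible Lie algebra.

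There is essentially no genuine obstacle here: the argument hinges only on the fact that the commutator map intertwines the linear combination of pre-Lie products with the corresponding linear combination of the associated Lie brackets, so the compatibility is inherited for free. If a more computational route were desired, one could instead antisymmetrize the compatibility identity of Proposition \ref{Comprelie} in the relevant variables to reproduce the mixed Jacobi-type compatibility of $\{\cdot,\cdot\}$ and $\{\cdot,\cdot\}'$ directly, but the conceptual argument above is both shorter and more transparent.
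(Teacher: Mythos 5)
Your proof is correct and complete: the paper itself gives no proof of this proposition (it is quoted from \cite{WB}), and your argument is the standard one — since the commutator is linear in the underlying product, the sub-adjacent Lie bracket of $k_1\,\diamond + k_2\,\diamond'$ (which is pre-Lie by the very definition of a compatible pre-Lie algebra) is exactly $k_1\{\cdot,\cdot\}+k_2\{\cdot,\cdot\}'$, so every linear combination of the two brackets satisfies the Jacobi identity. Nothing more is needed; the alternative computational route via antisymmetrizing the identity of Proposition \ref{Comprelie} would only reprove what the linear-combination argument gives for free.
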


\begin{pro}\label{ComFdualpp}
 Let $(A, \circ,[\cdot,\cdot])$  be a  dual pre-Poisson algebra  and  $(B, \diamond,\odot )$ be a pre-Poisson algebra. Then $(A \otimes B ,\diamond_{1})$ and $(A \otimes B ,\diamond_{2})$ are  pre-Lie algebras where $\diamond_{1},\diamond_{2}$ are respectively  given by
 \begin{align}
 (x \otimes a) \diamond_{1} (y \otimes b) &= (x \circ y) \otimes (a \diamond b),\\
  (x \otimes a) \diamond_{2} (y \otimes b) &=  [x , y]  \otimes  (a \odot b),\quad \forall x,y \in A, a,b \in B.
\end{align} 
Moreover, $(A \otimes B ,\diamond_{1} ,\diamond_{2})$ is a  compatible pre-Lie algebra and  $\mathfrak{g}(A \otimes B)$ is a  compatible  Lie algebra.


{}
\end{pro}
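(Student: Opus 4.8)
The plan is to establish the four assertions in order: that $(A\otimes B,\diamond_1)$ and $(A\otimes B,\diamond_2)$ are each pre-Lie, that the pair is compatible via the criterion of Proposition~\ref{Comprelie}, and finally that $\mathfrak{g}(A\otimes B)$ is a compatible Lie algebra, the last being immediate from Proposition~\ref{compreyoLie}. Throughout I write $u=x\otimes a$, $v=y\otimes b$, $w=z\otimes c$ with $x,y,z\in A$ and $a,b,c\in B$. The first two parts reduce cleanly to the defining operad identities, and the real work is concentrated in the compatibility verification.

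First I would check $\diamond_1$. Its associator equals $\big(x\circ(y\circ z)\big)\otimes\big(a\diamond(b\diamond c)-(a\diamond b)\diamond c\big)$, where permutativity \eqref{perm} has been used to identify the two $A$-factors. The key observation is that \eqref{perm} also gives $x\circ(y\circ z)=(x\circ y)\circ z=(y\circ x)\circ z=y\circ(x\circ z)$, so this $A$-factor is invariant under $x\leftrightarrow y$; hence the pre-Lie identity for $\diamond_1$ reduces to the pre-Lie identity \eqref{pre} for $\diamond$ on $B$. For $\diamond_2$ the associator is $[x,[y,z]]\otimes\big(a\odot(b\odot c)\big)-[[x,y],z]\otimes\big((a\odot b)\odot c\big)$. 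Here I would use two facts: the Zinbiel identity \eqref{Zinbiel} gives $a\odot(b\odot c)=b\odot(a\odot c)=(a\odot b+b\odot a)\odot c$, so the inner $B$-factor is symmetric in $a,b$; and the Leibniz identity \eqref{Leibniz}, applied twice and subtracted, yields $[[x,y],z]+[[y,x],z]=0$. Combining these, the symmetrization of the associator under $u\leftrightarrow v$ collapses (after substituting $[x,[y,z]]-[y,[x,z]]=[[x,y],z]$) to a multiple of $[[x,y],z]+[[y,x],z]$, which vanishes; thus $\diamond_2$ is pre-Lie.

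The hard part is the compatibility. By Proposition~\ref{Comprelie} it suffices to show that
\[
E(u,v,w):=(u\diamond_1 v)\diamond_2 w-u\diamond_2(v\diamond_1 w)+(u\diamond_2 v)\diamond_1 w-u\diamond_1(v\diamond_2 w)
\]
is symmetric in $u,v$. Expanding gives
\[
E(u,v,w)=[x\circ y,z]\otimes\big((a\diamond b)\odot c\big)-[x,y\circ z]\otimes\big(a\odot(b\diamond c)\big)+([x,y]\circ z)\otimes\big((a\odot b)\diamond c\big)-(x\circ[y,z])\otimes\big(a\diamond(b\odot c)\big),
\]
and my plan is to compute $E(u,v,w)-E(v,u,w)$ term by term, organizing the result by its $A$-factor among $x\circ[y,z]$, $y\circ[x,z]$, $[x,y]\circ z$. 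Concretely: on the first pair of terms I would apply \eqref{xdpp2} to write $[x\circ y,z]=x\circ[y,z]+y\circ[x,z]$ and the pre-Poisson relation \eqref{xpp1} on the $B$-factor; on the $[x,y\circ z]$ terms I would apply \eqref{xdpp1}; and on the $([x,y]\circ z)$ terms I would apply the skew-symmetry \eqref{xdpp3} together with \eqref{xpp2}. After these substitutions every term carrying $x\circ[y,z]$ or $[x,y]\circ z$ cancels, and I expect the difference to collapse to $\big(y\circ[x,z]\big)\otimes X$ with
\[
X=a\diamond(b\odot c)-b\odot(a\diamond c)-a\odot(b\diamond c)+b\diamond(a\odot c).
\]
The final step is to show $X=0$, which follows from \eqref{xpp1} used twice: once as stated it gives $a\diamond(b\odot c)-b\odot(a\diamond c)=(a\diamond b-b\diamond a)\odot c$, and once with $a,b$ interchanged it gives $b\diamond(a\odot c)-a\odot(b\diamond c)=(b\diamond a-a\diamond b)\odot c$, and these two summands cancel. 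Hence $E$ is symmetric and $(A\otimes B,\diamond_1,\diamond_2)$ is a compatible pre-Lie algebra.

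Finally, part concerning $\mathfrak{g}(A\otimes B)$ is immediate: Proposition~\ref{compreyoLie} turns the compatible pre-Lie algebra $(A\otimes B,\diamond_1,\diamond_2)$ into the compatible Lie algebra $\mathfrak{g}(A\otimes B)$, whose two brackets are the commutators of $\diamond_1$ and $\diamond_2$. The one genuine obstacle is the bookkeeping in the compatibility step; I would tame it precisely by the grouping above, so that the three $A$-side relations \eqref{xdpp1}, \eqref{xdpp2}, \eqref{xdpp3} effect all the cancellations of $A$-factors and the entire identity is reduced to the single scalar relation $X=0$ in $B$.
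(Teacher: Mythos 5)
Your proposal is correct and follows essentially the same route as the paper: after expanding the mixed-associator expression of Proposition~\ref{Comprelie}, you use \eqref{xdpp1}, \eqref{xdpp2} and \eqref{xdpp3} to regroup the $A$-factors into $x\circ[y,z]$, $y\circ[x,z]$ and $[x,y]\circ z$, and then \eqref{xpp1} (applied twice) and \eqref{xpp2} annihilate each $B$-coefficient, which is exactly the paper's chain of equalities. The only difference is cosmetic: you additionally spell out the verifications that $\diamond_1$ and $\diamond_2$ are individually pre-Lie (via permutativity plus \eqref{pre}, and via the Zinbiel symmetry $a\odot(b\odot c)=b\odot(a\odot c)$ plus the Leibniz consequence $[[x,y]+[y,x],z]=0$), a step the paper leaves implicit.
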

\begin{proof}
 Let $x,y,z\in A$, $a,b,c \in B$, we have
\begin{align*}
&((x \otimes a) \diamond_{1} (y \otimes b)) \diamond_{2} (z \otimes c)+((x \otimes a)  \diamond_{2} (y \otimes b)) \diamond_{1} (z \otimes c)\\
&-(x \otimes a) \diamond_{2}((y \otimes b) \diamond_{1} (z \otimes c))-(x \otimes a) \diamond_{1}((y \otimes b) \diamond_{2} (z \otimes c))\\
&=[x \circ y,z] \otimes (a  \diamond b)  \odot c + [x,y]\circ  z  \otimes (a  \odot b) \diamond   c\\
&\quad-[x , y \circ z] \otimes  a  \odot  (b  \diamond c) - x \circ  [ y,z] \otimes a  \diamond  (b  \odot  c)\\
&=x \circ[ y,z] \otimes (a  \diamond b)  \odot c + y \circ[ x,z] \otimes (a  \diamond b)  \odot c + [x,y]\circ  z  \otimes (a  \odot b) \diamond   c\\
&\quad-[x , y ]\circ z  \otimes  a  \odot  (b  \diamond c)-y \circ  [x , z]   \otimes  a  \odot  (b  \diamond c)  - x \circ  [ y,z] \otimes a  \diamond  (b  \odot  c)\\
&=x \circ[ y,z] \otimes \big((a  \diamond b)  \odot c - a  \diamond  (b  \odot  c)\big) + y \circ[ x,z] \otimes \big((a  \diamond b)  \odot c - a  \odot  (b  \diamond c) \big)    \\
&\quad + [x , y ]\circ z  \otimes \big( (a  \odot b) \diamond   c - a  \odot  (b  \diamond c) \big)   \\
&=y \circ[ x,z] \otimes \big((b \diamond a)  \odot c - b  \diamond  (a \odot  c)\big) + x \circ[ y,z] \otimes \big((b  \diamond a)  \odot c - b  \odot  (a  \diamond c) \big)    \\
&\quad + [y , x]\circ z  \otimes \big( (b  \odot a) \diamond   c - b  \odot  (a  \diamond c) \big)   \\
&=((y \otimes b) \diamond_{1} (x \otimes a)) \diamond_{2} (z \otimes c)+((y \otimes b)  \diamond_{2} (x \otimes a)) \diamond_{1} (z \otimes c)\\
&\quad-(y \otimes b) \diamond_{2}((x \otimes a) \diamond_{1} (z \otimes c))-(y \otimes b) \diamond_{1}((x \otimes a) \diamond_{2} (z \otimes c)).
\end{align*}
By Proposition  \ref{Comprelie}, $(A \otimes B ,\diamond_{1} ,\diamond_{2})$ is a  compatible pre-Lie algebra. The rest follows from Proposition \ref{compreyoLie}. This completes the proof.  \end{proof}

\subsection{Diassociative formal deformation of permutative algebras}

Observing that Poisson and  pre-Poisson algebras emerge as deformation limits of commutative associative   and  Zinbiel algebras respectively \cite{MA,MR}, we are motivated to explore whether the dual pre-Poisson algebra can be realized as a deformation limit of a suitable algebra as well. The answer is affirmative. In this subsection, we introduce diassociative deformation quantization of permutative algebras and show that dual pre-Poisson algebras arise as their semi-classical limits. This illustrates that dual pre-Poisson algebras are the same to dialgebras as Poisson algebras to associative algebras.

\begin{defi}\cite{L4}
 A {\bf diassociative algebra} (or dialgebra) is a triple $(A,\triangleright,\triangleleft)$ where $(A,\triangleright)$ and $(A,\triangleleft)$ are associative algebras satisfying the following conditions 
\begin{align*}
(x \triangleleft y) \triangleleft z  = x \triangleleft (y \triangleright z),\quad (x \triangleright y) \triangleleft z  = x \triangleright (y \triangleleft z),\quad
(x \triangleleft y) \triangleright z  = x  \triangleright (y \triangleright z),
\end{align*}
for all $x ,y,z \in A$.
\end{defi}

\begin{pro}\cite{L4}\label{diatoLeib}
    Let $(A,\triangleright,\triangleleft)$ be a dialgebra. Then the bilinear operation $[\cdot,\cdot]: A \otimes A \to A$ given by 
\begin{align}
  [x, y]  = x \triangleright y - y \triangleleft x, \quad \forall x,y \in A, \label{inducedLeib}
\end{align}
defines a Leibniz algebra $(A,[\cdot, \cdot])$, which is called the {\bf sub-adjacent Leibniz algebra} of $(A,\triangleright,\triangleleft)$
\end{pro}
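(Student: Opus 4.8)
The plan is to verify the left Leibniz identity \eqref{Leibniz}, namely $[x,[y,z]] = [[x,y],z] + [y,[x,z]]$, by direct expansion. Writing $[x,y] = x \triangleright y - y \triangleleft x$ and using bilinearity, I would first expand each of the three brackets $[x,[y,z]]$, $[[x,y],z]$ and $[y,[x,z]]$ into four products of three elements apiece. Thus the quantity $[x,[y,z]] - [[x,y],z] - [y,[x,z]]$ becomes a signed sum of twelve triple products, each a mixed associated word in $\triangleright$ and $\triangleleft$.

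The key observation is that these twelve terms cancel in six pairs, where each cancellation is governed by exactly one of the five defining relations of a dialgebra. Concretely: the pair $x \triangleright (y \triangleright z)$ and $(x \triangleright y) \triangleright z$ cancels by associativity of $\triangleright$; the pair $(z \triangleleft y) \triangleleft x$ and $z \triangleleft (y \triangleleft x)$ cancels by associativity of $\triangleleft$; the terms involving $x \triangleright (z \triangleleft y)$ and $(y \triangleright z) \triangleleft x$ are each reduced using the relation $(x \triangleright y) \triangleleft z = x \triangleright (y \triangleleft z)$; the term $(y \triangleleft x) \triangleright z$ is rewritten via $(x \triangleleft y) \triangleright z = x \triangleright (y \triangleright z)$; and the term $z \triangleleft (x \triangleright y)$ is rewritten via $(x \triangleleft y) \triangleleft z = x \triangleleft (y \triangleright z)$. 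After these five rewrites each reduced term meets its negative among the remaining twelve, and the total vanishes, establishing \eqref{Leibniz}.

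The computation is entirely routine; the only real obstacle is bookkeeping, that is, correctly tracking the six signs produced by the two subtractions $x \triangleright y - y \triangleleft x$ in each bracket and pairing each term with the correct dialgebra axiom. To keep this transparent I would present the twelve terms in a fixed order and indicate, term by term, which of the five relations is applied, so that the pairwise cancellation is visible by inspection. I would note in passing that all five dialgebra relations are needed (the mixed relation $(x \triangleright y)\triangleleft z = x \triangleright (y \triangleleft z)$ being used twice), which explains why the weaker structure of two merely associative products would not suffice. This recovers Loday's classical construction of the sub-adjacent Leibniz algebra of a dialgebra \cite{L4}.
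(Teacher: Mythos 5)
Your proof is correct: expanding $[x,[y,z]]-[[x,y],z]-[y,[x,z]]$ does yield twelve signed triple products that cancel in exactly the six pairs you describe --- associativity of $\triangleright$ handles $x\triangleright(y\triangleright z)$ against $(x\triangleright y)\triangleright z$, associativity of $\triangleleft$ handles $(z\triangleleft y)\triangleleft x$ against $z\triangleleft(y\triangleleft x)$, the middle relation $(x\triangleright y)\triangleleft z=x\triangleright(y\triangleleft z)$ kills both $x\triangleright(z\triangleleft y)-(x\triangleright z)\triangleleft y$ and $(y\triangleright z)\triangleleft x-y\triangleright(z\triangleleft x)$, and the relations $(x\triangleleft y)\triangleright z=x\triangleright(y\triangleright z)$ and $(x\triangleleft y)\triangleleft z=x\triangleleft(y\triangleright z)$ dispose of $(y\triangleleft x)\triangleright z$ and $z\triangleleft(x\triangleright y)$ respectively. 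The paper offers no proof of its own (the proposition is cited from Loday), and your direct verification, including the accurate remark that all five dialgebra axioms are used with the middle one used twice, is precisely the classical argument.
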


\begin{rmk}\label{baesF}
In fact, the definition  and result  above remain valid if we replace the base field $\mathbb{F}$ with a commutative ring. 
\end{rmk}

\begin{defi}
 Let $(  A, \circ )$ be a permutative algebra. A {\bf diassociative formal deformation} of  $A$  is a sequence of bilinear operations  $\triangleright_{i},\triangleleft_{i}: A \otimes A \rightarrow A$  for  $i \geqslant 0$ with $x \triangleright_{0} y := x \circ y = :y \triangleleft_{0}  x$ for all $x,y \in A$ such that   $(A[[\hbar]],\triangleright_{\hbar},\triangleleft_{\hbar} )$ is a dialgebra where    the  $\mathbb{F}[[\hbar]]$-bilinear operations   $\triangleright_{\hbar},\triangleleft_{\hbar}$  on  $A[[\hbar]]$ are defined by 
\begin{align*}
x \triangleright_{\hbar} y=\sum_{i=0}^{+\infty}   x \triangleright_{i} y \hbar^{i} , \quad x \triangleleft_{\hbar} y=\sum_{i=0}^{+\infty}   x \triangleleft_{i} y \hbar^{i}, \quad \forall x, y \in A.
\end{align*}
\end{defi}

Note that the rule of dialgebra operations  $\triangleright_{\hbar},\triangleleft_{\hbar}$   on  $A[[\hbar]] $ is equivalent to
\begin{align}
&\sum_{i+j=k} \big( (x \triangleright_{i} y) \triangleright_{j}  z- x \triangleright_{i} (y \triangleright_{j}  z)  \big)=0,\label{deffordi1}\\
&\sum_{i+j=k} \big( (x \triangleleft_{i} y) \triangleleft_{j}  z- x \triangleleft_{i} (y \triangleright_{j}  z)  \big)=0,\\
&\sum_{i+j=k} \big( (x \triangleright_{i} y) \triangleleft_{j}  z- x \triangleright_{i} (y \triangleleft_{j}  z)  \big)=0,\\
&\sum_{i+j=k} \big( (x \triangleleft_{i} y) \triangleright_{j}  z- x \triangleright_{i} (y \triangleright_{j}  z)  \big)=0,\\
&\sum_{i+j=k} \big( (x \triangleleft_{i} y) \triangleleft_{j}  z- x \triangleleft_{i} (y \triangleleft_{j}  z)  \big)=0,\quad \forall x,y,z \in A, k \geqslant 0.\label{deffordi5}
\end{align}

\begin{thm}\label{dppofdeform}
 Let  $(A, \circ)$  be a permutative algebra and   $(A[[\hbar]],\triangleright_{\hbar},\triangleleft_{\hbar} )$  be a  diassociative formal deformation of $A$. Define a new $\mathbb{F}$-bilinear operation $[\cdot,\cdot]: A \otimes A \rightarrow A$ by 
\begin{align*}
[x , y] = x \triangleright_{1} y - y \triangleleft_{1} x, \quad \forall x, y \in A.
\end{align*}
Then  $(A, \circ, [\cdot,\cdot])$ is a dual pre-Poisson algebra, which is called   the {\bf semi-classical limit} of $(A[[\hbar]],\triangleright_{\hbar},\triangleleft_{\hbar} )$.  The  dialgebra  $(A[[\hbar]],\triangleright_{\hbar},\triangleleft_{\hbar} )$ is called the {\bf diassociative deformation quantization} of   $(A, \circ, [\cdot,\cdot])$.
\end{thm}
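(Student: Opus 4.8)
The plan is to move the whole computation into the dialgebra $(A[[\hbar]],\triangleright_\hbar,\triangleleft_\hbar)$ over the ring $\mathbb{F}[[\hbar]]$ and then recover the dual pre-Poisson axioms for $(A,\circ,[\cdot,\cdot])$ by comparing coefficients of powers of $\hbar$. The permutative axiom \eqref{perm} is part of the hypothesis (indeed it is forced by the $\hbar^0$-part of the dialgebra axioms), so only \eqref{Leibniz} and the compatibility conditions \eqref{xdpp1}--\eqref{xdpp3} remain. The key object is the sub-adjacent Leibniz bracket of the deformed dialgebra,
\[
[a,b]_\hbar := a\triangleright_\hbar b - b\triangleleft_\hbar a = \sum_{i\geq 0}\bigl(a\triangleright_i b - b\triangleleft_i a\bigr)\hbar^{i},\qquad a,b\in A[[\hbar]].
\]
Since $a\triangleright_0 b = a\circ b = b\triangleleft_0 a$, its $\hbar^0$-coefficient vanishes and its $\hbar^1$-coefficient is exactly $a\triangleright_1 b - b\triangleleft_1 a = [a,b]$; thus our bracket is the leading term of $[\cdot,\cdot]_\hbar$.

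First I would dispose of the Leibniz identity \eqref{Leibniz} without any direct calculation. By Proposition~\ref{diatoLeib} together with Remark~\ref{baesF}, applied over the commutative ring $\mathbb{F}[[\hbar]]$, the bracket $[\cdot,\cdot]_\hbar$ is a Leibniz bracket on $A[[\hbar]]$. Expanding its Leibniz identity $[a,[b,c]_\hbar]_\hbar = [[a,b]_\hbar,c]_\hbar + [b,[a,c]_\hbar]_\hbar$ in powers of $\hbar$ and using that $[\cdot,\cdot]_\hbar$ starts in degree $1$, the $\hbar^0$- and $\hbar^1$-parts vanish identically and the $\hbar^2$-coefficient is precisely \eqref{Leibniz} for $[\cdot,\cdot]$.

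Next come the compatibility conditions, obtained from the $\hbar^1$-coefficients of \eqref{deffordi1}--\eqref{deffordi5}. It is cleanest to package these as formal identities in the dialgebra. For \eqref{xdpp1} I would use that the left adjoint of the sub-adjacent bracket is a derivation of $\triangleright_\hbar$, that is $[a,b\triangleright_\hbar c]_\hbar = [a,b]_\hbar\triangleright_\hbar c + b\triangleright_\hbar[a,c]_\hbar$, which is a short check using associativity of $\triangleright_\hbar$ and the dialgebra axioms; its $\hbar^1$-coefficient is exactly \eqref{xdpp1}. For \eqref{xdpp3} I would use $\bigl([a,b]_\hbar + [b,a]_\hbar\bigr)\triangleright_\hbar c = 0$, valid in any dialgebra because the four terms collapse in pairs via associativity of $\triangleright_\hbar$ and the axiom $(x\triangleleft y)\triangleright z = x\triangleright(y\triangleright z)$; its $\hbar^1$-coefficient gives $([x,y]+[y,x])\circ z = 0$, which is \eqref{xdpp3}.

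The main obstacle is \eqref{xdpp2}, where the product sits in the first slot of the bracket: here the right adjoint of a Leibniz bracket is not a derivation, so the naive termwise identity $[a\triangleright_\hbar b,c]_\hbar = a\triangleright_\hbar[b,c]_\hbar + b\triangleright_\hbar[a,c]_\hbar$ fails (one is left with a $(b\triangleright_\hbar a)\triangleright_\hbar c$-type residue). I would get around this by first proving the symmetric relation $[x\circ y,z] = [x,y\circ z] + [y,x\circ z]$. At the dialgebra level this is the $\hbar^1$-coefficient of the genuinely valid identity
\[
[a\triangleright_\hbar b,c]_\hbar + [b\triangleleft_\hbar a,c]_\hbar = [a,b\triangleright_\hbar c]_\hbar + [b,a\triangleright_\hbar c]_\hbar + [a,c\triangleleft_\hbar b]_\hbar + [b,c\triangleleft_\hbar a]_\hbar,
\]
in which the obstructing residues from the two halves cancel exactly (this is where the remaining dialgebra axioms, hence the permutative structure of $\circ$, are used); alternatively one may derive the same relation by combining the $\hbar^1$-forms of \eqref{deffordi1} and \eqref{deffordi5}. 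Finally I would assemble the pieces: applying \eqref{xdpp1} to $[x,y\circ z]$ and to $[y,x\circ z]$ and adding, the two terms of the form $[\,\cdot\,,\cdot]\circ z$ cancel by \eqref{xdpp3}, yielding $[x,y\circ z] + [y,x\circ z] = x\circ[y,z] + y\circ[x,z]$; combined with the symmetric relation this is exactly \eqref{xdpp2}. Having verified \eqref{perm}, \eqref{Leibniz} and \eqref{xdpp1}--\eqref{xdpp3}, the triple $(A,\circ,[\cdot,\cdot])$ is a dual pre-Poisson algebra.
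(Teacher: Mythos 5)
Your argument is correct, and its skeleton coincides with the paper's: the Leibniz identity for $[\cdot,\cdot]$ is obtained exactly as in the paper's proof, by applying Proposition~\ref{diatoLeib} and Remark~\ref{baesF} over $\mathbb{F}[[\hbar]]$ and reading off the $\hbar^2$-coefficient of the Leibniz rule for $[\cdot,\cdot]_\hbar$, and the compatibility conditions come from degree-one data in $\hbar$. Where you genuinely diverge is in how \eqref{xdpp1}--\eqref{xdpp3} are extracted. The paper sets $k=1$ in Eqs.~\eqref{deffordi1}--\eqref{deffordi5}, derives from these a chain of identities relating $\triangleright_1$, $\triangleleft_1$ and $\circ$ (such as $(x\circ y)\triangleright_1 z = x\circ(y\triangleright_1 z) = (x\triangleright_1 y)\circ z$), and then verifies each compatibility condition, including \eqref{xdpp2}, by direct expansion. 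You instead prove three $\hbar$-free identities valid in an arbitrary dialgebra, namely $[a,b\triangleright c]=[a,b]\triangleright c + b\triangleright[a,c]$, $([a,b]+[b,a])\triangleright c=0$, and the four-term identity $[a\triangleright b,c]+[b\triangleleft a,c]=[a,b\triangleright c]+[b,a\triangleright c]+[a,c\triangleleft b]+[b,c\triangleleft a]$, and take $\hbar^1$-coefficients; I checked all three against the dialgebra axioms and they hold, and the coefficient extractions are as you claim (the symmetric relation $[x\circ y,z]=[x,y\circ z]+[y,x\circ z]$ appears with a factor of $2$ on both sides, harmless in characteristic $0$), after which your assembly of \eqref{xdpp2} from the symmetric relation together with \eqref{xdpp1} and \eqref{xdpp3} is valid. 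Your route buys structural clarity: the key identities live in the dialgebra itself, so they explain uniformly why the semi-classical limit is dual pre-Poisson and they avoid the term-by-term bookkeeping of the $k=1$ relations; the paper's route is more elementary and makes explicit exactly which relations among $\triangleright_1,\triangleleft_1$ drive each axiom. One small caveat: your parenthetical alternative, deriving the symmetric relation from the $\hbar^1$-forms of \eqref{deffordi1} and \eqref{deffordi5} alone, looks insufficient, since those two control only the associativity axioms and the mixed dialgebra axioms appear to be needed; but nothing in your proof depends on that aside, as the primary derivation via the four-term identity is complete.
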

\begin{proof}
Define the  $\mathbb{F}[[\hbar]]$-bilinear operation $[\cdot,\cdot]_{\hbar}$ on $A[[\hbar]]$ by 
\begin{align*}
[x,y]_{\hbar} = x \triangleright_{h} y - y \triangleleft_{h} x = [x,y]\hbar + (x \triangleright_{2} y - y \triangleleft_{2} x)\hbar^2+\cdots
\end{align*}
By Proposition \ref{diatoLeib} and Remark \ref{baesF}, $(A[[\hbar]],[\cdot,\cdot]_{\hbar})$ is a Leinbiz algebra. It is obvious that the $\hbar^2$-terms of the Leibniz rule (Eq.~\eqref{Leibniz}) for $[\cdot,\cdot]_{\hbar}$ can imply  the Leibniz rule  for $[\cdot,\cdot]$. Thus $(A,  [\cdot,\cdot])$ is a Leibniz algebra.

For $k = 1$ in Eqs.~\eqref{deffordi1}-\eqref{deffordi5}, we have
\begin{align*}
(x \circ  y)  \triangleright_{1} z &= x \circ (y  \triangleright_{1} z) = (y \circ  x)  \triangleright_{1} z  = y \circ (x \triangleright_{1} z) = (y \triangleright_{1}  x)  \circ  z \\
&=  y \triangleright_{1}  (x  \circ  z) = x \triangleright_{1}  (y  \circ  z) = (x \triangleright_{1}   y)  \circ  z,\\
(x \circ  y)  \triangleleft_{1} z &= x \circ (y  \triangleleft_{1} z) = y \triangleleft_{1} (x \circ z) =(x \triangleleft_{1} z )\circ y = (x \triangleright_{1} z )\circ y, \quad \forall x,y,z \in A.
\end{align*}
Then 
\begin{align*}
&[x, y \circ z] -[x, y ]\circ z - y \circ[x,  z]   \\
&= x \triangleright_{1} (y \circ z) - (y \circ z) \triangleleft_{1} x - (x \triangleright_{1} y -y \triangleleft_{1} x)\circ z - y \circ (x \triangleright_{1} z -z \triangleleft_{1} x)  \\
&= x \triangleright_{1} (y \circ z) - (x \triangleright_{1} y )\circ z   + (y \triangleleft_{1} x)\circ z - (y \circ z) \triangleleft_{1} x\\
&=0.\\
&[x \circ y,   z] - x \circ [ y,   z] - y \circ [ x,   z] \\
&= (x \circ y)\triangleright_{1} z - z \triangleleft_{1} (x \circ y) - x \circ (y \triangleright_{1} z -z \triangleleft_{1} y)  - y \circ (x \triangleright_{1} z -z \triangleleft_{1} x)  \\
&=0.\\
&[x, y ]\circ z  + [y, x]\circ z\\
&= (x \triangleright_{1} y -y \triangleleft_{1} x + y \triangleright_{1} x -x \triangleleft_{1} y)\circ z \\
&=0.
\end{align*}
Thus $(A, \circ, [\cdot,\cdot])$ is a dual pre-Poisson algebra.
\end{proof}
 
\begin{rmk}
As Proposition \ref{Koszulfordpp} indicates that the operads $\mathsf{DualprePois}$ and $\mathsf{Dias}$ have the same Hilbert-Poincaré series, it follows that their associated operad dimensions coincide. Thus  Proposition \ref{dppofdeform} establishes dual pre-Poisson algebras as the semi-classical limits of diassociative formal deformations of permutative algebras, thereby lifting the classical relationship in which Poisson algebras arise as the classical limits of associative formal deformations of commutative associative algebras to the level of operadic duplicator \cite{JBGN}.
\end{rmk}

\subsection{Quadratic dual pre-Poisson algebras}

In this subsection, we introduce the notions of representations of  dual pre-Poisson algebras and quadratic dual pre-Poisson algebras.

\begin{defi} \label{defrep} A \textbf{representation} of a dual pre-Poisson algebra $(A, \circ,[\cdot,\cdot])$  is a quintuple $(V;l_{\circ}, r_{\circ}, l_{[\cdot,\cdot]}, $ $r_{[\cdot,\cdot]})$ where $V$ is a vector space and  $l_{\circ}, r_{\circ}, l_{[\cdot,\cdot]}, r_{[\cdot,\cdot]}: A \rightarrow \End(V)$ are  linear maps satisfying  
\begin{enumerate}
\item $(V;l_{\circ}, r_{\circ})$ is a {\bf representation of permutative algebra $(A, \circ)$}:
\begin{align}
r_{\circ}(x)r_{\circ}(y) &= r_{\circ}(y \circ x) = l_{\circ}(y)r_{\circ}(x) =r_{\circ}(x)l_{\circ}(y) ,\label{permrep1}\\
l_{\circ}(x \circ y)&= l_{\circ}(x)l_{\circ}(y)  = l_{\circ}(y)l_{\circ}(x), \quad \forall x,y  \in A. \label{permrep2}
\end{align}
\item $(V; l_{[\cdot,\cdot]}, r_{[\cdot,\cdot]})$ is a {\bf representation of  Leibniz algebra $(A,[\cdot,\cdot])$}:
\begin{align}
 l_{[\cdot,\cdot]}( [x,y])&= l_{[\cdot,\cdot]}(x)l_{[\cdot,\cdot]}(y) -  l_{[\cdot,\cdot]}(y)l_{[\cdot,\cdot]}(x),\label{Leibrep1}\\
r_{[\cdot,\cdot]}(  [x,y]  )&= r_{[\cdot,\cdot]}(y)r_{[\cdot,\cdot]}(x) + l_{[\cdot,\cdot]}(x)r_{[\cdot,\cdot]}(y),\\
r_{[\cdot,\cdot]}(x)r_{[\cdot,\cdot]}(y) &= -r_{[\cdot,\cdot]}(x)l_{[\cdot,\cdot]}(y), \quad \forall x,y  \in A. \label{Leibrep2}
\end{align}
\item The following conditions:
\begin{align}
r_{[\cdot,\cdot]}(x \circ y) & =r_{\circ}(y)r_{[\cdot,\cdot]}(x)+l_{\circ}(x)r_{[\cdot,\cdot]}(y), \label{dpp1} \\
l_{[\cdot,\cdot]}(x)r_{\circ}(y) & =r_{\circ}(y)l_{[\cdot,\cdot]}(x)+ r_{\circ}([x,y]), \label{dpp2} \\
l_{[\cdot,\cdot]}(x)l_{\circ}(y) & =l_{\circ}([x,y])+  l_{\circ}(y)l_{[\cdot,\cdot]}(x), \label{dpp3} \\
r_{[\cdot,\cdot]}(x)r_{\circ}(y) & =r_{\circ}([y,x])+  l_{\circ}(y)r_{[\cdot,\cdot]}(x), \label{dpp4} \\
l_{[\cdot,\cdot]}(x \circ y) & =l_{\circ}(x)l_{[\cdot,\cdot]}(y)+l_{\circ}(y)l_{[\cdot,\cdot]}(x), \label{dpp5} \\
r_{[\cdot,\cdot]}(x)(l_{\circ}-r_{\circ})(y) & = 0 , \label{dpp6} \\
r_{\circ}(x)(l_{[\cdot,\cdot]} + r_{[\cdot,\cdot]})(y) & = 0, \label{dpp7} \\
l_{\circ}([x,y]+[y,x]) & =0, \quad  \forall x, y \in A. \label{dpp8} 
\end{align}
\end{enumerate}
Representations  $(V_1;l_{\circ_1}, r_{\circ_1}, l_{[\cdot,\cdot]_1}, r_{[\cdot,\cdot]_1})$  and $(V_2;l_{\circ_2}, r_{\circ_2}, l_{[\cdot,\cdot]_2}, r_{[\cdot,\cdot]_2})$ of a dual pre-Poisson  algebra $(A,\circ,[\cdot,\cdot])$ are
\textbf{equivalent} if there exists a linear isomorphism $\varphi:
V_{1} \rightarrow V_{2}$  such that
\begin{align*}
\varphi\left(l_{\circ_1}(x) v\right) &=l_{\circ_2}(x) \varphi(v),&&
\varphi\left(r_{\circ_1}(x) v\right)=r_{\circ_2}(x) \varphi(v), \\
\varphi\left(l_{[\cdot,\cdot]_1}(x) v\right) &=l_{[\cdot,\cdot]_2}(x) \varphi(v),&&\varphi\left(r_{[\cdot,\cdot]_1}(x) v\right)=r_{[\cdot,\cdot]_2}(x) \varphi(v),
\;\;\forall x \in A, v \in V_{1}.
\end{align*}
\end{defi}

\begin{ex}
Let  $(A, \circ,[\cdot,\cdot])$  be a dual pre-Poisson algebra. Then  $(A; L_{\circ},R_{\circ},L_{[\cdot,\cdot]},R_{[\cdot,\cdot]})$  is a representation of  $(A, \circ,[\cdot,\cdot])$, which is called the \textbf{regular representation} of $(A, \circ,[\cdot,\cdot])$.
\end{ex}

It is straightforward to  obtain the following conclusion.
\begin{pro} \label{repofdppa}
Let  $(A, \circ,[\cdot,\cdot])$  be a dual pre-Poisson algebra. Let $V$  be a vector space and  $l_{\circ}, r_{\circ}, l_{[\cdot,\cdot]}, r_{[\cdot,\cdot]}: A \rightarrow \End(V)$ be linear maps. Then $(V;l_{\circ}, r_{\circ}, l_{[\cdot,\cdot]}, r_{[\cdot,\cdot]})$ is a representation of $(A, \circ,[\cdot,\cdot])$ if and only if there is a dual pre-Poisson algebra structure on the direct sum  $A \oplus V$  of vector spaces given by
\begin{align}
(x+u) \circ_{A \oplus V}(y+v) &:=x \circ  y+ l_{\circ}(x) v+r_{\circ}(y) u,\\
[ x+u , y+v ]_{A \oplus V} &:=[x,y]+l_{[\cdot,\cdot]}(x) v+r_{[\cdot,\cdot]}(y) u, \quad \forall x, y \in A, u, v \in V.
\end{align}
We denote the dual pre-Poisson algebra  $\left(A \oplus V,  \circ_{A \oplus V}, [\cdot, \cdot]_{A \oplus V} \right)$  by  $A \ltimes_{l_{\circ}, r_{\circ}, l_{[\cdot,\cdot]}, r_{[\cdot,\cdot]}} V$.
\end{pro}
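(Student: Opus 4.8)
The plan is to prove the equivalence by the standard semidirect-product argument: the compatibility axioms defining a representation are precisely the axioms for $(A\oplus V,\circ_{A\oplus V},[\cdot,\cdot]_{A\oplus V})$ to be a dual pre-Poisson algebra, once one expands each defining identity and separates the resulting expression by its "number of $V$-components." Since the products $\circ_{A\oplus V}$ and $[\cdot,\cdot]_{A\oplus V}$ are bilinear and $V$ is an ideal (products of two elements of $V$ vanish), every defining identity of Definition \ref{def1}, when applied to elements of $A\oplus V$, splits into a pure $A$-part and several mixed parts that are linear in exactly one element of $V$. The pure $A$-part always holds because $(A,\circ,[\cdot,\cdot])$ is itself a dual pre-Poisson algebra, so the content of the statement lies entirely in the mixed parts.

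First I would verify the two "internal" structures. Substituting $x+u,y+v$ into the permutative identity \eqref{perm} and collecting the coefficients of the single $V$-entry yields exactly the four relations \eqref{permrep1}--\eqref{permrep2}, giving that $(A\oplus V,\circ_{A\oplus V})$ is permutative iff $(V;l_\circ,r_\circ)$ is a representation of $(A,\circ)$; similarly the Leibniz identity \eqref{Leibniz} produces \eqref{Leibrep1}--\eqref{Leibrep2}. Then I would treat the three compatibility conditions \eqref{xdpp1}, \eqref{xdpp2}, \eqref{xdpp3} one at a time. Each is trilinear in $x+u,y+v,z+w$; expanding and reading off the coefficient of $w$, then of $v$, then of $u$, converts the three conditions into the eight mixed relations \eqref{dpp1}--\eqref{dpp8}. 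Concretely, \eqref{xdpp1} contributes the relations governing $l_{[\cdot,\cdot]}$ against $l_\circ$ and $r_\circ$ (giving \eqref{dpp2}, \eqref{dpp3} and part of \eqref{dpp7}), \eqref{xdpp2} contributes the $r_{[\cdot,\cdot]}$-relations (\eqref{dpp1}, \eqref{dpp4} and the remaining pieces of \eqref{dpp6}, \eqref{dpp7}), and \eqref{xdpp3} yields \eqref{dpp5}, \eqref{dpp8} together with the anti-symmetry forced on the $V$-slots.

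The verification is a bookkeeping exercise rather than a conceptual one, so I would present it as "it is straightforward to check," but I would be careful about which slot carries the $V$-entry, since $l$ and $r$ are genuinely distinct (the Leibniz bracket is not antisymmetric). The main obstacle is matching the exact form of each axiom \eqref{dpp1}--\eqref{dpp8}: several of them (notably \eqref{dpp6}, \eqref{dpp7}, \eqref{dpp8}) arise not from a single coefficient but from combining the coefficient extracted from a compatibility condition with relations already obtained from the permutative or Leibniz axioms, so one must be sure the list \eqref{dpp1}--\eqref{dpp8} is exactly equivalent to—not merely implied by—the split identities. In practice one checks both directions simultaneously: since each coefficient-of-$V$ equation is an identity in $x,y\in A$ acting on $v\in V$ (or $u\in V$), the passage "axiom on $A\oplus V$ $\Longleftrightarrow$ representation relation" is an equivalence at each stage, and assembling the stages gives the stated equivalence.

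Finally, because the proposition is asserted to be immediate ("It is straightforward to obtain"), I would record only the reduction—that each clause of Definition \ref{def1} for $A\oplus V$ decomposes by $V$-degree into the pure-$A$ identity (automatic) plus precisely the representation relations \eqref{permrep1}--\eqref{dpp8}—and leave the routine coefficient extraction to the reader, exactly as the companion constructions (e.g.\ Proposition \ref{dppfromHRofP}) are handled in the excerpt.
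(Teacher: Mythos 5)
Your strategy is exactly the paper's intended one: the paper supplies no written proof at all (the proposition is prefaced only by ``It is straightforward to obtain the following conclusion''), and the standard semidirect-product bookkeeping you describe --- expand each axiom of Definition \ref{def1} on $A\oplus V$, use that products of two $V$-elements vanish, and isolate each single-$V$-entry coefficient by setting the other two $V$-components to zero, so that every stage is an equivalence rather than a one-way implication --- is the correct and only proof. Your structural remarks (the pure $A$-part is automatic; $l$ and $r$ must be tracked separately because the bracket is not antisymmetric) are also right.

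However, your concrete attribution of which compatibility condition yields which representation axiom is wrong in several places, and since this is the only substantive content of the sketch beyond the generic strategy, it is worth correcting. Carrying out the extraction, \eqref{xdpp1} gives, from the coefficients of $w$, $v$, $u$ respectively, exactly \eqref{dpp3}, \eqref{dpp2} and \eqref{dpp1}; \eqref{xdpp2} gives \eqref{dpp5} (coefficient of $w$), \eqref{dpp4} (coefficient of $u$), while its coefficient of $v$ reads $r_{[\cdot,\cdot]}(x)l_{\circ}(y)=l_{\circ}(y)r_{[\cdot,\cdot]}(x)+r_{\circ}([y,x])$, which subtracted from \eqref{dpp4} produces \eqref{dpp6}; and \eqref{xdpp3} gives \eqref{dpp8} (coefficient of $w$) and \eqref{dpp7} (coefficients of $u$ and $v$). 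In particular, contrary to your sketch, \eqref{dpp1} comes from \eqref{xdpp1} and not \eqref{xdpp2}, \eqref{dpp5} comes from \eqref{xdpp2} and not \eqref{xdpp3}, and \eqref{dpp7} comes solely from \eqref{xdpp3}. Your worry about equivalence versus mere implication is resolved exactly as you anticipated: the only spot where two coefficient identities must be combined is the pair \eqref{dpp4}/\eqref{dpp6}, and this pair is jointly equivalent to the two mixed identities split off from \eqref{xdpp2}, so the list \eqref{dpp1}--\eqref{dpp8} is indeed equivalent to, not merely implied by, the split axioms. These slips would self-correct on actually performing the computation, so the proof goes through, but as written the bookkeeping claims should not be trusted.
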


Now we investigate the dual representation in the context of dual pre-Poisson algebras.


\begin{pro}
Let  $(A, \circ,[\cdot,\cdot])$  be a dual pre-Poisson algebra and $(V;l_{\circ}, r_{\circ}, l_{[\cdot,\cdot]}, r_{[\cdot,\cdot]})$ be a representation of $(A, \circ,[\cdot,\cdot])$. Then
$(V^*;-l_{\circ}^*, -l_{\circ}^*+r_{\circ}^*, l_{[\cdot,\cdot]}^*, -l_{[\cdot,\cdot]}^*-r_{[\cdot,\cdot]}^*)$ is a representation of $(A, \circ,[\cdot,\cdot])$, which is called the {\bf dual representation} of $(V;l_{\circ}, r_{\circ}, l_{[\cdot,\cdot]}, r_{[\cdot,\cdot]})$. Moreover, there is a dual pre-Poisson algebra structure  $A \ltimes_{-l_{\circ}^*, -l_{\circ}^*+r_{\circ}^*, l_{[\cdot,\cdot]}^*, -l_{[\cdot,\cdot]}^*-r_{[\cdot,\cdot]}^*} V^*$ on the direct sum  $A \oplus V^*$  of vector spaces.
\end{pro}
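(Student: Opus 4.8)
The plan is to check, one condition at a time, that the quintuple $(V^*;-l_{\circ}^*, -l_{\circ}^*+r_{\circ}^*, l_{[\cdot,\cdot]}^*, -l_{[\cdot,\cdot]}^*-r_{[\cdot,\cdot]}^*)$ fulfils every requirement of Definition~\ref{defrep}. Throughout, write $l_{\circ}'=-l_{\circ}^*$, $r_{\circ}'=-l_{\circ}^*+r_{\circ}^*$, $l_{[\cdot,\cdot]}'=l_{[\cdot,\cdot]}^*$ and $r_{[\cdot,\cdot]}'=-l_{[\cdot,\cdot]}^*-r_{[\cdot,\cdot]}^*$ for the four new operators. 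The single tool is the defining relation~\eqref{dualmap}: for any $f:A\to\End(V)$ the operator $f^*(x)$ is, up to the built-in sign, the transpose of $f(x)$, and consequently $\langle f^*(x)g^*(y)v^*,u\rangle=\langle v^*,g(y)f(x)u\rangle$ for all $u\in V$, $v^*\in V^*$ (the two signs cancel). Hence any candidate identity among the primed operators, once paired with an arbitrary $u\in V$ and $v^*\in V^*$, becomes an identity among composites of $l_{\circ},r_{\circ},l_{[\cdot,\cdot]},r_{[\cdot,\cdot]}$ evaluated on $u$, which I then read off from the axioms~\eqref{permrep1}--\eqref{dpp8}. The key book-keeping fact is that transposition reverses the order of composition, so each primed relation turns into an original relation with the factors (and the roles of $x,y$) suitably swapped.

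I would split the verification into three blocks. First, $(V^*;l_{\circ}',r_{\circ}')$ is exactly the contragredient representation of the permutative algebra $(A,\circ)$, so that~\eqref{permrep1} and~\eqref{permrep2} for the primed operators follow from the permutative representation theory of~\cite{BYZ}; alternatively, transposing~\eqref{permrep1}--\eqref{permrep2} directly gives them (for instance,~\eqref{permrep2} for the primed operators follows from the two equalities in~\eqref{permrep2} after transposition). Second, $(V^*;l_{[\cdot,\cdot]}',r_{[\cdot,\cdot]}')$ is the contragredient representation of the Leibniz algebra $(A,[\cdot,\cdot])$, so that~\eqref{Leibrep1}--\eqref{Leibrep2} for the primed operators are a consequence of the known description of dual Leibniz representations~\cite{TS}, or again of transposing the three Leibniz axioms. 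It is precisely the sign pattern in the statement that makes these two contragredient constructions assemble into the dual representation of the whole dual pre-Poisson algebra.

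The substance lies in the eight compatibility conditions~\eqref{dpp1}--\eqref{dpp8}. For each one I would write the primed identity, pair it with $v^*\in V^*$ and $u\in V$, use~\eqref{dualmap} to push every operator onto $u$, and then recognise the outcome as one of the original conditions or a short combination of them. The cleanest cases exhibit a pleasant duality: the primed version of~\eqref{dpp6} reduces, after transposing, to $r_{\circ}(y)\big(l_{[\cdot,\cdot]}(x)+r_{[\cdot,\cdot]}(x)\big)=0$, which is precisely~\eqref{dpp7} with $x$ and $y$ interchanged; symmetrically the primed version of~\eqref{dpp7} gives~\eqref{dpp6}; and the primed version of~\eqref{dpp8} is~\eqref{dpp8} itself, since $l_{\circ}'([x,y]+[y,x])=-l_{\circ}^*([x,y]+[y,x])$ already vanishes by~\eqref{dpp8}. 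The remaining ``mixed'' conditions~\eqref{dpp1}--\eqref{dpp5}, which couple the permutative and Leibniz operators, each expand into a linear combination of several of the original compatibility conditions. The main obstacle is therefore not conceptual but organisational: keeping the signs coming from~\eqref{dualmap} and the reversed composition order consistent, and identifying, for each mixed condition, the exact combination of original axioms whose transpose reproduces it.

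Once the three blocks are verified, $(V^*;l_{\circ}',r_{\circ}',l_{[\cdot,\cdot]}',r_{[\cdot,\cdot]}')$ is a representation of $(A,\circ,[\cdot,\cdot])$. The final assertion is then immediate: applying Proposition~\ref{repofdppa} to this representation produces the dual pre-Poisson algebra structure $A\ltimes_{-l_{\circ}^*, -l_{\circ}^*+r_{\circ}^*, l_{[\cdot,\cdot]}^*, -l_{[\cdot,\cdot]}^*-r_{[\cdot,\cdot]}^*}V^*$ on $A\oplus V^*$.
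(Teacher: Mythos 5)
Your proposal is correct and takes essentially the same route as the paper: the paper likewise quotes \cite[Proposition 3.14]{BYZ} for the permutative part and \cite[Lemma 2.10]{TS} for the Leibniz part, then verifies the compatibility conditions for the starred operators by exactly your pairing argument via \eqref{dualmap}, with your explicit reductions of the primed \eqref{dpp6}, \eqref{dpp7} and \eqref{dpp8} to $r_{\circ}(y)(l_{[\cdot,\cdot]}+r_{[\cdot,\cdot]})(x)=0$, to \eqref{dpp6}, and to \eqref{dpp8} itself matching the paper's final three displayed computations verbatim, before concluding via Proposition \ref{repofdppa}. The only difference is that the paper writes out the five mixed verifications you defer as bookkeeping, and they go through precisely as you predict, each reducing to a combination of \eqref{dpp1}--\eqref{dpp8} together with the representation axioms.
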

\begin{proof}
It is known that $(V^*;-l_{\circ}^*, -l_{\circ}^*+r_{\circ}^*)$ is a representation of permutative algebra $(A, \circ)$ \cite[Proposition 3.14]{BYZ} and $(V^*; l_{[\cdot,\cdot]}^*, -l_{[\cdot,\cdot]}^*-r_{[\cdot,\cdot]}^*)$ is a representation of Leibniz algebra $(A, [\cdot,\cdot])$ \cite[Lemma 2.10]{TS}. Moreover, for all $x, y  \in A, v\in V,  w^* \in V^*$, we have
\begin{align*}
&\left \langle \Big(( -l_{[\cdot,\cdot]}^*-r_{[\cdot,\cdot]}^*)(x \circ y) + (l_{\circ}^*-r_{\circ}^*) (y)  (-l_{[\cdot,\cdot]}^*-r_{[\cdot,\cdot]}^*) (x)- l_{\circ}^*(x)(  l_{[\cdot,\cdot]}^*+r_{[\cdot,\cdot]}^*)(y) \Big)w^*,v \right \rangle    \\
&=\left \langle w^*,\Big((  l_{[\cdot,\cdot]} +r_{[\cdot,\cdot]} )(x \circ y) - ( l_{[\cdot,\cdot]} +r_{[\cdot,\cdot]} ) (x)(l_{\circ} -r_{\circ} ) (y)- (  l_{[\cdot,\cdot]} +r_{[\cdot,\cdot]} )(y)l_{\circ} (x) \Big)v \right \rangle    \\
&=\left \langle w^*,\Big((  l_{[\cdot,\cdot]} +r_{[\cdot,\cdot]} )(x \circ y) -  l_{[\cdot,\cdot]}  (x)(l_{\circ} -r_{\circ} ) (y)- (  l_{[\cdot,\cdot]} +r_{[\cdot,\cdot]} )(y)l_{\circ} (x) \Big)v \right \rangle    \\
&=  \Big\langle w^*,\Big(( r_{\circ}(y)r_{[\cdot,\cdot]}(x)+l_{\circ}(x)r_{[\cdot,\cdot]}(y)+l_{\circ}(x)l_{[\cdot,\cdot]}(y)+l_{\circ}(y)l_{[\cdot,\cdot]}(x)+r_{\circ}(y)l_{[\cdot,\cdot]}(x)\\
& \hspace{1.4cm} + r_{\circ}([x,y]) - l_{\circ}([x,y])- l_{\circ}(y)l_{[\cdot,\cdot]}(x)- (  l_{[\cdot,\cdot]} +r_{[\cdot,\cdot]} )(y)l_{\circ} (x) \Big)v   \Big\rangle   \\
 &=  \langle w^*,\Big((  l_{\circ}(x)l_{[\cdot,\cdot]}(y)    - l_{\circ}([x,y]) -    l_{[\cdot,\cdot]}(y) l_{\circ} (x)  \Big)v   \rangle  = 0, \\
& \left \langle \Big(l_{[\cdot,\cdot]}^*(x)(-l_{\circ}^*+r_{\circ}^*)(y) +( l_{\circ}^*-r_{\circ}^*)(y)l_{[\cdot,\cdot]}^*(x)+ (l_{\circ}^*-r_{\circ}^*)([x,y]) \Big)w^*,v \right \rangle    \\
&=\left \langle w^*,\Big((r_{\circ}-l_{\circ})(y) l_{[\cdot,\cdot]}(x)+l_{[\cdot,\cdot]} (x)( l_{\circ} -r_{\circ} )(y)+ (r_{\circ}-l_{\circ}  )([x,y]) \Big)v \right \rangle    \\
&=\left \langle w^*,\Big(l_{[\cdot,\cdot]}(x)r_{\circ}(y) -l_{[\cdot,\cdot]}(x)l_{\circ}(y) +l_{[\cdot,\cdot]} (x)( l_{\circ} -r_{\circ} )(y)  \Big)v \right \rangle  =0,  \\
& \left \langle \Big(-l_{[\cdot,\cdot]}^*(x)l_{\circ}^*(y)+l_{\circ}^*([x,y])+ l_{\circ}^*(y)l_{[\cdot,\cdot]}^*(x) \Big)w^*,v \right \rangle    \\
&=\left \langle w^*, \Big( l_{[\cdot,\cdot]} (x) l_{\circ} (y)-l_{\circ} (y)l_{[\cdot,\cdot]} (x)-l_{\circ} ([x,y])\Big)v \right \rangle =0,    \\
& \left \langle \Big(( l_{[\cdot,\cdot]}^*+r_{[\cdot,\cdot]}^*)(y)( l_{\circ}^*-r_{\circ}^*)(x) +(l_{\circ}^*-r_{\circ}^*)([x,y])- l_{\circ}^*(x)( l_{[\cdot,\cdot]}^*+r_{[\cdot,\cdot]}^*)(y) \Big)w^*,v \right \rangle    \\
&=  \left \langle w^*,\Big(( l_{\circ} -r_{\circ} )(x)( l_{[\cdot,\cdot]} +r_{[\cdot,\cdot]} )(y) +(r_{\circ} - l_{\circ})([x,y])- ( l_{[\cdot,\cdot]} +r_{[\cdot,\cdot]} )(y) l_{\circ} (x)\Big)v \right \rangle    \\
&=  \left \langle w^*,\Big(  l_{\circ}  (x) l_{[\cdot,\cdot]}   (y)  - l_{[\cdot,\cdot]}(x)l_{\circ}(y)+ l_{\circ}(y)l_{[\cdot,\cdot]}(x)-  l_{[\cdot,\cdot]}  (y) l_{\circ} (x)\Big)v \right \rangle    \\
&=  - \left \langle w^*,   l_{\circ} ([x,y]+[y,x]) v \right \rangle   =0,\\
& \left \langle \Big(l_{[\cdot,\cdot]}^*(x \circ y) +l_{\circ}^*(x)l_{[\cdot,\cdot]}^*(y)+l_{\circ}^*(y)l_{[\cdot,\cdot]}^*(x)\Big)w^*,v \right \rangle    \\
&=  \left \langle w^*,\Big(-l_{[\cdot,\cdot]} (x \circ y) +l_{[\cdot,\cdot]} (y)l_{\circ} (x)+l_{[\cdot,\cdot]} (x)l_{\circ} (y)\Big)v \right \rangle    \\
&=  \left \langle w^*,\Big(-l_{\circ}(x)l_{[\cdot,\cdot]}(y)-l_{\circ}(y)l_{[\cdot,\cdot]}(x) +l_{[\cdot,\cdot]} (y)l_{\circ} (x)+l_{[\cdot,\cdot]} (x)l_{\circ} (y)\Big)v \right \rangle    \\
&=    \left \langle w^*,   l_{\circ} ([x,y]+[y,x]) v \right \rangle   =0,\\
& \left \langle \Big((l_{[\cdot,\cdot]}^*+r_{[\cdot,\cdot]}^*)(x) r_{\circ}^*(y)\Big)w^*,v \right \rangle   = \left \langle w^*,\Big(r_{\circ}(y)(l_{[\cdot,\cdot]} +r_{[\cdot,\cdot]} )(x) \Big)v \right \rangle =0, \\
& \left \langle \Big((l_{\circ}^*-r_{\circ}^*)(x) r_{[\cdot,\cdot]}^*(y)\Big)w^*,v \right \rangle   = \left \langle w^*,\Big( r_{[\cdot,\cdot]} (y)(l_{\circ} -r_{\circ} )(x)\Big)v \right \rangle =0, \\
& \left \langle \Big(-l_{\circ}^*([x,y]+[y,x])\Big)w^*,v \right \rangle   =\left \langle w^*,\Big( l_{\circ} ([x,y]+[y,x])\Big)v \right \rangle  = 0.
\end{align*}
Thus $(V^*;-l_{\circ}^*, -l_{\circ}^*+r_{\circ}^*, l_{[\cdot,\cdot]}^*, -l_{[\cdot,\cdot]}^*-r_{[\cdot,\cdot]}^*)$ is a representation of $(A, \circ,[\cdot,\cdot])$. Moreover, the remaining statement follows from Proposition \ref{repofdppa}.
\end{proof}

\begin{ex}\label{repdualtoqurs1}
(1) The regular representation $(A; L_{\circ},R_{\circ},L_{[\cdot,\cdot]},R_{[\cdot,\cdot]})$ of a dual pre-Poisson algebra  $(A, \circ,[\cdot,\cdot])$ gives the representation $(A^*;-L_{\circ}^*, -L_{\circ}^*+R_{\circ}^*, L_{[\cdot,\cdot]}^*, -L_{[\cdot,\cdot]}^*-R_{[\cdot,\cdot]}^*)$,
  called the \textbf{coregular representation} of $(A, \circ,[\cdot,\cdot])$. In this case,   there is a dual  pre-Poisson algebra structrue $A \ltimes_{-L_{\circ}^*, -L_{\circ}^*+R_{\circ}^*, L_{[\cdot,\cdot]}^*, -L_{[\cdot,\cdot]}^*-R_{[\cdot,\cdot]}^*} A^*$ on the direct sum  $A \oplus A^*$  of vector spaces. 
  
  (2) Continuing with Example \ref{dwdpp} (2), let $\{e_1^*, e_2^* \}$ be the dual basis of $\{e_1, e_2 \}$. Then there is a 4-dimensional dual pre-Poisson algebra  $A \ltimes_{-L_{\circ}^*, -L_{\circ}^*+R_{\circ}^*, L_{[\cdot,\cdot]}^*, -L_{[\cdot,\cdot]}^*-R_{[\cdot,\cdot]}^*} A^*$ whose nonzero products are explicitly given by Eq.~\eqref{dwdpp:1} and the following equations 
\begin{align}
  e_2 \circ e_1^* = e_2^*,  \quad   [ e_2 ,e_1^*] = -e_2^*,  \quad [e_1^*, e_2] = 2e_2^*.
\end{align}
\end{ex}

 \begin{defi}
A bilinear form $\mathcal{B}: A \otimes A \to \mathbb{F}$ on a dual pre-Poisson algebra $(A, \circ,[\cdot,\cdot])$ is called {\bf invariant} if
\begin{align}
\mathcal{B}(x\circ y,z) &= \mathcal{B}(x, y\circ z - z \circ y), \label{perminv}\\
\mathcal{B}([x, y],z) &=  \mathcal{B}(x, [y, z] + [z , y]), \quad \forall x,y,z \in A. \label{Leibinv}
\end{align}
A {\bf quadratic dual pre-Poisson algebra} $(A, \circ,[\cdot,\cdot],\mathcal{B})$ is a dual pre-Poisson algebra $(A, \circ,[\cdot,\cdot])$ with a
nondegenerate skew-symmetric invariant bilinear form $\mathcal{B}$.
\end{defi}

\begin{rmk}
The Eq.~\eqref{perminv} first appeared as the invariance condition for bilinear forms on permutative algebras in \cite{BYZ}, while Eq.~\eqref{Leibinv} did so for Leibniz algebras in \cite{TS}. If $\mathcal{B}$ is a nondegenerate skew-symmetric invariant bilinear form on a perm algebra  $(A, \circ)$, then the triple $(A, \circ, \mathcal{B})$ is called a {\bf quadratic perm algebra}. Similarly, a  Leibniz algebra $(A, [\cdot,\cdot])$ equipped with a nondegenerate skew-symmetric invariant bilinear form $\mathcal{B}$ is called a {\bf quadratic Leibniz  algebra} and denoted by $(A, [\cdot,\cdot],\mathcal{B})$. Therefore,  a quadratic dual pre-Poisson algebra  structure consists of a quadratic perm algebra structure and a quadratic Leibniz  algebra structure.
\end{rmk}

There are two natural constructions of quadratic dual pre-Poisson algebras from Poisson algebras and dual pre-Poisson algebras  respectively.

\begin{pro} Let $A$ be a vector space and $\mathcal{B}_{A \oplus A^{*}}$ be the  bilinear form on the direct sum  $A \oplus A^*$  of vector spaces  defined by 
\begin{align}
\mathcal{B}_{A \oplus A^{*}}\left(x+a^{*}, y+b^{*}\right)=  \left\langle  y,a^{*}\right\rangle - \left\langle x, b^{*}\right\rangle, \quad  \forall x, y \in A, a^{*}, b^{*} \in A^{*}. \label{abm}
\end{align}
 \begin{enumerate}
\item\label{itemPoi:1} If  $(A, \bullet,\{\cdot,\cdot\})$ is a Poisson algebra, then $(A \rightthreetimes_{-L_{\bullet}^*,\ad_{\{\cdot,\cdot\}}^*} A^*, \mathcal{B}_{A \oplus A^{*}})$ is a quadratic dual pre-Poisson algebra. 
\item\label{itemPoi:2} If  $(A, \circ,[\cdot,\cdot])$ is a dual pre-Poisson algebra, then $(A \ltimes_{-L_{\circ}^*, -L_{\circ}^*+R_{\circ}^*, L_{[\cdot,\cdot]}^*, -L_{[\cdot,\cdot]}^*-R_{[\cdot,\cdot]}^*} A^*,\mathcal{B}_{A \oplus A^{*}})$ is a quadratic dual pre-Poisson algebra. 
 \end{enumerate}
\end{pro}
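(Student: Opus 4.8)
The plan is to verify the three defining requirements of a quadratic dual pre-Poisson algebra for $\mathcal{B}_{A\oplus A^*}$: nondegeneracy, skew-symmetry, and invariance in the sense of \eqref{perminv}--\eqref{Leibinv}. The underlying dual pre-Poisson structure on $A\oplus A^*$ is already in hand (from Proposition~\ref{dppfromHRofP} for part~(\ref{itemPoi:1}) and from the dual representation together with Proposition~\ref{repofdppa} for part~(\ref{itemPoi:2})), so only the bilinear form itself is at issue. Nondegeneracy and skew-symmetry are immediate from \eqref{abm}: if $\mathcal{B}_{A\oplus A^*}(x+a^*,\,y+b^*)=\langle y,a^*\rangle-\langle x,b^*\rangle$ vanishes against all $y+b^*$, then taking $b^*=0$ forces $a^*=0$ and taking $y=0$ forces $x=0$; and swapping the two arguments flips the sign. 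Hence the whole content of both parts lies in the two invariance identities. I would also note at the outset that part~(\ref{itemPoi:1}) is the specialization of part~(\ref{itemPoi:2}): regarding $(A,\bullet,\{\cdot,\cdot\})$ as a dual pre-Poisson algebra, commutativity of $\bullet$ gives $L_\circ=R_\circ$ (so $-L_\circ^*+R_\circ^*=0$) and anticommutativity of $\{\cdot,\cdot\}$ gives $R_{[\cdot,\cdot]}=-L_{[\cdot,\cdot]}$ (so $-L_{[\cdot,\cdot]}^*-R_{[\cdot,\cdot]}^*=0$), under which $A\ltimes_{-L_\circ^*,-L_\circ^*+R_\circ^*,L_{[\cdot,\cdot]}^*,-L_{[\cdot,\cdot]}^*-R_{[\cdot,\cdot]}^*}A^*$ collapses exactly to $A\rightthreetimes_{-L_\bullet^*,\ad_{\{\cdot,\cdot\}}^*}A^*$. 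Thus it suffices to treat the general dual pre-Poisson case.

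For invariance I would exploit the natural $\ZZ$-grading on $A\oplus A^*$, placing $A$ in degree $0$ and $A^*$ in degree $1$. The semidirect-product operations respect this grading, with $A\cdot A\subseteq A$, $A\cdot A^*\subseteq A^*$ and $A^*\cdot A^*=0$, while $\mathcal{B}_{A\oplus A^*}$ pairs only $A$ against $A^*$. By trilinearity it is therefore enough to evaluate \eqref{perminv} and \eqref{Leibinv} on homogeneous triples, and both sides vanish automatically unless the total degree of $(x,y,z)$ equals $1$. This leaves precisely three subcases for each identity, according to whether the single $A^*$-entry occupies the first, second, or third slot.

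In each subcase I would rewrite the pairings using the definition \eqref{dualmap} of the dual maps and the explicit coregular actions $l_\circ=-L_\circ^*$, $r_\circ=-L_\circ^*+R_\circ^*$, $l_{[\cdot,\cdot]}=L_{[\cdot,\cdot]}^*$, $r_{[\cdot,\cdot]}=-L_{[\cdot,\cdot]}^*-R_{[\cdot,\cdot]}^*$. The key mechanism is that the commutator $y\circ z-z\circ y$ and the anticommutator $[y,z]+[z,y]$ on the right-hand sides of \eqref{perminv} and \eqref{Leibinv} telescope against these actions: for $y,z\in A$ and $c^*\in A^*$ one gets $y\circ c^*-c^*\circ y=-R_\circ^*(y)c^*$ and $[y,c^*]+[c^*,y]=-R_{[\cdot,\cdot]}^*(y)c^*$, after which \eqref{dualmap} turns each identity into a tautology of the canonical pairing, e.g. $\mathcal{B}_{A\oplus A^*}(x\circ y,c^*)=-\langle x\circ y,c^*\rangle=\mathcal{B}_{A\oplus A^*}(x,\,y\circ c^*-c^*\circ y)$. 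The remaining slot-positions are handled the same way, using that $r_\circ$ and $r_{[\cdot,\cdot]}$ were defined as exactly the dual actions $-L_\circ^*+R_\circ^*$ and $-L_{[\cdot,\cdot]}^*-R_{[\cdot,\cdot]}^*$.

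The main obstacle is purely sign and slot bookkeeping: one must confirm that all three positions of the $A^*$-entry close up simultaneously, and this is precisely where the exact form of the dual representation, and the representation axioms verified in the preceding proposition, are consumed. Conceptually this is unsurprising, since invariance of the canonical form on $A\oplus A^*$ is morally equivalent to the assertion that the chosen actions constitute the dual of the regular representation; the computation merely makes this matching explicit. Once the three subcases are settled, both parts follow, with part~(\ref{itemPoi:1}) read off from the specialization noted above.
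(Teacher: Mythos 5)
Your proposal is correct, and it takes a genuinely different route from the paper. The paper proves part (a) by brute force: it writes out $\mathcal{B}_{A \oplus A^{*}}\left( (x+a^{*}) \circ  (y+b^{*}) , z+c^{*}\right)$ and $\mathcal{B}_{A \oplus A^{*}}\left( [ x+a^{*}  ,  y+b^{*}] , z+c^{*}\right)$ in full on the $\rightthreetimes$ structure and matches them against the right-hand sides of \eqref{perminv}--\eqref{Leibinv}, then disposes of part (b) with a one-line ``straightforward to verify.'' You instead prove the general case (b) and deduce (a) as a specialization, observing that commutativity of $\bullet$ forces $-L_{\circ}^{*}+R_{\circ}^{*}=0$ and anticommutativity of $\{\cdot,\cdot\}$ forces $-L_{[\cdot,\cdot]}^{*}-R_{[\cdot,\cdot]}^{*}=0$, so the coregular semidirect product collapses exactly to $A \rightthreetimes_{-L_{\bullet}^*,\ad_{\{\cdot,\cdot\}}^*} A^*$ — a correct identification the paper never makes (I checked: both products reduce to $x\bullet y - L_\bullet^*(x)b^*$ and $\{x,y\}+\ad^*_{\{\cdot,\cdot\}}(x)b^*$). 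Your grading reduction is also sound: since $A^*\cdot A^*=0$ and $\mathcal{B}_{A\oplus A^*}$ pairs only $A$ against $A^*$, every homogeneous instance of \eqref{perminv}--\eqref{Leibinv} with total degree $\neq 1$ vanishes on both sides, leaving exactly three slot positions per identity, and your telescoping identities $y\circ c^*-c^*\circ y=-R_\circ^*(y)c^*$ and $[y,c^*]+[c^*,y]=-R_{[\cdot,\cdot]}^*(y)c^*$ are correct and make each remaining case a one-line consequence of \eqref{dualmap} (the other two slot positions check out the same way, e.g.\ $\mathcal{B}([a^*,y],z)=\langle [y,z]+[z,y],a^*\rangle$ directly from $r_{[\cdot,\cdot]}=-L_{[\cdot,\cdot]}^*-R_{[\cdot,\cdot]}^*$). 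What your organization buys is economy and transparency — one general verification, with the Poisson case free, and the degree bookkeeping replacing the paper's long expansion with mixed terms; what the paper's direct computation buys is that part (a) is self-contained and does not depend on the dual-representation proposition for part (b). The only thing to tighten in a written version is to actually display all six subcase verifications rather than one exemplar, since the sign bookkeeping you flag as the main obstacle is precisely where an error could hide.
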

\begin{proof}
\eqref{itemPoi:1}. Let $x,y \in A$, $a^*,b^* \in A^*$, we have
\begin{align*}
\mathcal{B}_{A \oplus A^{*}}\left( (x+a^{*}) \circ  (y+b^{*}) , z+c^{*}\right) &= \mathcal{B}_{A \oplus A^{*}}\left(  x \bullet y -L_{\bullet}^*(x)b^* , z+c^{*}\right) = \left\langle  z,-L_{\bullet}^*(x)b^*\right\rangle - \left\langle x \bullet y , c^{*}\right\rangle\\
&=\left\langle  x,-L_{\bullet}^*(z)b^*\right\rangle + \left\langle x  , L_{\bullet}^*(y)c^{*}\right\rangle\\
&=\left\langle  x+a^*,y\bullet z +L_{\bullet}^*(z)b^*-z\bullet y-L_{\bullet}^*(y)c^{*}\right\rangle\\
&=\mathcal{B}_{A \oplus A^{*}}\left(  x+a^{*}  , (y+b^{*}) \circ ( z+c^{*}) -  ( z+c^{*})\circ (y+b^{*})   \right),\\
\mathcal{B}_{A \oplus A^{*}}\left( [ x+a^{*}  ,  y+b^{*}] , z+c^{*}\right) &=   \mathcal{B}_{A \oplus A^{*}}\left(  \{x ,y \}+\ad_{\{\cdot,\cdot\}}^*(x)b^* , z+c^{*}\right)\\
&=   \left\langle    z ,\ad_{\{\cdot,\cdot\}}^*(x)b^* \right\rangle - \left\langle \{x ,y \}  , c^{*}\right\rangle 
 =   \left\langle    x ,-\ad_{\{\cdot,\cdot\}}^*(z)b^*  -   \ad_{\{\cdot,\cdot\}}^*(y)c^{*}\right\rangle\\
 &=\left\langle    x+a^*,[y,z]  +  \ad_{\{\cdot,\cdot\}}^*(y)c^{*}+[z,y]+\ad_{\{\cdot,\cdot\}}^*(z)b^*\right\rangle\\
&= \mathcal{B}_{A \oplus A^{*}}\left(  x+a^{*}  ,  [y+b^{*} , z+c^{*}] +  [ z+c^{*}, y+b^{*}] \right).
\end{align*}
Thus $(A \rightthreetimes_{-L_{\bullet}^*,\ad_{\{\cdot,\cdot\}}^*} A^*, \mathcal{B}_{A \oplus A^{*}})$ is a quadratic dual pre-Poisson algebra. 

\eqref{itemPoi:2}. It is straightforward to verify that $\mathcal{B}_{A \oplus A^{*}}$ is invariant on  $A \ltimes_{-L_{\circ}^*, -L_{\circ}^*+R_{\circ}^*, L_{[\cdot,\cdot]}^*, -L_{[\cdot,\cdot]}^*-R_{[\cdot,\cdot]}^*} A^*$.
\end{proof}

\begin{ex}
(1) In Example \ref{bigdualpre} (2), there is a 6-dimensional quadratic dual pre-Poisson algebra  $(A \rightthreetimes_{-L_{\bullet}^*,\ad_{\{\cdot,\cdot\}}^*} A^*, \mathcal{B}_{A \oplus A^{*}})$ whose nonzero products are explicitly given by Eqs.~\eqref{exdual1}-\eqref{exdual2} and 
$\mathcal{B}_{A \oplus A^{*}}$ is given by 
\begin{align*}
	\mathcal{B}_{A \oplus A^{*}}(e_i, e_j) = \mathcal{B}_{A \oplus A^{*}}(e_i^*, e_j^*) = 0, \quad \mathcal{B}_{A \oplus A^{*}}(e_i^*, e_j) = -\mathcal{B}_{A \oplus A^{*}}(e_i, e_j^*) = \begin{cases}
		1, i=j\\
		0, i \neq j
	\end{cases}   i, j \in  \{1, 2,3\}.
\end{align*} 

(2)  In Example \ref{repdualtoqurs1} (2), there is an invariant bilinear form  $\mathcal{B}_{A \oplus A^{*}}$  on the  4-dimensional  dual pre-Poisson algebra $A \ltimes_{-L_{\circ}^*, -L_{\circ}^*+R_{\circ}^*, L_{[\cdot,\cdot]}^*, -L_{[\cdot,\cdot]}^*-R_{[\cdot,\cdot]}^*} A^*$ given by 
\begin{align*}
	\mathcal{B}_{A \oplus A^{*}}(e_i, e_j) = \mathcal{B}_{A \oplus A^{*}}(e_i^*, e_j^*) = 0, \quad \mathcal{B}_{A \oplus A^{*}}(e_i^*, e_j) = -\mathcal{B}_{A \oplus A^{*}}(e_i, e_j^*) = \begin{cases}
		1, i=j\\
		0, i \neq j
	\end{cases}   i, j \in  \{1, 2\}, 
\end{align*} 
such that $(A \ltimes_{-L_{\circ}^*, -L_{\circ}^*+R_{\circ}^*, L_{[\cdot,\cdot]}^*, -L_{[\cdot,\cdot]}^*-R_{[\cdot,\cdot]}^*} A^*,\mathcal{B}_{A \oplus A^{*}})$ is a quadratic dual pre-Poisson algebra.

\end{ex}

\begin{pro}
With the conditions in Proposition \ref{sdztodpp}. Suppose that there is a bilinear form $\mathcal{B}$ on $A$ such that $(A, \circ, \mathcal{B})$ is a  quadratic perm algebra and $D_1, D_2$ are
 skew-symmetric with respect   to  $\mathcal{B}$ in the sense that
\begin{align*}
\mathcal{B}(D_i(x),y) + \mathcal{B}(x,D_i(y)) =0, \quad \forall x,y \in A, i \in \{1,2\}.
\end{align*}
Then $(A, \circ,[\cdot,\cdot],\mathcal{B})$ is a quadratic dual pre-Poisson algebra with $[\cdot,\cdot]$ given by Eq.~\eqref{DDdualprep}.
\end{pro}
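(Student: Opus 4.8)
The plan is to reduce the claim to verifying a single new identity, since most of the defining properties of a quadratic dual pre-Poisson algebra are already at hand. By Proposition \ref{sdztodpp}, the triple $(A,\circ,[\cdot,\cdot])$ with $[\cdot,\cdot]$ given by \eqref{DDdualprep} is a dual pre-Poisson algebra. Since $(A,\circ,\mathcal{B})$ is a quadratic perm algebra, the form $\mathcal{B}$ is nondegenerate and skew-symmetric, and it already satisfies the permutative invariance condition \eqref{perminv}. Hence the only condition that remains to be checked is the Leibniz invariance \eqref{Leibinv}, namely $\mathcal{B}([x,y],z)=\mathcal{B}(x,[y,z]+[z,y])$ for all $x,y,z\in A$.

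To establish \eqref{Leibinv} I would start from the left-hand side, substitute the definition $[x,y]=D_1(x)\circ D_2(y)-D_2(x)\circ D_1(y)$, and then apply \eqref{perminv} to transfer each outer $\circ$ off the first argument; this replaces $\mathcal{B}(D_1(x)\circ D_2(y),z)$ by $\mathcal{B}(D_1(x),D_2(y)\circ z-z\circ D_2(y))$, and similarly for the second term. Next I would invoke the skew-symmetry of $D_1$ and $D_2$ with respect to $\mathcal{B}$ to move the outer derivation onto the second slot, producing terms of the form $-\mathcal{B}(x,D_1(D_2(y)\circ z-z\circ D_2(y)))$, and then expand each of these by the derivation (Leibniz) rule for $D_1$ and $D_2$.

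The decisive step is the cancellation that follows the expansion: among the eight resulting terms, the four carrying a second-order derivative of $y$, namely those involving $D_1D_2(y)\circ z$, $z\circ D_1D_2(y)$, $D_2D_1(y)\circ z$ and $z\circ D_2D_1(y)$, cancel in pairs precisely because $D_1$ and $D_2$ commute. This is the one place where the hypothesis $D_1D_2=D_2D_1$ is essential, and it is the main point to get right; everything else is routine bookkeeping of the remaining terms. What survives is
\begin{align*}
\mathcal{B}([x,y],z)=\mathcal{B}\big(x,\;D_1(y)\circ D_2(z)-D_2(y)\circ D_1(z)+D_1(z)\circ D_2(y)-D_2(z)\circ D_1(y)\big).
\end{align*}

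Finally, I would recognize the argument of $\mathcal{B}$ on the right as exactly $[y,z]+[z,y]$ by the definition \eqref{DDdualprep}, which gives $\mathcal{B}([x,y],z)=\mathcal{B}(x,[y,z]+[z,y])$ and thereby verifies \eqref{Leibinv}. Combining this with the permutative invariance and the nondegenerate skew-symmetry inherited from the quadratic perm structure, together with the dual pre-Poisson structure supplied by Proposition \ref{sdztodpp}, shows that $(A,\circ,[\cdot,\cdot],\mathcal{B})$ is a quadratic dual pre-Poisson algebra. I expect no genuine difficulty beyond tracking the expanded terms; the only conceptual ingredient is the commutativity of the derivations used in the cancellation above.
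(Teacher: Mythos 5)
Your proposal is correct and takes essentially the same route as the paper's proof: reduce everything to the Leibniz invariance \eqref{Leibinv}, expand $[x,y]$ via \eqref{DDdualprep}, apply the perm invariance \eqref{perminv} and the skew-symmetry of $D_1,D_2$ together with the derivation rule, and cancel the second-order terms $D_1D_2(y)$ and $D_2D_1(y)$ using the commutativity of the derivations. The surviving four terms reassemble into $[y,z]+[z,y]$ exactly as in the paper's computation.
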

\begin{proof}
It is known that $(A, \circ,[\cdot,\cdot])$ is a  dual pre-Poisson algebra by Proposition \ref{sdztodpp}. Moreover, for all $x,y,z \in A$, we have
\begin{align*}
\mathcal{B}([x, y],z)  &= \mathcal{B}(D_1(x)\circ D_2(y) - D_2(x) \circ D_1(y) ,z)\\
&= \mathcal{B}(D_1(x), D_2(y) \circ z - z \circ D_2(y)) - \mathcal{B}(D_2(x)  ,D_1(y) \circ z - z \circ D_1(y))  \\
&=  \mathcal{B}(x  ,D_2 D_1(y)  \circ z + D_1(y)  \circ D_2(z)  - D_2(z) \circ D_1(y) - z  \circ D_2 D_1(y)   )   \\
&\quad-\mathcal{B}(x, D_1 D_2(y) \circ z + D_2(y) \circ D_1(z) - D_1(z) \circ D_2(y) -  z \circ D_1 D_2(y))\\
&=  \mathcal{B}(x  , D_1(y)  \circ D_2(z)  - D_2(z) \circ D_1(y)   - D_2(y) \circ D_1(z) + D_1(z) \circ D_2(y) )\\
&= \mathcal{B}(x, [y, z] + [z , y]).
\end{align*}
Thus $(A, \circ,[\cdot,\cdot],\mathcal{B})$ is a quadratic dual pre-Poisson algebra.
\end{proof}
 
Recall \cite{NB} that a {\bf quadratic Poisson algebra} is a quadruple $(A, \bullet,\{\cdot,\cdot\},\mathcal{B})$ where $(A, \bullet,\{\cdot,\cdot\})$ is a Poisson algebra and $\mathcal{B}$ is a nondegenerate symmetric  bilinear form satisfying the following conditions
 \begin{align*}
 \mathcal{B}(x \bullet y,z) =  \mathcal{B}(x ,y \bullet z),\;  \mathcal{B}(\{x,y\},z) =  \mathcal{B}(x,\{y,z\}),\quad \forall x,y,z \in A.
 \end{align*}
 
Next we extend the construction of dual pre-Poisson algebras in Proposition \ref{ifdualpp} to the quadratic case.
 
\begin{pro}\label{ifqdualpp}
 Let $(A,\star,\mathcal{B}_A)$ be a quadratic permutative algebra and $(B, \bullet,\{\cdot,\cdot\},\mathcal{B}_B)$ be a quadratic Poisson algebra. Then $(A \otimes B , \circ_{A \otimes B},[\cdot,\cdot]_{A \otimes B},\mathcal{B}_{A \otimes B})$ is a quadratic dual  pre-Poisson algebra where $\circ_{A \otimes B}$ and $[\cdot,\cdot]_{A \otimes B}$ are   given in Proposition \ref{ifdualpp} and $\mathcal{B}_{A \otimes B}$ is defined by 
 \begin{align}
 \mathcal{B}_{A \otimes B}( x \otimes a,  y \otimes b) &:=  \mathcal{B}_{A }(x ,y)\mathcal{B}_{B}(a,b), \quad \forall x,y \in A, a,b \in B.
\end{align} 
\end{pro}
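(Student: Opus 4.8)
The plan is to reduce the claim entirely to Proposition \ref{ifdualpp} together with the separate structural properties of $\mathcal{B}_A$ and $\mathcal{B}_B$. Proposition \ref{ifdualpp} already shows that $(A\otimes B,\circ_{A\otimes B},[\cdot,\cdot]_{A\otimes B})$ is a dual pre-Poisson algebra, so the only remaining work is to verify that $\mathcal{B}_{A\otimes B}$ is nondegenerate, skew-symmetric, and invariant in the sense of Eqs.~\eqref{perminv}--\eqref{Leibinv}. Nondegeneracy is the standard fact that the tensor product of two nondegenerate forms on finite-dimensional spaces is nondegenerate, so I would invoke it directly from the nondegeneracy of $\mathcal{B}_A$ and $\mathcal{B}_B$. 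For skew-symmetry, I would use that $\mathcal{B}_A$ is skew-symmetric (it is part of a quadratic perm algebra) while $\mathcal{B}_B$ is symmetric (it is part of a quadratic Poisson algebra), so that $\mathcal{B}_{A\otimes B}(y\otimes b,x\otimes a)=\mathcal{B}_A(y,x)\mathcal{B}_B(b,a)=-\mathcal{B}_A(x,y)\mathcal{B}_B(a,b)$, which is the desired skew-symmetry.

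The core of the argument is checking the two invariance conditions by direct expansion on decomposable tensors $x\otimes a,\ y\otimes b,\ z\otimes c$. For the permutative invariance \eqref{perminv}, I would expand $\mathcal{B}_{A\otimes B}((x\otimes a)\circ_{A\otimes B}(y\otimes b),z\otimes c)$ as $\mathcal{B}_A(x\star y,z)\,\mathcal{B}_B(a\bullet b,c)$, and expand the right-hand side as the difference of two factorized terms coming from $(y\otimes b)\circ_{A\otimes B}(z\otimes c)$ and $(z\otimes c)\circ_{A\otimes B}(y\otimes b)$. Commutativity of $\bullet$ in the Poisson algebra $B$ gives $\mathcal{B}_B(a,b\bullet c)=\mathcal{B}_B(a,c\bullet b)$, so the common $B$-factor can be pulled out and the $A$-part collapses to $\mathcal{B}_A(x,y\star z-z\star y)$, which equals $\mathcal{B}_A(x\star y,z)$ by the perm-invariance of $\mathcal{B}_A$; the $B$-factor then matches via the invariance $\mathcal{B}_B(a\bullet b,c)=\mathcal{B}_B(a,b\bullet c)$ of $\mathcal{B}_B$.

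For the Leibniz invariance \eqref{Leibinv}, the computation runs in parallel, but now the relevant symmetry in $B$ is the anti-commutativity of the Lie bracket: $\mathcal{B}_B(a,\{c,b\})=-\mathcal{B}_B(a,\{b,c\})$. This sign is exactly what is needed so that the two factorized terms on the right-hand side combine, with the $A$-part again reducing to $\mathcal{B}_A(x,y\star z-z\star y)=\mathcal{B}_A(x\star y,z)$ and the $B$-part matching by the Poisson-invariance $\mathcal{B}_B(\{a,b\},c)=\mathcal{B}_B(a,\{b,c\})$. One then concludes that $(A\otimes B,\circ_{A\otimes B},[\cdot,\cdot]_{A\otimes B},\mathcal{B}_{A\otimes B})$ is a quadratic dual pre-Poisson algebra.

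I do not expect a genuine obstacle here; the essential point is the interplay of symmetry types rather than any difficulty. The perm-invariance condition \eqref{perminv} carries a built-in commutator $y\circ z-z\circ y$, so the two factorized terms on each side can only be merged into one because the symmetry of $\bullet$, the anti-symmetry of $\{\cdot,\cdot\}$, the skew-symmetry of $\mathcal{B}_A$, and the symmetry of $\mathcal{B}_B$ conspire precisely to supply the right cancellations. The computations are short and closely mirror those already carried out in the proof of Proposition \ref{ifdualpp}, so the whole verification is essentially bookkeeping.
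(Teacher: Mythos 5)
Your proposal is correct and follows essentially the same route as the paper's proof: invoke Proposition \ref{ifdualpp} for the dual pre-Poisson structure, obtain skew-symmetry from the skew-symmetry of $\mathcal{B}_A$ combined with the symmetry of $\mathcal{B}_B$, and verify Eqs.~\eqref{perminv}--\eqref{Leibinv} on decomposable tensors using the invariance of $\mathcal{B}_A$ and $\mathcal{B}_B$ together with the commutativity of $\bullet$ and the anti-commutativity of $\{\cdot,\cdot\}$ to merge the factorized terms. No gaps; the computations match the paper's step for step.
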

\begin{proof}
By Proposition \ref{ifdualpp}, it is enough to prove that $\mathcal{B}_{A \otimes B}$ is a nondegenerate skew-symmetric invariant bilinear form on the dual pre-Poisson algebra $ (A \otimes B, \circ_{A \otimes B},[\cdot,\cdot]_{A \otimes B})$. For all $x,y,z \in A, a,b,c \in B$, we have
\begin{align*}
&\mathcal{B}_{A \otimes B}( x \otimes a,  y \otimes b) =   \mathcal{B}_{A }(x ,y)\mathcal{B}_{B}(a,b) =   -\mathcal{B}_{A }(y ,x)\mathcal{B}_{B}(b,a) = \mathcal{B}_{A \otimes B}( y \otimes b,  x \otimes a).
\end{align*}
Thus $\mathcal{B}_{A \otimes B}$ is skew-symmetric and it is straightforward to verify that $\mathcal{B}_{A \otimes B}$ is  nondegenerate. Moreover   
\begin{align*}
&\mathcal{B}_{A \otimes B}( (x \otimes a) \circ_{A \otimes B} (y \otimes b), z \otimes c)  =\mathcal{B}_{A \otimes B}( (x \star y) \otimes (a \bullet b), z \otimes c) \\
& =\mathcal{B}_{A}(  x \star y ,z)\mathcal{B}_{B}( a \bullet b ,   c)  = \mathcal{B}_{A}(x, y \star z - z \star y)\mathcal{B}_{B}( a, b \bullet  c)\\
&= \mathcal{B}_{A}(x, y \star z  )\mathcal{B}_{B}( a, b \bullet  c) - \mathcal{B}_{A}(x,     z \star y)\mathcal{B}_{B}( a, c \bullet  b)\\
&=\mathcal{B}_{A \otimes B}( x \otimes a , (y \otimes b)\circ_{A \otimes B} (z \otimes c)  
- (z \otimes c)\circ_{A \otimes B}(y \otimes b) ), \\
&\mathcal{B}_{A \otimes B}( [ x \otimes a ,  y \otimes b]_{A \otimes B}, z \otimes c)=\mathcal{B}_{A \otimes B}(  (x \star y)  \otimes  \{a ,b\}, z \otimes c) \\
&=\mathcal{B}_{A}(  x \star y, z ) \mathcal{B}_{B}(    \{a ,b\},  c) = \mathcal{B}_{A}(x, y \star z - z \star y)\mathcal{B}_{B}(   a , \{b,  c\})\\
 &= \mathcal{B}_{A}(x, y \star z  )\mathcal{B}_{B}(   a , \{b,  c\})+ \mathcal{B}_{A}(x,  z \star y)\mathcal{B}_{B}(   a , \{c,  b\})\\
 &= \mathcal{B}_{A \otimes B}( x \otimes a , [  y \otimes b, z \otimes c]_{A \otimes B} +  [   z \otimes c,y \otimes b]_{A \otimes B}).
\end{align*}
 That is, $\mathcal{B}_{A \otimes B}$ is invariant on  $(A \otimes B , \circ_{A \otimes B},[\cdot,\cdot]_{A \otimes B})$. 
\end{proof}

\begin{ex}
Continuing with Example \ref{infdualppa}, there is a  quadratic dual  pre-Poisson algebra $ (A \otimes B, \circ_{A \otimes B},[\cdot,\cdot]_{A \otimes B},\mathcal{B}_{A \otimes B})$ with the bilinear form $\mathcal{B}_{A \otimes B}$ given by 
\begin{align*}
\mathcal{B}_{A}\left(t^{i}  \partial_{1}, t^{j}   \partial_{2}\right) & =-\mathcal{B}_{A}\left(t^{j}  \partial_{2}, t^{i}   \partial_{1}\right)=\delta_{i+j, 0},\quad
\mathcal{B}_{A}\left(t^{i}   \partial_{1}, t^{j}  \partial_{1}\right)   =\mathcal{B}_{A}\left(t^{i}   \partial_{2}, t^{j}   \partial_{2}\right)=0,  \\   
\mathcal{B}_{B}(x^{a} y^{b}, x^{c} y^{d}) &= \delta_{a+c+1,0}\delta_{b+d+1,0}, \quad \forall a,b,c,d,i,j\in \mathbb{Z}.
\end{align*}
{}
\end{ex}

\begin{lem}\label{invxzyil}
Let $\mathcal{B}$ be a skew-symmetric invariant bilinear form on a dual pre-Poisson algebra $(A, \circ,[\cdot,\cdot])$. Then $\mathcal{B}$ satisfies the following properties
\begin{align}
\mathcal{B}(x\circ y,z) &= \mathcal{B}(y,x\circ z),\\
\mathcal{B}([x, y],z) &= -\mathcal{B}(y,[x, z]), \quad \forall x,y,z \in A.
\end{align}
\end{lem}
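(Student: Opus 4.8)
The plan is to derive both identities purely formally from the skew-symmetry of $\mathcal{B}$ together with the two invariance conditions \eqref{perminv} and \eqref{Leibinv}; none of the structural axioms (permutativity \eqref{perm} or the Leibniz rule \eqref{Leibniz}) will be needed. The key observation is that each desired identity and the invariance condition defining it, once the arguments are flipped using skew-symmetry, collapse onto the very same symmetric expression, so equality is immediate.

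First I would treat the permutative identity $\mathcal{B}(x\circ y,z)=\mathcal{B}(y,x\circ z)$. Starting from the right-hand side, skew-symmetry gives $\mathcal{B}(y,x\circ z)=-\mathcal{B}(x\circ z,y)$, and applying \eqref{perminv} with the ordered triple $(x,z,y)$ turns this into $-\mathcal{B}(x,\,z\circ y - y\circ z)=\mathcal{B}(x,\,y\circ z - z\circ y)$. On the other hand, \eqref{perminv} applied directly to the left-hand side already gives $\mathcal{B}(x\circ y,z)=\mathcal{B}(x,\,y\circ z - z\circ y)$. Since both sides reduce to the same expression $\mathcal{B}(x,\,y\circ z - z\circ y)$, the first identity follows.

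The Leibniz identity $\mathcal{B}([x,y],z)=-\mathcal{B}(y,[x,z])$ is handled in exactly the same way, now using \eqref{Leibinv}. Writing $-\mathcal{B}(y,[x,z])=\mathcal{B}([x,z],y)$ by skew-symmetry and applying \eqref{Leibinv} with the ordered triple $(x,z,y)$ yields $\mathcal{B}(x,\,[z,y]+[y,z])=\mathcal{B}(x,\,[y,z]+[z,y])$, which is precisely $\mathcal{B}([x,y],z)$ by a second application of \eqref{Leibinv}. This completes the argument.

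I do not expect a genuine obstacle here: the whole content is the bookkeeping of which invariance condition to apply after flipping the arguments, together with the fact that each invariance condition is ``self-consistent'' under the swap of its last two slots forced by skew-symmetry. The only point to watch is to invoke \eqref{perminv} and \eqref{Leibinv} with the correct ordering of variables, since both conditions are sensitive to the order of their second and third arguments; getting that ordering right is what makes the two expressions coincide rather than differ by a sign.
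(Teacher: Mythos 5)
Your argument is correct and coincides with the paper's own proof: the paper likewise reduces $\mathcal{B}(x\circ y,z)$ and $\mathcal{B}(y,x\circ z)$ to the common expression $\mathcal{B}(x,\,y\circ z-z\circ y)$ via Eq.~\eqref{perminv} and skew-symmetry, and handles the Leibniz identity analogously via Eq.~\eqref{Leibinv}, noting that the symmetric sum $[y,z]+[z,y]$ is invariant under swapping $y$ and $z$. Your observation that neither the permutativity nor the Leibniz axioms are needed is also accurate.
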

\begin{proof}
For all $x, y, z \in A$, we have
\begin{align*}
 \mathcal{B}(x \circ y,z) &= \mathcal{B}(x, y\circ z - z \circ y) = -\mathcal{B}(x, z\circ y - y \circ z) =- \mathcal{B}(x \circ z,y) =   \mathcal{B}(y,x \circ z) ,\\
  \mathcal{B}([x, y],z) &= \mathcal{B}(x, [y, z] + [z , y])= \mathcal{B}(x, [z, y] + [y , z])=  \mathcal{B}([x, z],y)=   -\mathcal{B}(y,[x, z]).
\end{align*}
This completes the proof.
\end{proof}

\begin{pro}\label{dualdj:x}
 Let $(A, \circ,[\cdot,\cdot])$  be a dual pre-Poisson algebra. If there is a nondegenerate skew-symmetric invariant bilinear form  $\mathcal{B}$  such that $(A, \circ,[\cdot,\cdot],\mathcal{B})$ is a quadratic dual pre-Poisson algebra, then the two representations $(A; L_{\circ},R_{\circ},L_{[\cdot,\cdot]},R_{[\cdot,\cdot]})$   and  $(A^*;-L_{\circ}^*, -L_{\circ}^*+R_{\circ}^*, L_{[\cdot,\cdot]}^*, -L_{[\cdot,\cdot]}^*-R_{[\cdot,\cdot]}^*)$  of the dual pre-Poisson algebra $(A, \circ,[\cdot,\cdot])$ are equivalent. 
 
 Conversely, if the two representations $(A; L_{\circ},R_{\circ},L_{[\cdot,\cdot]},R_{[\cdot,\cdot]})$   and  $(A^*;-L_{\circ}^*, -L_{\circ}^*+R_{\circ}^*, L_{[\cdot,\cdot]}^*,$ $ -L_{[\cdot,\cdot]}^*-R_{[\cdot,\cdot]}^*)$ of the dual pre-Poisson algebra $(A, \circ,[\cdot,\cdot])$ are equivalent, then there exists a nondegenerate invariant bilinear form  $\mathcal{B}$  on $A$.
\end{pro}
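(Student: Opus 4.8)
The plan is to route everything through the musical isomorphism attached to $\mathcal{B}$. Define $\varphi\colon A\to A^*$ by $\langle \varphi(x),y\rangle=\mathcal{B}(x,y)$ for all $x,y\in A$. Since $\mathcal{B}$ is nondegenerate and $A$ is finite-dimensional, $\varphi$ is a linear isomorphism. The whole proposition then reduces to the single claim that $\varphi$ is an equivalence of representations in the sense of Definition~\ref{defrep}, namely that $\varphi$ intertwines the regular representation $(A;L_\circ,R_\circ,L_{[\cdot,\cdot]},R_{[\cdot,\cdot]})$ with the coregular one $(A^*;-L_\circ^*,-L_\circ^*+R_\circ^*,L_{[\cdot,\cdot]}^*,-L_{[\cdot,\cdot]}^*-R_{[\cdot,\cdot]}^*)$.

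For the forward direction I would verify the four intertwining identities one at a time by pairing both sides against an arbitrary $u\in A$ and unwinding the dual-map convention \eqref{dualmap}. Concretely, the condition $\varphi(L_\circ(x)v)=-L_\circ^*(x)\varphi(v)$ collapses to $\mathcal{B}(x\circ v,u)=\mathcal{B}(v,x\circ u)$, which is the first conclusion of Lemma~\ref{invxzyil}; the $R_\circ$-condition collapses to $\mathcal{B}(v\circ x,u)=\mathcal{B}(v,x\circ u-u\circ x)$, which is the permutative invariance \eqref{perminv}; the $L_{[\cdot,\cdot]}$-condition collapses to $\mathcal{B}([x,v],u)=-\mathcal{B}(v,[x,u])$, which is the second conclusion of Lemma~\ref{invxzyil}; and the $R_{[\cdot,\cdot]}$-condition collapses to $\mathcal{B}([v,x],u)=\mathcal{B}(v,[x,u]+[u,x])$, which is the Leibniz invariance \eqref{Leibinv}. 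Thus the four equivalence identities are exactly the four invariance/Lemma identities rewritten through $\varphi$, and since $\varphi$ is an isomorphism this establishes equivalence.

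For the converse I would run the same computation backwards. Given an equivalence $\varphi\colon A\to A^*$ of the two representations, set $\mathcal{B}(x,y):=\langle\varphi(x),y\rangle$; it is nondegenerate because $\varphi$ is an isomorphism. Pairing the intertwining identities against an arbitrary $u$ and relabeling reproduces \eqref{perminv} (from the $R_\circ$-identity, with $v=x,\,x=y,\,u=z$) and \eqref{Leibinv} (from the $R_{[\cdot,\cdot]}$-identity under the same relabeling), so $\mathcal{B}$ is invariant. This yields a nondegenerate invariant bilinear form, which is precisely what the converse asserts.

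The one point that needs care — and the reason the converse is stated without skew-symmetry — is that an abstract equivalence $\varphi$ produces an invariant form $\mathcal{B}(x,y)=\langle\varphi(x),y\rangle$ with no built-in symmetry constraint, whereas the forward direction starts from a form already assumed skew-symmetric, so that Lemma~\ref{invxzyil} is available to recast \eqref{perminv}–\eqref{Leibinv} into the symmetric-looking pairings used for the $L_\circ$- and $L_{[\cdot,\cdot]}$-conditions. The main obstacle is purely bookkeeping: tracking the minus signs of \eqref{dualmap} consistently across all four maps. Once the pairing against $u$ is set up, each identity reduces to a one-line invocation of either an invariance axiom or Lemma~\ref{invxzyil}.
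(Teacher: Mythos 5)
Your proposal is correct and follows essentially the same route as the paper's proof: both define the musical isomorphism $\varphi$ via Eq.~\eqref{mapinbyB} and check the four intertwining identities by pairing against an arbitrary element, reducing the $L_{\circ}$- and $L_{[\cdot,\cdot]}$-conditions to Lemma~\ref{invxzyil} and the $R_{\circ}$- and $R_{[\cdot,\cdot]}$-conditions to the invariance axioms \eqref{perminv}--\eqref{Leibinv}, then reverse the argument for the converse. Your explicit remark that the converse yields only invariance (not skew-symmetry) correctly accounts for the weaker statement in the second half of the proposition, a point the paper leaves implicit in its one-line ``reversing the argument.''
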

\begin{proof}
Since $\mathcal{B}$ is nondegenerate, there exists a linear isomorphism $\varphi: A \to A^*$  defined by
\begin{align}
\langle \varphi(x) ,y\rangle := \mathcal{B}(x,y),\quad \forall x,y \in A.\label{mapinbyB}
\end{align}
By Lemma \ref{invxzyil}, for all $x, y, z \in A$, we have
\begin{align*}
\langle \varphi(L_{\circ}(x)y) ,z\rangle &= \mathcal{B}(x \circ y,z) = \mathcal{B}(y,x\circ z) =   \langle -L^*_{\circ}(x)\varphi(y),  z\rangle ,\\
\langle \varphi(R_{\circ}(y)x) ,z\rangle &= \mathcal{B}(x \circ y,z) = \mathcal{B}(x, y\circ z - z \circ y) = \langle \varphi(x), (L_{\circ} - R_{\circ})(y)z\rangle ,\\
& = \langle (-L_{\circ}^* + R_{\circ}^*)(y)\varphi(x), z\rangle,\\
\langle \varphi(L_{[\cdot,\cdot]}(x)y) ,z\rangle &= \mathcal{B}([x , y],z) = -\mathcal{B}(y,[x, z]) =   \langle  L^*_{[\cdot,\cdot]}(x)\varphi(y),  z\rangle ,\\
\langle \varphi(R_{[\cdot,\cdot]}(y)x) ,z\rangle &= \mathcal{B}([x , y],z) = \mathcal{B}(x, [y, z] + [z , y]) =   \langle  -(L^*_{[\cdot,\cdot]}+R^*_{[\cdot,\cdot]})(y)\varphi(x),  z\rangle.
\end{align*}
Thus the two representations $(A; L_{\circ},R_{\circ},L_{[\cdot,\cdot]},R_{[\cdot,\cdot]})$   and  $(A^*;-L_{\circ}^*, -L_{\circ}^*+R_{\circ}^*, L_{[\cdot,\cdot]}^*, -L_{[\cdot,\cdot]}^*-R_{[\cdot,\cdot]}^*)$ of  the dual pre-Poisson algebra  $(A, \circ,[\cdot,\cdot])$  are equivalent. The converse statement can be proved by reversing the argument.
\end{proof}

 \section{A bialgebra theory for dual pre-Poisson algebras}\label{Bialgebra}

 In this section, we introduce the notions of Manin triples of dual pre-Poisson algebras and dual pre-Poisson bialgebras. The equivalence between them is interpreted in terms of certain matched pairs of
dual pre-Poisson algebras. By studying a special class of dual pre-Poisson bialgebras, we are led to introduce the permutative-Leibniz Yang-Baxter equation (PLYBE), which consists of the permutative and the classical Leibniz Yang-Baxter equations. Symmetric solutions of this equation can give rise to such bialgebras. Finally, to construct such solutions, we introduce the concepts of $\mathcal{O}$-operators and pre-dual pre-Poisson algebras. The latter of which serve as the operadic disuccessors of dual pre-Poisson algebras.

  \subsection{Matched pair and Manin triple  of dual pre-Poisson  algebras}
  
  We first give the concept of a matched pair of dual pre-Poisson  algebras and a Manin triple of dual pre-Poisson  algebras.

 \begin{defi}\label{mpofdppa}
 Let $(A_1,\circ_1,[\cdot,\cdot]_1)$ and $(A_2,\circ_2,[\cdot,\cdot]_2)$  be two dual pre-Poisson algebras. Let  $l_{\circ_{1}}, r_{\circ_{1}}, l_{[\cdot,\cdot]_{1}},$ $ r_{[\cdot,\cdot]_{1}}: A_{1} \rightarrow \operatorname{End}\left(A_{2}\right)$  and  $l_{\circ_{2}}, r_{\circ_{2}}, l_{[\cdot,\cdot]_{2}}, r_{[\cdot,\cdot]_{2}}: A_{2} \rightarrow \operatorname{End}\left(A_{1}\right)$  be eight linear maps such that 
 \begin{enumerate}
\item $(A_{1}, A_{2}, l_{\circ_{1}}, r_{\circ_{1}}, l_{\circ_{2}},$ $ r_{\circ_{2}})$ is a {\bf matched pair of permutative algebras}:  $(A_2; l_{\circ_{1}}, r_{\circ_{1}})$ and
$(A_1; l_{\circ_{2}}, $ $r_{\circ_{2}})$  are representations of permutative algebras $(A_1, \circ_1)$ and $(A_2, \circ_2)$ respectively satisfying
\begin{align}
l_{\circ_{1}}\left(x\right)\left(a \circ_2 b\right) & =\left(l_{\circ_{1}}\left(x\right) a\right) \circ_2 b+l_{\circ_{1}}\left(r_{\circ_{2}}\left(a\right) x\right)b, \\
r_{\circ_{1}}\left(x\right)\left(a \circ_2 b\right) & =a \circ_2\left(r_{\circ_{1}}\left(x\right) b\right)+r_{\circ_{1}}\left(l_{\circ_{2}}\left(b\right) x\right) a, \\
l_{\circ_{2}}\left(a\right)\left(x \circ_{1} y\right) & =\left(l_{\circ_{2}}\left(a\right) x\right) \circ_{1} y+l_{\circ_{2}}\left(r_{\circ_{1}}\left(x\right) a\right) y,\\
r_{\circ_{2}}\left(a\right)\left(x \circ_{1} y\right) & =x \circ_{1}\left(r_{\circ_{2}}\left(a\right) y\right)+r_{\circ_{2}}\left(l_{\circ_{1}}\left(y\right) a\right) x,\\
\left(r_{\circ_{1}}\left(x\right) a\right) \circ_2 b+l_{\circ_{1}}\left(l_{\circ_{2}}\left(a\right) x\right) b & =a \circ_2\left(l_{\circ_{1}}\left(x\right) b\right)+r_{\circ_{1}}\left(r_{\circ_{2}}\left(b\right) x\right) a,\\
\left(r_{\circ_{2}}\left(a\right) x\right) \circ_{1} y+l_{\circ_{2}}\left(l_{\circ_{1}}\left(x\right) a\right) y & =x \circ_{1}\left(l_{\circ_{2}}\left(a\right) y\right)+r_{\circ_{2}}\left(r_{\circ_{1}}\left(y\right) a\right) x,\\
\left(l_{\circ_{1}}\left(x\right) a\right) \circ_2 b+l_{\circ_{1}}\left(r_{\circ_{2}}\left(a\right) x\right) b & =\left(r_{\circ_{1}}\left(x\right) a\right) \circ_2 b+l_{\circ_{1}}\left(l_{\circ_{2}}\left(a\right) x\right) b,\\
\left(l_{\circ_{2}}\left(a\right) x\right) \circ_{1} y+l_{\circ_{2}}\left(r_{\circ_{1}}\left(x\right) a\right) y & =\left(r_{\circ_{2}}\left(a\right) x\right) \circ_{1} y+l_{\circ_{2}}\left(l_{\circ_{1}}\left(x\right) a\right) y,\\
r_{\circ_{1}}\left(x\right)\left(a \circ_2 b\right) & =r_{\circ_{1}}\left(x\right)\left(b \circ_2 a\right),\\
r_{\circ_{2}}\left(a\right)\left(x \circ_{1} y\right) & =r_{\circ_{2}}\left(a\right)\left(y \circ_{1} x\right), \quad \forall x, y \in  A_1, a, b \in  A_2.
\end{align}
\item $(A_{1}, A_{2}, l_{[\cdot,\cdot]_{1}}, r_{[\cdot,\cdot]_{1}}, l_{[\cdot,\cdot]_{2}}, r_{[\cdot,\cdot]_{2}})$ is a {\bf  matched pair of  Leibniz algebras}:  $(A_2; l_{[\cdot,\cdot]_{1}}, $ $r_{[\cdot,\cdot]_{1}})$ and
$(A_1; l_{[\cdot,\cdot]_{2}}, r_{[\cdot,\cdot]_{2}})$  are representations of  Leibniz algebras $(A_1, [\cdot,\cdot]_1)$ and $(A_2, [\cdot,\cdot]_2)$ respectively satisfying
\begin{align}
&r_{[\cdot,\cdot]_{1}}(x)[a, b]_{2}\!-\!\left[a, r_{[\cdot,\cdot]_{1}}(x) b\right]_{2}\!+\!\left[b, r_{[\cdot,\cdot]_{1}}(x) a\right]_{2}\!-\!r_{[\cdot,\cdot]_{1}}\!\left(l_{[\cdot,\cdot]_{2}}(b) x\right) a  \!+\!r_{[\cdot,\cdot]_{1}}\!\left(l_{[\cdot,\cdot]_{2}}(a) x\right) b\!=\!0,\\
&l_{[\cdot,\cdot]_{1}}(x)[a, b]_{2}\!-\!\left[l_{[\cdot,\cdot]_{1}}(x) a, b\right]_{2}\!-\!\left[a, l_{[\cdot,\cdot]_{1}}(x) b\right]_{2}\!-\!l_{[\cdot,\cdot]_{1}}\left(r_{[\cdot,\cdot]_{2}}(a) x\right) b  \!-\!r_{[\cdot,\cdot]_{1}}\left(r_{[\cdot,\cdot]_{2}}(b) x\right) a\!=\!0,\\
&{\left[l_{[\cdot,\cdot]_{1}}(x) a, b\right]_{2}\!+\!l_{[\cdot,\cdot]_{1}}\left(r_{[\cdot,\cdot]_{2}}(a) x\right) b\!+\!\left[r_{[\cdot,\cdot]_{1}}(x) a, b\right]_{2}\!+\!l_{[\cdot,\cdot]_{1}}\left(l_{[\cdot,\cdot]_{2}}(a) x\right) b\!=\!0,} \\
&r_{[\cdot,\cdot]_{2}}(a)[x, y]_{1}\!\!-\!\!\left[x, r_{[\cdot,\cdot]_{2}}(a) y\right]_{1}\!+\!\!\left[y, r_{[\cdot,\cdot]_{2}}(a) x\right]_{1}\!\!-\! r_{[\cdot,\cdot]_{2}}\left(l_{[\cdot,\cdot]_{1}}(y) a\right) x \!+\!r_{[\cdot,\cdot]_{2}}\!\left(l_{[\cdot,\cdot]_{1}}(x) a\right) y\!=\!0, \\
&l_{[\cdot,\cdot]_{2}}(a)[x, y]_{1}\!-\!\!\left[l_{[\cdot,\cdot]_{2}}(a) x, y\right]_{1}\!-\!\left[x, l_{[\cdot,\cdot]_{2}}(a) y\right]_{1}\!-\!l_{[\cdot,\cdot]_{2}}\!\left(r_{[\cdot,\cdot]_{1}}(x) a\right) y  \!-\!r_{[\cdot,\cdot]_{2}}\!\left(r_{[\cdot,\cdot]_{1}}(y) a\right) x\!=\!0, \\
&{\left[l_{[\cdot,\cdot]_{2}}(a) x, y\right]_{1}\!+\!l_{[\cdot,\cdot]_{2}}\left(r_{[\cdot,\cdot]_{1}}(x) a\right) y\!+\!\left[r_{[\cdot,\cdot]_{2}}(a) x, y\right]_{1}\!+\!l_{[\cdot,\cdot]_{2}}\left(l_{[\cdot,\cdot]_{1}}(x) a\right) y\!=\!0}, \quad \forall x, y \in  A_1, a, b \in  A_2.
\end{align}
\item  $(A_{2};l_{\circ_{1}}, r_{\circ_{1}}, l_{[\cdot,\cdot]_{1}}, r_{[\cdot,\cdot]_{1}})$ and $(A_{1};l_{\circ_{2}}, r_{\circ_{2}}, l_{[\cdot,\cdot]_{2}}, r_{[\cdot,\cdot]_{2}})$ are representations of dual pre-Poisson algebras  $ (A_{1},  \circ_{1},[\cdot,\cdot]_{1})$ and  $ (A_{2},  \circ_{2},[\cdot,\cdot]_{2})$ respectively satisfying the following equations
\begin{align}
&[x,l_{\circ_{2}}(a)y]_{1}  + r_{[\cdot,\cdot]_{2}}(r_{\circ_{1}}(y)a)x = r_{[\cdot,\cdot]_{2}}(a) x \circ_{1} y + l_{\circ_{2}}(a)([ x , y]_{1}) + l_{\circ_{2}}(l_{[\cdot,\cdot]_{1}}(x)a)y, \label{mpdualppc1}\\
&[a,l_{\circ_{1}}(x)b]_{2}  + r_{[\cdot,\cdot]_{1}}(r_{\circ_{2}}(b)x)a = r_{[\cdot,\cdot]_{1}}(x) a \circ_{2} b + l_{\circ_{1}}(x)([ a , b]_{2}) + l_{\circ_{1}}(l_{[\cdot,\cdot]_{2}}(a)x)b,\label{mpdualppc2}\\
&[x,r_{\circ_{2}}(a)y]_{1}  + r_{[\cdot,\cdot]_{2}}(l_{\circ_{1}}(y)a)x = y \circ_{1} r_{[\cdot,\cdot]_{2}}(a) x   + r_{\circ_{2}}(a)([ x , y]_{1}) + r_{\circ_{2}}(l_{[\cdot,\cdot]_{1}}(x)a)y,\label{mpdualppc3}\\
&[a,r_{\circ_{1}}(x)b]_{2}  + r_{[\cdot,\cdot]_{1}}(l_{\circ_{2}}(b)x)a = b \circ_{2} r_{[\cdot,\cdot]_{1}}(x) a   + r_{\circ_{1}}(x)([ a , b]_{2}) + r_{\circ_{1}}(l_{[\cdot,\cdot]_{2}}(a)x)b,\label{mpdualppc4}\\
&l_{[\cdot,\cdot]_{2}}(a) (x \circ_{1} y) = l_{\circ_{2}}(r_{[\cdot,\cdot]_{1}}(x)a)y + r_{\circ_{2}}(r_{[\cdot,\cdot]_{1}}(y)a)x +  l_{[\cdot,\cdot]_{2}}(a)x  \circ_{1} y + x \circ_{1}  l_{[\cdot,\cdot]_{2}}(a) y,\label{mpdualppc5}\\
&l_{[\cdot,\cdot]_{1}}(x) (a \circ_{2} b) = l_{\circ_{1}}(r_{[\cdot,\cdot]_{2}}(a)x)b + r_{\circ_{1}}(r_{[\cdot,\cdot]_{2}}(b)x)a +  l_{[\cdot,\cdot]_{1}}(x)a  \circ_{2} b + a \circ_{2}  l_{[\cdot,\cdot]_{1}}(x) b , \label{mpdualppc6}\\
&[r_{\circ_{2}}(a)x,y]_{1} + l_{[\cdot,\cdot]_{2}}(l_{\circ_{1}}(x)a)y = l_{\circ_{2}}(a)([x,y]) + x \circ_{1} l_{[\cdot,\cdot]_{2}}(a) y + r_{\circ_{2}}(r_{[\cdot,\cdot]_{1}}(y)a)x,\label{mpdualppc7} \\
&[r_{\circ_{2}}(a)x,y]_{1} + l_{[\cdot,\cdot]_{2}}(l_{\circ_{1}}(x)a)y =  [l_{\circ_{2}}(a)x,y]_{1} + l_{[\cdot,\cdot]_{2}}(r_{\circ_{1}}(x)a)y,\label{mpdualppc8}\\
&r_{[\cdot,\cdot]_{2}}(a)(x \circ_{1} y) = x \circ_{1}  r_{[\cdot,\cdot]_{2}}(a) y + r_{\circ_{2}}(l_{[\cdot,\cdot]_{1}}(y)a)x + y \circ_{1}  r_{[\cdot,\cdot]_{2}}(a) x + r_{\circ_{2}}(l_{[\cdot,\cdot]_{1}}(x)a)y, \label{mpdualppc9}\\
&[l_{\circ_{1}}(x)a,b]_{2}+  l_{[\cdot,\cdot]_{1}}(r_{\circ_{2}}(a)x)b  =     l_{\circ_{1}}(x)([a,b]_{2})+ a \circ_{2}  l_{[\cdot,\cdot]_{1}}(x) b + r_{\circ_{1}}(r_{[\cdot,\cdot]_{2}}(b)x)a , \label{mpdualppc10}\\
&[l_{\circ_{1}}(x)a,b]_{2}+  l_{[\cdot,\cdot]_{1}}(r_{\circ_{2}}(a)x)b  =      [r_{\circ_{1}}(x)a,b]_{2}+ l_{[\cdot,\cdot]_{1}}(l_{\circ_{2}}(a)x)b, \label{mpdualppc11}\\
&r_{[\cdot,\cdot]_{1}}(x)(a \circ_{2} b) = a \circ_{2}  r_{[\cdot,\cdot]_{1}}(x) b + r_{\circ_{1}}(l_{[\cdot,\cdot]_{2}}(b)x)a + b \circ_{2}  r_{[\cdot,\cdot]_{1}}(x) a + r_{\circ_{1}}(l_{[\cdot,\cdot]_{2}}(a)x)b, \label{mpdualppc12} \\
&  l_{\circ_{2}}(l_{[\cdot,\cdot]_{1}}(x) a) y + r_{[\cdot,\cdot]_{2}}(a) x \circ_{1} y  = - l_{[\cdot,\cdot]_{2}}(a) x \circ_{1} y - l_{\circ_{2}}(r_{[\cdot,\cdot]_{1}}(x) a) y, \label{mpdualppc13} \\
&l_{\circ_{1}}(l_{[\cdot,\cdot]_{2}}(a)x)b + r_{[\cdot,\cdot]_{1}}(x) a\circ_{2}b = - l_{[\cdot,\cdot]_{1}}(x) a\circ_{2}b-l_{\circ_{1}}(r_{[\cdot,\cdot]_{2}}(a)x)b ,\label{mpdualppc14} \\
&r_{\circ_{2}}(a)([x,y]_{1}) = -r_{\circ_{2}}(a)([y,x]_{1}),\label{mpdualppc15} \\
&r_{\circ_{1}}(x)([a,b]_{2}) = -r_{\circ_{1}}(x)([b,a]_{2}), \quad \forall x, y \in  A_1, a, b \in  A_2. \label{mpdualppc16}
\end{align}
 \end{enumerate}
 Then we call $(A_{1},A_{2},l_{\circ_{1}}, r_{\circ_{1}}, l_{[\cdot,\cdot]_{1}}, r_{[\cdot,\cdot]_{1}},l_{\circ_{2}}, r_{\circ_{2}}, l_{[\cdot,\cdot]_{2}}, r_{[\cdot,\cdot]_{2}})$  a {\bf matched pair of dual pre-Poisson  algebras}.
 \end{defi}
 
 \begin{pro}\label{djdandm}
  Let $(A_1,\circ_1,[\cdot,\cdot]_1)$ and $(A_2,\circ_2,[\cdot,\cdot]_2)$  be two dual pre-Poisson algebras. Let  $l_{\circ_{1}}, r_{\circ_{1}}, l_{[\cdot,\cdot]_{1}}, r_{[\cdot,\cdot]_{1}}: A_{1} \rightarrow \operatorname{End}\left(A_{2}\right)$  and  $l_{\circ_{2}}, r_{\circ_{2}}, l_{[\cdot,\cdot]_{2}}, r_{[\cdot,\cdot]_{2}}: A_{2} \rightarrow \operatorname{End}\left(A_{1}\right)$  be eight linear maps. Define two bilinear operations  $\circ_{A\oplus B},[\cdot,\cdot]_{A\oplus B}:\left(A_{1} \oplus A_{2}\right) \otimes\left(A_{1} \oplus A_{2}\right) \rightarrow A_{1} \oplus A_{2}$  on  $A_{1} \oplus A_{2}$  by
\begin{align}
(x+a) \circ_{A\oplus B} (y+b) &=x \circ_{1} y+r_{\circ_{2}}(b) x+l_{\circ_{2}}(a) y+l_{\circ_{1}}(x) b+r_{\circ_{1}}(y) a+a \circ_{2} b,\\
 [x+a,y+b]_{A\oplus B}  &=[x ,y]_{1}  + r_{[\cdot,\cdot]_{2}}(b) x+l_{[\cdot,\cdot]_{2}}(a) y+l_{[\cdot,\cdot]_{1}}(x) b+r_{[\cdot,\cdot]_{1}}(y) a+[a , b]_{2},
\end{align}
where  $x, y \in A_{1}, a, b \in A_{2}$. Then $(A_{1} \oplus A_{2}, \circ_{A\oplus B},[\cdot,\cdot]_{A\oplus B})$ is a dual pre-Poisson algebra  if and only if $(A_{1},A_{2},l_{\circ_{1}}, r_{\circ_{1}}, l_{[\cdot,\cdot]_{1}}, r_{[\cdot,\cdot]_{1}},l_{\circ_{2}}, r_{\circ_{2}}, l_{[\cdot,\cdot]_{2}}, r_{[\cdot,\cdot]_{2}})$ is a matched pair of dual pre-Poisson algebras. We denote this dual pre-Poisson algebra by  $A_{1} \bowtie_{l_{\circ_{1}}, r_{\circ_{1}}, l_{[\cdot,\cdot]_{1}}, r_{[\cdot,\cdot]_{1}}}^{l_{\circ_{2}}, r_{\circ_{2}}, l_{[\cdot,\cdot]_{2}}, r_{[\cdot,\cdot]_{2}}} A_{2}$  or simply  $A_{1} \bowtie A_{2}$.
 \end{pro}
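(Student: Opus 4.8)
The plan is to expand each defining axiom of a dual pre-Poisson algebra (Definition \ref{def1}) for the operations $\circ_{A\oplus B}$ and $[\cdot,\cdot]_{A\oplus B}$ on $A_1\oplus A_2$ and to show, component by component, that these axioms hold precisely when the corresponding clause of Definition \ref{mpofdppa} is satisfied. The permutative axiom \eqref{perm} for $\circ_{A\oplus B}$ involves only the four maps $l_{\circ_{1}}, r_{\circ_{1}}, l_{\circ_{2}}, r_{\circ_{2}}$, so by the matched-pair theory for permutative algebras \cite{BYZ} it is equivalent to clause (1). Likewise the Leibniz axiom \eqref{Leibniz} for $[\cdot,\cdot]_{A\oplus B}$ involves only $l_{[\cdot,\cdot]_{1}}, r_{[\cdot,\cdot]_{1}}, l_{[\cdot,\cdot]_{2}}, r_{[\cdot,\cdot]_{2}}$ and is equivalent to clause (2) by the matched-pair theory for Leibniz algebras \cite{TS}. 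Hence the genuine content is to prove that the three compatibility conditions \eqref{xdpp1}--\eqref{xdpp3} for the direct sum are together equivalent to clause (3).

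First I would exploit bilinearity: since all six operations extend bilinearly over $A_1\oplus A_2$, each of \eqref{xdpp1}--\eqref{xdpp3} holds in general if and only if it holds on every homogeneous triple in which each argument lies entirely in $A_1$ or entirely in $A_2$. For each compatibility condition this yields eight cases. The two pure cases, with all three arguments in $A_1$ or all three in $A_2$, reduce to the same compatibility condition inside $A_1$ or $A_2$ respectively and hold automatically since each $A_i$ is a dual pre-Poisson algebra. Substituting a mixed triple into \eqref{xdpp1}--\eqref{xdpp3}, expanding via the definitions of $\circ_{A\oplus B}$ and $[\cdot,\cdot]_{A\oplus B}$, and then projecting the resulting identity onto the $A_1$-summand and the $A_2$-summand separately produces a finite list of component identities among the eight structure maps.

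The next step is to match these component identities with the labelled equations. In each mixed case the two arguments of the same type are fed into the product of their own algebra, and the component that stays within that algebra reproduces one of the compatibility axioms \eqref{dpp1}--\eqref{dpp8} that are built into the requirement of clause (3) that $(A_{2};l_{\circ_{1}}, r_{\circ_{1}}, l_{[\cdot,\cdot]_{1}}, r_{[\cdot,\cdot]_{1}})$ and $(A_{1};l_{\circ_{2}}, r_{\circ_{2}}, l_{[\cdot,\cdot]_{2}}, r_{[\cdot,\cdot]_{2}})$ be representations (Definition \ref{defrep}), while the component that crosses into the other algebra yields a coupling equation. Running through the mixed cases for \eqref{xdpp1} and \eqref{xdpp2}, together with their $A_1\leftrightarrow A_2$ mirror images, produces exactly the equations \eqref{mpdualppc1}--\eqref{mpdualppc14}, and the skew condition \eqref{xdpp3} yields the remaining pair \eqref{mpdualppc15}--\eqref{mpdualppc16}. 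Because every reduction in this chain is an equivalence---the universal quantifier over $A_1$ and $A_2$ forces each projected component to vanish independently---the two implications of the proposition are established simultaneously, and the bicrossed-product notation $A_1\bowtie A_2$ is then well defined.

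The main obstacle is organizational rather than conceptual: the several dozen component identities obtained by expansion must be enumerated without omission or duplication and correctly identified with the sixteen target equations and the representation axioms. The delicate points are to recognize which projected components are automatically absorbed by the representation conditions \eqref{permrep1}--\eqref{Leibrep2} and \eqref{dpp1}--\eqref{dpp8} and by the permutative and Leibniz matched-pair clauses, so that precisely \eqref{mpdualppc1}--\eqref{mpdualppc16} survive, and to track the $A_1\leftrightarrow A_2$ symmetry so that each mixed case and its mirror are both accounted for. Once this bookkeeping is organized, each individual case is a routine expansion that I would relegate to a direct verification.
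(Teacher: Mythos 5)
Your proposal is correct and follows essentially the same route as the paper, whose entire proof is the remark that the statement ``can be checked directly by Definitions \ref{def1} and \ref{mpofdppa}''; your plan is simply the organized version of that direct verification, decomposing each axiom into homogeneous components, absorbing the pure and two-same-type cases into clauses (1), (2) and the representation conditions \eqref{dpp1}--\eqref{dpp8}, and matching the surviving cross terms with Eqs.~\eqref{mpdualppc1}--\eqref{mpdualppc16}. Nothing further is needed.
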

\begin{proof}
It can be checked directly by Definitions \ref{def1} and \ref{mpofdppa}.
\end{proof}

\begin{defi}  A {\bf (standard) Manin triple of  dual pre-Poisson algebras } is a triple  $((A \oplus A^{*}, \circ_{A \oplus A^{*}},$ $  [\cdot,\cdot]_{A \oplus A^{*}}, \mathcal{B}_{A \oplus A^{*}}), (A, \circ_A,[\cdot,\cdot]_A), (A^{*},\circ_{A^*},[\cdot,\cdot]_{A^*}))$  for which
\begin{enumerate}
\item  as a vector space,  $A \oplus A^{*}$  is the direct sum of  $A$  and  $A^{*}$.
\item  $(A, \circ_A,[\cdot,\cdot]_A)$ and $(A^{*},\circ_{A^*},[\cdot,\cdot]_{A^*})$  are dual pre-Poisson  subalgebras of dual pre-Poisson algebra  $((A \oplus A^{*}, \circ_{A \oplus A^{*}}, [\cdot,\cdot]_{A \oplus A^{*}} )$.
\item  $(A \oplus A^{*}, \circ_{A \oplus A^{*}}, [\cdot,\cdot]_{A \oplus A^{*}}, \mathcal{B}_{A \oplus A^{*}})$ is a quadratic dual pre-Poisson algebra  where the natural nondegenerate skew-symmetric bilinear form  $\mathcal{B}_{A \oplus A^{*}}$ is defined by Eq.~\eqref{abm}.
\end{enumerate}
We  denote it by $(A \oplus A^{*}, A, A^{*})$ simply.
\end{defi}

\begin{ex}
Let  $(A, \circ_A,[\cdot,\cdot]_A)$  be a dual pre-Poisson algebra and $\mathcal{B}_{A \oplus A^{*}}$ be the bilinear form on $A \oplus A^{*}$  defined by Eq.~\eqref{abm}. Then   $((A \ltimes_{-L_{\circ}^*, -L_{\circ}^*+R_{\circ}^*, L_{[\cdot,\cdot]}^*, -L_{[\cdot,\cdot]}^*-R_{[\cdot,\cdot]}^*} A^*, \mathcal{B}_{A \oplus A^{*}}),$ $ (A, \circ_A,[\cdot,\cdot]_A), $ $(A^{*},\circ_{A^*},[\cdot,\cdot]_{A^*}))$ is a  (standard) Manin triple of  dual pre-Poisson algebras where $(A^{*},\circ_{A^*},[\cdot,\cdot]_{A^*})$ is a trivial dual pre-Poisson algebra.
\end{ex}

\begin{pro}\label{mpandmtofdppa}
Let  $(A, \circ_A,[\cdot,\cdot]_A)$  be a dual pre-Poisson algebra. Suppose $(A^{*}, \circ_{A^{*}},[\cdot,\cdot]_{A^{*}})$ is a dual pre-Poisson algebra structure on the dual space  $A^{*}$. Then  $((A \oplus A^{*}, \circ_{A \oplus A^{*}}, [\cdot,\cdot]_{A \oplus A^{*}},$ $ \mathcal{B}_{A \oplus A^{*}}), (A, \circ_A,[\cdot,\cdot]_A), $ $(A^{*},\circ_{A^*},[\cdot,\cdot]_{A^*}))$  is a (standard) Manin triple of dual pre-Poisson algebras with $\mathcal{B}_{A \oplus A^{*}}$ given by Eq.~\eqref{abm} if and only if $(A,A^*,-L_{\circ}^*, -L_{\circ}^*+R_{\circ}^*, L_{[\cdot,\cdot]}^*, -L_{[\cdot,\cdot]}^*-R_{[\cdot,\cdot]}^*,-\mathcal{L}_{\circ}^*, -\mathcal{L}_{\circ}^*+\mathcal{R}_{\circ}^*,  \mathcal{L}_{[\cdot,\cdot]}^*, -\mathcal{L}_{[\cdot,\cdot]}^*-\mathcal{R}_{[\cdot,\cdot]}^*)$  is a matched pair of dual pre-Poisson algebras.
\end{pro}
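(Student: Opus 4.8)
The plan is to isolate the invariance of $\mathcal{B}_{A\oplus A^*}$ as the sole additional constraint and to identify it with the requirement that the cross actions be the coregular representations. By Proposition \ref{djdandm}, endowing $A\oplus A^*$ with operations $\circ_{A\oplus A^*},[\cdot,\cdot]_{A\oplus A^*}$ that restrict to $(A,\circ_A,[\cdot,\cdot]_A)$ on $A$ and to $(A^*,\circ_{A^*},[\cdot,\cdot]_{A^*})$ on $A^*$ — which is exactly conditions (1) and (2) in the definition of a Manin triple — amounts to prescribing eight cross-action maps $l_{\circ_1},r_{\circ_1},l_{[\cdot,\cdot]_1},r_{[\cdot,\cdot]_1}: A\to\operatorname{End}(A^*)$ and $l_{\circ_2},r_{\circ_2},l_{[\cdot,\cdot]_2},r_{[\cdot,\cdot]_2}: A^*\to\operatorname{End}(A)$ forming a matched pair. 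Hence the only extra content in condition (3) is the invariance of $\mathcal{B}_{A\oplus A^*}$, and it suffices to prove that, for such a matched-pair structure, $\mathcal{B}_{A\oplus A^*}$ is invariant if and only if the cross actions coincide with $-L_{\circ}^*,-L_{\circ}^*+R_{\circ}^*,L_{[\cdot,\cdot]}^*,-L_{[\cdot,\cdot]}^*-R_{[\cdot,\cdot]}^*$ (the action of $A$ on $A^*$) and with $-\mathcal{L}_{\circ}^*,-\mathcal{L}_{\circ}^*+\mathcal{R}_{\circ}^*,\mathcal{L}_{[\cdot,\cdot]}^*,-\mathcal{L}_{[\cdot,\cdot]}^*-\mathcal{R}_{[\cdot,\cdot]}^*$ (the action of $A^*$ on $A$), i.e. with the coregular representations of Example \ref{repdualtoqurs1}.

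\textbf{Main computation.} I would expand the invariance identities \eqref{perminv} and \eqref{Leibinv} for $\mathcal{B}_{A\oplus A^*}$, letting the three arguments range over $A$ and $A^*$ in all configurations. Because $\mathcal{B}_{A\oplus A^*}$ vanishes on $A\times A$ and on $A^*\times A^*$ by \eqref{abm}, the all-$A$ and all-$A^*$ cases are vacuous and only the mixed configurations matter. Using $x\circ_{A\oplus A^*}y=x\circ_A y$ for $x,y\in A$ and pairing via \eqref{abm}, the configuration $(x,y,a^*)$ of \eqref{perminv} gives $\langle x\circ_A y,a^*\rangle=\langle x,(l_{\circ_1}(y)-r_{\circ_1}(y))a^*\rangle$, while $(a^*,x,y)$ gives $\langle y,r_{\circ_1}(x)a^*\rangle=\langle x\circ_A y-y\circ_A x,a^*\rangle$; by the defining relation \eqref{dualmap} of dual maps these force $r_{\circ_1}=-L_{\circ}^*+R_{\circ}^*$ and then $l_{\circ_1}=-L_{\circ}^*$. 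The configurations obtained by exchanging the roles of $A$ and $A^*$ force $l_{\circ_2}=-\mathcal{L}_{\circ}^*$ and $r_{\circ_2}=-\mathcal{L}_{\circ}^*+\mathcal{R}_{\circ}^*$, and the same analysis of \eqref{Leibinv} pins down the four bracket actions as $L_{[\cdot,\cdot]}^*,-L_{[\cdot,\cdot]}^*-R_{[\cdot,\cdot]}^*$ and $\mathcal{L}_{[\cdot,\cdot]}^*,-\mathcal{L}_{[\cdot,\cdot]}^*-\mathcal{R}_{[\cdot,\cdot]}^*$. Lemma \ref{invxzyil} may be used to rewrite \eqref{perminv} and \eqref{Leibinv} in a one-sided form that streamlines these manipulations.

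\textbf{Conclusion and main obstacle.} For the forward implication, a Manin triple yields a dual pre-Poisson structure on $A\oplus A^*$ with $A,A^*$ as subalgebras, hence a matched pair by Proposition \ref{djdandm}; invariance of $\mathcal{B}_{A\oplus A^*}$ then forces the cross actions to be the coregular representations above, so the matched pair is precisely the one in the statement. For the converse, the specified matched pair makes $A\oplus A^*$ a dual pre-Poisson algebra with $A,A^*$ as subalgebras by Proposition \ref{djdandm}, the form $\mathcal{B}_{A\oplus A^*}$ is manifestly nondegenerate and skew-symmetric by \eqref{abm}, and its invariance follows by running the mixed-element computations in reverse. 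The main obstacle is purely one of bookkeeping: one must check that every mixed configuration of \eqref{perminv} and \eqref{Leibinv} — not only the few that determine the actions — holds once the cross actions equal the coregular representations. I expect each residual identity to collapse to a tautology directly via \eqref{dualmap} (with the representation axioms of Definition \ref{defrep} invoked where needed), so that no constraint beyond those already encoded in Definition \ref{mpofdppa} appears; verifying this across all mixed configurations for each of the two invariance conditions is the most laborious, though routine, part of the proof.
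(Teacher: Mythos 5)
Your proposal is correct and takes essentially the same route as the paper's proof: both directions pass through Proposition \ref{djdandm}, with the forward implication pinning down the eight cross actions as the coregular ones by pairing the invariance of $\mathcal{B}_{A \oplus A^{*}}$ through Eqs.~\eqref{abm} and \eqref{dualmap} (streamlined by Lemma \ref{invxzyil}), and the converse established by the direct mixed-argument verification of Eqs.~\eqref{perminv}--\eqref{Leibinv}. The bookkeeping you flag as the main obstacle is precisely what the paper's explicit displayed computations carry out, and it does collapse via \eqref{dualmap} as you anticipate.
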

\begin{proof}
 If  $((A \oplus A^{*}, \circ_{A \oplus A^{*}}, [\cdot,\cdot]_{A \oplus A^{*}},$ $ \mathcal{B}_{A \oplus A^{*}}), (A, \circ_A,[\cdot,\cdot]_A), (A^{*},\circ_{A^*},[\cdot,\cdot]_{A^*}))$  is a standard Manin triple of dual pre-Poisson algebras, then we set 
\begin{align*}
x \circ a^{*}&=l_{\circ_{A}}(x) a^{*}+r_{\circ_{A^{*}}}(a^{*}) x, && a^{*}\circ x=l_{\circ_{A^{*}}}(a^{*}) x+r_{\circ_A}(x) a^{*}, \\
[x , a^{*}]&=l_{[\cdot,\cdot]_{A}}(x) a^{*}+r_{[\cdot,\cdot]_{A^{*}}}(a^{*}) x, && [a^{*},x]=l_{[\cdot,\cdot]_{A^{*}}}(a^{*}) x+r_{[\cdot,\cdot]_A}(x) a^{*},\; \forall x \in A, a^{*} \in A^{*}.
\end{align*}
By Proposition \ref{djdandm},  $(A, A^{*}, l_{\circ_{A}}, r_{\circ_{A}},l_{[\cdot,\cdot]_{A}}, r_{[\cdot,\cdot]_{A}}, l_{\circ_{A^*}}, r_{\circ_{A^*}},l_{[\cdot,\cdot]_{A^*}}, r_{[\cdot,\cdot]_{A^*}})$  is a matched pair of dual pre-Poisson algebras. For all $x, y \in A$, $a^{*}, b^{*} \in A^{*}$, note that
\begin{align*}
\left\langle  y,l_{\circ_{A}}(x) a^{*}\right\rangle &= \mathcal{B}_{A \oplus A^{*}}( l_{\circ_{A}}(x) a^{*}, y) =\mathcal{B}_{A \oplus A^{*}}( x \circ a^{*} - r_{\circ_{A^{*}}}(a^{*}) x, y)=\mathcal{B}_{A \oplus A^{*}}( x \circ a^{*}, y) \\
&= \mathcal{B}_{A \oplus A^{*}}(  a^{*},   x  \circ_{A} y) =\left\langle    y,  -L_{\circ}^{*}(x)(a^{*}) \right\rangle.\\
\left\langle  y,r_{\circ_{A}}(x) a^{*}\right\rangle &= \mathcal{B}_{A \oplus A^{*}}( r_{\circ_{A}}(x) a^{*}, y) =\mathcal{B}_{A \oplus A^{*}}(a^{*} \circ x - l_{\circ_{A^{*}}}(a^{*}) x, y)=\mathcal{B}_{A \oplus A^{*}}( a^{*} \circ x , y) \\
&= \mathcal{B}_{A \oplus A^{*}}( a^{*}  , x \circ_{A}  y - y \circ_{A} x)   =  \left\langle  y, (-L_{\circ}^*+R_{\circ}^*)(x)a^{*}\right\rangle.\\
\left\langle  y,l_{[\cdot,\cdot]_{A}}(x) a^{*}\right\rangle &= \mathcal{B}_{A \oplus A^{*}}( l_{[\cdot,\cdot]_{A}}(x) a^{*}, y) =\mathcal{B}_{A \oplus A^{*}}( [x , a^{*}] - r_{[\cdot,\cdot]_{A^{*}}}(a^{*}) x, y)=\mathcal{B}_{A \oplus A^{*}}( [x,a^{*}], y) \\
&= -\mathcal{B}_{A \oplus A^{*}}(  a^{*},   [x,y]_{A}) =\left\langle    y,   L_{[\cdot,\cdot]}^{*}(x)(a^{*}) \right\rangle.\\
\left\langle  y,r_{[\cdot,\cdot]_{A}}(x) a^{*}\right\rangle &= \mathcal{B}_{A \oplus A^{*}}( r_{[\cdot,\cdot]_{A}}(x) a^{*}, y) =\mathcal{B}_{A \oplus A^{*}}([a^{*},x] - l_{[\cdot,\cdot]_{A^{*}}}(a^{*}) x, y)=\mathcal{B}_{A \oplus A^{*}}( [a^{*},x] , y) \\
&= \mathcal{B}_{A \oplus A^{*}}( a^{*}  , [x  ,y]_{A} + [y,x]_{A})   =  \left\langle  y,- (L_{[\cdot,\cdot]}^*+R_{[\cdot,\cdot]}^*)(x)a^{*}\right\rangle.
\end{align*}
 Hence  $l_{\circ_{A}}=-L_{\circ}^{*},\; r_{\circ_{A}}=-L_{\circ}^*+R_{\circ}^*,\; l_{[\cdot,\cdot]_{A}}=L_{[\cdot,\cdot]}^{*},\; r_{[\cdot,\cdot]_{A}}=- L_{[\cdot,\cdot]}^*-R_{[\cdot,\cdot]}^*$. Similarly, we have
 $l_{\circ_{A^{*}}}=-\mathcal{L}_{\circ}^{*},\; r_{\circ_{A^{*}}}=-\mathcal{L}_{\circ}^*+\mathcal{R}_{\circ}^*,\; l_{[\cdot,\cdot]_{A^{*}}}=\mathcal{L}_{[\cdot,\cdot]}^{*},\; r_{[\cdot,\cdot]_{A^{*}}}=- \mathcal{L}_{[\cdot,\cdot]}^*-\mathcal{R}_{[\cdot,\cdot]}^*$.

Conversely, if  $(A,A^*,-L_{\circ}^*, -L_{\circ}^*+R_{\circ}^*, L_{[\cdot,\cdot]}^*, -L_{[\cdot,\cdot]}^*-R_{[\cdot,\cdot]}^*,-\mathcal{L}_{\circ}^*, -\mathcal{L}_{\circ}^*+\mathcal{R}_{\circ}^*,  \mathcal{L}_{[\cdot,\cdot]}^*, -\mathcal{L}_{[\cdot,\cdot]}^*-\mathcal{R}_{[\cdot,\cdot]}^*)$   is a matched pair of dual pre-Poisson algebras, then there is a dual pre-Poisson algebra  $A \bowtie  A^{*}$  obtained from Proposition \ref{djdandm}, which includes  $(A, \circ_A,[\cdot,\cdot]_A)$ and $(A, \circ_{A^{*}},[\cdot,\cdot]_{A^{*}})$ as  dual pre-Poisson subalgebra. It is enough to show that the skew-symmetric bilinear  form  $\mathcal{B}_{A \oplus A^{*}}$ defined by Eq.~\eqref{abm}   is invariant. In fact, for all $x,y,z\in A, a^{*},b^{*},c^{*}\in A^{*}$, we have 
\begin{align*}
&\mathcal{B}_{A \oplus A^{*}}((x + a^*)\circ_{A \oplus A^*}( y + b^*), z + c^*)\\
&= \left\langle z , a^* \circ_{A^{*}} b^*   -L_{\circ}^*(x)b^*-(  L_{\circ}^*-R_{\circ}^*)(y)a^*  \right\rangle - \left\langle x \circ_{A} y   -\mathcal{L}_{\circ}^*(a^{*}) y-( \mathcal{L}_{\circ}^*-\mathcal{R}_{\circ}^*)(b^{*})x, c^* \right\rangle \\
&=  \left\langle z, a^* \circ_{A^{*}} b^*   \right\rangle  -\left\langle z, L_{\circ}^*(x)b^*\right\rangle -\left\langle z,(  L_{\circ}^*-R_{\circ}^*)(y)a^*\right\rangle -\left\langle x \circ_{A} y,c^* \right\rangle +\left\langle  \mathcal{L}_{\circ}^*(a^{*}) y, c^* \right\rangle \\
&\quad + \left\langle ( \mathcal{L}_{\circ}^*-\mathcal{R}_{\circ}^*)(b^*)x, c^* \right\rangle,\\
&\mathcal{B}_{A \oplus A^{*}}( x + a^*,   (y + b^*) \circ_{A \oplus A^*}(z + c^*)  - (z + c^*)\circ_{A \oplus A^*} (y + b^*) )\\
&= \mathcal{B}_{A \oplus A^{*}}( x + a^*,y \circ_{A} z \!-\! L_{\circ}^*(y) c^*\!-\!(  L_{\circ}^*\!-\!R_{\circ}^*)(z) b^{*}+ b^* \circ_{A^{*}} c^* \!-\! \mathcal{L}_{\circ}^*(b^*)z\!-\!( \mathcal{L}_{\circ}^*\!-\!\mathcal{R}_{\circ}^*)(c^*)y ) \\
&\quad \!-\!\mathcal{B}_{A \oplus A^{*}}( x + a^*,z \circ_{A} y \!-\! L_{\circ}^*(z) b^*\!-\!(  L_{\circ}^*\!-\!R_{\circ}^*)(y) c^{*}+ c^* \circ_{A^{*}} b^* \!-\! \mathcal{L}_{\circ}^*(c^*)y\!-\!( \mathcal{L}_{\circ}^*\!-\!\mathcal{R}_{\circ}^*)(b^*)z ) \\
&= \left\langle  a^*,y \circ_{A} z   - \mathcal{L}_{\circ}^*(b^*)z-( \mathcal{L}_{\circ}^*-\mathcal{R}_{\circ}^*)(c^*)y   \right\rangle   +\left\langle  x,  L_{\circ}^*(y) c^*+(  L_{\circ}^*-R_{\circ}^*)(z) b^{*}- b^* \circ_{A^{*}} c^* \right\rangle \\
&\quad -\left\langle    a^*,z \circ_{A} y   - \mathcal{L}_{\circ}^*(c^*)y-( \mathcal{L}_{\circ}^*-\mathcal{R}_{\circ}^*)(b^*)z  \right\rangle  +\left\langle   x  ,  - L_{\circ}^*(z) b^*-(  L_{\circ}^*-R_{\circ}^*)(y) c^{*}+ c^* \circ_{A^{*}} b^*  ) \right\rangle \\
&= \left\langle  a^*,y \circ_{A} z  + \mathcal{R}_{\circ}^* (c^*)y   \right\rangle   -\left\langle  x,      R_{\circ}^* (z) b^{*}+ b^* \circ_{A^{*}} c^* \right\rangle  -\left\langle    a^*, z \circ_{A} y   +\mathcal{R}_{\circ}^*(b^*)z  \right\rangle \\
& \quad +\left\langle   x  ,    R_{\circ}^* (y) c^{*}+ c^* \circ_{A^{*}} b^*  ) \right\rangle. 
\end{align*}
  Thus $\mathcal{B}_{A \oplus A^{*}}$ satisfies Eq.~\eqref{perminv}. Moreover,  we have
 \begin{align*}
&\mathcal{B}_{A \oplus A^{*}}([x + a^*, y + b^*]_{A \oplus A^*}, z + c^*)\\
&= \left\langle z ,  [a^*,b^*]_{A^{*}}    \!+\!L_{[\cdot,\cdot]}^*(x)b^*\!-\!(  L_{[\cdot,\cdot]}^*\!+\!R_{[\cdot,\cdot]}^*)(y)a^*  \right\rangle \!-\! \left\langle  [x,y]_{A}    \!+\!\mathcal{L}_{[\cdot,\cdot]}^*(a^{*}) y\!-\!( \mathcal{L}_{[\cdot,\cdot]}^*\!+\!\mathcal{R}_{[\cdot,\cdot]}^*)(b^{*})x, c^* \right\rangle \\
&=  \left\langle z,  [a^*,b^*]_{A^{*}}    \right\rangle  +\left\langle z, L_{[\cdot,\cdot]}^*(x)b^*\right\rangle -\left\langle z,(  L_{[\cdot,\cdot]}^*+R_{[\cdot,\cdot]}^*)(y)a^*\right\rangle -\left\langle  [x,y]_{A}  ,c^* \right\rangle -\left\langle  \mathcal{L}_{[\cdot,\cdot]}^*(a^{*}) y, c^* \right\rangle \\
&\quad + \left\langle ( \mathcal{L}_{[\cdot,\cdot]}^*+\mathcal{R}_{[\cdot,\cdot]}^*)(b^*)x, c^* \right\rangle,\\
&\mathcal{B}_{A \oplus A^{*}}( x + a^*,   [y + b^*   ,  z + c^*]_{A \oplus A^*}+ [z + c^* ,   y + b^*]_{A \oplus A^*} )\\
&= \mathcal{B}_{A \oplus A^{*}}( x \!+\! a^*, [y,z]_{A}   \!\!+\!\! L_{[\cdot,\cdot]}^*(y) c^*\!\!-\!\!(  L_{[\cdot,\cdot]}^*\!\!+\!\!R_{[\cdot,\cdot]}^*)(z) b^{*}\!+\!  [b^*,c^*]_{A^{*}}  \!\!+\!\! \mathcal{L}_{[\cdot,\cdot]}^*(b^*)z\!\!-\!\!( \mathcal{L}_{[\cdot,\cdot]}^*\!\!+\!\!\mathcal{R}_{[\cdot,\cdot]}^*)(c^*)y ) \\
&\quad \!\!+\!\!\mathcal{B}_{A \oplus A^{*}}( x \!+\! a^*, [z,y]_{A}  \!\!+\!\! L_{[\cdot,\cdot]}^*(z) b^*\!\!-\!\!(  L_{[\cdot,\cdot]}^*\!\!+\!\!R_{[\cdot,\cdot]}^*)(y) c^{*}\!+\! [c^* ,b^*]_{A^{*}}  \!\!+\!\! \mathcal{L}_{[\cdot,\cdot]}^*(c^*)y\!\!-\!\!( \mathcal{L}_{[\cdot,\cdot]}^*\!\!+\!\!\mathcal{R}_{[\cdot,\cdot]}^*)(b^*)z ) \\
&= \left\langle  a^*, [y,z]_{A}    \!+\! \mathcal{L}_{[\cdot,\cdot]}^*(b^*)z\!-\!( \mathcal{L}_{[\cdot,\cdot]}^*\!+\!\mathcal{R}_{[\cdot,\cdot]}^*)(c^*)y   \right\rangle   \!-\!\left\langle  x,  L_{[\cdot,\cdot]}^*(y) c^*\!-\!(  L_{[\cdot,\cdot]}^*\!+\!R_{[\cdot,\cdot]}^*)(z) b^{*}\!+\! [ b^*,c^*]_{A^{*}}  \right\rangle \\
&\quad \!+\!\left\langle    a^*, [z,y]_{A}    \!+\!\mathcal{L}_{[\cdot,\cdot]}^*(c^*)y\!-\!( \mathcal{L}_{[\cdot,\cdot]}^*\!+\!\mathcal{R}_{[\cdot,\cdot]}^*)(b^*)z  \right\rangle  \!-\!\left\langle   x  ,   L_{[\cdot,\cdot]}^*(z) b^*\!-\!(  L_{[\cdot ,\cdot]}^*\!+\!R_{[\cdot,\cdot]}^*)(y) c^{*}\!+\! [c^*\!,\!b^* ]_{A^{*}}  ) \right\rangle \\
&= \left\langle  a^*, [y,z]_{A}    \! - \mathcal{R}_{[\cdot,\cdot]}^* (c^*)y   \right\rangle   +\left\langle  x,        R_{[\cdot,\cdot]}^* (z) b^{*}- [ b^*,c^*]_{A^{*}}  \right\rangle \\
&\quad \!+\!\left\langle    a^*, [z,y]_{A}      - \mathcal{R}_{[\cdot,\cdot]}^* (b^*)z  \right\rangle  +\left\langle   x  ,       R_{[\cdot,\cdot]}^* (y) c^{*}- [c^*\!,\!b^* ]_{A^{*}}  ) \right\rangle.
\end{align*}
Therefore $\mathcal{B}_{A \oplus A^{*}}$ is invariant on $A \bowtie A^{*}$.
\end{proof}

   \subsection{Dual pre-Poisson  bialgebras}
   
   Dualizing the notion of a dual pre-Poisson algebra, we give the notion of a dual pre-Poisson coalgebra.

\begin{defi} \label{defco1}
A   \textbf{dual pre-Poisson coalgebra} is a triple $(A, \delta_{\circ},\delta_{[\cdot,\cdot]})$ where $A$ is a vector space with two bilinear maps $\delta_{\circ},\delta_{[\cdot,\cdot]}: A \to A \otimes A$ such that 
\begin{enumerate}
\item $(A,\delta_{\circ})$ is a {\bf permutative coalgebra}:
\begin{eqnarray}
(\id \otimes \delta_{\circ})\delta_{\circ} = ( \delta_{\circ}  \otimes \id )\delta_{\circ} =(\tau \otimes \id)( \delta_{\circ}  \otimes \id )\delta_{\circ}. \label{permco}
\end{eqnarray}
\item $(A,\delta_{[\cdot,\cdot]})$ is a {\bf Leibniz coalgebra}:
\begin{eqnarray}
&(\id \otimes \delta_{[\cdot,\cdot]})\delta_{[\cdot,\cdot]} = ( \delta_{[\cdot,\cdot]}  \otimes \id )\delta_{[\cdot,\cdot]}+(\tau \otimes \id)(\id \otimes \delta_{[\cdot,\cdot]})\delta_{[\cdot,\cdot]}.
 \label{Leibnizco}
\end{eqnarray}
\item The following three compatibility conditions hold:
\begin{align}
(\id \otimes \delta_{\circ})\delta_{[\cdot,\cdot]}  &= ( \delta_{[\cdot,\cdot]} \otimes \id )\delta_{\circ}  +  (\tau \otimes \id)( \id \otimes  \delta_{[\cdot,\cdot]} )\delta_{\circ} ,\label{xdpp1co} \\
(\delta_{\circ} \otimes  \id )\delta_{[\cdot,\cdot]}  &= (  \id \otimes \delta_{[\cdot,\cdot]}  )\delta_{\circ}  +  (\tau \otimes \id)(  \id \otimes \delta_{[\cdot,\cdot]}  )\delta_{\circ} , \label{xdpp2co} \\
( \delta_{[\cdot,\cdot]} \otimes \id )\delta_{\circ}  &= -(\tau \otimes \id)( \delta_{[\cdot,\cdot]} \otimes \id )\delta_{\circ}  . \label{xdpp3co}
\end{align}
\end{enumerate}
\end{defi}

The notion of a dual pre-Poisson coalgebra could be viewed as the duality of the notion of a dual pre-Poisson algebra. 

\begin{pro}
Let $A$ be a finite-dimensional vector space and $\delta_{\circ},\delta_{[\cdot,\cdot]}: A \to A \otimes A$ be linear maps. If  the operations $\circ_{A^*}, [\cdot,\cdot]_{A^*}: A^* \otimes A^* \to A^*$ are the linear duals of $\delta_{\circ},\delta_{[\cdot,\cdot]}$ respectively, that is, $\circ_{A^*}$ and $[\cdot,\cdot]_{A^*}$ are respectively defined by 
\begin{align*}
\langle a^* \circ_{A^{*}} b^*, x \rangle := \langle a^* \otimes b^*, \delta_{\circ}(x) \rangle, \; \langle [a^* , b^*]_{A^{*}}, x \rangle := \langle a^* \otimes b^*, \delta_{[\cdot,\cdot]}(x) \rangle,\;\forall x \in A, a^*,b^* \in A^{*}.
\end{align*}
Then $(A,\delta_{\circ},\delta_{[\cdot,\cdot]})$ is a dual pre-Poisson coalgebra if and only if $(A^*,\circ_{A^*},[\cdot,\cdot]_{A^*})$ is a dual pre-Poisson algebra.
\end{pro}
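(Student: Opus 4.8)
The plan is to show that both implications are instances of a single fact: over a finite-dimensional $A$ the transpose $(-)^*$ is a contravariant, involutive equivalence that identifies the cooperations $\delta_\circ,\delta_{[\cdot,\cdot]}\colon A\to A\otimes A$ with the operations $\circ_{A^*},[\cdot,\cdot]_{A^*}\colon A^*\otimes A^*\to A^*$. Concretely, I would first record the canonical identification $(A^{\otimes n})^*\cong (A^*)^{\otimes n}$ and observe that the defining pairings in the statement say exactly that $\circ_{A^*}=\delta_\circ^*$ and $[\cdot,\cdot]_{A^*}=\delta_{[\cdot,\cdot]}^*$ under this identification. Since two linear maps $A\to A^{\otimes 3}$ agree if and only if their transposes $(A^*)^{\otimes 3}\to A^*$ agree, each defining relation of Definition~\ref{defco1} will be equivalent to the transposed relation among iterated products of $\circ_{A^*}$ and $[\cdot,\cdot]_{A^*}$.

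Next I would set up the bookkeeping dictionary that governs this transposition. The three rules I need are $(f\otimes g)^*=f^*\otimes g^*$, the order reversal $(g\circ f)^*=f^*\circ g^*$, and self-duality of the flip, $\tau^*=\tau$. Applying these, every word in $\delta_\circ,\delta_{[\cdot,\cdot]},\id,\tau$ occurring in \eqref{permco}--\eqref{xdpp3co} transposes to the corresponding word in $\circ_{A^*},[\cdot,\cdot]_{A^*},\id,\tau$ read in the opposite order. As a representative check I would evaluate on a decomposable tensor: $((\id\otimes\delta_\circ)\delta_\circ)^*=\circ_{A^*}(\id\otimes\circ_{A^*})$ sends $a^*\otimes b^*\otimes c^*$ to $a^*\circ_{A^*}(b^*\circ_{A^*}c^*)$, while $((\delta_\circ\otimes\id)\delta_\circ)^*$ and $((\tau\otimes\id)(\delta_\circ\otimes\id)\delta_\circ)^*$ send it to $(a^*\circ_{A^*}b^*)\circ_{A^*}c^*$ and $(b^*\circ_{A^*}a^*)\circ_{A^*}c^*$ respectively, so \eqref{permco} transposes precisely to the permutative law \eqref{perm}.

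Carrying out the same computation term by term then matches the remaining axioms: \eqref{Leibnizco} transposes to \eqref{Leibniz}, the $(\tau\otimes\id)$-summand on its right-hand side producing the $[y,[x,z]]$ term; and \eqref{xdpp1co}, \eqref{xdpp2co}, \eqref{xdpp3co} transpose to the compatibility conditions \eqref{xdpp1}, \eqref{xdpp2}, \eqref{xdpp3}, the flips again delivering the symmetrized summands such as $y\circ_{A^*}[x,z]$ and the sign in \eqref{xdpp3}. Because transposition is a bijection on the relevant spaces of maps, each coalgebra relation holds if and only if its algebraic counterpart does, which proves one direction; the converse follows verbatim by replacing $A$ with $A^*$ and using $(A^*)^*\cong A$, so that $\delta_\circ=(\circ_{A^*})^*$ and $\delta_{[\cdot,\cdot]}=([\cdot,\cdot]_{A^*})^*$.

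The only delicate point, and hence the main thing to get right, is the bookkeeping: correctly tracking the order reversal of composition and the relabeling of tensor factors induced by each $\tau$, so that the flipped summands land on the intended terms of \eqref{Leibniz} and \eqref{xdpp1}--\eqref{xdpp3} and the sign in \eqref{xdpp3} comes out correctly. I would neutralize this by systematically evaluating every transposed map on $a^*\otimes b^*\otimes c^*$ and pairing against an arbitrary $x\in A$, which reduces each identity to the defining relation $\langle a^*\circ_{A^*}b^*,x\rangle=\langle a^*\otimes b^*,\delta_\circ(x)\rangle$ and its $[\cdot,\cdot]$ analogue; beyond this there is no genuine mathematical obstacle.
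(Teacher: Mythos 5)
Your proof is correct: each coalgebra axiom \eqref{permco}--\eqref{xdpp3co} does transpose, under your dictionary $(f\otimes g)^*=f^*\otimes g^*$, $(g\circ f)^*=f^*\circ g^*$, $\tau^*=\tau$, to the corresponding algebra axiom \eqref{perm}--\eqref{xdpp3}, with the flipped summands and the sign in \eqref{xdpp3} landing exactly where you say, and finite-dimensionality makes transposition a bijection so both implications follow at once. The paper states this proposition without proof, treating precisely this routine dualization as immediate, so your argument is the intended one and supplies the omitted verification.
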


   \begin{defi} \label{defbi1}
A   \textbf{dual pre-Poisson bialgebra} is a sextuple $(A,\circ,[\cdot,\cdot], \delta_{\circ},\delta_{[\cdot,\cdot]})$ where $(A,\circ,[\cdot,\cdot])$ is a dual pre-Poisson  algebra and $(A,\delta_{\circ},\delta_{[\cdot,\cdot]})$ is a dual pre-Poisson coalgebra satisfying the following compatibility conditions:
\begin{align}
&\delta_{\circ}(x \circ y)=  (L_{\blacksquare}(x) \otimes \operatorname{id}) \delta_{\circ}(y)+(\operatorname{id} \otimes R_{\circ}(y)) \delta_{\circ}(x), \label{pbi1}\\
&(\operatorname{id} \otimes R_{\circ}(x)  ) \tau\delta_{\circ}(y)=   (R_{\circ}(y) \otimes \operatorname{id})\delta_{\circ}(x), \label{pbi2}\\
&\delta_{\circ}(x \circ y)=     (L_{\blacksquare}(y) \otimes \operatorname{id}) \delta_{\blacksquare}(x)+(\operatorname{id} \otimes L_{\circ}(x)) \delta_{\circ}(y)  ,\label{pbi3}\\
& \left(\mathrm{id} \otimes R_{[\cdot,\cdot]}(x)\right) \tau\delta_{[\cdot,\cdot]}(y)=\left(R_{[\cdot,\cdot]}(y) \otimes \mathrm{id}\right) \delta_{[\cdot,\cdot]}(x), \label{Leibbi1}\\
&\delta_{[\cdot,\cdot]}\left([x, y]\right)=\left(\mathrm{id}  \!\otimes\!  R_{[\cdot,\cdot]}(y)-L_{\square}(y)  \!\otimes\!  \mathrm{id}\right)  \delta_{\square}(x)+\left(\mathrm{id}  \!\otimes\!  L_{[\cdot,\cdot]}(x)+L_{[\cdot,\cdot]}(x)  \!\otimes\!  \mathrm{id}\right)\delta_{[\cdot,\cdot]}(y),\label{Leibbi2}\\
&\delta_{\circ}([x,y])  = (\id \otimes  L_{[\cdot,\cdot]}(x) +   L_{[\cdot,\cdot]}(x) \otimes \id  )\delta_{\circ}(y)  + ( L_{\blacksquare}(y) \otimes \id   -   \id \otimes  R_{\circ}(y) )\delta_{\square}(y) ,\label{dppbi1} \\
&\delta_{[\cdot,\cdot]}(x \circ y) = (\id \otimes  L_{\circ}(x))\delta_{[\cdot,\cdot]}(y) +   (\id \otimes  R_{\circ}(y))\delta_{[\cdot,\cdot]}(x) -   (  L_{\square}(x) \otimes \id )\delta_{\circ}(y) \nonumber\\
&\hspace{2.3cm} -   (  L_{\square}(y) \otimes \id )\delta_{\blacksquare}(x) , \label{dppbi2} \\
&(   \id \otimes R_{[\cdot,\cdot]}(x))\tau\delta_{\circ}(y)=-( R_{\circ}(y) \otimes \id )\delta_{[\cdot,\cdot]}(x)  , \label{dppbi3}\\
&\delta_{\circ}([x,y])  = (\id \otimes  L_{\circ}(x) -   L_{\circ}(x)  \otimes \id  )\delta_{[\cdot,\cdot]}(y)  + (\id \otimes  R_{[\cdot,\cdot]}(y) - L_{\square}(y)  \otimes \id )\delta_{\blacksquare}(x) ,\label{dppbi4} \\
&\delta_{\square}(x \circ y) = (\id \otimes  L_{\circ}(x))\delta_{\square}(y) +   (\id \otimes  L_{\circ}(y))\delta_{\square}(x)-   (  L_{[\cdot,\cdot]}(x) \otimes \id )\delta_{\blacksquare}(y)\nonumber\\
&\hspace{2.14cm} -   (  L_{[\cdot,\cdot]}(y) \otimes \id )\delta_{\blacksquare}(x), \label{dppbi5} \\
&\delta_{[\cdot,\cdot]}(x \circ y) = ( L_{\circ}(x) \otimes  \id )\delta_{[\cdot,\cdot]}(y) -  (\id \otimes  R_{[\cdot,\cdot]}(y))\delta_{\blacksquare}(x) +   (    \id \otimes L_{[\cdot,\cdot]}(x))\delta_{\circ}(y)   \nonumber\\
&\hspace{2.2cm} - (  R_{\blacksquare}(y) \otimes \id )\delta_{\square}(x), \label{dppbi6} \\
&\delta_{\circ}(x \square y) =   (   \id \otimes L_{\square}(x) )\delta_{\circ}(y) +   (    L_{\square}(x)\otimes\id   )\tau \delta_{\circ}(y)- (  \id \otimes R_{\blacksquare}(y) )\delta_{[\cdot,\cdot]}(x) \nonumber\\
&\hspace{1.9cm} -(  R_{\blacksquare}(y)\otimes  \id )\tau\delta_{[\cdot,\cdot]}(x) , \label{dppbi7}
\end{align}
where $x \blacksquare y = x \circ y - y \circ x,\; x \square y = [x ,y] + [y, x]$,  $\delta_{\blacksquare} = \delta_{\circ}-\tau\delta_{\circ},\; \delta_{\square} = \delta_{[\cdot,\cdot]}+\tau\delta_{[\cdot,\cdot]}$  for all $x,y \in A$.
\end{defi} 

\begin{rmk}\label{Bia:sm}
Recall \cite{BYZ} that a {\bf permutative bialgebra} is a triple $(A,\circ,\delta_{\circ})$ that $(A,\circ)$ is a permutative  algebra and  $(A,\delta_{\circ})$ is a permutative  coalgebra satisfying Eqs.~\eqref{pbi1}-\eqref{pbi3}. Moreover, a triple $(A,[\cdot,\cdot],\delta_{[\cdot,\cdot]})$ is called a {\bf  Leibniz bialgebra} \cite{TS} if $(A,[\cdot,\cdot])$ is a Leibniz  algebra and $(A,\delta_{[\cdot,\cdot]})$ is a Leibniz  coalgebra satisfying Eqs.~\eqref{Leibbi1}-\eqref{Leibbi2}.  Therefore, a sextuple $(A,\circ,[\cdot,\cdot], \delta_{\circ},\delta_{[\cdot,\cdot]})$ is a  dual pre-Poisson bialgebra if and only if $(A,\circ,  \delta_{\circ})$ is a permutative bialgebra and  $(A, [\cdot,\cdot],\delta_{[\cdot,\cdot]})$  is a Leibniz bialgebra  satisfying Eqs.~\eqref{xdpp1}-\eqref{xdpp3},  \eqref{xdpp1co}-\eqref{xdpp3co} and \eqref{dppbi1}-\eqref{dppbi7}.
\end{rmk}

\begin{pro}\label{mpdppba} Let  $(A,\circ_A,[\cdot,\cdot]_A)$ and $(A^*,\circ_{A^*},[\cdot,\cdot]_{A^*})$  be   dual pre-Poisson algebras. Let  linear maps  $\delta_{\circ},\delta_{[\cdot,\cdot]}: A \rightarrow A \otimes A $ be the linear duals of  $\circ_{A^{*}}$ and  $[\cdot.\cdot]_{A^{*}}$ respectively. Then $(A,A^*,-L_{\circ}^*, -L_{\circ}^*+R_{\circ}^*, L_{[\cdot,\cdot]}^*, -L_{[\cdot,\cdot]}^*-R_{[\cdot,\cdot]}^*,-\mathcal{L}_{\circ}^*, -\mathcal{L}_{\circ}^*+\mathcal{R}_{\circ}^*,  \mathcal{L}_{[\cdot,\cdot]}^*, -\mathcal{L}_{[\cdot,\cdot]}^*-\mathcal{R}_{[\cdot,\cdot]}^*)$  is a matched pair of dual pre-Poisson algebras if and only if  $(A,\circ_{A},[\cdot,\cdot]_{A}, \delta_{\circ},\delta_{[\cdot,\cdot]})$  is a dual pre-Poisson bialgebra. 
\end{pro}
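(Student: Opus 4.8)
The plan is to reduce the claimed equivalence to its ``permutative'', ``Leibniz'', and ``mixed'' components, exploiting the structural decompositions already provided by Remark~\ref{Bia:sm} and Definition~\ref{mpofdppa}. By Remark~\ref{Bia:sm}, $(A,\circ,[\cdot,\cdot],\delta_{\circ},\delta_{[\cdot,\cdot]})$ is a dual pre-Poisson bialgebra if and only if $(A,\circ,\delta_{\circ})$ is a permutative bialgebra, $(A,[\cdot,\cdot],\delta_{[\cdot,\cdot]})$ is a Leibniz bialgebra, and the seven mixed conditions \eqref{dppbi1}--\eqref{dppbi7} hold. Dually, by Definition~\ref{mpofdppa}, the tuple $(A,A^*,-L_{\circ}^*,-L_{\circ}^*+R_{\circ}^*,L_{[\cdot,\cdot]}^*,-L_{[\cdot,\cdot]}^*-R_{[\cdot,\cdot]}^*,-\mathcal{L}_{\circ}^*,-\mathcal{L}_{\circ}^*+\mathcal{R}_{\circ}^*,\mathcal{L}_{[\cdot,\cdot]}^*,-\mathcal{L}_{[\cdot,\cdot]}^*-\mathcal{R}_{[\cdot,\cdot]}^*)$ is a matched pair of dual pre-Poisson algebras if and only if its permutative part is a matched pair of permutative algebras, its Leibniz part is a matched pair of Leibniz algebras, and the sixteen mixed conditions \eqref{mpdualppc1}--\eqref{mpdualppc16} hold. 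Thus the proposition splits into three matching statements, which I would treat separately.

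For the first two components I would invoke the bialgebra theorems already in the literature: the equivalence between permutative bialgebras and matched pairs of permutative algebras via the coregular representations $-L_{\circ}^*,-L_{\circ}^*+R_{\circ}^*$ and $-\mathcal{L}_{\circ}^*,-\mathcal{L}_{\circ}^*+\mathcal{R}_{\circ}^*$ \cite{BYZ}, together with the analogous equivalence for Leibniz bialgebras with $L_{[\cdot,\cdot]}^*,-L_{[\cdot,\cdot]}^*-R_{[\cdot,\cdot]}^*$ and $\mathcal{L}_{[\cdot,\cdot]}^*,-\mathcal{L}_{[\cdot,\cdot]}^*-\mathcal{R}_{[\cdot,\cdot]}^*$ \cite{TS}. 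These account exactly for the permutative and Leibniz parts of both sides, so it remains only to show that, under the stated choice of representations, the seven mixed bialgebra conditions \eqref{dppbi1}--\eqref{dppbi7} are equivalent to the sixteen mixed matched-pair conditions \eqref{mpdualppc1}--\eqref{mpdualppc16}.

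The engine for this last step is dualization through the canonical pairing. Each of \eqref{dppbi1}--\eqref{dppbi7} is an identity in $A\otimes A$; I would pair it against an arbitrary decomposable tensor $a^*\otimes b^*\in A^*\otimes A^*$ and use the defining relations $\langle \delta_{\circ}(x),a^*\otimes b^*\rangle=\langle x,a^*\circ_{A^*}b^*\rangle$ and $\langle \delta_{[\cdot,\cdot]}(x),a^*\otimes b^*\rangle=\langle x,[a^*,b^*]_{A^*}\rangle$ to turn each coproduct into a product on $A^*$, while the structure maps $L_{\circ},R_{\circ},L_{[\cdot,\cdot]},R_{[\cdot,\cdot]}$ and the flip $\tau$ become the prescribed dual representations acting on $A^*$. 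After expanding $x\blacksquare y=x\circ y-y\circ x$, $x\square y=[x,y]+[y,x]$, $\delta_{\blacksquare}=\delta_{\circ}-\tau\delta_{\circ}$ and $\delta_{\square}=\delta_{[\cdot,\cdot]}+\tau\delta_{[\cdot,\cdot]}$, each condition becomes a scalar identity quantified over $x,y\in A$ and $a^*,b^*\in A^*$; by nondegeneracy of the pairing it is equivalent to an operator identity on $A^*$ of exactly the shape occurring in \eqref{mpdualppc1}--\eqref{mpdualppc16}. Running the same computation while also pairing on the first factor, and exploiting the $A\leftrightarrow A^*$ symmetry built into the self-dual choice of representations, supplies the companion conditions, and a careful enumeration then shows that \eqref{dppbi1}--\eqref{dppbi7} are together equivalent to the full list \eqref{mpdualppc1}--\eqref{mpdualppc16}. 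Reversing every step yields the converse.

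The main obstacle I anticipate is precisely this dualization bookkeeping: the mixed conditions are heavy with flips $\tau$, with the derived operations $\blacksquare,\square$ and the derived coproducts $\delta_{\blacksquare},\delta_{\square}$, and with the specific sign patterns $-L_{\circ}^*+R_{\circ}^*$ and $-L_{[\cdot,\cdot]}^*-R_{[\cdot,\cdot]}^*$. Keeping each operator applied to the correct tensor factor, tracking which bialgebra equation dualizes to which matched-pair equation (and with what sign), and confirming that no condition is lost or double-counted when passing between the seven- and sixteen-equation formats is where the care is required. Once a clean dictionary between the two formats is fixed, each individual verification reduces to a routine, if lengthy, transposition argument.
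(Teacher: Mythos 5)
Your proposal is correct and follows essentially the same route as the paper's proof: the paper likewise splits the equivalence into the permutative component (settled by \cite[Theorem 3.16]{BYZ} with the representations $-L_{\circ}^*$, $-L_{\blacksquare}^*=-L_{\circ}^*+R_{\circ}^*$), the Leibniz component (settled by \cite[Theorem 2.14]{TS}), and then dualizes the mixed conditions through the canonical pairing to build exactly the dictionary you describe between Eqs.~\eqref{dppbi1}--\eqref{dppbi7} and Eqs.~\eqref{mpdualppc1}--\eqref{mpdualppc16}. The one point your outline leaves implicit, which the paper's enumeration makes explicit, is that the seven bialgebra identities do not match the sixteen matched-pair identities bijectively: several matched-pair conditions (e.g.\ Eqs.~\eqref{mpdualppc3}, \eqref{mpdualppc4}, \eqref{mpdualppc15}, \eqref{mpdualppc16}) are one-way consequences of a bialgebra identity already accounted for, while Eq.~\eqref{dppbi3} alone covers four of them.
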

\begin{proof}
Define  bilinear operations $ \blacksquare,\square:  A \otimes A \rightarrow A$ and linear maps $ \delta_{\blacksquare},\delta_{\square}: A \rightarrow A \otimes A $ respectively by
\begin{align}
&x  \blacksquare  y :=x \circ y -  y \circ x, && x  \square  y := [x, y] +   [y, x],  \label{bss}\\
&\delta_{\blacksquare}(x) := \delta_{\circ}  - \tau\delta_{\circ}, && \delta_{\square}(x) := \delta_{[\cdot,\cdot]}  + \tau\delta_{[\cdot,\cdot]}, \quad \forall x,y\in A. \label{dbss} 
\end{align}
It is well known that $(A,A^*,-L_{\circ}^*, -L_{\blacksquare}^*,-\mathcal{L}_{\circ}^*, -\mathcal{L}_{\blacksquare}^*)$ is a matched pair of permutative algebras if and only if   $(A,\circ,  \delta_{\circ})$  is a permutative bialgebra \cite[Theorem 3.16]{BYZ},  and $(A,A^*, L_{[\cdot,\cdot]}^*,$ $  -L_{\square}^*, \mathcal{L}_{[\cdot,\cdot]}^*, -\mathcal{L}_{\square}^*)$  is a matched pair of Leibniz algebras if and only if   $(A,[\cdot,\cdot],  \delta_{[\cdot,\cdot]})$  is a Leibniz bialgebra \cite[Theorem 2.14]{TS}. Moreover, for all $x,y \in A$, $a^*,b^* \in A^*$, we have
\begin{align*}
\left\langle [x,-\mathcal{L}_{\circ}^*(a^*)y]_{A}, b^* \right\rangle &= -\left\langle y,a^* \circ L_{[\cdot,\cdot]}^*(x)b^* \right\rangle =  \left\langle (\id \otimes L_{[\cdot,\cdot]}(x))\delta_{\circ}(y),a^* \otimes  b^* \right\rangle, \\
 \left\langle  -\mathcal{L}_{\square}^*(-L_{\blacksquare}^*(y)a^*)x, b^* \right\rangle &=  \left\langle (   L_{\blacksquare}(y) \otimes \id)\delta_{\square}(x),a^* \otimes  b^* \right\rangle,\\
\left\langle -\mathcal{L}_{\square}^*(a^*) x \circ_{A} y, b^* \right\rangle &=  \left\langle (     \id \otimes R_{\circ}(y))\delta_{\square}(x),a^* \otimes  b^* \right\rangle,\\
 \left\langle  -\mathcal{L}_{\circ}^*(a^*)([ x , y]_{A}), b^* \right\rangle &=  \left\langle  \delta_{\circ}([x,y]),a^* \otimes  b^* \right\rangle,\\
\left\langle  -\mathcal{L}_{\circ}^*(L_{[\cdot,\cdot]}^*(x)a^*)y, b^* \right\rangle &= - \left\langle (     L_{[\cdot,\cdot]}(x) \otimes \id)\delta_{\circ}(y),a^* \otimes  b^* \right\rangle.\\
 \left\langle[a^*,-L_{\circ}^*(x)b^*]_{A^{*}}, y \right\rangle &=  \left\langle ( \id \otimes L_{\circ}(x))\delta_{[\cdot,\cdot]}(y),a^* \otimes  b^* \right\rangle,\\
 \left\langle - L_{\square}^*(-\mathcal{L}_{\blacksquare}^*(b^*)x)a^*, y \right\rangle &=  -\left\langle    a^*, \mathcal{L}_{\blacksquare}^*(b^*)x \square  y \right\rangle  = -\left\langle ( L_{\square}(y) \otimes  \id )\delta_{\blacksquare}(x),a^* \otimes  b^* \right\rangle,\\
 \left\langle - L_{\square}^*(x) a^* \circ_{A^{*}} b^*, y \right\rangle &=    \left\langle ( L_{\square}(x) \otimes  \id )\delta_{\circ}(y),a^* \otimes  b^* \right\rangle,\\
 \left\langle -L_{\circ}^*(x)([ a^*, b^*]_{A^{*}}) , y \right\rangle &=  \left\langle \delta_{[\cdot,\cdot]}(x \circ y),a^* \otimes  b^* \right\rangle,\\
 \left\langle -L_{\circ}^*(\mathcal{L}_{[\cdot,\cdot]}^*(a^*)x)b^*, y \right\rangle &=  -\left\langle ( \id \otimes   R_{\circ}(y))\delta_{[\cdot,\cdot]}(x),a^* \otimes  b^* \right\rangle.
\end{align*}
In Eqs.~\eqref{mpdualppc1}-\eqref{mpdualppc16},  take
 \begin{align*}
 &l_{\circ_{1}} =  -L_{\circ}^*,\quad r_{\circ_{1}} = -L_{\blacksquare}^*,\quad l_{[\cdot,\cdot]_{1}}   = L_{[\cdot,\cdot]}^*, \quad  r_{[\cdot,\cdot]_{1}} =- L_{\square}^*,  \\
 &l_{\circ_{2}} =  -\mathcal{L}_{\circ}^*,\quad r_{\circ_{2}} = -\mathcal{L}_{\blacksquare}^*,\quad l_{[\cdot,\cdot]_{2}}   = \mathcal{L}_{[\cdot,\cdot]}^*, \quad  r_{[\cdot,\cdot]_{2}} =- \mathcal{L}_{\square}^*. 
 \end{align*}
Hence Eqs.~\eqref{mpdualppc1}-\eqref{mpdualppc2} hold  if and only if Eqs.~\eqref{dppbi1}-\eqref{dppbi2}  hold respectively. Similarly, we have
\begin{align*}
\text{Eq.~}(\ref{dppbi1}) &  \Longrightarrow \text{Eq.~}(\ref{mpdualppc3}),  && \text{Eq.~}(\ref{dppbi2})  \Longrightarrow \text{Eq.~}(\ref{mpdualppc4}), &&\text{Eq.~}(\ref{dppbi1})  \Longleftrightarrow  \text{Eq.~}(\ref{mpdualppc5})  , \\
\text{Eq.~}(\ref{dppbi2}) &  \Longleftrightarrow \text{Eq.~}(\ref{mpdualppc6}),  && \text{Eq.~}(\ref{dppbi3})  \Longleftrightarrow \text{Eq.~}(\ref{mpdualppc8}), &&\text{Eq.~}(\ref{dppbi4})  \Longleftrightarrow  \text{Eq.~}(\ref{mpdualppc7}), \\
\text{Eq.~}(\ref{dppbi5}) &  \Longleftrightarrow \text{Eq.~}(\ref{mpdualppc9}),  && \text{Eq.~}(\ref{dppbi6})  \Longleftrightarrow \text{Eq.~}(\ref{mpdualppc10}), &&\text{Eq.~}(\ref{dppbi3})  \Longleftrightarrow \text{Eq.~}(\ref{mpdualppc11})  , \\
\text{Eq.~}(\ref{dppbi7}) &  \Longleftrightarrow \text{Eq.~}(\ref{mpdualppc12}),  && \text{Eq.~}(\ref{dppbi3})  \Longleftrightarrow \text{Eq.~}(\ref{mpdualppc13}), &&\text{Eq.~}(\ref{dppbi3})  \Longleftrightarrow \text{Eq.~}(\ref{mpdualppc14})   ,\\
\text{Eq.~}(\ref{dppbi7}) &  \Longrightarrow \text{Eq.~}(\ref{mpdualppc15}),  && \text{Eq.~}(\ref{dppbi5})  \Longrightarrow \text{Eq.~}(\ref{mpdualppc16}).
\end{align*}
Thus $(A,A^*,-L_{\circ}^*, -L_{\blacksquare}^*, L_{[\cdot,\cdot]}^*, -L_{\square}^*,-\mathcal{L}_{\circ}^*, -\mathcal{L}_{\blacksquare}^*,  \mathcal{L}_{[\cdot,\cdot]}^*, -\mathcal{L}_{\square}^*)$  is a matched pair of dual pre-Poisson algebras if and only if  $(A,\circ_{A},[\cdot,\cdot]_{A}, \delta_{\circ},\delta_{[\cdot,\cdot]})$  is a dual pre-Poisson bialgebra. 
\end{proof}

 Combining Propositions \ref{mpandmtofdppa} and \ref{mpdppba}, we obtain the following conclusion.
 
 \begin{thm}\label{sandengjia}
Let  $(A,\circ_A,[\cdot,\cdot]_A)$ and $(A^*,\circ_{A^*},[\cdot,\cdot]_{A^*})$  be   dual pre-Poisson algebras. Let  linear maps  $\delta_{\circ},\delta_{[\cdot,\cdot]}: A \rightarrow A \otimes A $ be the linear duals of  $\circ_{A^{*}}$ and  $[\cdot,\cdot]_{A^{*}}$ respectively.  Then the following conditions are equivalent.
\begin{enumerate}
\item  There is a Manin triple $((A \oplus A^{*}, \circ_{A \oplus A^{*}}, [\cdot,\cdot]_{A \oplus A^{*}},  \mathcal{B}_{A \oplus A^{*}}), (A, \circ_A,[\cdot,\cdot]_A), (A^{*},\circ_{A^*}, $ $[\cdot,\cdot]_{A^*}))$    of dual pre-Poisson algebras with $\mathcal{B}_{A \oplus A^{*}}$ given by Eq.~\eqref{abm}.
\item $(A,A^*,-L_{\circ}^*, -L_{\circ}^*+R_{\circ}^*, L_{[\cdot,\cdot]}^*, -L_{[\cdot,\cdot]}^*-R_{[\cdot,\cdot]}^*,-\mathcal{L}_{\circ}^*, -\mathcal{L}_{\circ}^*+\mathcal{R}_{\circ}^*,  \mathcal{L}_{[\cdot,\cdot]}^*, -\mathcal{L}_{[\cdot,\cdot]}^*-\mathcal{R}_{[\cdot,\cdot]}^*)$  is a matched pair of dual pre-Poisson algebras.
    \item $(A,\circ_{A},[\cdot,\cdot]_{A}, \delta_{\circ},\delta_{[\cdot,\cdot]})$  is a dual pre-Poisson bialgebra.
\end{enumerate}
 \end{thm}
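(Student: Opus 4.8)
The plan is to establish the three-way equivalence simply by chaining together the two immediately preceding propositions, which between them supply all the required implications. First I would invoke Proposition \ref{mpandmtofdppa} to obtain the equivalence of conditions (1) and (2): that proposition states precisely that the quadratic dual pre-Poisson algebra structure on $A \oplus A^{*}$ with the bilinear form $\mathcal{B}_{A \oplus A^{*}}$ from Eq.~\eqref{abm} (i.e.\ the standard Manin triple $(A\oplus A^*, A, A^*)$) exists if and only if the eight dual maps $-L_{\circ}^*, -L_{\circ}^*+R_{\circ}^*, L_{[\cdot,\cdot]}^*, -L_{[\cdot,\cdot]}^*-R_{[\cdot,\cdot]}^*$ on the $A$-side, together with the corresponding calligraphic maps on the $A^{*}$-side, constitute a matched pair of dual pre-Poisson algebras. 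Second, I would invoke Proposition \ref{mpdppba} to obtain the equivalence of conditions (2) and (3): the very same matched pair data is equivalent to $(A,\circ_{A},[\cdot,\cdot]_{A}, \delta_{\circ},\delta_{[\cdot,\cdot]})$ being a dual pre-Poisson bialgebra, where $\delta_{\circ}$ and $\delta_{[\cdot,\cdot]}$ are the linear duals of $\circ_{A^{*}}$ and $[\cdot,\cdot]_{A^{*}}$.

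The key structural observation is that both cited propositions are formulated for \emph{exactly} the same matched pair, namely the one determined by $-L_{\circ}^*, -L_{\circ}^*+R_{\circ}^*, L_{[\cdot,\cdot]}^*, -L_{[\cdot,\cdot]}^*-R_{[\cdot,\cdot]}^*$ and the analogous maps for $A^{*}$. Consequently the two equivalences share a common middle term (condition (2)), and composing them yields the chain $(1) \Longleftrightarrow (2) \Longleftrightarrow (3)$. This gives the mutual equivalence of all three conditions and completes the argument.

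There is essentially no serious obstacle here beyond checking that the hypotheses and the precise specification of the matched pair agree verbatim between Propositions \ref{mpandmtofdppa} and \ref{mpdppba}. The one point deserving a brief verification is that the representations of $(A,\circ_A,[\cdot,\cdot]_A)$ on $A^{*}$ and of $(A^{*},\circ_{A^*},[\cdot,\cdot]_{A^*})$ on $A$ appearing implicitly in both statements are indeed the coregular-type dual representations introduced earlier (cf.\ Example \ref{repdualtoqurs1}), so that the identification of the eight structure maps is consistent across the two results. Once this consistency is noted, the proof reduces to the single sentence that Theorem \ref{sandengjia} is obtained by combining Propositions \ref{mpandmtofdppa} and \ref{mpdppba}.
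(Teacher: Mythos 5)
Your proposal is correct and matches the paper's own proof, which indeed obtains Theorem \ref{sandengjia} exactly by combining Proposition \ref{mpandmtofdppa} (equivalence of (1) and (2)) with Proposition \ref{mpdppba} (equivalence of (2) and (3)), the common middle term being the same matched pair $(A,A^*,-L_{\circ}^*, -L_{\circ}^*+R_{\circ}^*, L_{[\cdot,\cdot]}^*, -L_{[\cdot,\cdot]}^*-R_{[\cdot,\cdot]}^*,-\mathcal{L}_{\circ}^*, -\mathcal{L}_{\circ}^*+\mathcal{R}_{\circ}^*, \mathcal{L}_{[\cdot,\cdot]}^*, -\mathcal{L}_{[\cdot,\cdot]}^*-\mathcal{R}_{[\cdot,\cdot]}^*)$. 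Your additional consistency check on the coregular-type representations is a reasonable (if implicit in the paper) remark, but otherwise nothing further is needed.
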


   \subsection{The permutative-Leibniz Yang-Baxter equations, $\mathcal{O}$-operators and pre-dual pre-Poisson algebras}

\begin{defi} Let  $(A,\circ,[\cdot,\cdot])$   be a dual pre-Poisson algebra and  $r = \sum\limits_i a_i \otimes b_i \in A \otimes A$. Define  bilinear operations $ \blacksquare,\square:  A \otimes A \rightarrow A$ by Eq.~\eqref{bss}. Set
\begin{align}
\mathbf{P} (r):&=     r_{13} \circ r_{23} -  r_{12} \circ r_{23} +  r_{13} \blacksquare r_{12},\label{PYBE}\\
\mathbf{L} (r):&=  [r_{13},r_{23}] + [r_{12},r_{23}] -  r_{12} \square r_{13},\label{LYBE}
\end{align}
where 
\begin{align*}
&r_{13} \circ r_{23} = \sum_{i,j}a_i \!\otimes\! a_j \!\otimes\!  [b_i,b_j],\;r_{12} \circ r_{23} = \sum_{i,j}a_i \!\otimes\! b_i \circ a_j  \!\otimes\!  b_j,\; r_{13} \blacksquare r_{12} =  \sum_{i,j}a_i \blacksquare a_j \!\otimes\! b_j  \!\otimes\!   b_i,\\
&[r_{13}, r_{23}] = \sum_{i,j}a_i \!\otimes\! a_j \!\otimes\!  [b_i,b_j],\;[r_{12}, r_{23}] = \sum_{i,j}a_i   \!\otimes\! [b_i,a_j] \!\otimes\!  b_j,\;r_{12} \square r_{13} =  \sum_{i,j} a_i \square  a_j \!\otimes\! b_i  \!\otimes\! b_j.
\end{align*}
 Then $r$ is called a solution of the {\bf permutative-Leibniz Yang-Baxter equation (PLYBE)} in $(A,\circ,[\cdot,\cdot])$ if $\mathbf{P} (r) = \mathbf{L} (r) = 0$.
\end{defi}

\begin{rmk}\label{YBE:sm}
For a permutative algebra $(A,\circ)$, $\mathbf{P} (r)=0$ is called the {\bf permutative  Yang-Baxter equation}, whose symmetric solutions give a special class of permutative bialgebras \cite{BYZ}. Moreover, let $(A,[\cdot,\cdot])$ be a Leibniz algebra. Recall \cite{BLST} that  $r \in A \otimes A$ is called a solution of the {\bf  classical Leibniz Yang-Baxter equation}  if it satisfies 
\begin{align}
   [r_{12},r_{13}] +[r_{23},r_{13}]-  r_{12} \square r_{23} = 0. \label{clybe}
\end{align}
Under the symmetric condition, Eq. \eqref{clybe} holds if and only if $\mathbf{L} (r) = 0$. In fact, suppose $r = \sum\limits_i a_i \otimes b_i$, we have
\begin{align*}
  & [r_{12},r_{13}] +[r_{23},r_{13}]-  r_{12} \square r_{23}\\
    &= \sum_{i,j} [a_i,a_j]\otimes b_i \otimes b_j +  a_j \otimes a_i \otimes [b_i,b_j]-   a_i \otimes b_i \square a_j \otimes b_j\\
    &=(\tau \otimes \id) \left(\sum_{i,j} b_i\otimes [a_i,a_j] \otimes b_j +  a_i \otimes  a_j \otimes [b_i,b_j]-    b_i \square a_j \otimes a_i \otimes b_j \right)\\
   &=(\tau \otimes \id) \left(\sum_{i,j} a_i\otimes [b_i,a_j] \otimes b_j +  a_i \otimes  a_j \otimes [b_i,b_j]-    a_i \square a_j \otimes b_i \otimes b_j \right)\\
      &=(\tau \otimes \id)\mathbf{L} (r).
\end{align*}
As the Leibniz analogue of the classical Yang–Baxter equation, the classical Leibniz Yang-Baxter equations are introduced to construct a  class of Leibniz bialgebras \cite{TS}.
\end{rmk}

\begin{pro}\label{cobdppb}
Let  $(A,\circ,[\cdot,\cdot])$   be a dual pre-Poisson algebra and  $r = \sum\limits_i a_i \otimes b_i \in A \otimes A$. Define bilinear operations   $\blacksquare$ and $\square$   by
Eq.~\eqref{bss} and linear maps ${\delta_{\circ,r}}, {\delta_{[\cdot,\cdot],r}}: A \rightarrow A \otimes A$  by
\begin{align}
{\delta_{\circ,r}}(x) := E(x)r ,\quad  {\delta_{[\cdot,\cdot],r}}(x):=
F(x)r, \quad \forall x \in A, \label{EF}
\end{align}
where the linear maps  $E,F: A \rightarrow
\operatorname{End}(A \otimes A)$ are defined
respectively by
\begin{align}
E(x)&:=    R_{\blacksquare}(x) \otimes  \mathrm{id}+ \mathrm{id} \otimes R_{\circ}(x) , \label{eq:e}\\
F(x)&:= L_{\square}(x) \otimes \mathrm{id} - \mathrm{id} \otimes R_{[\cdot,\cdot]}(x).\label{eq:f}
\end{align}
\begin{enumerate}
\item\label{tdppb:co1} Eq. ~\eqref{xdpp1co} holds if and only if
    \begin{align}
   &  (\tau \otimes \id)( \id \otimes L_{\square}(x) \otimes \id)\mathbf{P} (r)+(  \id \otimes R_{\blacksquare}(x) \otimes \id  +\id \otimes  \id \otimes  R_{\circ}(x)  ) \mathbf{L} (r)  \nonumber \\
  &    - \sum_{i}  \big(( L_{\square}(x) \otimes \id )  \tau E(a_i)+   ( L_{\square}(a_i) \otimes \id )  \tau E(x)\big)(r - \tau(r)) \otimes b_i = 0.\label{tdppb:eqco1}
    \end{align}
    \item\label{tdppb:co2} Eq. ~\eqref{xdpp2co} holds if and only if
            \begin{align}
&(\id \otimes \id \otimes  R_{[\cdot,\cdot]}(x) )\mathbf{P} (r)  - (\tau \otimes \id +   \id \otimes \id)(\id \otimes R_{\blacksquare}(x)\otimes \id  )\mathbf{L} (r) \nonumber\\
&+  \sum_{i}\big((    L_{\square}(a_i) \otimes \id   )\tau E(x)-(    R_{\blacksquare}(x) \otimes \id   )\tau F(a_i)  \big )(r -\tau(r))  \otimes b_i= 0. \label{tdppb:eqco2}
    \end{align}
    \item\label{tdppb:co3} Eq. ~\eqref{xdpp3co} holds if and only if
        \begin{align}
 \sum_{i} F( a_i \blacksquare x )(r -\tau(r))\otimes b_i  -   (\tau \otimes \id +   \id \otimes \id)(\id \otimes \id \otimes R_{\circ}(x))\mathbf{L} (r)  = 0. \label{tdppb:eqco3}
    \end{align}
    \item\label{tdppb:cm1} Eq. ~\eqref{dppbi1} holds if and only if
    \begin{align}
    (\id \otimes  L_{\blacksquare}(x) -R_{\circ}(x)  \otimes  \id) F(y)(r -\tau(r))      = 0, \;\; \forall x,y \in A. \label{tdppb:eqcm1}
    \end{align}
        \item\label{tdppb:cm2} Eq. ~\eqref{dppbi2} holds if and only if
    \begin{align}
    ( \id \otimes L_{\square}(x) ) E(y)(r -\tau(r))  = 0, \;\; \forall x,y \in A.\label{tdppb:eqcm2}
    \end{align}
      \item\label{tdppb:cm3} Eq. ~\eqref{dppbi3} holds if and only if
    \begin{align}
   ( R_{\circ}(x)  \otimes  \id) F(y)(r -\tau(r))   = 0, \;\; \forall x,y \in A.\label{tdppb:eqcm3}
    \end{align}
          \item\label{tdppb:cm4} Eq. ~\eqref{dppbi4} holds if and only if
    \begin{align}
   (     \id \otimes L_{\square}(x) - R_{[\cdot,\cdot]}(x)\otimes \id) E(y)(r -\tau(r)) = 0, \;\; \forall x,y \in A.\label{tdppb:eqcm4}
    \end{align}
       \item\label{tdppb:cm5} Eq. ~\eqref{dppbi5} holds if and only if
    \begin{align}
  & (   \id  \otimes  L_{\square}(x)  ) E(y)(r-\tau(r)  ) +  (   L_{\blacksquare}(x) \otimes \id)  \tau(F(y) ( r-\tau(r) ))\nonumber\\
   &  + (    R_{\circ}(x) \otimes \id)  F(y)( r-\tau(r))  +  (   R_{\circ}(y)   \otimes \id )  F(x) ( r-\tau(r))    = 0, \;\; \forall x,y \in A. \label{tdppb:eqcm5}
    \end{align}
     \item\label{tdppb:cm6} Eq. ~\eqref{dppbi6} holds if and only if
    \begin{align}
   (   R_{[\cdot,\cdot]}(x)   \otimes \id )  E(y) ( r-\tau(r)) - (   \id \otimes  R_{\blacksquare}(x) )  F(y)( r-\tau(r))    = 0, \;\; \forall x,y \in A. \label{tdppb:eqcm6}
    \end{align}
     \item\label{tdppb:cm7} Eq. ~\eqref{dppbi7} holds if and only if
    \begin{align}
   (      \id \otimes L_{\square}(x) )  E(y) ( r-\tau(r)) - (   \id \otimes  R_{\blacksquare}(y) )  F(x)( r-\tau(r))    = 0, \;\; \forall x,y \in A.\label{tdppb:eqcm7}
    \end{align}
 \end{enumerate}

    \end{pro}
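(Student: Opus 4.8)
The plan is to prove all ten equivalences by direct verification, exploiting the coboundary form of the coproducts. First I would record the explicit component expressions coming from \eqref{EF}, \eqref{eq:e}, \eqref{eq:f} and \eqref{bss}: writing $r=\sum_i a_i\otimes b_i$, one has $\delta_{\circ,r}(x)=\sum_i (a_i\blacksquare x)\otimes b_i + a_i\otimes(b_i\circ x)$ and $\delta_{[\cdot,\cdot],r}(x)=\sum_i (x\square a_i)\otimes b_i - a_i\otimes[b_i,x]$. These formulas, together with the definitions \eqref{PYBE}, \eqref{LYBE} of $\mathbf{P}(r)$ and $\mathbf{L}(r)$, are the only inputs needed; each statement then becomes a tensor-algebra identity to be checked leg by leg.

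For the three coalgebra compatibility conditions (parts (a)--(c)), which are \eqref{xdpp1co}--\eqref{xdpp3co} of Definition \ref{defco1}, I would substitute the above into each side of the relevant identity and expand the resulting double sum over pairs $(i,j)$. The terms in which a second copy of $r$ is fully consumed---that is, both $a_j$ and $b_j$ reappear under a product---reassemble, after applying the indicated flips $\tau\otimes\id$ and operators, into the quadratic expressions $\mathbf{P}(r)$ and $\mathbf{L}(r)$; this is exactly where the blocks $r_{13}\circ r_{23}$, $r_{12}\circ r_{23}$, $r_{13}\blacksquare r_{12}$ and their Leibniz analogues originate. The remaining terms are linear in $r$ and, after repeated use of the permutative and Leibniz axioms \eqref{perm}, \eqref{Leibniz} and the compatibility conditions \eqref{xdpp1}--\eqref{xdpp3} of Definition \ref{def1}, collapse precisely to the displayed correction contributions of the form $\sum_i(\cdots)(r-\tau(r))\otimes b_i$. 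Matching the two groups yields \eqref{tdppb:eqco1}--\eqref{tdppb:eqco3}.

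For the seven mixed bialgebra conditions (parts (d)--(j)), which are \eqref{dppbi1}--\eqref{dppbi7} of Definition \ref{defbi1}, the computation is structurally simpler: each such equation is a cocycle-type relation expressing the coproduct of a product in terms of actions on the coproducts of the two factors, and because $\delta_{\circ,r}$ and $\delta_{[\cdot,\cdot],r}$ are coboundary the principal parts cancel automatically once the dual pre-Poisson axioms are invoked. What survives is always a single operator applied to the antisymmetrized tensor $r-\tau(r)$, giving the annihilation conditions \eqref{tdppb:eqcm1}--\eqref{tdppb:eqcm7}. The reformulation recorded in Remark \ref{YBE:sm}, which identifies $\mathbf{L}(r)$ as the Leibniz analogue of the classical Yang--Baxter operator, is useful for orientation and for fixing the conventions on the Leibniz legs, since the bialgebras ultimately produced (Theorem \ref{tcdppab:1}) arise from symmetric $r$.

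The main obstacle is the volume and bookkeeping of the tensor manipulations rather than any single conceptual difficulty: one must track leg positions through the flips $\tau\otimes\id$, correctly distinguish the four operators $L_\circ, R_\circ, L_{[\cdot,\cdot]}, R_{[\cdot,\cdot]}$ together with the derived $L_\blacksquare, R_\blacksquare, L_\square$, and apply at each step the appropriate instance among \eqref{xdpp1}--\eqref{xdpp3} and the three derived identities of the Proposition immediately following Definition \ref{def1}. The subtlest point is ensuring that the asymmetry of $r$ is handled uniformly: the condition \eqref{xdpp3} and the antisymmetry relation $[x,y]\circ z=-[y,x]\circ z$ are what force every leftover term to be expressible through $r-\tau(r)$, so careful attention is required to verify that no symmetric remainder is silently dropped.
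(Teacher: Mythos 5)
Your plan coincides with the paper's actual proof: the paper verifies items \eqref{tdppb:co1} and \eqref{tdppb:cm1} by exactly this direct expansion---your component formulas $\delta_{\circ,r}(x)=\sum_i (a_i\blacksquare x)\otimes b_i + a_i\otimes(b_i\circ x)$ and $\delta_{[\cdot,\cdot],r}(x)=\sum_i (x\square a_i)\otimes b_i - a_i\otimes[b_i,x]$ are correct, the quadratic-in-$r$ terms reassemble into $\mathbf{P}(r)$ and $\mathbf{L}(r)$ under the indicated flips, and the leftover linear terms are forced by the axioms \eqref{xdpp1}--\eqref{xdpp3} into corrections applied to $r-\tau(r)$---and then declares the remaining eight items analogous, just as you do. The only slight imprecision is your claim that in parts (d)--(j) ``a single operator'' applied to $r-\tau(r)$ always survives: for item \eqref{tdppb:cm5} the obstruction is a sum of four such operator terms, but this does not affect the argument.
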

    \begin{proof}
We only give an explicit proof of  \eqref{tdppb:co1}  and  \eqref{tdppb:cm1}  as examples, whereas the others are obtained similarly. 

 \eqref{tdppb:co1}. Let $x \in A$, we have 
\begin{align*}
&   ( \delta_{[\cdot,\cdot],r} \otimes \id )\delta_{\circ,r}(x)  + (\tau \otimes \id)( \id \otimes  \delta_{[\cdot,\cdot],r} )\delta_{\circ,r}(x)-(\id \otimes \delta_{\circ,r})\delta_{[\cdot,\cdot],r}(x) \\
&=\!\sum_{i,j}\!\Big((a_i \blacksquare x) \square a_j \otimes   b_j \!\otimes\! b_i -   a_j\! \otimes \!  [b_j,a_i \blacksquare x] \otimes b_i + a_i \square a_j \otimes   b_j \otimes b_i  \circ x -   a_j \!\otimes \! [ b_j ,a_i]\otimes b_i  \circ x\\
&\quad+b_i \square a_j \otimes   a_i \blacksquare x \otimes b_j -   a_j\! \otimes \!   a_i \blacksquare x  \otimes [b_j,b_i]+ (b_i \circ x)\square a_j  \otimes   a_i \otimes b_j  -   a_j \otimes   a_i \otimes [b_j,b_i  \circ x]\\
&\quad- x \square a_i  \otimes  a_j \blacksquare b_i  \otimes b_j - x \square a_i \otimes  a_j   \otimes b_j \circ b_i + a_i \otimes a_j \blacksquare [b_i,x] \otimes b_j + a_i \otimes a_j   \otimes b_j \circ [b_i,x]\Big)\\
&=S_{1}+ S_{2}+ S_{3},
 \end{align*}
where
\begin{align*}
S_{1} &=\sum_{i,j}\!\Big(\!(a_i \blacksquare x) \square a_j \!\otimes\!   b_j \!\otimes\! b_i  \!+ \!(b_i \circ x)\square a_j  \!\otimes\!   a_i \!\otimes\! b_j\!-\! x \square a_i  \!\otimes\!  a_j \blacksquare b_i  \!\otimes\! b_j \!-\!(x \square a_i) \!\otimes\!  a_j   \!\otimes\! b_j \circ b_i\!\Big)\\
&=\sum_{i,j}\!\Big(\![a_j,a_i \blacksquare x] \!\otimes\!   b_j \!\otimes\! b_i  + (b_j \circ x)\square a_i  \!\otimes\!   a_j \!\otimes\! b_i\!-(L_{\square}(x) \otimes \id \otimes \id)(r_{23} \blacksquare r_{12}+ r_{23}\circ r_{13})\Big)\\
&=\sum_{i,j}\!\Big(\![a_j,a_i \blacksquare x] \!\otimes\!   b_j \!\otimes\! b_i  + (b_j \circ x)\square a_i  \!\otimes\!   a_j \!\otimes\! b_i\!-(\tau \otimes \id)(\id \otimes  L_{\square}(x)  \otimes \id)\mathbf{P} (r) \\
&\quad\;+ ( L_{\square}(x)  \otimes \id)(  \id  \otimes R_{\blacksquare}(a_i) )(r -\tau(r))\otimes b_i - x \square (b_i \circ a_j) \otimes a_i \otimes b_j\Big)\\
&=\sum_{i,j}\!\Big(\![a_j,a_i \blacksquare x] \!\otimes\!   b_j \!\otimes\! b_i  + (b_j \circ x)\square a_i  \!\otimes\!   a_j \!\otimes\! b_i\!-(\tau \otimes \id)(\id \otimes  L_{\square}(x)  \otimes \id)\mathbf{P} (r) \\
&\quad\;+ ( L_{\square}(x)  \otimes \id)(\tau E(a_i)-  R_{\circ}(a_i)  \otimes \id  )(r -\tau(r))\otimes b_i - x \square (b_i \circ a_j) \otimes a_i \otimes b_j\Big)\\
&=\sum_{i,j}\!\Big(\!  ( L_{\square}(x)  \otimes \id)(\tau E(a_i) )(r -\tau(r))\otimes b_i -(\tau \otimes \id)(\id \otimes  L_{\square}(x)  \otimes \id)\mathbf{P} (r) \\
&\quad\; +[a_j,a_i \blacksquare x] \!\otimes\!   b_j \!\otimes\! b_i  + (b_j \circ x)\square a_i  \!\otimes\!   a_j \!\otimes\! b_i\! -  x \square (a_j \circ a_i) \otimes b_j \otimes b_i \Big).\\
S_{2} &=\sum_{i,j}\!\Big( b_i \square a_j \otimes   a_i \blacksquare x \!\otimes\! b_j -   a_j\! \otimes \!   a_i \blacksquare x  \!\otimes \![b_j,b_i]  -   a_j\! \otimes \!  [b_j,a_i \blacksquare x] \!\otimes\! b_i + a_i \!\otimes\! a_j \blacksquare [b_i,x] \!\otimes\!b_j\Big)\\
&=\sum_{i}\!\Big( (\id \otimes  R_{\blacksquare}(x))(L_{\square}(a_i)  \otimes \id)   (\tau(r)-r)\otimes b_i - (\id \otimes  R_{\blacksquare}(x)  \otimes \id)  \mathbf{L} (r) \Big)\\
&=\sum_{i}\!\Big((L_{\square}(a_i)  \otimes \id)(\tau E(x) - R_{\circ}(x)  \otimes \id)   (\tau(r)-r)\otimes b_i - (\id \otimes  R_{\blacksquare}(x)  \otimes \id)  \mathbf{L} (r) \Big)\\
&=\sum_{i,j}\!\Big(\!(L_{\square}(a_i)  \!\otimes\! \id)(\tau E(x)  )   (\tau(r)-r)\!\otimes\! b_i - (\id \!\otimes\!  R_{\blacksquare}(x)  \!\otimes\! \id)   \mathbf{L} (r) +a_i \square (a_j \circ x)  \!\otimes\!  b_j\!\otimes\! b_i \\
&\quad\;  - a_i \square (b_j \circ x)  \otimes  a_j\otimes b_i  \Big).\\
S_{3} &=\sum_{i,j}\!\Big(a_i \square a_j \!\otimes\!   b_j \!\otimes\! b_i  \circ x -   a_j \! \otimes  \! [ b_j ,a_i]\!\otimes\! b_i  \circ x-   a_j \!\otimes\!   a_i \!\otimes\! [b_j,b_i  \circ x] + a_i \!\otimes\! a_j   \!\otimes\! b_j \circ [b_i,x] \Big) \\
&=-(\id \otimes \id  \otimes  R_{\circ}(x) )(  \mathbf{L} (r)).
\end{align*}
 Note that
 \begin{align*}
 \sum_{i,j}\Big(  a_i \square (a_j \circ x)   \otimes   b_j \otimes  b_i +[a_j,a_i \blacksquare x]  \otimes    b_j  \otimes  b_i   -  x \square (a_j \circ a_i) \otimes b_j \otimes b_i \Big) = 0.
 \end{align*}
Thus 
 \begin{align*}
S_{1}+S_{2}+S_{3} &=  \sum_{i}  \big(( L_{\square}(x) \otimes \id )  \tau E(a_i)+   ( L_{\square}(a_i) \otimes \id )  \tau E(x)\big)(r - \tau(r)) \otimes b_i    \\
  & \quad    -(\tau \!\otimes\! \id)( \id \!\otimes\! L_{\square}(x) \!\otimes\! \id)\mathbf{P} (r)-(  \id \!\otimes\! R_{\blacksquare}(x) \!\otimes\! \id  +\id \!\otimes\!  \id \!\otimes\!  R_{\circ}(x)  ) \mathbf{L} (r). 
 \end{align*}
Hence Eq. ~\eqref{xdpp1co} holds if and only if   Eq. ~\eqref{tdppb:eqco1} holds.

 \eqref{tdppb:cm1}. Let $x,y \in A$ and $\delta_{\square,r} := \delta_{[\cdot,\cdot],r}+ \tau \delta_{[\cdot,\cdot],r}$, we have 
\begin{align*}
&\delta_{\circ,r}([x,y])-  (\id \otimes  L_{[\cdot,\cdot]}(x) +   L_{[\cdot,\cdot]}(x) \otimes \id  )\delta_{\circ,r}(y) - ( L_{\blacksquare}(y) \otimes \id   -   \id \otimes  R_{\circ}(y) )\delta_{\square,r}(y)\\ 
 &=  - (  R_{\blacksquare}(y) \otimes  L_{[\cdot,\cdot]}(x) + \id \otimes L_{[\cdot,\cdot]}(x)  R_{\circ}(y) + L_{[\cdot,\cdot]}(x)R_{\blacksquare}(y)  \otimes \id + L_{[\cdot,\cdot]}(x)  \otimes R_{\circ}(y))r \\
    &\quad - (  L_{\blacksquare}(y) L_{\square}(x) \otimes \id - L_{\blacksquare}(y)  \otimes R_{[\cdot,\cdot]}(x) -    L_{\square}(x)  \otimes R_{\circ}(y)+  \id \otimes  R_{\circ}(y) R_{[\cdot,\cdot]}(x) )r \\
    &\quad - (  L_{\blacksquare}(y)  \otimes L_{\square}(x) - L_{\blacksquare}(y) R_{[\cdot,\cdot]}(x) \otimes \id   -  \id    \otimes R_{\circ}(y)L_{\square}(x) +  R_{[\cdot,\cdot]}(x)   \otimes  R_{\circ}(y) )\tau(r) \\
    &\quad+      (R_{\blacksquare}([x,y]) \otimes \id + \id \otimes R_{\circ}([x,y]))r \\
 &=   (  R_{\blacksquare}(y) \otimes  L_{\square}(x))(\tau(r) -r)   +( R_{[\cdot,\cdot]}(x)  \otimes R_{\circ}(y))(r -\tau(r))   +( L_{\blacksquare}(y) R_{[\cdot,\cdot]}(x) \otimes \id )\tau(r) \\
    &\quad+      \big( (R_{\blacksquare}([x,y])-    L_{\blacksquare}(y) L_{\square}(x)- L_{[\cdot,\cdot]}(x)R_{\blacksquare}(y)) \otimes \id \big)r \\
    &\quad+ \big(\id \otimes (R_{\circ}([x,y]) -R_{\circ}(y) R_{[\cdot,\cdot]}(x)-L_{[\cdot,\cdot]}(x)  R_{\circ}(y) )\big)r \\
     &=   (  R_{\blacksquare}(y) \otimes  L_{\square}(x))(\tau(r) -r)      +( L_{\blacksquare}(y) R_{[\cdot,\cdot]}(x) \otimes \id )(\tau(r)-r)\\
    &\quad+ ( R_{[\cdot,\cdot]}(x)  \otimes R_{\circ}(y) - \id \otimes R_{\circ}(y)  L_{\square}(x))(r -\tau(r))\\
        &= (L_{\blacksquare}(y) \otimes  \id)(R_{[\cdot,\cdot]}(x) \otimes \id  - \id \otimes L_{\square}(x))(\tau(r)-r)\\
    &\quad+ (\id \otimes R_{\circ}(y))( R_{[\cdot,\cdot]}(x) \otimes \id   - \id \otimes   L_{\square}(x))(r -\tau(r))\\ 
            &= (L_{\blacksquare}(y) \otimes  \id-\id \otimes R_{\circ}(y))(\tau F(x))(r-\tau(r) ).
\end{align*}
Hence Eq. ~\eqref{dppbi1} holds if and only if   Eq. ~\eqref{tdppb:eqcm1} holds.
    \end{proof}
   
    A direct consequence of Proposition \ref{cobdppb} is given as follows.

\begin{thm}\label{tcdppab:1}
Let  $(A,\circ,[\cdot,\cdot])$   be a dual pre-Poisson algebra and  $r  \in A \otimes A$. Define  linear maps $\delta_{\circ,r}, \delta_{[\cdot,\cdot],r}: A \otimes A \to A$  by Eq.~\eqref{EF}. 
If $r$ is a symmetric  solution of the PLYBE, then $(A,\circ,[\cdot,\cdot],{\delta_{\circ,r}}, {\delta_{[\cdot,\cdot],r}})$ is a dual pre-Poisson bialgebra.
\end{thm}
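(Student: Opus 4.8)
The plan is to reduce the statement to results already established by splitting the bialgebra axioms into a purely permutative part, a purely Leibniz part, and a mixed part, and then dispatching each separately. By Remark~\ref{Bia:sm}, the sextuple $(A,\circ,[\cdot,\cdot],\delta_{\circ,r},\delta_{[\cdot,\cdot],r})$ is a dual pre-Poisson bialgebra precisely when $(A,\circ,\delta_{\circ,r})$ is a permutative bialgebra, $(A,[\cdot,\cdot],\delta_{[\cdot,\cdot],r})$ is a Leibniz bialgebra, and the compatibility equations \eqref{xdpp1co}--\eqref{xdpp3co} together with \eqref{dppbi1}--\eqref{dppbi7} all hold (the algebra compatibilities \eqref{xdpp1}--\eqref{xdpp3} being part of the hypothesis that $(A,\circ,[\cdot,\cdot])$ is a dual pre-Poisson algebra). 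So I would verify these three blocks in turn.

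For the permutative block, I would observe that since $r$ is symmetric and $\mathbf{P}(r)=0$, it is a symmetric solution of the permutative Yang-Baxter equation. Hence, by the theory of permutative bialgebras in \cite{BYZ} (see Remark~\ref{YBE:sm}), with $\delta_{\circ,r}=E(\cdot)r$ the associated cobracket, the pair $(A,\circ,\delta_{\circ,r})$ is a permutative bialgebra; this yields the permutative coalgebra axiom \eqref{permco} and the compatibilities \eqref{pbi1}--\eqref{pbi3}. For the Leibniz block, the computation recorded in Remark~\ref{YBE:sm} shows that under the symmetry $r=\tau(r)$, the condition $\mathbf{L}(r)=0$ is equivalent to $r$ solving the classical Leibniz Yang-Baxter equation \eqref{clybe}. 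Therefore, by the theory of Leibniz bialgebras in \cite{TS}, with $\delta_{[\cdot,\cdot],r}=F(\cdot)r$, the pair $(A,[\cdot,\cdot],\delta_{[\cdot,\cdot],r})$ is a Leibniz bialgebra, yielding the Leibniz coalgebra axiom \eqref{Leibnizco} and the compatibilities \eqref{Leibbi1}--\eqref{Leibbi2}. The one genuinely delicate point here is to confirm that the cobrackets built from $E$ and $F$ in \eqref{EF}--\eqref{eq:f} are exactly the coboundary maps used in \cite{BYZ} and \cite{TS}; once the normalizations are matched this is immediate.

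For the mixed block I would invoke Proposition~\ref{cobdppb}, which reformulates each of \eqref{xdpp1co}--\eqref{xdpp3co} and \eqref{dppbi1}--\eqref{dppbi7} as an identity whose terms are built solely from $\mathbf{P}(r)$, $\mathbf{L}(r)$, and expressions of the form $(\,\cdot\,)(r-\tau(r))$. Under the present hypotheses $\mathbf{P}(r)=\mathbf{L}(r)=0$ and $r-\tau(r)=0$, so every term in the equivalent conditions \eqref{tdppb:eqco1}--\eqref{tdppb:eqco3} and \eqref{tdppb:eqcm1}--\eqref{tdppb:eqcm7} vanishes termwise; hence each of \eqref{xdpp1co}--\eqref{xdpp3co} and \eqref{dppbi1}--\eqref{dppbi7} holds. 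Combining the three blocks through Remark~\ref{Bia:sm} then shows that $(A,\circ,[\cdot,\cdot],\delta_{\circ,r},\delta_{[\cdot,\cdot],r})$ is a dual pre-Poisson bialgebra.

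I expect no serious obstacle in this argument: the heavy lifting has already been carried out in Proposition~\ref{cobdppb}, so once it is available the theorem is essentially a bookkeeping exercise of checking that symmetry annihilates every $(r-\tau(r))$-term while the Yang-Baxter hypotheses annihilate the $\mathbf{P}(r)$- and $\mathbf{L}(r)$-terms. The step requiring the most care is the cross-referencing in the permutative and Leibniz blocks, namely making sure the coboundary cobrackets of \cite{BYZ} and \cite{TS} coincide with $\delta_{\circ,r}$ and $\delta_{[\cdot,\cdot],r}$ as defined here; this is where I would concentrate the verification effort.
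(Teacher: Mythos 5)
Your proposal is correct and follows essentially the same route as the paper: the paper likewise deduces the permutative bialgebra structure from \cite[Corollary 3.32]{BYZ}, the Leibniz bialgebra structure from Remark~\ref{YBE:sm} together with \cite[Corollary 4.12]{TS}, and then dispatches all the mixed conditions at once via Proposition~\ref{cobdppb}, whose equivalent formulations \eqref{tdppb:eqco1}--\eqref{tdppb:eqcm7} vanish because $\mathbf{P}(r)=\mathbf{L}(r)=0$ and $r=\tau(r)$. Your extra care about matching the normalizations of $\delta_{\circ,r}=E(\cdot)r$ and $\delta_{[\cdot,\cdot],r}=F(\cdot)r$ with the coboundary cobrackets of \cite{BYZ} and \cite{TS} is a sensible check that the paper leaves implicit.
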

\begin{proof}
Sicen $r$ is a symmetric solution of the PLYBE, $(A,\circ ,{\delta_{\circ,r}})$ is a permutative bialgebra by \cite[Corollary 3.32]{BYZ} and $(A,[\cdot,\cdot],{\delta_{[\cdot,\cdot],r}})$ is a Leibniz bialgebra  by Remark \ref{YBE:sm} and    \cite[Corollary 4.12]{TS}.  Moreover, by Proposition \ref{cobdppb}, Eqs.~\eqref{tdppb:eqco1}-\eqref{tdppb:eqcm7} hold. Thus $(A,\circ,[\cdot,\cdot],{\delta_{\circ,r}}, {\delta_{[\cdot,\cdot],r}})$ is a dual pre-Poisson bialgebra.  
\end{proof}

\begin{defi} Let  $(A,\circ,[\cdot,\cdot])$   be a dual pre-Poisson algebra  and $(V;l_{\circ},r_{\circ},l_{[\cdot,\cdot]},r_{[\cdot,\cdot]})$ be a representation of $(A,\circ,[\cdot,\cdot])$. A linear map $T: V \rightarrow A$ is called an \textbf{$\mathcal{O}$-operator on  $(A,\circ,[\cdot,\cdot])$  associated to $(V;l_{\circ},r_{\circ},l_{[\cdot,\cdot]},r_{[\cdot,\cdot]})$} if 
\begin{align}
T(u) \circ  T(v) &= T\left(l_{\circ}(T(u)) v+r_{\circ}(T(v)) u \right), \label{ofP}\\
[T(u) , T(v)] &= T\left(l_{[\cdot,\cdot]}(T(u)) v+r_{[\cdot,\cdot]}(T(v)) u \right),\quad \forall u, v \in V.\label{ofL} 
\end{align}
\end{defi}

\begin{rmk}In \cite{BYZ}, a linear map $T: V \rightarrow A$ is called an \textbf{$\mathcal{O}$-operator} on a permutative algebra $(A,\circ)$  associated to a representation $(V;l_{\circ},r_{\circ})$  of   $(A,\circ)$ if $T$ satisfies Eq.~\eqref{ofP}. It was used to provide a symmetric solution  of the permutative  Yang-Baxter equation, giving a permutative bialgebra. On the other hand, let  $(A, [\cdot,\cdot])$   be a Leibniz algebra  and $(V; l_{[\cdot,\cdot]},r_{[\cdot,\cdot]})$ be a representation of $(A,[\cdot,\cdot])$. If a linear map $T: V \rightarrow A$ satisfies Eq.~\eqref{ofL} on $(A, [\cdot,\cdot])$, then $T$ is called an \textbf{$\mathcal{O}$-operator}  associated to   $(V;l_{[\cdot,\cdot]},r_{[\cdot,\cdot]})$, which is also called a {\bf relative Rota-Baxter operator} in  \cite{TS} and introduced to give rise to a solution of the classical Leibniz Yang-Baxter equation in a larger Leibniz algebra. 
\end{rmk}

\begin{ex}\label{RBOdppa}
 Let  $(A,\circ,[\cdot,\cdot])$   be a dual pre-Poisson algebra. If $P$ is an $\mathcal{O}$-operator  on  $(A,\circ,[\cdot,\cdot])$  associated to the regular representation $(A; L_{\circ},R_{\circ},L_{[\cdot,\cdot]},R_{[\cdot,\cdot]})$, then   $P$  is called a {\bf Rota-Baxter operator  on $(A,\circ,[\cdot,\cdot])$}, that is, $P$ satisfies
\begin{align}
P(x) \circ P (y) &= P\left(P  (x) \circ y+ x \circ  P(y) \right),\label{RBOdppa1}\\
[P(x) ,P (y)] &= P\left([P  (x) , y]+ [x , P(y)]  \right),\quad \forall x, y \in A.\label{RBOdppa2}
\end{align}
\end{ex}

 Let $A$  be a vector space. For any  $r \in A \otimes A$, $r$  can induce a linear map $\widetilde{r}: A^{*} \to A$  in the following way
\begin{align*}
\left\langle \widetilde{r}\left(u^{*}\right), v^{*}\right\rangle :=\left\langle r, u^{*} \otimes v^{*}\right\rangle,     \quad \forall u^{*}, v^{*} \in A^{*} .
\end{align*}
We say that  $r \in V \otimes V$  is nondegenerate if the linear map  $\widetilde{r}$  is invertible. 

\begin{pro}\label{wssqof:0}
 Let  $(A,\circ,[\cdot,\cdot])$   be a dual pre-Poisson algebra  and  $r   \in A \otimes A$ be symmetric.  Then   $r$  is a solution of the PLYBE if and only if $\widetilde{r}$ is an $\mathcal{O}$-operator  on  $(A,\circ,[\cdot,\cdot])$  associated to the coregular representation $(A^*; -L_{\circ}^*,-L_{\blacksquare}^*,L_{[\cdot,\cdot]}^*,-L_{\square}^*)$, that is, $\widetilde{r}$ satisfies the following equations
 \begin{align}
   \widetilde{r}\left(a^{*}\right) \circ \widetilde{r}\left(b^{*}\right)&=\widetilde{r}\left(-L_{\circ}^{*}\left(\widetilde{r}\left(a^{*}\right)\right) b^{*}-L_{\blacksquare}^{*}\left(\widetilde{r}\left(b^{*}\right)\right) a^{*}\right),\label{szxsPYBE}\\
   [\widetilde{r}\left(a^{*}\right),\widetilde{r}\left(b^{*}\right)]&=\widetilde{r}\left(L_{[\cdot,\cdot]}^{*}\left(\widetilde{r}\left(a^{*}\right)\right) b^{*}-L_{\square}^{*}\left(\widetilde{r}\left(b^{*}\right)\right) a^{*}\right),  \quad \forall a^{*}, b^{*} \in A^{*},\label{szxsLYBE}
   \end{align}
where  $\blacksquare$ and $\square$ are defined by Eq.~\eqref{bss}.
\end{pro}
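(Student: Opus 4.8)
The plan is to exploit the fact that the PLYBE is by definition the pair of conditions $\mathbf{P}(r)=0$ and $\mathbf{L}(r)=0$, and that the $\mathcal{O}$-operator equations \eqref{ofP} and \eqref{ofL} for $\widetilde{r}$ likewise split into the permutative equation \eqref{szxsPYBE} and the Leibniz equation \eqref{szxsLYBE}, each involving only one half of the coregular representation. Thus I would prove the two halves independently and then recombine. As preliminary bookkeeping I would record, directly from \eqref{bss}, that $\blacksquare$ is anti-symmetric and $\square$ is symmetric, and that $L_{\blacksquare}=L_{\circ}-R_{\circ}$ and $L_{\square}=L_{[\cdot,\cdot]}+R_{[\cdot,\cdot]}$; dualizing gives $-L_{\blacksquare}^{*}=-L_{\circ}^{*}+R_{\circ}^{*}$ and $-L_{\square}^{*}=-L_{[\cdot,\cdot]}^{*}-R_{[\cdot,\cdot]}^{*}$, so that $(A^{*};-L_{\circ}^{*},-L_{\blacksquare}^{*})$ and $(A^{*};L_{[\cdot,\cdot]}^{*},-L_{\square}^{*})$ are precisely the permutative and the Leibniz parts of the coregular representation.

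For the permutative half I would write $r=\sum_i a_i\otimes b_i$ with $r$ symmetric, so that $\widetilde{r}(u^{*})=\sum_i\langle a_i,u^{*}\rangle b_i=\sum_i\langle b_i,u^{*}\rangle a_i$, and pair the tensor $\mathbf{P}(r)\in A\otimes A\otimes A$ against $a^{*}\otimes b^{*}$ in its first two slots, leaving the third free. The term $r_{13}\circ r_{23}$ produces $\widetilde{r}(a^{*})\circ\widetilde{r}(b^{*})$; the term $-r_{12}\circ r_{23}$ produces $\widetilde{r}\big(L_{\circ}^{*}(\widetilde{r}(a^{*}))b^{*}\big)$ after using the definition \eqref{dualmap} of the dual map; and the term $r_{13}\blacksquare r_{12}$ produces $\widetilde{r}\big(L_{\blacksquare}^{*}(\widetilde{r}(b^{*}))a^{*}\big)$, where the anti-symmetry of $\blacksquare$ (equivalently $R_{\blacksquare}=-L_{\blacksquare}$) is what converts the resulting right-multiplication dual into $L_{\blacksquare}^{*}$. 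Since $a^{*},b^{*}$ are arbitrary, $\mathbf{P}(r)=0$ is equivalent to the vanishing of this contraction for all $a^{*},b^{*}$, which is exactly \eqref{szxsPYBE}; this is the content of the permutative correspondence of \cite{BYZ}. For the Leibniz half I would first invoke Remark \ref{YBE:sm}, by which $\mathbf{L}(r)=0$ is equivalent, for symmetric $r$, to $r$ solving the classical Leibniz Yang--Baxter equation \eqref{clybe}, and then the corresponding result of \cite{TS} identifying symmetric solutions of \eqref{clybe} with $\mathcal{O}$-operators on $(A,[\cdot,\cdot])$ associated to $(A^{*};L_{[\cdot,\cdot]}^{*},-L_{\square}^{*})$; the analogous contraction, this time using the symmetry of $\square$, yields precisely \eqref{szxsLYBE}.

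Combining the two halves, $r$ solves the PLYBE, i.e. $\mathbf{P}(r)=\mathbf{L}(r)=0$, if and only if $\widetilde{r}$ satisfies both \eqref{szxsPYBE} and \eqref{szxsLYBE}, which is exactly the statement that $\widetilde{r}$ is an $\mathcal{O}$-operator on $(A,\circ,[\cdot,\cdot])$ associated to the coregular representation. I expect the main obstacle to be the sign and index bookkeeping in the two contractions: one must track carefully how each of the three summands of $\mathbf{P}(r)$ (and of $\mathbf{L}(r)$) redistributes its indices under pairing, and apply the anti-symmetry of $\blacksquare$ and the symmetry of $\square$ at exactly the right points so that every right-multiplication operator is turned into the intended left-multiplication dual with the correct sign. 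The converse direction requires no new work, since the same contractions are reversible given that $r$ and $\widetilde{r}$ determine each other.
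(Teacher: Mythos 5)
Your proposal is correct and follows essentially the same route as the paper's proof: the paper also writes $r=\sum_i a_i\otimes b_i$ and, by direct dual pairing against $a^*\otimes b^*\otimes c^*$, matches $\widetilde{r}(a^*)\circ\widetilde{r}(b^*)$, $\widetilde{r}(-L_{\circ}^*(\widetilde{r}(a^*))b^*)$, $\widetilde{r}(-L_{\blacksquare}^*(\widetilde{r}(b^*))a^*)$ with the three summands of $\mathbf{P}(r)$ (using the symmetry of $r$ and the anti-symmetry of $\blacksquare$ exactly where you place them), and likewise for $\mathbf{L}(r)$, so that each Yang--Baxter component vanishes if and only if the corresponding $\mathcal{O}$-operator identity holds. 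The only cosmetic difference is that the paper carries out the Leibniz half by the same direct contraction rather than routing it through Remark~\ref{YBE:sm} and the cited result of \cite{TS}, but since you also describe that contraction explicitly, the arguments coincide in substance.
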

\begin{proof}Let $r=\sum\limits_{i} a_i \otimes b_i$. For $a^{*},b^{*},c^{*} \in A^*$, we have
\begin{align*}
\left\langle \widetilde{r}\left(a^{*}\right) \circ\widetilde{r}\left(b^{*}\right), c^{*}\right\rangle &= -\left\langle  \widetilde{r}\left(b^{*}\right), L_{\circ}^*(\widetilde{r}(a^{*}))c^{*}\right\rangle =  -\sum_{i}\left\langle  b^{*} ,a_i\right\rangle  \left\langle   a^{*} \otimes R_{\circ}^{*}(b_i)c^{*} ,r  \right\rangle\\
&= -\sum_{i,j} \left\langle   a^{*}  ,a_j  \right\rangle \left\langle  b^{*} ,a_i\right\rangle \left\langle R_{\circ}^{*}(b_i)c^{*} ,b_j \right\rangle \\
&= \left\langle  a^{*}  \otimes b^{*} \otimes c^{*},r_{13}\circ r_{23}\right\rangle,\\
\left\langle \widetilde{r}(-L_{\circ}^{*}\left(\widetilde{r}\left(a^{*}\right)\right) b^{*}), c^{*}\right\rangle &= \left\langle  -L_{\circ}^{*}\left(\widetilde{r}\left(a^{*}\right)\right) b^{*} \otimes c^{*}, r\right\rangle  =  -\sum_{i}\left\langle  R_{\circ}^{*}\left(a_i\right)b^{*}  , \widetilde{r}\left(a^{*}\right)\right\rangle \left\langle   c^{*}, b_i\right\rangle \\
&=  -\sum_{i,j}\left\langle   a^{*} , a_j\right\rangle  \left\langle   R_{\circ}^{*}\left(a_i\right)b^{*}  , b_j\right\rangle \left\langle   c^{*}, b_i\right\rangle\\
&= \left\langle   a^{*} \otimes b^{*}\otimes c^*  , r_{12} \circ r_{23} \right\rangle, \\
\left\langle \widetilde{r}(-L_{\blacksquare}^{*}\left(\widetilde{r}\left(b^{*}\right)\right) a^{*}), c^{*}\right\rangle &= -\left\langle  L_{\blacksquare}^{*}\left(\widetilde{r}\left(b^{*}\right)\right) a^{*} \otimes c^{*}, r\right\rangle= -\sum_{i}\left\langle   a^{*}  , L_{\blacksquare} \left(a_i\right)\widetilde{r}\left(b^{*}\right)\right\rangle \left\langle   c^{*}, b_i\right\rangle\\
&=  \sum_{i}\left\langle   L_{\blacksquare}^{*}\left(a_i\right)a^{*}  , \widetilde{r}\left(b^{*}\right)\right\rangle \left\langle   c^{*}, b_i\right\rangle  = - \sum_{i,j} \left\langle    a^{*}  , a_i \blacksquare a_j\right\rangle \left\langle   b^{*} , b_j\right\rangle \left\langle   c^{*}, b_i\right\rangle\\
&=  -\left\langle   a^{*} \otimes b^{*}\otimes c^*  ,  r_{13} \blacksquare r_{12}\right\rangle.
\end{align*}
Then  $\mathbf{P} (r) = 0$ if and only if $\widetilde{r}$ satisfies Eq.~\eqref{szxsPYBE}. Similarly, we have
\begin{align*}
\left\langle [\widetilde{r}\left(a^{*}\right), \widetilde{r}\left(b^{*}\right)], c^{*}\right\rangle &=    \langle  a^{*}  \otimes b^{*} \otimes c^{*},\sum_{i,j} a_j \otimes a_i \otimes [b_j ,b_i] \rangle
=: \left\langle  a^{*}  \otimes b^{*} \otimes c^{*},[r_{13}, r_{23}]\right\rangle,\\
\left\langle
\widetilde{r}( L_{[\cdot,\cdot]}^{*}\left(\widetilde{r}\left(a^{*}\right)\right)
b^{*}), c^{*}\right\rangle &=  \langle  a^{*}  \otimes b^{*}
\otimes c^{*},-\sum_{i,j} a_j \otimes [b_j,a_i] \otimes b_i
 \rangle
=: \left\langle   a^{*} \otimes b^{*}\otimes c^*  , -[r_{12}, r_{23}] \right\rangle, \\
 \left\langle \widetilde{r}(-L_{\square}^{*}\left(\widetilde{r} \left(b^{*}\right)\right) a^{*}), c^{*}\right\rangle &=   \langle  a^{*}
\otimes b^{*} \otimes c^{*},\sum_{i,j} a_j \square  a_i \otimes b_j
\otimes  b_i \rangle=:  \left\langle   a^{*} \otimes
b^{*}\otimes c^*  , r_{12} \square r_{13}\right\rangle.
\end{align*}
Thus   $\mathbf{L} (r) = 0$ if and only if $\widetilde{r}$ satisfies Eq.~\eqref{szxsLYBE}.   This completes the proof.
\end{proof}

\begin{pro}\label{tcdppab:2}
 Let  $(A,\circ,[\cdot,\cdot])$   be a dual pre-Poisson algebra  and  $r   \in A \otimes A$ be symmetric.  Define  linear maps $\delta_{\circ,r}, \delta_{[\cdot,\cdot],r}: A \otimes A \to A$  by Eq.~\eqref{EF} and $\circ_{r},[\cdot,\cdot]_r$ be the linear dual of $\delta_{\circ,r}, \delta_{[\cdot,\cdot],r}$ respectively. Suppose that $r$ is a solution of the PLYBE in $(A,\circ,[\cdot,\cdot])$. Then the dual pre-Poisson algebra structure $\circ_{r},[\cdot,\cdot]_r$ on  $A^{*}$ satisfies
\begin{align}
a^{*} \circ_{r} b^{*}&= -L_{\circ}^{*}\left(\widetilde{r}\left(a^{*}\right)\right) b^{*}-L_{\blacksquare}^{*}\left(\widetilde{r}\left(b^{*}\right)\right) a^{*}, \label{dualdpp:rp}\\
[a^{*} , b^{*}]_r&=L_{[\cdot,\cdot]}^{*}\left(\widetilde{r}\left(a^{*}\right)\right) b^{*}-L_{\square}^{*}\left(\widetilde{r}\left(b^{*}\right)\right) a^{*}, \quad \forall a^{*}, b^{*} \in A^{*},\label{dualdpp:rl}
\end{align}
where  $\blacksquare$ and $\square$ are defined by Eq.~\eqref{bss}. Moreover, $\widetilde{r}: (A^*,\circ_{r},[\cdot,\cdot]_r) \to (A,\circ,[\cdot,\cdot]) $  is a  homomorphism of dual pre-Poisson algebra algebras.
\end{pro}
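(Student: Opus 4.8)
The plan is to proceed in two stages: first derive the explicit formulas \eqref{dualdpp:rp}--\eqref{dualdpp:rl} by unfolding the definition of the dualized operations, and then obtain the homomorphism property as an immediate consequence of Proposition \ref{wssqof:0}. Note that the statement already presupposes that $(A^*,\circ_r,[\cdot,\cdot]_r)$ is a dual pre-Poisson algebra, and this is guaranteed: since $r$ is a symmetric solution of the PLYBE, Theorem \ref{tcdppab:1} shows that $(A,\circ,[\cdot,\cdot],\delta_{\circ,r},\delta_{[\cdot,\cdot],r})$ is a dual pre-Poisson bialgebra, so $(A,\delta_{\circ,r},\delta_{[\cdot,\cdot],r})$ is a dual pre-Poisson coalgebra and its linear dual carries the asserted dual pre-Poisson algebra structure.

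For the explicit formulas, I would write $r=\sum_i a_i\otimes b_i$ and use that $r$ is symmetric, so that $\widetilde{r}(a^*)=\sum_i\langle a^*,a_i\rangle b_i=\sum_i\langle a^*,b_i\rangle a_i$ for every $a^*\in A^*$. By the pairing between $\circ_r$ and $\delta_{\circ,r}=E(\cdot)r$, one has
\[
\langle a^*\circ_r b^*,x\rangle=\langle a^*\otimes b^*,E(x)r\rangle=\sum_i\langle a^*,a_i\blacksquare x\rangle\langle b^*,b_i\rangle+\sum_i\langle a^*,a_i\rangle\langle b^*,b_i\circ x\rangle,
\]
where $E(x)=R_{\blacksquare}(x)\otimes\mathrm{id}+\mathrm{id}\otimes R_{\circ}(x)$. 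On the other hand, expanding the right-hand side of \eqref{dualdpp:rp} via the definition \eqref{dualmap} of the dual maps gives $\langle -L_{\circ}^*(\widetilde{r}(a^*))b^*,x\rangle=\langle b^*,\widetilde{r}(a^*)\circ x\rangle$ and $\langle -L_{\blacksquare}^*(\widetilde{r}(b^*))a^*,x\rangle=\langle a^*,\widetilde{r}(b^*)\blacksquare x\rangle$, and substituting the two expressions for $\widetilde{r}$ recovers exactly the two summands above. The $L_{\circ}^*$-term matches the $\mathrm{id}\otimes R_{\circ}(x)$ part directly, while the $L_{\blacksquare}^*$-term matches the $R_{\blacksquare}(x)\otimes\mathrm{id}$ part only after using the symmetry of $r$ to write $\widetilde{r}(b^*)=\sum_i\langle b^*,b_i\rangle a_i$. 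The derivation of \eqref{dualdpp:rl} is parallel, with $F(x)=L_{\square}(x)\otimes\mathrm{id}-\mathrm{id}\otimes R_{[\cdot,\cdot]}(x)$; here the matching of the $L_{\square}$-term additionally requires the commutativity of $\square$, namely $a_i\square x=x\square a_i$, together with the symmetry of $r$.

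With the formulas in hand, the homomorphism property is immediate. By Proposition \ref{wssqof:0}, since $r$ is a symmetric solution of the PLYBE, the induced map $\widetilde{r}$ is an $\mathcal{O}$-operator on $(A,\circ,[\cdot,\cdot])$ associated to the coregular representation, that is, $\widetilde{r}$ satisfies \eqref{szxsPYBE} and \eqref{szxsLYBE}. Applying $\widetilde{r}$ to \eqref{dualdpp:rp} and then invoking \eqref{szxsPYBE} yields
\[
\widetilde{r}(a^*\circ_r b^*)=\widetilde{r}\big(-L_{\circ}^*(\widetilde{r}(a^*))b^*-L_{\blacksquare}^*(\widetilde{r}(b^*))a^*\big)=\widetilde{r}(a^*)\circ\widetilde{r}(b^*),
\]
and likewise \eqref{dualdpp:rl} together with \eqref{szxsLYBE} gives $\widetilde{r}([a^*,b^*]_r)=[\widetilde{r}(a^*),\widetilde{r}(b^*)]$. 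Hence $\widetilde{r}$ preserves both operations and is a homomorphism of dual pre-Poisson algebras.

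I expect no serious obstacle in this argument; it is essentially a bookkeeping of dual pairings. The only points demanding care are the systematic use of the symmetry $r=\tau(r)$ — which is what allows the two asymmetric generating expressions for $\widetilde{r}$ to be interchanged — and the commutativity of the symmetrized bracket $\square$, both of which are needed for the $\blacksquare$- and $\square$-terms to line up with the definitions of $E$ and $F$.
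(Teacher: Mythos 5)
Your proof is correct and follows essentially the same route as the paper: you unfold the dual pairing $\langle a^*\circ_r b^*,x\rangle=\langle a^*\otimes b^*,E(x)r\rangle$ (resp.\ with $F(x)$), using the symmetry of $r$ and of $\square$ to obtain Eqs.~\eqref{dualdpp:rp}--\eqref{dualdpp:rl}, and then deduce the homomorphism property by applying $\widetilde{r}$ and invoking the $\mathcal{O}$-operator characterization of Proposition~\ref{wssqof:0}. Your citation of Proposition~\ref{wssqof:0} at the final step is in fact more accurate than the paper's own reference to Proposition~\ref{wssqof} there, and your opening remark that Theorem~\ref{tcdppab:1} is what guarantees $(A^*,\circ_r,[\cdot,\cdot]_r)$ is a dual pre-Poisson algebra makes explicit a point the paper leaves implicit.
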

\begin{proof}
Let $x \in A$, $a^{*}, b^{*} \in A^{*}$, we have
       \begin{align*}
\left\langle a^{*} \circ_{r} b^{*}, x\right\rangle=&\left\langle a^{*} \otimes b^{*}, \delta_{\circ,r}(x)\right\rangle =\left\langle  a^{*} \otimes b^{*},   (R_{\blacksquare}(x) \otimes  \mathrm{id}+ \mathrm{id} \otimes R_{\circ}(x))r  \right\rangle\\
=&\!-\!\left\langle R_{\blacksquare}^{*}(x)a^{*} \!\otimes \! b^{*} ,  r\right\rangle  - \left\langle   a^{*} \!\otimes\! R_{\circ}^{*}(x)b^{*},  r \right\rangle
= \!-\!\left\langle  R_{\blacksquare}^{*}(x)a^{*} ,  \widetilde{r}\left(b^{*}\right)\right\rangle   - \left\langle  R_{\circ}^{*}(x)b^{*},  \widetilde{r}\left(a^{*}\right) \right\rangle\\
=& -\left\langle  L_{\blacksquare}^{*}( \widetilde{r}\left(b^{*}\right))a^{*} ,x \right\rangle   - \left\langle  L_{\circ}^{*}(\widetilde{r}\left(a^{*}\right) )b^{*}, x \right\rangle =\left\langle -L_{\circ}^{*}\left(\widetilde{r}\left(a^{*}\right)\right) b^{*}-L_{\blacksquare}^{*}\left(\widetilde{r}\left(b^{*}\right)\right) a^{*},  x \right\rangle.\\
\left\langle [a^{*} , b^{*}]_{r}, x\right\rangle=&\left\langle a^{*} \otimes b^{*}, \delta_{[\cdot,\cdot],r}(x)\right\rangle =\left\langle  a^{*} \otimes b^{*},   (L_{\square}(x) \otimes  \mathrm{id}- \mathrm{id} \otimes R_{[\cdot,\cdot]}(x))r  \right\rangle\\
=&\!-\!\left\langle L_{\square}^{*}(\!x\!)a^{*} \!\otimes \! b^{*} ,  r\right\rangle  + \left\langle   a^{*} \!\otimes\! R_{[\cdot,\cdot]}^{*}(x)b^{*},  r \right\rangle
= \!-\!\left\langle  L_{\square}^{*}(\!x\!)a^{*} ,\!  \widetilde{r}\left(b^{*}\right)\right\rangle \!  +\! \left\langle  R_{[\cdot,\cdot]}^{*}(\!x\!)b^{*},  \widetilde{r}\left(a^{*}\right) \right\rangle\\
=&  \!-\!\left\langle  L_{\square}^{*}( \widetilde{r}\left(b^{*}\right))a^{*} ,x \right\rangle   + \left\langle  L_{[\cdot,\cdot]}^{*}(\widetilde{r}\left(a^{*}\right) )b^{*}, x \right\rangle =\left\langle L_{[\cdot,\cdot]}^{*}\left(\widetilde{r}\left(a^{*}\right)\right) b^{*}\!-\!L_{\square}^{*}\left(\widetilde{r}\left(b^{*}\right)\right) a^{*},  x \right\rangle. 
\end{align*}
Thus Eqs.~\eqref{dualdpp:rp}-\eqref{dualdpp:rl} hold. Moreover, by Proposition \ref{wssqof}, we  have 
\begin{align*}
\widetilde{r}(a^{*} \circ_{r} b^{*})&=\widetilde{r}\left(-L_{\circ}^{*}\left(\widetilde{r}\left(a^{*}\right)\right) b^{*}-L_{\blacksquare}^{*}\left(\widetilde{r}\left(b^{*}\right)\right) a^{*}\right) = \widetilde{r}\left(a^{*}\right) \circ \widetilde{r}\left(b^{*}\right),\\
\widetilde{r}([ a^{*} , b^{*}]_r)&=\widetilde{r}\left(L_{[\cdot,\cdot]}^{*}\left(\widetilde{r}\left(a^{*}\right)\right) b^{*}-L_{\square}^{*}\left(\widetilde{r}\left(b^{*}\right)\right) a^{*}\right) = [\widetilde{r}\left(a^{*}\right),\widetilde{r}\left(b^{*}\right)].
\end{align*}
This completes the proof.
\end{proof}

\begin{thm}\label{tcdppab:3}
Let  $(A,\circ,[\cdot,\cdot])$   be a dual pre-Poisson algebra  and $(V;l_{\circ},r_{\circ},l_{[\cdot,\cdot]},r_{[\cdot,\cdot]})$ be a representation of $(A,\circ,[\cdot,\cdot])$. Let $T: V \rightarrow A$  be a linear map which can be identified as an element in $A \otimes V^{*} \subseteq\left(A \ltimes_{-l_{\circ}^*, -l_{\circ}^*+r_{\circ}^*, l_{[\cdot,\cdot]}^*, -l_{[\cdot,\cdot]}^*-r_{[\cdot,\cdot]}^*} V^*\right) \otimes\left(A \ltimes_{-l_{\circ}^*, -l_{\circ}^*+r_{\circ}^*, l_{[\cdot,\cdot]}^*, -l_{[\cdot,\cdot]}^*-r_{[\cdot,\cdot]}^*} V^*\right)$  through  $\operatorname{Hom}(V, A) \cong A \otimes V^{*}$. 
Then  $r=T + \tau(T)$  is a solution of the PLYBE in the dual pre-Poisson algebra $A \ltimes_{-l_{\circ}^*, -l_{\circ}^*+r_{\circ}^*, l_{[\cdot,\cdot]}^*, -l_{[\cdot,\cdot]}^*-r_{[\cdot,\cdot]}^*} V^*$ if and only if  $T$ is an  $\mathcal{O}$-operator on   $(A,\circ,[\cdot,\cdot])$  associated to the representation $(V;l_{\circ},r_{\circ},l_{[\cdot,\cdot]},r_{[\cdot,\cdot]})$.
\end{thm}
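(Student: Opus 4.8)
The plan is to split the PLYBE into its two defining equations and handle each through the corresponding component theory of permutative and Leibniz algebras. Since $\tau\big(T+\tau(T)\big)=T+\tau(T)$, the element $r=T+\tau(T)$ is automatically symmetric, so it is a solution of the PLYBE exactly when $\mathbf{P}(r)=0$ and $\mathbf{L}(r)=0$ simultaneously. The structural observation that drives the argument is that the dual pre-Poisson algebra $A \ltimes_{-l_{\circ}^*, -l_{\circ}^*+r_{\circ}^*, l_{[\cdot,\cdot]}^*, -l_{[\cdot,\cdot]}^*-r_{[\cdot,\cdot]}^*} V^*$, assembled from the dual representation via Proposition~\ref{repofdppa}, restricts on the same underlying space to the semidirect product \emph{permutative} algebra $A \ltimes_{-l_{\circ}^*, -l_{\circ}^*+r_{\circ}^*} V^*$ and to the semidirect product \emph{Leibniz} algebra $A \ltimes_{l_{[\cdot,\cdot]}^*, -l_{[\cdot,\cdot]}^*-r_{[\cdot,\cdot]}^*} V^*$. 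Because $\mathbf{P}(r)$ is written entirely in terms of $\circ$ and $\blacksquare$, it depends only on the permutative part, while $\mathbf{L}(r)$, written in terms of $[\cdot,\cdot]$ and $\square$, depends only on the Leibniz part.

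First I would treat the permutative equation. Viewing $r=T+\tau(T)$ inside $A \ltimes_{-l_{\circ}^*, -l_{\circ}^*+r_{\circ}^*} V^*$, whose dual permutative representation on $V^*$ is precisely $(V^*;-l_{\circ}^*,-l_{\circ}^*+r_{\circ}^*)$, the permutative $\mathcal{O}$-operator theory of \cite{BYZ} gives that $r$ is a symmetric solution of the permutative Yang--Baxter equation $\mathbf{P}(r)=0$ if and only if $T$ is an $\mathcal{O}$-operator on $(A,\circ)$ associated to $(V;l_{\circ},r_{\circ})$, i.e.\ if and only if $T$ satisfies Eq.~\eqref{ofP}. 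Next I would treat the Leibniz equation. By Remark~\ref{YBE:sm}, for symmetric $r$ one has $\mathbf{L}(r)=0$ if and only if $r$ solves the classical Leibniz Yang--Baxter equation \eqref{clybe} in $A \ltimes_{l_{[\cdot,\cdot]}^*, -l_{[\cdot,\cdot]}^*-r_{[\cdot,\cdot]}^*} V^*$, whose dual Leibniz representation on $V^*$ is $(V^*;l_{[\cdot,\cdot]}^*,-l_{[\cdot,\cdot]}^*-r_{[\cdot,\cdot]}^*)$; applying the corresponding $\mathcal{O}$-operator (relative Rota--Baxter) result of \cite{TS} then shows this holds if and only if $T$ is an $\mathcal{O}$-operator on $(A,[\cdot,\cdot])$ associated to $(V;l_{[\cdot,\cdot]},r_{[\cdot,\cdot]})$, i.e.\ if and only if $T$ satisfies Eq.~\eqref{ofL}.

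Combining the two equivalences, $r=T+\tau(T)$ is a solution of the PLYBE (that is, $\mathbf{P}(r)=\mathbf{L}(r)=0$) if and only if both Eq.~\eqref{ofP} and Eq.~\eqref{ofL} hold, which is exactly the assertion that $T$ is an $\mathcal{O}$-operator on the dual pre-Poisson algebra $(A,\circ,[\cdot,\cdot])$ associated to $(V;l_{\circ},r_{\circ},l_{[\cdot,\cdot]},r_{[\cdot,\cdot]})$; specializing to the regular representation recovers Proposition~\ref{wssqof:0}. The main obstacle I anticipate is bookkeeping rather than conceptual. One must verify with care that the dual representations arising in the permutative and Leibniz summands of $A \ltimes_{-l_{\circ}^*, -l_{\circ}^*+r_{\circ}^*, l_{[\cdot,\cdot]}^*, -l_{[\cdot,\cdot]}^*-r_{[\cdot,\cdot]}^*} V^*$ coincide exactly with those used in \cite{BYZ} and \cite{TS}, so that the cited correspondences apply verbatim with no stray sign or index discrepancy, and one must confirm that $\mathbf{P}(r)$ and $\mathbf{L}(r)$, evaluated inside the full algebra, genuinely reduce to the component Yang--Baxter expressions on $A$. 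The latter relies on $A$ being a subalgebra of both the permutative and the Leibniz parts and on the mixed terms being governed precisely by the actions $l_{\circ},r_{\circ},l_{[\cdot,\cdot]},r_{[\cdot,\cdot]}$, which is what makes them match the $\mathcal{O}$-operator relations \eqref{ofP} and \eqref{ofL}.
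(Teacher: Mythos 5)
Your proposal is correct and follows essentially the same route as the paper, whose proof is simply the observation that the statement follows from \cite[Theorem 3.36]{BYZ} and \cite[Theorem 5.2]{TS} applied to the permutative and Leibniz components of the semidirect product, exactly as you decompose $\mathbf{P}(r)=0$ and $\mathbf{L}(r)=0$. Your write-up in fact supplies the bookkeeping (symmetry of $r=T+\tau(T)$, the identification of the component semidirect-product structures, and the reduction via Remark~\ref{YBE:sm}) that the paper leaves implicit.
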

\begin{proof}
It follows from \cite[Theorem 3.36]{BYZ} and \cite[Theorem 5.2]{TS}.
\end{proof}

Let $A$ be a vector space and $r   \in A \otimes A$ be nondegenerate. Then we can define a nondegenerate bilinear form $\mathcal{B}$ on $A$  by
  \begin{align}
 \mathcal{B}(x,y):=\left\langle \widetilde{r}^{-1}(x) ,y \right\rangle,\quad \forall x,y \in A. \label{BF:inducebyr}
\end{align}
and $\mathcal{B}$ is called the {\bf  induced bilinear form}  by $r$.

\begin{pro}\label{wssqof}
 Let  $(A,\circ,[\cdot,\cdot])$   be a dual pre-Poisson algebra  and  $r   \in A \otimes A$ be symmetric and nondegenerate. Let $\mathcal{B}$ be the induced bilinear form by $r$.  Then   $r$  is a solution of the PLYBE if and only if  $\mathcal{B}$ satisfies the “closed” conditions
\begin{align}
\mathcal{B}(x \circ y,z) &= \mathcal{B}(y,x \circ z) + \mathcal{B}(x,y \blacksquare z),\label{closed:1}\\
\mathcal{B}([x , y],z) &= -\mathcal{B}(y,[x , z]) + \mathcal{B}(x,y \square z),\quad \forall x,y,z\in A, \label{closed:2}
\end{align}
where  $\blacksquare$ and $\square$ are defined by Eq.~\eqref{bss}.
\end{pro}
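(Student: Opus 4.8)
The plan is to deduce the statement directly from Proposition \ref{wssqof:0}, which already shows that a symmetric $r$ solves the PLYBE if and only if $\widetilde{r}$ satisfies the $\mathcal{O}$-operator identities \eqref{szxsPYBE}--\eqref{szxsLYBE}. The new hypothesis here is that $r$ is \emph{nondegenerate}, so $\widetilde{r}\colon A^*\to A$ is a linear isomorphism; hence every $x\in A$ is uniquely $x=\widetilde{r}(a^*)$ with $a^*=\widetilde{r}^{-1}(x)$, and by definition $\mathcal{B}(x,y)=\langle \widetilde{r}^{-1}(x),y\rangle=\langle a^*,y\rangle$. My strategy is to take the identities \eqref{szxsPYBE}--\eqref{szxsLYBE}, apply $\widetilde{r}^{-1}$ to both sides, pair against an arbitrary $z\in A$, and recognize each resulting term as a value of $\mathcal{B}$.

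Carrying this out for the permutative identity: setting $x=\widetilde{r}(a^*)$ and $y=\widetilde{r}(b^*)$, Eq.~\eqref{szxsPYBE} reads $\widetilde{r}^{-1}(x\circ y)=-L_{\circ}^*(x)b^*-L_{\blacksquare}^*(y)a^*$. Pairing with $z$ and invoking the dual-map convention \eqref{dualmap} turns $\langle -L_{\circ}^*(x)b^*,z\rangle$ into $\langle b^*,x\circ z\rangle=\mathcal{B}(y,x\circ z)$ and $\langle -L_{\blacksquare}^*(y)a^*,z\rangle$ into $\langle a^*,y\blacksquare z\rangle=\mathcal{B}(x,y\blacksquare z)$, while the left-hand side is $\langle\widetilde{r}^{-1}(x\circ y),z\rangle=\mathcal{B}(x\circ y,z)$; this is precisely Eq.~\eqref{closed:1}. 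The Leibniz identity \eqref{szxsLYBE} is handled verbatim: it gives $\widetilde{r}^{-1}([x,y])=L_{[\cdot,\cdot]}^*(x)b^*-L_{\square}^*(y)a^*$, and pairing with $z$ via \eqref{dualmap} produces $-\mathcal{B}(y,[x,z])$ and $\mathcal{B}(x,y\square z)$, i.e.\ Eq.~\eqref{closed:2}.

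Both implications then follow simultaneously. Since $\widetilde{r}$ is invertible, $x$ and $y$ range over all of $A$ as $a^*,b^*$ range over $A^*$, and since the pairing $\langle\cdot,\cdot\rangle$ is nondegenerate, asserting the equality of the two sides for every $z\in A$ is equivalent to asserting the operator identity itself. Thus \eqref{szxsPYBE}--\eqref{szxsLYBE} hold if and only if \eqref{closed:1}--\eqref{closed:2} do, and combining this with Proposition \ref{wssqof:0} finishes the proof. (The symmetry of $r$ also makes $\mathcal{B}$ a symmetric form, though only nondegeneracy is actually needed in the above translation.)

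I expect the one delicate point to be the sign bookkeeping among the four dual maps $L_{\circ}^*,L_{\blacksquare}^*,L_{[\cdot,\cdot]}^*,L_{\square}^*$: one must track the minus sign built into \eqref{dualmap} and match each surviving term to the correct slot of $\mathcal{B}$, keeping in mind that the defining formula $\mathcal{B}(u,v)=\langle\widetilde{r}^{-1}(u),v\rangle$ feeds only its first argument through $\widetilde{r}^{-1}$. No structural difficulty arises, as the substantive work---rewriting the PLYBE as the $\mathcal{O}$-operator equations---has already been done in Proposition \ref{wssqof:0}.
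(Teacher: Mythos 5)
Your proposal is correct and takes essentially the same route as the paper's own proof: both reduce the statement via Proposition \ref{wssqof:0} to the $\mathcal{O}$-operator identities \eqref{szxsPYBE}--\eqref{szxsLYBE} and then translate each term through $\mathcal{B}(u,v)=\langle\widetilde{r}^{-1}(u),v\rangle$ and the dual-map convention \eqref{dualmap}, with invertibility of $\widetilde{r}$ and nondegeneracy of the pairing giving the two-way equivalence. Your sign bookkeeping (in particular the surviving minus in the $L_{[\cdot,\cdot]}^*$ term producing $-\mathcal{B}(y,[x,z])$) matches the paper's computation exactly.
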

\begin{proof}
Since $r   \in A \otimes A$ is symmetric, then  $\mathcal{B}$ is symmetric.  Since $\widetilde{r}: A^* \to A$  is invertible, for all $x, y,z \in A$, there are $a^*,b^*,c^* \in A^*$ such that  $\widetilde{r}(a^*) = x$, $\widetilde{r}(b^*) = y$. Then  we have
\begin{align*}
 \mathcal{B}(x \circ y,z) &=   \mathcal{B}(\widetilde{r}(a^*) \circ \widetilde{r}(b^*),z),\\
\mathcal{B}(y,x \circ z) + \mathcal{B}(x,y \blacksquare z) &= \left\langle    b^{*},x\circ z \right\rangle  + \left\langle   a^{*}, y   \blacksquare z  \right\rangle  =\left\langle  -L_{\circ}^{*}\left(\widetilde{r}\left(a^{*}\right)\right) b^{*}-L_{\blacksquare}^{*}\left(\widetilde{r}\left(b^{*}\right)\right) a^{*} ,z \right\rangle \\
&= \mathcal{B}(\widetilde{r}\left(-L_{\circ}^{*}\left(\widetilde{r}\left(a^{*}\right)\right) b^{*}-L_{\blacksquare}^{*}\left(\widetilde{r}\left(b^{*}\right)\right) a^{*}\right),z).
\end{align*}
Thus Eq.~\eqref{szxsPYBE} if and only if  $\mathcal{B}$ satisfies Eq.~\eqref{closed:1}. A similar argument shows that Eq.~\eqref{szxsLYBE} if and only if  $\mathcal{B}$ satisfies Eq.~\eqref{closed:2}. By  Proposition \ref{wssqof:0},  $r$  is a solution of the PLYBE if and only if  $\mathcal{B}$ satisfies Eq.~\eqref{closed:1}-\eqref{closed:2}.
\end{proof}

\begin{rmk}
A nondegenerate symmetric bilinear form $\mathcal{B}$ satisfying Eq.~\eqref{closed:2} on a Leibniz algebra $(A,[\cdot,\cdot])$ is called a {\bf symplectic structure} on $(A,[\cdot,\cdot])$, and $(A,[\cdot,\cdot],\mathcal{B})$ is called a {\bf symplectic Leibniz algebra} \cite{TXS}, whose the underlying algebraic structure is called {\bf Leibniz-dendriform algebra} (Definition \ref{predppadefi}). It is closely related to the phase spaces of Leibniz algebras \cite{TXS}.
\end{rmk}

Building on Semenov-Tian-Shansky’s work \cite{STS}, we generalize the corresponding result to the framework of dual pre-Poisson algebras.

\begin{pro} \label{rbfna1}
Let $(A, \circ,[\cdot,\cdot],\mathcal{B})$ be a quadratic dual pre-Poisson algebra  and  $\phi: A \rightarrow A^{*}$  be the invertible  linear map given by Eq. \eqref{mapinbyB}. Let $r \in A \otimes A$ be symmetric.  Then $r$ is a solution of the PLYBE if and only if $P_{r}:= \widetilde{r}\circ \phi$
is a Rota-Baxter operator on dual pre-Poisson algebra  $(A, \circ,[\cdot,\cdot])$.
\end{pro}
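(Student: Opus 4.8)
The plan is to reduce the statement to the characterization of symmetric solutions of the PLYBE already established in Proposition \ref{wssqof:0}, and then to transport that characterization across the isomorphism $\phi$. Recall that Proposition \ref{wssqof:0} asserts that a symmetric $r$ solves the PLYBE if and only if $\widetilde{r}\colon A^{*}\to A$ is an $\mathcal{O}$-operator associated to the coregular representation $(A^{*};-L_{\circ}^{*},-L_{\blacksquare}^{*},L_{[\cdot,\cdot]}^{*},-L_{\square}^{*})$, that is, if and only if $\widetilde{r}$ satisfies Eqs.~\eqref{szxsPYBE} and \eqref{szxsLYBE}. Since $\mathcal{B}$ is nondegenerate, $\phi$ is invertible, so the composite $P_{r}=\widetilde{r}\circ\phi$ is well defined for any symmetric $r$ (note that no nondegeneracy of $r$ is required, in contrast with Proposition \ref{wssqof}), and every $a^{*}\in A^{*}$ equals $\phi(x)$ for a unique $x\in A$; this bijectivity is exactly what will render the transport a genuine biconditional.

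The key tool is that $\phi$ realizes the equivalence of the regular and coregular representations proved in Proposition \ref{dualdj:x}. From the defining relation $\langle\phi(x),y\rangle=\mathcal{B}(x,y)$ together with the invariance of $\mathcal{B}$ (Eqs.~\eqref{perminv} and \eqref{Leibinv}, and their reformulations in Lemma \ref{invxzyil}), I would record the four intertwining identities
\begin{align*}
\phi(x\circ y)&=-L_{\circ}^{*}(x)\phi(y)=-L_{\blacksquare}^{*}(y)\phi(x),\\
\phi([x,y])&=L_{[\cdot,\cdot]}^{*}(x)\phi(y)=-L_{\square}^{*}(y)\phi(x),\qquad\forall x,y\in A,
\end{align*}
which are precisely the computations already carried out in the proof of Proposition \ref{dualdj:x}. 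Each is verified by pairing both sides with an arbitrary $z\in A$ and invoking the appropriate invariance identity; for instance $\langle-L_{\circ}^{*}(x)\phi(y),z\rangle=\langle\phi(y),x\circ z\rangle=\mathcal{B}(y,x\circ z)=\mathcal{B}(x\circ y,z)=\langle\phi(x\circ y),z\rangle$ gives the first equality.

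With these identities in hand, the substitution $a^{*}=\phi(x)$, $b^{*}=\phi(y)$ (so that $\widetilde{r}(a^{*})=P_{r}(x)$, $\widetilde{r}(b^{*})=P_{r}(y)$) rewrites the right-hand side of \eqref{szxsPYBE} as
\begin{align*}
\widetilde{r}\bigl(-L_{\circ}^{*}(P_{r}(x))\phi(y)-L_{\blacksquare}^{*}(P_{r}(y))\phi(x)\bigr)
&=\widetilde{r}\bigl(\phi(P_{r}(x)\circ y+x\circ P_{r}(y))\bigr)\\
&=P_{r}\bigl(P_{r}(x)\circ y+x\circ P_{r}(y)\bigr),
\end{align*}
while its left-hand side is $P_{r}(x)\circ P_{r}(y)$; hence \eqref{szxsPYBE} for all $a^{*},b^{*}$ is equivalent to the Rota-Baxter identity \eqref{RBOdppa1} for all $x,y$. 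An identical manipulation using the second pair of intertwining identities converts \eqref{szxsLYBE} into the bracket Rota-Baxter identity \eqref{RBOdppa2}. Combining these two equivalences with Proposition \ref{wssqof:0}, and recalling that $P_{r}$ is a Rota-Baxter operator on $(A,\circ,[\cdot,\cdot])$ precisely when it satisfies both \eqref{RBOdppa1} and \eqref{RBOdppa2} (Example \ref{RBOdppa}), yields the claim in both directions via the bijectivity of $\phi$.

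The argument is essentially a transport of structure, so no genuine obstacle arises; the only point requiring care is the sign bookkeeping when passing between the left multiplications $L_{\circ},L_{[\cdot,\cdot]}$, the symmetrized operations $\blacksquare,\square$ of Eq.~\eqref{bss}, and their duals. In particular I would check that the coregular right actions appearing in Proposition \ref{wssqof:0}, written there as $-L_{\blacksquare}^{*}$ and $-L_{\square}^{*}$, coincide with the forms $-L_{\circ}^{*}+R_{\circ}^{*}$ and $-L_{[\cdot,\cdot]}^{*}-R_{[\cdot,\cdot]}^{*}$ used elsewhere; since $L_{\blacksquare}=L_{\circ}-R_{\circ}$ and $L_{\square}=L_{[\cdot,\cdot]}+R_{[\cdot,\cdot]}$, the two presentations agree, so the identities above are exactly the ones needed.
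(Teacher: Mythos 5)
Your proposal is correct and follows essentially the same route as the paper: both transport Eqs.~\eqref{szxsPYBE}--\eqref{szxsLYBE} of Proposition \ref{wssqof:0} across $\phi$ using the intertwining identities from Proposition \ref{dualdj:x} (equivalently, the invariance of $\mathcal{B}$), together with the observation that $-L_{\blacksquare}^{*}=-L_{\circ}^{*}+R_{\circ}^{*}$ and $-L_{\square}^{*}=-L_{[\cdot,\cdot]}^{*}-R_{[\cdot,\cdot]}^{*}$. The only difference is cosmetic: you carry out the Leibniz-bracket half explicitly, whereas the paper computes the permutative half and cites \cite[Theorem 4.17]{TS} for the bracket part.
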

\begin{proof}
For all  $x,y \in A$, there exist $a^*, b^* \in A^*$ such that $x=\phi^{-1}(a^{*})$, $y=\phi^{-1}(b^{*})$. By  Proposition \ref{dualdj:x}, we have
\begin{align*}
&P_r(x) \circ   P_r (y) - P_r (P_r  (x) \circ   y)-P_r (x \circ  P_{r}(y))\\
&= \widetilde{r}(a^{*}) \circ \widetilde{r}(b^{*})-\widetilde{r}\circ \phi(L_{\circ}(\widetilde{r}(a^{*}))\phi^{-1}(b^{*}))-\widetilde{r}\circ \phi(R_{\circ}(\widetilde{r}(b^{*}))\phi^{-1}(a^{*}))\\
&= \widetilde{r}(a^{*}) \circ \widetilde{r}(b^{*}) +\widetilde{r}( L_{\circ}^{*}(\widetilde{r}(a^{*}))b^{*}) +\widetilde{r}(L_{\blacksquare}^*(\widetilde{r}(b^{*}))a^{*}),
\end{align*}
where  $\blacksquare$ is defined by Eq.~\eqref{bss}. Combining  \cite[Theorem 4.17]{TS} and Proposition \ref{wssqof:0},   $r$ is a solution of the PLYBE if and only if $P_{r}$
is a Rota-Baxter operator on  dual pre-Poisson algebra $(A, \circ,[\cdot,\cdot])$.
\end{proof}

Next we introduce the notion of a pre-dual pre-Poisson algebra as the underlying algebraic structure of a Rota–Baxter operator on a dual pre-Poisson algebra.
  
 \begin{defi}\label{predppadefi}
A  \textbf{pre-dual pre-Poisson algebra} is a quintuple $(A, \rhd,\lhd,\succ,\prec)$  such that  $A$  is a vector space and $\rhd,\lhd,\succ,\prec: A \otimes A \rightarrow A$  are  bilinear operations satisfying the following conditions:
\begin{enumerate}
\item \cite{BYZ} $(A,\rhd,\lhd)$ is a {\bf pre-permutative algebra}:
\begin{align*}
x \lhd (y \lhd z + y \rhd z) &= (x \lhd  y )\lhd z =( y \rhd x)\lhd  z =  y \rhd (x \lhd  z),\\
x \rhd ( y \rhd z) & =  (x \lhd  y  + x \rhd  y)\rhd  z =(y \lhd  x  + y \rhd  x)\rhd z,\quad \forall x,y,z \in A.
\end{align*}
\item \cite{TS} $(A,\succ,\prec)$ is a {\bf Leibniz-dendriform algebra}:
\begin{align*}
(x \prec y+x \succ y) \succ z  & =x \succ(y \succ z)- y \succ(x \succ z),\\
(x \succ y) \prec z & =- ( y \prec x) \prec z, \\
x \prec(y \prec z+y \succ z) & =(x \prec y) \prec z+y \succ(x \prec z), \quad \forall x,y,z \in A.
\end{align*}
\item The following  equations hold:
\begin{align}
x \prec ( y \rhd z + y  \lhd z) &= (x \prec y)\lhd z + y \rhd (x \prec z), \label{pdpp1}\\
x \succ (y \lhd z) &= (x \succ y) \lhd z + y \lhd (x \succ z + x \prec z), \label{pdpp2}\\
x \succ (y \rhd z) &= (x \succ y + x \prec y) \rhd z  + y\rhd (x \succ z), \label{pdpp3} \\
(x \lhd y) \prec z &= x \lhd  (y \succ z + y \prec z)    + y \rhd (x \prec z) , \label{pdpp4} \\
(x \rhd y + x \lhd y) \succ z &= x \rhd  (y \succ z  )    + y \rhd  (x \succ z  )  , \label{pdpp5} \\
(x \rhd y - y \lhd x) \prec z &= 0 , \label{pdpp6} \\
(x \succ y + y \prec x) \lhd z &= 0 , \label{pdpp7} \\
(x \succ y + x \prec y + y \succ x + y \prec x) \rhd z &= 0,\quad \forall x,y,z \in A.\label{pdpp8}  
\end{align}
\end{enumerate}
\end{defi}

\begin{pro}\label{saforpdpp}
    Let $(A, \rhd,\lhd,\succ,\prec)$ be a pre-dual pre-Poisson algebra.
    \begin{enumerate}
\item\label{pre:item1}  Then the bilinear operations $\circ$ and $[\cdot,\cdot]$ respectively given by 
\begin{align}
x \circ y := x \rhd y + x \lhd y,\quad [x, y] := x \succ y + x \prec y, \quad \forall x,y \in A, \label{pdppalj}
\end{align}
define a dual pre-Poisson algebra $(A, \circ,[\cdot,\cdot])$,  called the {\bf sub-adjacent dual pre-Poisson algebra} of  $(A, \rhd,\lhd,\succ,\prec)$ and $(A, \rhd,\lhd,\succ,\prec)$ is called the {\bf compatible pre-dual pre-Poisson algebra structure} on $(A, \circ,[\cdot,\cdot])$. Moreover, $(A; L_{\rhd}, R_{\lhd},$ $L_{\succ}, R_{\prec})$ is a representation of $(A, \circ,[\cdot,\cdot])$. 
\item\label{pre:item2} Suppose $x \lhd y = y \rhd x$ and  $x \prec y = -y \succ x$ for all $x,y \in A$. Then the bilinear operations $\diamond$ and $\odot$ respectively given by 
\begin{align*}
x \diamond y &:= x \succ y,\quad x \odot y := x \rhd  y,  \quad \forall x,y \in A,  
\end{align*}
define a  pre-Poisson algebra $(A, \diamond,\odot)$,  called the {\bf  associated  pre-Poisson algebra} of  $(A, \rhd,\lhd,\succ,\prec)$. Moreover, there is a Poisson algebra structure $(A, \bullet,\{\cdot,\cdot\})$ on $A$ with $\bullet,\{\cdot,\cdot\}$ given by 
\begin{align*}
x \bullet y :=  x \odot y + y \odot x, \quad  \{x,y\}:= x \diamond y - y \diamond x,  \quad \forall x,y \in A.
\end{align*}
    \end{enumerate}
\end{pro}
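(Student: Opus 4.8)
The plan is to handle the permutative half and the Leibniz half of $(A,\rhd,\lhd,\succ,\prec)$ separately and then glue them through the mixed axioms \eqref{pdpp1}--\eqref{pdpp8}. For \eqref{pre:item1}, I would first invoke the established sub-adjacent results: by the theory of pre-permutative algebras in \cite{BYZ}, the pre-permutative axioms make $(A,\circ)$ with $x\circ y=x\rhd y+x\lhd y$ a permutative algebra for which $(A;L_{\rhd},R_{\lhd})$ is a representation, so that Eqs.~\eqref{permrep1}--\eqref{permrep2} hold with $l_{\circ}=L_{\rhd}$, $r_{\circ}=R_{\lhd}$; dually, by the theory of Leibniz-dendriform algebras in \cite{TS}, the axioms on $(A,\succ,\prec)$ make $(A,[\cdot,\cdot])$ with $[x,y]=x\succ y+x\prec y$ a Leibniz algebra for which $(A;L_{\succ},R_{\prec})$ is a representation, giving Eqs.~\eqref{Leibrep1}--\eqref{Leibrep2} with $l_{[\cdot,\cdot]}=L_{\succ}$, $r_{[\cdot,\cdot]}=R_{\prec}$. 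This reduces the claim to verifying the three cross-compatibilities \eqref{xdpp1}--\eqref{xdpp3} and the eight mixed representation conditions \eqref{dpp1}--\eqref{dpp8}.

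For the cross-compatibilities I would expand every product via $x\circ y=x\rhd y+x\lhd y$ and $[x,y]=x\succ y+x\prec y$, and then recognize the outcome as a sum of pre-dual pre-Poisson axioms: Eq.~\eqref{xdpp1} drops out of adding Eqs.~\eqref{pdpp1}, \eqref{pdpp2} and \eqref{pdpp3}; Eq.~\eqref{xdpp2} from combining Eqs.~\eqref{pdpp4}, \eqref{pdpp5} and \eqref{pdpp6}, where \eqref{pdpp6} is used to replace $(x\rhd y)\prec z$ by $(y\lhd x)\prec z$; and Eq.~\eqref{xdpp3} from Eq.~\eqref{pdpp8} together with Eq.~\eqref{pdpp7} applied to both orderings of the pair $x,y$. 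The decisive observation, which I would state explicitly, is that under the identification $l_{\circ}=L_{\rhd}$, $r_{\circ}=R_{\lhd}$, $l_{[\cdot,\cdot]}=L_{\succ}$, $r_{[\cdot,\cdot]}=R_{\prec}$, each representation condition \eqref{dpp1}--\eqref{dpp8} of Definition~\ref{defrep}, once evaluated on an arbitrary $z\in A$, becomes verbatim the correspondingly numbered identity \eqref{pdpp1}--\eqref{pdpp8}, after a harmless relabeling of the free variables; for example \eqref{dpp1} on $z$ reads $z\prec(x\rhd y+x\lhd y)=(z\prec x)\lhd y+x\rhd(z\prec y)$, which is exactly \eqref{pdpp1}. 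Hence $(A,\circ,[\cdot,\cdot])$ is a dual pre-Poisson algebra and $(A;L_{\rhd},R_{\lhd},L_{\succ},R_{\prec})$ is a representation of it, proving \eqref{pre:item1}. I expect the only real effort to be clerical: keeping the variable relabelings coherent across the eight conditions \eqref{dpp1}--\eqref{dpp8} and checking that the triple sums for \eqref{xdpp1}--\eqref{xdpp3} close up.

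For \eqref{pre:item2}, the plan is to show that the two extra hypotheses collapse each half onto a classical structure and then read off the pre-Poisson compatibilities. With $x\lhd y=y\rhd x$, the second pre-permutative axiom becomes the Zinbiel identity \eqref{Zinbiel} for $\odot=\rhd$, and with $x\prec y=-y\succ x$, the first Leibniz-dendriform axiom becomes the pre-Lie identity \eqref{pre} for $\diamond=\succ$. The compatibility \eqref{xpp1} is then precisely Eq.~\eqref{pdpp3} after substituting $x\prec y=-y\succ x$, and \eqref{xpp2} is precisely Eq.~\eqref{pdpp5} after substituting $x\lhd y=y\rhd x$, so $(A,\diamond,\odot)$ is a pre-Poisson algebra. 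Finally, the triple $(A,\bullet,\{\cdot,\cdot\})$ with $x\bullet y=x\odot y+y\odot x$ and $\{x,y\}=x\diamond y-y\diamond x$ is the sub-adjacent Poisson algebra of this pre-Poisson algebra in the sense of \cite{MA}, which completes the proof. Since no step here involves a genuine obstruction, the write-up amounts to organizing these substitutions cleanly rather than overcoming a conceptual difficulty.
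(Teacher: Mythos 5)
Your proposal is correct and takes essentially the same route as the paper: the same groupings of axioms (\eqref{pdpp1}--\eqref{pdpp3} for \eqref{xdpp1}; \eqref{pdpp4}, \eqref{pdpp5} plus the swap via \eqref{pdpp6} for \eqref{xdpp2}; \eqref{pdpp8} and \eqref{pdpp7} in both orderings for \eqref{xdpp3}) appear verbatim in the paper's computation. Your observation that the mixed representation conditions \eqref{dpp1}--\eqref{dpp8}, evaluated with $l_{\circ}=L_{\rhd}$, $r_{\circ}=R_{\lhd}$, $l_{[\cdot,\cdot]}=L_{\succ}$, $r_{[\cdot,\cdot]}=R_{\prec}$, coincide one-for-one with \eqref{pdpp1}--\eqref{pdpp8} (and your identification of the item-(2) hypotheses with \eqref{Zinbiel}, \eqref{pre}, \eqref{xpp1} via \eqref{pdpp3}, and \eqref{xpp2} via \eqref{pdpp5}) is accurate and simply fills in the parts the paper labels ``straightforward.''
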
 
\begin{proof}\eqref{pre:item1}. It is direct to check that $(A, \circ)$ is a  permutative algebra and  $(A,[\cdot,\cdot])$ is a  Leibniz algebra. Let $x,y,z \in A$, we have
\begin{align*}
&[x, y \circ z] -[x, y ]\circ z - y \circ[x,  z] \\
&= x \succ( y \rhd z + y \lhd z) + x \prec ( y \rhd z + y \lhd z) - (x \succ y + x \prec y) \rhd z - (x \succ y + x \prec y) \lhd z\\
&\quad -  y \rhd ( x \succ z+ x \prec z) - y \lhd ( x \succ z+ x \prec z) \\
&=  x \prec ( y \rhd z + y \lhd z)  - ( x \prec y) \lhd z -  y \rhd ( x \prec z) = 0. \\
&[x \circ y,   z] - x \circ [ y,   z] - y \circ [ x,   z]\\
&=(x \rhd y + x \lhd y)\succ z + (x \rhd y + x \lhd y)\prec z - x \rhd (y \succ z + y \prec z) - x \lhd (y \succ z + y \prec z)\\
&\quad - y \rhd (x \succ z + x \prec z) - y \lhd (x \succ z + x \prec z) \\
&=  (x \rhd y )\prec z - x \rhd (  y \prec z) - y \lhd (x \succ z + x \prec z) \\
&=  (y \lhd x )\prec z - x \rhd (  y \prec z) - y \lhd (x \succ z + x \prec z) =0. \\
&[x, y ]\circ z + [y, x]\circ z\\
&= (x \prec y + x \succ y) \rhd z + (x \prec y + x \succ y) \lhd z + (y \prec x + y \succ x) \rhd z + (y \prec x + y \succ x) \lhd z \\
&=   (x \prec y + x \succ y) \lhd z   + (y \prec x + y \succ x) \lhd z  =0.
\end{align*}
Thus Eqs.~\eqref{xdpp1}-\eqref{xdpp3} hold and $(A, \circ,[\cdot,\cdot])$ is a dual pre-Poisson algebra. Moreover, it is straightforward to show that $(A; L_{\rhd}, R_{\lhd}, L_{\succ}, R_{\prec})$ is a representation of $(A, \circ,[\cdot,\cdot])$. 

\eqref{pre:item2}. It follows from a straightforward verification.
\end{proof}

\begin{pro} \label{OonVector}
Let $(A, \circ,[\cdot,\cdot])$  be a  dual pre-Poisson algebra.
\begin{enumerate}
\item\label{item1} Let  $T: V \rightarrow A$  be an  $\mathcal{O}$-operator of  $(A, \circ,[\cdot,\cdot])$  associated to the representation  $(V;l_{\circ},r_{\circ},l_{[\cdot,\cdot]},r_{[\cdot,\cdot]})$. Then there exists a pre-dual pre-Poisson algebra structure $(V, \rhd_{V},$ $\lhd_{V},\succ_{V},\prec_{V})$ on  $V$, where $\rhd_{V}$, $\lhd_{V}$, $\succ_{V}$ and $\prec_{V}$ are respectively defined by
\begin{align}
 &u \rhd_{V} v:=l_{\circ}(T(u)) v,  && u \lhd_{V} v:=r_{\circ}(T(v)) u, \\
 &u \succ_{V} v:=l_{[\cdot,\cdot]}(T(u)) v, &&  u \prec_{V} v:=r_{[\cdot,\cdot]}(T(v)) u, \quad  \forall u, v \in V.
\end{align}
Moreover, $T$ is a homomorphism of dual pre-Poisson algebras from the sub-adjacent dual pre-Poisson algebra of  $(V, \rhd_{V},\lhd_{V},\succ_{V},$ $\prec_{V})$ to $(A, \circ,[\cdot,\cdot])$.
\item\label{item2} Let $P: A \to A$ be a Rota-Baxter operator on $(A, \circ,[\cdot,\cdot])$. Define  bilinear operations $\rhd$, $\lhd$, $\succ$ and $\prec$  on  $A$  by
\begin{align*}
&x \rhd y := P(x) \circ  y, && x \lhd y := x \circ  P(y), \\
&x \succ y := [P(x),y], &&  x \prec y := [x,P(y)], \quad  \forall x, y \in A.
\end{align*}
Then  $(A, \rhd,\lhd,\succ,\prec)$  is a pre-dual pre-Poisson algebra. Moreover, $T$ is a homomorphism of dual pre-Poisson algebras from the sub-adjacent dual pre-Poisson algebra of  $(A, \rhd,\lhd,\succ,\prec)$  to $(A, \circ,[\cdot,\cdot])$.
\end{enumerate}
\end{pro}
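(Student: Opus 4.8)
The plan is to prove the first assertion (the $\mathcal{O}$-operator case) in full, and then obtain the second assertion (the Rota--Baxter case) as its specialization to the regular representation. For the first assertion, a pre-dual pre-Poisson algebra structure on $V$ requires three families of identities: the pre-permutative axioms for $(V,\rhd_V,\lhd_V)$, the Leibniz-dendriform axioms for $(V,\succ_V,\prec_V)$, and the eight compatibility conditions \eqref{pdpp1}--\eqref{pdpp8}. The first two families are already available from the component $\mathcal{O}$-operator theories: since $T$ is simultaneously an $\mathcal{O}$-operator on the permutative algebra $(A,\circ)$ associated to $(V;l_{\circ},r_{\circ})$ and on the Leibniz algebra $(A,[\cdot,\cdot])$ associated to $(V;l_{[\cdot,\cdot]},r_{[\cdot,\cdot]})$, the operations $\rhd_V,\lhd_V$ form a pre-permutative algebra by \cite{BYZ} and $\succ_V,\prec_V$ form a Leibniz-dendriform algebra by \cite{TS}. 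Thus the genuinely new content is the verification of \eqref{pdpp1}--\eqref{pdpp8}.

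The key point is that each of \eqref{pdpp1}--\eqref{pdpp8} reduces, after unfolding the definitions of $\rhd_V,\lhd_V,\succ_V,\prec_V$ and applying the defining relations \eqref{ofP}--\eqref{ofL} of an $\mathcal{O}$-operator, to the correspondingly numbered compatibility condition among \eqref{dpp1}--\eqref{dpp8} of Definition \ref{defrep}, evaluated at the elements $T(u),T(v)\in A$. For example, the left-hand side of \eqref{pdpp1} is $u\prec_V(v\rhd_V w+v\lhd_V w)=r_{[\cdot,\cdot]}\big(T(v\rhd_V w+v\lhd_V w)\big)u=r_{[\cdot,\cdot]}\big(T(v)\circ T(w)\big)u$ by \eqref{ofP}, while its right-hand side equals $r_{\circ}(T(w))r_{[\cdot,\cdot]}(T(v))u+l_{\circ}(T(v))r_{[\cdot,\cdot]}(T(w))u$; their equality is exactly \eqref{dpp1} read at $x=T(v)$, $y=T(w)$. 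Running through the remaining cases, each condition matches its numbered partner; the conditions \eqref{pdpp6}, \eqref{pdpp7} and \eqref{pdpp8} require first combining the relevant left and right operators (and, for \eqref{pdpp8}, applying \eqref{ofL} to a symmetric sum so that the $T$-image becomes $[T(u),T(v)]+[T(v),T(u)]$) before invoking \eqref{dpp6}, \eqref{dpp7} and \eqref{dpp8} respectively.

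The homomorphism claim then follows at once: by Proposition \ref{saforpdpp} the sub-adjacent operations are $u\circ_V v=u\rhd_V v+u\lhd_V v$ and $[u,v]_V=u\succ_V v+u\prec_V v$, whence $T(u\circ_V v)=T\big(l_{\circ}(T(u))v+r_{\circ}(T(v))u\big)=T(u)\circ T(v)$ by \eqref{ofP}, and likewise $T([u,v]_V)=[T(u),T(v)]$ by \eqref{ofL}. For the second assertion, I would apply the first with $V=A$, the regular representation $(A;L_{\circ},R_{\circ},L_{[\cdot,\cdot]},R_{[\cdot,\cdot]})$, and $T=P$, which is an $\mathcal{O}$-operator associated to the regular representation by Example \ref{RBOdppa}; substituting $l_{\circ}=L_{\circ}$, $r_{\circ}=R_{\circ}$, $l_{[\cdot,\cdot]}=L_{[\cdot,\cdot]}$ and $r_{[\cdot,\cdot]}=R_{[\cdot,\cdot]}$ turns the four operations of the first assertion into $x\rhd y=P(x)\circ y$, $x\lhd y=x\circ P(y)$, $x\succ y=[P(x),y]$ and $x\prec y=[x,P(y)]$, exactly as stated, and the homomorphism property is inherited. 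There is no conceptual obstacle here; the only real difficulty is bookkeeping---correctly pairing each condition \eqref{pdpp1}--\eqref{pdpp8} with its representation axiom among \eqref{dpp1}--\eqref{dpp8}, and in the combined cases applying the $\mathcal{O}$-operator relations to sums rather than to single terms.
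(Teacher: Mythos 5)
Your proposal is correct and follows essentially the same route as the paper's proof: the pre-permutative and Leibniz-dendriform parts are imported from \cite{BYZ} and \cite{TS}, the compatibility conditions \eqref{pdpp1}--\eqref{pdpp8} are verified by unfolding the operations and applying the $\mathcal{O}$-operator relations \eqref{ofP}--\eqref{ofL} so that each reduces to its numbered counterpart among \eqref{dpp1}--\eqref{dpp8} at $T$-images (the paper writes out \eqref{pdpp1}--\eqref{pdpp5} and treats \eqref{pdpp6}--\eqref{pdpp8} as straightforward, exactly the cases you flag as needing the combined-sum trick), and item (b) is obtained via Example \ref{RBOdppa} as a specialization of item (a) to the regular representation. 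Your pairing of the conditions, including the observation that \eqref{pdpp8} needs \eqref{ofL} applied to the symmetric sum before invoking \eqref{dpp8}, matches the paper's computations.
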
  
\begin{proof}
(\ref{item1}). By \cite[Proposition 3.38]{BYZ} and \cite[Proposition 5.10]{TS}, $(V, \rhd_{V},\lhd_{V})$ is a permutative algebra and $(V,  \succ_{V}, \prec_{V})$ is a Leibniz-dendriform algebra.  Moreover, let $u,v,w \in A$, w have
\begin{align*}
&u \prec_V ( v \rhd_V w + v  \lhd_V w) - (u \prec_V v)\lhd_V w - v \rhd_V (u \prec_V w)\\
&=   r_{[\cdot,\cdot]}(T( l_{\circ}(T(v)) w +  r_{\circ}(T(w)) v))u - r_{\circ}(T(w))r_{[\cdot,\cdot]}(T(v)) u -  l_{\circ}(T(v))r_{[\cdot,\cdot]}(T(w)) u\\
&=   r_{[\cdot,\cdot]}( T(v) \circ  T(w))u - r_{\circ}(T(w))r_{[\cdot,\cdot]}(T(v)) u -  l_{\circ}(T(v))r_{[\cdot,\cdot]}(T(w)) u =0.\\
&u \succ_V (v \lhd_V w) - (u \succ_V v) \lhd_V w - v \lhd_V (u \succ_V w + u \prec_V w)\\
&=l_{[\cdot,\cdot]}(T(u)) r_{\circ}(T(w)) v - r_{\circ}(T(w))l_{[\cdot,\cdot]}(T(u)) v - 
r_{\circ}(T(l_{[\cdot,\cdot]}(T(u)) w + r_{[\cdot,\cdot]}(T(w)) u))v\\
&=l_{[\cdot,\cdot]}(T(u)) r_{\circ}(T(w)) v - r_{\circ}(T(w))l_{[\cdot,\cdot]}(T(u)) v - 
r_{\circ}( [T(u),T(w)])v = 0.\\
&u \succ_V (v \rhd_V w) - (u \succ_V v + u \prec_V v) \rhd_V w  - v\rhd_V (u \succ_V w) \\
&=l_{[\cdot,\cdot]}(T(u))l_{\circ}(T(v)) w - l_{\circ}(T(l_{[\cdot,\cdot]}(T(u)) v + r_{[\cdot,\cdot]}(T(v)) u)) w - l_{\circ}(T(v))l_{[\cdot,\cdot]}(T(u))w\\
&=l_{[\cdot,\cdot]}(T(u))l_{\circ}(T(v)) w - l_{\circ}( [T(u) ,T(v)]) w - l_{\circ}(T(v))l_{[\cdot,\cdot]}(T(u))w = 0.\\
&(u \lhd_V v) \prec_V w - u \lhd_V  (v \succ_V w + v \prec_V w)   - v \rhd_V (u \prec_V w)  \\
&=   r_{[\cdot,\cdot]}(T(w))   r_{\circ}(T(v)) u -  r_{\circ}(T(l_{[\cdot,\cdot]}(T(v)) w + r_{[\cdot,\cdot]}(T(w)) v))u - l_{\circ}(T(v))r_{[\cdot,\cdot]}(T(w)) u\\
&=   r_{[\cdot,\cdot]}(T(w))   r_{\circ}(T(v)) u -  r_{\circ}([T(v),T(w)])u - l_{\circ}(T(v))r_{[\cdot,\cdot]}(T(w)) u = 0.\\
& (u \rhd_V v + u \lhd_V v) \succ_V w - u \rhd_V  (v \succ_V w )  - v \rhd_V  (u \succ_V w)\\
&= l_{[\cdot,\cdot]}(l_{\circ}(T(u)) v + r_{\circ}(T(v)) u)w - l_{\circ}(T(u)) l_{[\cdot,\cdot]}(T(v))w  - l_{\circ}(T(v)) l_{[\cdot,\cdot]}(T(u))w\\
&= l_{[\cdot,\cdot]}( T(u) \circ T(v) )w - l_{\circ}(T(u)) l_{[\cdot,\cdot]}(T(v))w  - l_{\circ}(T(v)) l_{[\cdot,\cdot]}(T(u))w = 0. 
\end{align*}
Thus Eqs.~\eqref{pdpp1}-\eqref{pdpp5} hold. It is straightforward to show that   Eqs.~\eqref{pdpp6}-\eqref{pdpp8} also hold. Thus  $(V, \rhd_{V}, \lhd_{V},\succ_{V},\prec_{V})$ is a  pre-dual pre-Poisson algebra.  Moreover, we have
\begin{align*}
T(u \circ_V v) &= T(u \rhd_{V} v + u \lhd_{V} v) = T(l_{\circ}(T(u)) v+ r_{\circ}(T(v)) u) =  T(u) \circ  T(v), \\
T([u , v]_V) &= T(u \succ_{V} v + u \prec_{V} v) = T(l_{[\cdot,\cdot]}(T(u)) v+ r_{[\cdot,\cdot]}(T(v)) u) =  [T(u),T(v)].
\end{align*}

(\ref{item2}). It follows from Example \ref{RBOdppa} and Item \eqref{item1}.
\end{proof}

\begin{ex} \label{ex:predppa}
 Continuing with Example \ref{dwdpp} (2), let $(A,\circ, [\cdot, \cdot])$ be the 2-dimensional dual pre-Poisson algebra  with a basis $\{e_1,e_2\}$ whose nonzero products are given by Eq.~\eqref{dwdpp:1}.  Let $P : A \to A$ be a linear map given by $
P(e_1) = \frac{1}{2}e_1$ and $P(e_2) = e_{2}$. 
Then  $P$ is a Rota-Baxter   operator on $(A,\circ, [\cdot, \cdot])$. Thus by  Proposition \ref{OonVector} \eqref{item2}, there is a 2-dimensional pre-dual pre-Poisson algebra $(A, \rhd,\lhd,\succ,\prec)$  whose nonzero products are explicitly given by 
\begin{align}
e_2 \rhd e_2 = e_2 \lhd e_2  =   e_2 \prec e_2  = e_2 \succ e_2  =e_1. \label{ex:predppa:cfb}
\end{align}

\end{ex}

We give a sufficient and necessary condition for the existence of a compatible pre-dual pre-Poisson algebra structure on a  dual pre-Poisson  algebra.

\begin{thm}\label{iogpdpp:0}
Let $(A, \circ,[\cdot,\cdot])$  be a  dual pre-Poisson algebra. There is a compatiable pre-dual pre-Poisson algebra structure on $(A, \circ,[\cdot,\cdot])$ if and only if there exists an invertible  $\mathcal{O}$-operator on  $(A, \circ,[\cdot,\cdot])$.
\end{thm}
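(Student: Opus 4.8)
The plan is to establish both implications by leaning on the two constructions already set up, namely Proposition~\ref{saforpdpp} and Proposition~\ref{OonVector}\eqref{item1}, and to use the invertibility of an $\mathcal{O}$-operator to transport a pre-structure from the representation space back onto $A$ itself. The key observation that makes the two directions nearly symmetric is the reformulation of the notion involved: a \emph{compatible} pre-dual pre-Poisson algebra structure on $(A,\circ,[\cdot,\cdot])$ is a pre-dual pre-Poisson algebra $(A,\rhd,\lhd,\succ,\prec)$ whose sub-adjacent dual pre-Poisson algebra, in the sense of Eq.~\eqref{pdppalj}, coincides with the given $(A,\circ,[\cdot,\cdot])$.

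For the forward implication, suppose $(A,\rhd,\lhd,\succ,\prec)$ is a compatible pre-dual pre-Poisson algebra structure on $(A,\circ,[\cdot,\cdot])$. By Proposition~\ref{saforpdpp}\eqref{pre:item1}, the quintuple $(A;L_{\rhd},R_{\lhd},L_{\succ},R_{\prec})$ is a representation of $(A,\circ,[\cdot,\cdot])$. I would then verify that the identity map $\id_A\colon A\to A$ is an invertible $\mathcal{O}$-operator associated to this representation: Eq.~\eqref{ofP} becomes $x\circ y = L_{\rhd}(x)y + R_{\lhd}(y)x = x\rhd y + x\lhd y$, which holds by compatibility via Eq.~\eqref{pdppalj}, and Eq.~\eqref{ofL} reduces likewise to $[x,y] = L_{\succ}(x)y + R_{\prec}(y)x = x\succ y + x\prec y$. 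Thus $\id_A$ is the required invertible $\mathcal{O}$-operator.

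For the converse, let $T\colon V\to A$ be an invertible $\mathcal{O}$-operator associated to a representation $(V;l_{\circ},r_{\circ},l_{[\cdot,\cdot]},r_{[\cdot,\cdot]})$. Proposition~\ref{OonVector}\eqref{item1} equips $V$ with a pre-dual pre-Poisson algebra structure $(V,\rhd_V,\lhd_V,\succ_V,\prec_V)$. Since $T$ is a linear isomorphism, I would transport this structure to $A$ by setting $x\rhd y := T\big(T^{-1}(x)\rhd_V T^{-1}(y)\big)$ and analogously for $\lhd,\succ,\prec$; because every defining identity of Definition~\ref{predppadefi} is multilinear, transport along the bijection $T$ preserves all of Eqs.~\eqref{pdpp1}--\eqref{pdpp8}, so $(A,\rhd,\lhd,\succ,\prec)$ is again a pre-dual pre-Poisson algebra. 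Compatibility then follows from the defining $\mathcal{O}$-operator equations: writing $u=T^{-1}(x)$ and $v=T^{-1}(y)$, one computes $x\rhd y + x\lhd y = T(u\rhd_V v + u\lhd_V v) = T(l_{\circ}(T(u))v + r_{\circ}(T(v))u) = T(u)\circ T(v) = x\circ y$ using Eq.~\eqref{ofP}, and similarly $x\succ y + x\prec y = [x,y]$ using Eq.~\eqref{ofL}. Hence the sub-adjacent dual pre-Poisson algebra of $(A,\rhd,\lhd,\succ,\prec)$ is exactly $(A,\circ,[\cdot,\cdot])$, producing the desired compatible structure.

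The argument is structurally light because the laborious verification that $(V,\rhd_V,\lhd_V,\succ_V,\prec_V)$ satisfies all the pre-dual pre-Poisson axioms was already carried out in Proposition~\ref{OonVector}\eqref{item1}. The only point demanding care, and what I expect to be the main (if minor) obstacle, is confirming that the transported operations reproduce $\circ$ and $[\cdot,\cdot]$ \emph{on the nose} rather than merely up to isomorphism; this is precisely where the $\mathcal{O}$-operator identities \eqref{ofP} and \eqref{ofL} enter, and one must also check that pushing the structure forward along $T$ does not disturb them.
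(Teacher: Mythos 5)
Your proposal is correct and follows essentially the same route as the paper: the identity map serves as the invertible $\mathcal{O}$-operator associated to $(A;L_{\rhd},R_{\lhd},L_{\succ},R_{\prec})$ in one direction, and in the other the structure from Proposition~\ref{OonVector}\eqref{item1} is transported along $T$, which is exactly what the paper's explicit formulas $x\rhd y=T\left(l_{\circ}(x)\left(T^{-1}(y)\right)\right)$, etc., encode. Your added remarks --- that multilinearity of the axioms makes the transport harmless and that Eqs.~\eqref{ofP}--\eqref{ofL} are what force the sub-adjacent operations to equal $\circ$ and $[\cdot,\cdot]$ on the nose --- merely make explicit what the paper leaves implicit.
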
 

\begin{proof} Suppose that $(A, \rhd,\lhd,\succ,\prec)$ is a pre-dual pre-Poisson algebra whose the sub-adjacent dual pre-Poisson algebra is $(A, \circ,[\cdot,\cdot])$. Then we have
\begin{align*}
 x \circ y  &=   x \rhd y + x \lhd  y  = \id(L_{\rhd}(\id(x))y+ R_{\lhd}(\id(y)) x), \\
 [x,y]  &=   x \succ y + x \prec  y  = \id(L_{\succ}(\id(x))y+ R_{\prec}(\id(y)) x), \quad \forall x,y \in A.
\end{align*}
Thus the identity map $\mathrm{id}:  A \rightarrow A$ is an invertible $\mathcal{O}$-operator on  $(A, \circ,[\cdot,\cdot])$   associated to the representation  $(A; L_{\rhd}, R_{\lhd}, L_{\succ}, R_{\prec})$.

 Conversely, if  $T: V \to A$  is an invertible  $\mathcal{O}$-operator on $(A, \circ,[\cdot,\cdot])$ associated to the representation  $(V;l_{\circ},r_{\circ},l_{[\cdot,\cdot]},r_{[\cdot,\cdot]})$, then by Proposition \ref{OonVector}, there is a compatiable pre-dual pre-Poisson algebra structure on   $(A, \circ,[\cdot,\cdot])$   given by
\begin{align}
 x \rhd y&:=T\left(l_{\circ}(x)\left(T^{-1}(y)\right)\right),&& x \lhd y:=T\left(r_{\circ}(y)\left(T^{-1}(x)\right)\right), \label{ReOondpp1}\\
x \succ y&:=T\left(l_{[\cdot,\cdot]}(x)\left(T^{-1}(y)\right)\right), && x \prec y:=T\left(r_{[\cdot,\cdot]}(y)\left(T^{-1}(x)\right)\right),\quad \forall x, y \in A. \label{ReOondpp2} 
\end{align}
This completes the proof.
\end{proof}


 \begin{pro}
Let $\mathcal{B}$  be the nondegenerate symmetric bilinear form on a dual pre-Poisson algebra  $(A, \circ,[\cdot,\cdot])$ satisfying the closed conditions  \eqref{closed:1}-\eqref{closed:2}. Then there exists a compatiable pre-dual pre-Poisson algebra structure $(A, \rhd,\lhd,\succ,\prec)$ on $(A, \circ,[\cdot,\cdot])$ with $\rhd,\lhd,\succ,\prec$ given by
\begin{align*}
\mathcal{B}(x \rhd y,z) &= \mathcal{B}(y,x \circ z),&&\mathcal{B}(x \lhd y,z)  =   \mathcal{B}(x,y  \blacksquare   z ),\\
\mathcal{B}(x \succ y,z)&= -\mathcal{B}(y,[x , z]),&&\mathcal{B}(x \prec y,z)  =   \mathcal{B}(x, y \square z ),\quad \forall x,y,z \in A, 
\end{align*}
where  $\blacksquare$ and $\square$ are defined by Eq.~\eqref{bss}.   Moreover,  the  representations  $(A;L_{\circ}, R_{\circ},
L_{[\cdot,\cdot]}, R_{[\cdot,\cdot]})$    and
$(A^*;-L_{\rhd}^{*},$ $-L_{\rhd}^{*}+R_{\lhd}^{*},  L_{\succ}^{*},-L_{\succ}^{*}-R_{\prec}^{*})$  of the
dual pre-Poisson algebra $(A,\circ,[\cdot,\cdot])$   are equivalent.
 \end{pro}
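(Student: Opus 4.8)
The plan is to realize the form $\mathcal{B}$ through the nondegenerate symmetric $r$-matrix it induces, and then read off the pre-structure as the one produced by the associated $\mathcal{O}$-operator. Concretely, nondegeneracy of $\mathcal{B}$ makes the map $\phi: A \to A^*$ of Eq.~\eqref{mapinbyB} invertible, so I set $T := \phi^{-1}$ and let $r \in A \otimes A$ be the unique element with $\widetilde{r} = T$. Since $\mathcal{B}$ is symmetric, $r$ is symmetric, and it is nondegenerate because $T$ is invertible. As $\mathcal{B}$ satisfies the closed conditions \eqref{closed:1}--\eqref{closed:2}, Proposition~\ref{wssqof} shows that $r$ is a solution of the PLYBE, and then Proposition~\ref{wssqof:0} shows that $T = \widetilde{r}$ is an \emph{invertible} $\mathcal{O}$-operator on $(A,\circ,[\cdot,\cdot])$ associated to the coregular representation $(A^*; -L_{\circ}^*, -L_{\blacksquare}^*, L_{[\cdot,\cdot]}^*, -L_{\square}^*)$.

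Having an invertible $\mathcal{O}$-operator, Theorem~\ref{iogpdpp:0} (together with Proposition~\ref{OonVector}) produces a compatible pre-dual pre-Poisson structure on $(A,\circ,[\cdot,\cdot])$ via the transport formulas \eqref{ReOondpp1}--\eqref{ReOondpp2}, e.g. $x \rhd y = T(-L_{\circ}^*(x)\, T^{-1}(y)) = \phi^{-1}(-L_{\circ}^*(x)\phi(y))$. The next step is to check that this transported structure coincides with the four operations defined by $\mathcal{B}$ in the statement. This is a one-line pairing computation for each operation: applying $\phi$ and pairing against $z$, e.g. $\mathcal{B}(x \rhd y, z) = \langle -L_{\circ}^*(x)\phi(y), z\rangle = \langle \phi(y), x \circ z\rangle = \mathcal{B}(y, x \circ z)$, using only the definition \eqref{dualmap} of the dual map and of $\phi$; since $\mathcal{B}$ is nondegenerate this characterizes $\rhd$ uniquely, and the three remaining identities for $\lhd, \succ, \prec$ follow verbatim, the signs matching because $-L_{\blacksquare}^* = -L_{\circ}^*+R_{\circ}^*$ and $-L_{\square}^* = -L_{[\cdot,\cdot]}^*-R_{[\cdot,\cdot]}^*$.

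For the equivalence of representations, I first observe that $(A; L_{\rhd}, R_{\lhd}, L_{\succ}, R_{\prec})$ is a representation of $(A,\circ,[\cdot,\cdot])$ by Proposition~\ref{saforpdpp}\,\eqref{pre:item1}, so its dual representation is exactly $(A^*; -L_{\rhd}^*, -L_{\rhd}^*+R_{\lhd}^*, L_{\succ}^*, -L_{\succ}^*-R_{\prec}^*)$, which is the target of the claimed equivalence. I then propose $\varphi := \phi$ as the intertwiner and verify the four defining relations of equivalence. Each reduces, after pairing against an arbitrary $z \in A$, to the $\mathcal{B}$-formulas combined with the symmetry of $\mathcal{B}$; for instance $\langle \phi(x \circ y), z\rangle = \mathcal{B}(x\circ y, z) = \mathcal{B}(y, x \rhd z) = \langle -L_{\rhd}^*(x)\phi(y), z\rangle$, while the mixed cases use $x \circ y - x \blacksquare y = y \circ x$ and $-[x,y] + x \square y = [y,x]$ to absorb the commutator/anticommutator terms.

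The computations themselves are routine; the only real care is bookkeeping, namely keeping straight the identification $\widetilde{r} = \phi^{-1}$, the sign conventions hidden in $\blacksquare$, $\square$ and in the dual-map definition \eqref{dualmap}, and the fact that here $\mathcal{B}$ is \emph{symmetric} and satisfies the closed rather than the invariance conditions. Consequently Proposition~\ref{dualdj:x} does not apply directly: the equivalence must instead be routed through the pre-structure representation $(A; L_{\rhd}, R_{\lhd}, L_{\succ}, R_{\prec})$ and its dual. I expect this conceptual point, recognizing that the target is the dual of the pre-structure's natural representation rather than the coregular representation of $(A,\circ,[\cdot,\cdot])$, to be the main (if modest) obstacle.
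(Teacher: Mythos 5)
Your proposal is correct and takes essentially the same route as the paper: both arguments reduce to showing that $\phi^{-1}$ (with $\phi$ from Eq.~\eqref{mapinbyB}) is an invertible $\mathcal{O}$-operator on $(A,\circ,[\cdot,\cdot])$ associated to the coregular representation $(A^*;-L_{\circ}^*,-L_{\blacksquare}^*,L_{[\cdot,\cdot]}^*,-L_{\square}^*)$ --- you by passing through the symmetric nondegenerate $r$ with $\widetilde{r}=\phi^{-1}$ and citing Propositions~\ref{wssqof} and~\ref{wssqof:0}, the paper by the equivalent direct computation from the closed conditions \eqref{closed:1}--\eqref{closed:2} --- and then both transport the pre-structure via Proposition~\ref{OonVector} (Eqs.~\eqref{ReOondpp1}--\eqref{ReOondpp2}) and identify the four operations by the same pairing computations. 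Your worked-out equivalence of representations, recognizing the target as the dual representation of $(A;L_{\rhd},R_{\lhd},L_{\succ},R_{\prec})$ from Proposition~\ref{saforpdpp} and taking $\phi$ itself as the intertwiner, correctly fills in what the paper compresses into ``The rest is straightforward.''
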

 \begin{proof}
 With the nondegeneracy of $\mathcal{B}$, there exists a linear isomorphism $\varphi: A \to A^*$  defined by Eq.~\eqref{mapinbyB}.  For all $u^*,v^*,w^* \in A^*$, there exist $x,y,z \in A$ such that $u^* = \varphi(x), v^* = \varphi(y), w^* = \varphi(z)$. Therefore
\begin{align*}
\langle \varphi(x \circ y), z \rangle  &=  \langle  -L_{\circ}^* (x)\varphi(y),z\rangle - \langle
L_{\blacksquare}^*(y)\varphi(x), z \rangle,\\
 \langle \varphi([x,y]), z
\rangle  &=  \langle L_{[\cdot,\cdot]}^*  (x)\varphi(y),z\rangle - \langle
L_{\square}^* (y)\varphi(x), z \rangle.
\end{align*}
Then we obtain  
\begin{align*}
 \varphi^{-1}(u^*) \circ \varphi^{-1}(v^*)  &= \varphi^{-1}\big(-L_{\circ}^* (\varphi^{-1}(u^*))v^* -L_{\blacksquare}^*(\varphi^{-1}(v^*))u^*\big),\\
 [\varphi^{-1}(u^*),\varphi^{-1}(v^*)] &= \varphi^{-1}\big( L_{[\cdot,\cdot]}^*  (\varphi^{-1}(u^*))v^*   -L_{\square}^* (\varphi^{-1}(v^*))u^*\big).
\end{align*}
Hence $\varphi^{-1}: A^* \to A$ is an invertible $\mathcal{O}$-operator   on $(A, \circ,[\cdot,\cdot])$ associated to the coregular representation $(A^*; -L_{\circ}^*,-L_{\blacksquare}^*,L_{[\cdot,\cdot]}^*,-L_{\square}^*)$. By Proposition \ref{OonVector},   there is
a compatiable pre-dual pre-Poisson algebra structure $(A, \rhd,\lhd,\succ,\prec)$  on 
$(A, \circ,[\cdot,\cdot])$ with $\rhd,\lhd,\succ,\prec$   defined by
Eqs.~\eqref{ReOondpp1}-\eqref{ReOondpp2} in taking $T := \varphi^{-1}, l_{\circ}:=L_{\circ}^{*}, r_{\circ}:= -L_{\blacksquare}^*, l_{[\cdot,\cdot]}:=
L_{[\cdot,\cdot]}^*, r_{[\cdot,\cdot]}:= -L_{\square}^*$. Thus we have
\begin{align*}
\mathcal{B}(x \rhd y, z) &=  \langle \varphi (x \rhd y), z\rangle
= -\langle   L_{\circ}^{*} (x)\varphi(y), z\rangle =
  \mathcal{B}(y, x \circ z ), \\
\mathcal{B}(x \lhd y, z) &=  \langle \varphi (x \lhd y), z\rangle
= -\langle   L_{\blacksquare}^* (y)\varphi (x), z\rangle =
\mathcal{B}(x, y \blacksquare z).\\
\mathcal{B}(x \succ y, z) &=  \langle \varphi (x \succ y), z\rangle
= \langle  L_{[\cdot,\cdot]}^*(x)\varphi(y), z\rangle =
 -\mathcal{B}(y, [x,z]), \\
\mathcal{B}(x \prec y, z) &=  \langle \varphi (x \prec y), z\rangle
= -\langle   L_{\square}^* (y)\varphi (x), z\rangle =
\mathcal{B}(y, x \square z).
\end{align*}
The rest is straightforward.
\end{proof}

\begin{cor} \label{pretoBialgebra}
    Let $(A, \rhd,\lhd,\succ,\prec)$ be a pre-dual pre-Poisson algebra and $(A, \circ,[\cdot,\cdot])$  be the sub-adjacent dual pre-Poisson algebra of $(A, \rhd,\lhd,\succ,\prec)$. Let  $\left\{e_{1}, \ldots, e_{n}\right\}$  be a basis of  $A$  and  $\left\{e_{1}^{*}, \ldots, e_{n}^{*}\right\}$  be the dual basis. Then
\begin{align}
r=\sum_{i=1}^{n}\left(e_{i} \otimes e_{i}^{*}+e_{i}^{*} \otimes e_{i}\right)
\end{align}
is a symmetric solution of the PLYBE in the dual pre-Poisson algebra  $\hat{A}$ defined by
\begin{align*}
\hat{A}:=A \ltimes_{-L_{\rhd}^{*},-L_{\rhd}^{*}+R_{\lhd}^{*},  L_{\succ}^{*},-L_{\succ}^{*}-R_{\prec}^{*}} A^{*}.
\end{align*}
\end{cor}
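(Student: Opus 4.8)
The plan is to exhibit $r$ as the element associated, via Theorem \ref{tcdppab:3}, to a very particular $\mathcal{O}$-operator, namely the identity map. First I would invoke Proposition \ref{saforpdpp} \eqref{pre:item1}: the sub-adjacent dual pre-Poisson algebra $(A,\circ,[\cdot,\cdot])$ of $(A,\rhd,\lhd,\succ,\prec)$ is indeed a dual pre-Poisson algebra, and $(A; L_{\rhd}, R_{\lhd}, L_{\succ}, R_{\prec})$ is a representation of it. This supplies exactly the data needed to run the bialgebra-from-$\mathcal{O}$-operator construction with $V = A$ and $(l_{\circ}, r_{\circ}, l_{[\cdot,\cdot]}, r_{[\cdot,\cdot]}) = (L_{\rhd}, R_{\lhd}, L_{\succ}, R_{\prec})$.

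The second step is to check that $\operatorname{id}: A \to A$ is an $\mathcal{O}$-operator on $(A,\circ,[\cdot,\cdot])$ associated to this representation. With $T = \operatorname{id}$, conditions \eqref{ofP}--\eqref{ofL} read $x \circ y = L_{\rhd}(x)y + R_{\lhd}(y)x = x \rhd y + x \lhd y$ and $[x,y] = L_{\succ}(x)y + R_{\prec}(y)x = x \succ y + x \prec y$, both of which are immediate from the definition \eqref{pdppalj} of the sub-adjacent operations. Since this is precisely the forward direction already established in the proof of Theorem \ref{iogpdpp:0}, I would simply cite that the identity map is an (invertible) $\mathcal{O}$-operator associated to $(A; L_{\rhd}, R_{\lhd}, L_{\succ}, R_{\prec})$, rather than reproving it.

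Finally, I would apply Theorem \ref{tcdppab:3} with $V = A$ and $T = \operatorname{id}$. Under the canonical isomorphism $\operatorname{Hom}(A,A) \cong A \otimes A^{*}$, the identity operator corresponds to $\sum_{i=1}^{n} e_{i} \otimes e_{i}^{*}$, so that $r = T + \tau(T) = \sum_{i=1}^{n} (e_{i} \otimes e_{i}^{*} + e_{i}^{*} \otimes e_{i})$ is exactly the claimed element, and it is manifestly symmetric. Theorem \ref{tcdppab:3} then yields directly that $r$ solves the PLYBE in $A \ltimes_{-L_{\rhd}^{*}, -L_{\rhd}^{*}+R_{\lhd}^{*}, L_{\succ}^{*}, -L_{\succ}^{*}-R_{\prec}^{*}} A^{*} = \hat{A}$, which is the assertion.

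The only point requiring genuine care—and the step I expect to be the main (though minor) obstacle—is the bookkeeping in the identification $\operatorname{Hom}(A,A) \cong A \otimes A^{*}$: one must confirm that $\operatorname{id}$ corresponds to $\sum_i e_i \otimes e_i^{*}$ and not to its transpose, and that substituting $l_{\circ} = L_{\rhd}$, $r_{\circ} = R_{\lhd}$, $l_{[\cdot,\cdot]} = L_{\succ}$, $r_{[\cdot,\cdot]} = R_{\prec}$ into the semidirect-product decoration $-l_{\circ}^{*}, -l_{\circ}^{*}+r_{\circ}^{*}, l_{[\cdot,\cdot]}^{*}, -l_{[\cdot,\cdot]}^{*}-r_{[\cdot,\cdot]}^{*}$ of Theorem \ref{tcdppab:3} reproduces verbatim the decoration defining $\hat{A}$. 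Once these identifications are matched, the conclusion is a one-line consequence of Theorem \ref{tcdppab:3}.
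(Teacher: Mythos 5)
Your proposal is correct and follows essentially the same route as the paper: the paper's proof also cites Theorem \ref{iogpdpp:0} to identify $\mathrm{id}: A \to A$ as an invertible $\mathcal{O}$-operator on $(A,\circ,[\cdot,\cdot])$ associated to the representation $(A; L_{\rhd}, R_{\lhd}, L_{\succ}, R_{\prec})$, and then concludes directly from Theorem \ref{tcdppab:3}. Your extra bookkeeping—verifying that $\mathrm{id}$ corresponds to $\sum_i e_i \otimes e_i^*$ under $\operatorname{Hom}(A,A)\cong A\otimes A^*$ and that the semidirect-product decoration specializes verbatim to that of $\hat{A}$—is a sound expansion of details the paper leaves implicit.
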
 
\begin{proof} By Theorem \ref{iogpdpp:0}, the identity map $\mathrm{id} :  A \rightarrow A$  is an invertible $\mathcal{O}$-operator on  $(A, \circ,[\cdot,\cdot])$  associated to the representation  $(A; L_{\rhd}, R_{\lhd}, L_{\succ}, R_{\prec})$. Hence it follows from Theorem \ref{tcdppab:3}. 
\end{proof}

We conclude this section by presenting an example illustrating the above construction.

\begin{ex}
 Continuing with Example \ref{ex:predppa}, let $(A, \rhd,\lhd,\succ,\prec)$  be the 2-dimensional pre-dual pre-Poisson algebra  whose nonzero products are explicitly given by Eq.~\eqref{ex:predppa:cfb}. Thus by  Corollary \ref{pretoBialgebra},  $r= \sum\limits_{i=1}^{2} (e_{i} \otimes e_{i}^{*}+e_{i}^{*} \otimes e_{i})$ is a symmetric solution of the PLYBE in the dual pre-Poisson algebra  $\hat{A}$ whose nonzero products are explicitly given by   the following equations
\begin{align*}
e_2 \circ e_2 =  [e_2,e_2] = 2e_1,\quad
e_2 \circ e_1^* = e_2^*,\quad  [e_2,e_1^*] = -e_2^*,\quad [e_1^*,e_2] = 2e_2^*.
\end{align*}
By Theorem \ref{tcdppab:1}, there is a 4-dimensional dual pre-Poisson bialgebra $(\hat{A},{\delta_{\circ,r}}, {\delta_{[\cdot,\cdot],r}})$  where ${\delta_{\circ,r}}, {\delta_{[\cdot,\cdot],r}}$ are given by
 \begin{align*}
&\delta_{\circ,r}(e_1) = 0, &&\delta_{\circ,r}(e_2) =e_2^* \otimes e_1  ,&&\delta_{\circ,r}(e_1^*) = 2e_2^* \otimes e_2^* ,&&\delta_{\circ,r}(e_2^*) = 0,\\
&\delta_{[\cdot,\cdot],r}(e_1) = 0, &&\delta_{[\cdot,\cdot],r}(e_2) = 2 e_1 \otimes  e_2^* - e_2^* \otimes e_1,&&\delta_{[\cdot,\cdot],r}(e_1^*) = 2e_2^* \otimes e_2^* ,&&\delta_{[\cdot,\cdot],r}(e_2^*) = 0.
\end{align*}

\end{ex}

\medskip

 \noindent
  {\bf Acknowledgements.} The author wishes to express sincere gratitude to Professor Vladimir Dotsenko for his patient and generous assistance. Special thanks are also extended to  Siyuan Chen for his valuable discussions and insightful suggestions.

\medskip

 \noindent


\end{document}